\documentclass[10pt,reqno]{amsart} % removed: intlim, righttag [Piotr]
\usepackage{enumitem}
\usepackage{amscd}
\usepackage{amssymb}
\usepackage{graphicx} 
\usepackage{hyperref}
\usepackage[all]{xy}
\usepackage{fge}

\usepackage[bbgreekl]{mathbbol}

\oddsidemargin 0.5cm
\evensidemargin 1.1cm
\topmargin -0.3cm
\textwidth 15cm
\textheight 23cm

\usepackage[utf8]{inputenc}
\RequirePackage[T1]{fontenc}
\RequirePackage{amsfonts,latexsym,amssymb}
%\RequirePackage[frenchb]{babel}    
%\addto\extrasfrenchb{\bbl@nonfrenchitemize
%\bbl@nonfrenchspacing}

\RequirePackage{mathrsfs}
\let\mathcal\mathscr
\let\cal\mathcal

\let\bb\mathbb

\usepackage{bbm}
\usepackage[textsize=tiny]{todonotes}

\newtheorem{theorem}[equation]{Theorem}
 \newtheorem{lemma}[equation]{Lemma}
 \newtheorem{proposition}[equation]{Proposition}
 \newtheorem{corollary}[equation]{Corollary}

\theoremstyle{definition}
\newtheorem{definition}[equation]{Definition}

\newtheorem{remark}[equation]{Remark}
\newtheorem{example}[equation]{Example}
\theoremstyle{remark}
\newtheorem*{acknowledgments}{Acknowledgments}

\def\jcdot{\scriptscriptstyle\bullet}

\def\invlim{\mathop{\vtop{\ialign{##\crcr$\hfill{\lim}\hfil$\crcr
\noalign{\kern1pt\nointerlineskip}\leftarrowfill\crcr\noalign
{\kern -3pt}}}}\limits}
\def\dirlim{\mathop{\vtop{\ialign{##\crcr$\hfill{\lim}\hfil$\crcr
\noalign{\kern1pt\nointerlineskip}\rightarrowfill\crcr\noalign
{\kern -3pt}}}}\limits} 
\def\lomapr#1{\smash{\mathop{\relbar\joinrel\longrightarrow}\limits^{#1}}}
 \def\verylomapr#1{\smash{\mathop{\relbar\joinrel\relbar\joinrel\relbar\joinrel\longrightarrow}\limits^{#1}}}

\def\phi{\varphi}
\def\epsilon{\varepsilon}

\newcommand{\an}{\operatorname{an}}

\newcommand{\Sp}{\operatorname{Sp}}
\newcommand{\Sm}{\operatorname{Sm}}
\newcommand{\Spa}{\operatorname{Spa}}
\newcommand{\mode}{\operatorname{mod}}
\newcommand{\ovk}{\overline{K} }
\newcommand{\tr}{\operatorname{tr} }

\newcommand{\dr}{\operatorname{dR} } 
  \newcommand{\hk}{\operatorname{HK} }   
   \newcommand{\nr}{\operatorname{nr} }

 \newcommand{\colim}{\operatorname{colim} }

 \newcommand{\proeet}{\operatorname{pro\acute{e}t} } 
 \newcommand{\eet}{\operatorname{\acute{e}t} }
 \newcommand{\Spec}{\operatorname{Spec} }
\newcommand{\Spf}{\operatorname{Spf} }
 \newcommand{\Hom}{{\rm{Hom}} }
 \newcommand{\Ext}{\operatorname{Ext} }

\newcommand{\Gal}{\operatorname{Gal} }
\newcommand{\can}{ \operatorname{can} }
\newcommand{\synt}{ \operatorname{syn} }
 
\newcommand{\st}{\operatorname{st} }
 \newcommand{\kker}{\operatorname{Ker} } 
  \newcommand{\HK}{\operatorname{HK} } 
 \newcommand{\coker}{\operatorname{coker} }  
  \newcommand{\DR}{\operatorname{DR} }  
    \newcommand{\Bcrp}{\operatorname{{\mathbf B}^+_{\rm cr}} } 
      \newcommand{\Bstp}{\operatorname{{\mathbf B}^+_{\st}} } 
    
 \newcommand{\crr}{\operatorname{cr} }

 \newcommand{\sff}{{\mathcal{F}}}

 \newcommand{\sg}{{\mathcal{G}}}

 \newcommand{\scc}{{\mathcal{C}}}

 \newcommand{\so}{{\mathcal O}}

 \newcommand{\sa}{{\mathcal{A}}}

\newcommand{\sd}{{\mathcal{D}}}

 \newcommand{\wh}{\widehat}
   \numberwithin{equation}{section}

\def\R{{\mathrm R}}
\def\LL{{\mathrm L}}

 \def\A{{\bf A}} \def\B{{\bf B}}
\def\Q{{\bf Q}} \def\Z{{\bf Z}}

\def\N{{\bf N}}

\def\G{{\cal G}}

\def\bwedge{\underset{\widehat{\phantom{\ \ }}}{\phantom{\_}}}

\def\rg{{\rm R}\Gamma}

\def\epsilon{\varepsilon}

\newcommand\AC{\mathcal{A}}
\newcommand\FC{\mathcal{F}}

\newcommand\IC{\mathcal{I}}
\newcommand\JC{\mathcal{J}}

\newcommand\OC{\mathcal{O}}

\def\AS{{\mathscr{A}}}

\def\DS{{\mathscr{D}}}

\def\FS{{\mathscr{F}}}

\def\IS{{\mathscr{I}}}
\def\JS{{\mathscr{J}}}

\def\MS{{\mathscr{M}}}
\def\OS{{\mathscr{O}}}

\def\US{{\mathscr{U}}}

\def\XS{{\mathscr{X}}}

\newcommand\VF{\mathfrak{V}}
\newcommand\XF{\mathfrak{X}}
\newcommand\YF{\mathfrak{Y}}
\newcommand\ZF{\mathfrak{Z}}

\newcommand{\solid}{\otimes^{\scalebox{0.5}{$\square$}}}
\newcommand{\dsolid}{\otimes^{\LL {\scalebox{0.5}{$\square$}}}}

\newcommand{\wiesia}[1]{\textcolor{red}{[Wiesia: #1]}}

\setcounter{secnumdepth}{4}
\numberwithin{equation}{section}
 \setcounter{tocdepth}{1}
 
 \makeatletter
\let\@wraptoccontribs\wraptoccontribs
\makeatother

\begin{document}
\title[Compactly supported $p$-adic pro-\'etale cohomology of analytic varieties]
 {Compactly supported $p$-adic pro-\'etale cohomology of analytic varieties}
 \author{Piotr Achinger}
\address{Instytut Matematyczny PAN, 
ul.\ \'Sniadeckich 8, 
00-656 Warszawa, Poland}
\email{pachinger@impan.pl}
\author{Sally Gilles}
\address{Universit\"at Duisburg-Essen, Fakult\"at f\"ur Mathematik, Thea-Leymann-Str. 9, 
45127 Essen, Germany}
\email{sally.gilles@uni-due.de}
\author{Wies{\l}awa Nizio{\l}}
\address{CNRS, IMJ-PRG, Sorbonne Universit\'e, 4 place Jussieu, 75005 Paris, France}
\email{wieslawa.niziol@imj-prg.fr}

 \date{\today}
\thanks{W.\ N.'s  research was supported in part by the grant NR-19-CE40-0015-02 COLOSS, the NSF grant No. DMS-1440140, and   the Simons Foundation. 
P.A.\ and W.N.\ were supported by the project KAPIBARA funded by the European Research Council (ERC) under the European Union's Horizon 2020 research and innovation programme (grant agreement No 802787). S.G.’s research was partially
supported by the National Science Foundation under Grant No. DMS-1926686 and by the ERC under the European Union’s Horizon
2020 research and innovation programme (grant agreement No 804176).}
 \begin{abstract}
We study  properties of compactly supported $p$-adic pro-\'etale cohomology of smooth partially proper  rigid analytic varieties. In particular, we prove a comparison theorem, in a stable range,  with compactly supported syntomic cohomology, which is built from compactly  supported Hyodo-Kato and $\B^+_{\dr}$-cohomologies. We derive from that a (limited version of a) fundamental diagram.  This paper should be thought of as a compactly supported counterpart of the series \cite{CN3}, \cite{CN4}, \cite{CN5} but, in particular, as a continuation of \cite{CN4}.
 \end{abstract}

\maketitle

 \tableofcontents

 \section{Introduction} We wrote this paper to familiarize ourselves (and the reader) with the definition of compactly supported $p$-adic pro-\'etale cohomology of smooth partially proper  rigid analytic varieties, to study its basic properties, and to present some computations.  This paper should be thought of as a compactly supported counterpart of the series \cite{CN3}, \cite{CN4}, \cite{CN5} but, in particular, as a continuation of \cite{CN4}.
 
  Let $K$ be a complete discrete valuation field  of characteristic 0  with a perfect
residue field $k$ of characteristic $p$.  Let $C$ be the  $p$-adic completion of an algebraic closure of $K$.  Let $\sg_K$ be the Galois group of $K$.
Let $F={\rm Frac}(W(k))$  and 
let $\varphi$ be the absolute
Frobenius on $W(\overline{k})$. 

 \subsection{Definitions} The study of $p$-adic \'etale cohomology of analytic varieties over $C$ is a relatively recent endeavour. It started in earnest with \cite{Sch13}, where Scholze   proved, among other things,  finiteness of $p$-adic \'etale cohomology of proper rigid analytic varieties. For compactly supported $p$-adic pro-\'etale cohomology of smooth partially proper varieties, it was not clear initially what the right definitions should be. There were two immediate  guesses: 
 \begin{align}\label{main-def1}
 \R\Gamma^{(1)}_{\proeet,c}(X,\Q_p) & :=(\R\lim_n\R\Gamma_{\proeet,c}(X,\Z/p^n))\otimes_{\Z_p}^{\LL_{\Box}}\Q_p;\\
 \text{or }\R\Gamma^{(2)}_{\proeet,c}(X,\Q_p) &: =[\R\Gamma_{\proeet}(X,\Q_p)\to \R\Gamma_{\proeet}(\partial X,\Q_p)],\notag\\
   & \hphantom{aaa} \text{with } \R\Gamma(\partial X,\Q_p) =\colim_{Z\in\Phi_X}\R\Gamma(X\setminus Z,\Q_p),\notag
 \end{align}
 where $\Phi_X$ denotes the set of quasi-compact opens in $X$ and    the brackets $[...]$ denote the mapping fiber. The first definition  is just a continuous version of $\Z/p^n$-cohomology.
  The heuristic for the second definition comes from the (possible) comparison with de Rham cohomology and the classical definition of van der Put of compactly supported version of the latter \cite{VdP92}. It came  to us as a surprise that this definition agrees with Huber's definition of compactly supported $p$-adic \'etale (sic !) cohomology (see Theorem \ref{main1} below): this follows from the  canonical quasi-isomorphism
  $$\R\Gamma^{(2)}_{\proeet,c}(X,\Z_p)_{\Q_p}\stackrel{\sim}{\to}\R\Gamma^{(2)}_{\proeet,c}(X,\Q_p).$$ The fact that this might be the right definition comes from the work on dualities which pairs this cohomology with $p$-adic pro-\'etale cohomology (see \cite{CGN}, \cite{CGN2}, \cite{ZLi}, \cite{ALBM}). Hence we set
  $$
   \R\Gamma_{\proeet,c}(X,\Q_p) :=\R\Gamma^{(2)}_{\proeet,c}(X,\Q_p).
  $$
  
  We note that, if we take \'etale instead of pro-\'etale in the first definition of \eqref{main-def1},  we obtain an isomorphic object. The indication that this is actually a  good definition of compactly supported $p$-adic \'etale cohomology (sic !) comes from two sources:
 \begin{enumerate}
 \item in \cite{CDHN}, it was shown that, for $p$-adic period domains, it  behaves like $\ell$-adic cohomology for $\ell\neq p$;
 \item in \cite{LLZ}, it was shown that, for  smooth almost proper varieties (i.e., $X=\overline{X}\setminus Z$, for a smooth  proper variety $\overline{X}$ and a closed Zariski subvariety $Z$), it  satisfies Poincar\'e duality vis a vis $p$-adic \'etale cohomology. 
 \end{enumerate}
 Hence we set
  $$
   \R\Gamma_{\eet,c}(X,\Q_p) :=\R\Gamma^{(1)}_{\proeet,c}(X,\Q_p).
  $$
It is a nonobvious fact but    there exists a canonical map $\R\Gamma_{\proeet,c}(X,\Q_p)\to \R\Gamma_{\eet,c}(X,\Q_p)$. 
 \subsection{Main results} The main result of the paper  can be summed up in the following  theorem-construction.  We write $\sd(\Q_{p,\Box})$ for the $\infty$-derived category 
  of solid $\Q_p$-modules and $\sff\sd(\Q_{p,\Box})$ for the corresponding filtered $\infty$-derived category.
    \begin{theorem}
 \label{main1}
 To any smooth partially proper rigid analytic  variety $X$ over $C$ there are  naturally  associated: 
   \begin{enumerate}
  \item The pro-\'etale cohomology with compact support  $ \R\Gamma_{\proeet,c}(X,\Q_p)\in \sd(\Q_{p,\Box})$. There is a natural quasi-isomorphism
  in $\sd(\Q_{p,\Box})$ with Huber's \'etale cohomology with compact support
  \begin{equation}\label{war11}
  \R\Gamma_{\proeet,c}(X,\Q_p)\simeq \R\Gamma_{\eet,c, {\rm Hu}}(X,\Q_p).
  \end{equation}
\item The syntomic cohomology with compact support $\rg_{\synt,c}(X,\Q_p(r))\in \sd(\Q_{p,\Box})$, $r\in \N$,  with a natural period morphism in $\sd(\Q_{p,\Box})$
 \begin{equation}
 \label{period-now}
 \alpha_r: \rg_{\synt,c}(X,\Q_p(r))\to \rg_{\proeet,c}(X,\Q_p(r)),
 \end{equation}
 which  is a quasi-isomorphism after truncation $\tau_{\leq r}$. 
 \item The de Rham cohomology $\R\Gamma_{\dr,c}(X)$ and $\B^+_{\dr}$-cohomology $\R\Gamma_{\dr,c}(X/\B^+_{\dr})$. These are filtered dg $C$- and $\B^+_{\dr}$-algebras, respectively, 
 such that in $\sff\sd(\Q_{p,\Box})$
 $$
 \R\Gamma_{\dr,c}(X/\B^+_{\dr})\otimes^{\LL_{\Box}}_{\B^+_{\dr}}C\stackrel{\sim}{\to}\R\Gamma_{\dr,c}(X).
 $$
 \item The Hyodo-Kato cohomology 
$\rg_{\hk,c}(X)$. This is  a dg $F^{\nr}$-algebra\footnote{We define $F^{\rm nr}$ as the maximal unramified extension of $F$ in $\ovk$.}
equipped with a Frobenius $\phi$ and a monodromy operator $N$. We have :
\begin{enumerate}
\item Natural Hyodo-Kato  quasi-isomorphisms in $\sd(C_{\Box})$ and $\sd(\B^+_{\dr,\Box})$, respectively:
\begin{align*}
  \iota_{\hk} &: \rg_{\hk,c}(X){\otimes}^{\LL_{\Box}}_{F^{\nr}}C\stackrel{\sim}{\to} \rg_{\dr,c}(X),\\
\iota_{\hk}&: \rg_{\hk,c}(X){\otimes}^{\LL_{\Box}}_{F^{\nr}}\B^+_{\dr}\stackrel{\sim}{\to} \rg_{\dr,c}(X/\B^+_{\dr}).
\end{align*}
\item {\rm (Poincar\'e duality)} If $X$ is everywhere of dimension $d$ then the natural pairing of solid $(\phi,N)$-modules over $F^{\rm nr}$
$$
H^i_{\hk}(X)\otimes^{\Box}_{F_L} H^{2d-i}_{\hk,c}(X)\stackrel{\cup}{\to} H^{2d}_{\hk,c}(X)\xrightarrow{{\rm tr}_{\hk}} F^{\rm nr}\{-d\}
$$
is a perfect duality, i.e., we have  induced isomorphisms of solid $(\phi,N)$-modules over~$F^{\rm nr}$
\begin{align*}
& H^i_{\hk}(X)\stackrel{\sim}{\to} \Hom_{F^{\rm nr}_{\Box}} (H^{2d-i}_{\hk,c}(X),F^{\rm nr}\{-d\}),\\
& H^i_{\hk,c}(X)\stackrel{\sim}{\to} \Hom_{F^{\rm nr}_{\Box}} (H^{2d-i}_{\hk}(X),F^{\rm nr}\{-d\}).
\end{align*}
%These isomorphisms are  Galois equivariant  $X$ is defined over $K$.
These duality isomorphisms are compatible, via the Hyodo-Kato isomorphism,  with the de Rham duality.
\end{enumerate}
 \item A  distinguished triangle in  $\sd(\Q_{p,\Box})$
  \begin{equation}
  \label{triangle11}
 \rg_{\synt,c}(X,\Q_p(r))\lomapr{}  [\rg_{\hk,c}(X){\otimes}^{\LL_{\Box}}_{F^{\nr}}\B^+_{\st}]^{N=0,\phi=p^r}\lomapr{\iota_{\hk}} \rg_{\dr,c}(X/\B^+_{\dr})/F^r,
 \end{equation}
 which can be lifted to the derived category of Topological  Vector Spaces.  
 \item {\rm(Trace maps)} If $X$ is of dimension $d$, then there is a natural trace map
 $$
 {\rm tr}_{\proeet,X}:  H^{2d}_{\proeet,c}(X,\Q_p(d))\to \Q_p,
 $$
 which is compatible with both Huber's trace map and the syntomic trace map (via \eqref{war11} and \eqref{period-now}, respectively).
 \end{enumerate}
 \end{theorem}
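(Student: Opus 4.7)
The plan is to define all six compactly supported theories uniformly through the mapping-fiber recipe
\[
\rg_{?,c}(X):=\bigl[\rg_{?}(X)\longrightarrow \rg_{?}(\partial X)\bigr],\qquad \rg_{?}(\partial X):=\colim_{Z\in\Phi_X}\rg_{?}(X\setminus Z),
\]
for $?\in\{\proeet,\synt,\dr,\dr/\B^+_{\dr},\hk\}$. Since every $X\setminus Z$ is smooth and quasi-compact (with a nonempty boundary), each theory $\rg_{?}$ and each comparison morphism between them on such opens is fully supplied by \cite{CN3,CN4,CN5}; the same holds on $X$ by partial properness. The substantive points are then (i) identifying the pro-\'etale fiber with Huber's theory, (ii) checking that the various base changes and structural operations commute with $\colim_{\Phi_X}$, (iii) constructing the Hyodo--Kato and pro-\'etale trace maps, and (iv) promoting Hyodo--Kato duality to a \emph{perfect} duality in the solid category.

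Part (1) splits into the tautological identification of $\rg_{\proeet,c}(X,\Q_p)$ with the mapping fiber and the comparison \eqref{war11}. For the latter, Huber's compactly supported cohomology fits into the same mapping-fiber sequence against $\rg_{\eet}(\partial X,\Q_p)$ (as follows from $j_!$-formalism along quasi-compact open immersions), so \eqref{war11} reduces to the classical pro-\'etale-vs-\'etale comparison with $\Q_p$-coefficients on $X$ and on each $X\setminus Z$. Parts (3) and (4a) are analogous: the base change in (3) and the two Hyodo--Kato isomorphisms in (4a) are known termwise on $\Phi_X$ by \cite{CN4,CN5} and descend to the mapping fiber because $-\otimes^{\LL_{\Box}}_{\B^+_{\dr}} C$ and $-\otimes^{\LL_{\Box}}_{F^{\nr}}\B^+_{\dr}$ are solid-flat and commute with filtered colimits.

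For (2) and (5), $\rg_{\synt,c}(X,\Q_p(r))$ is the mapping fiber of the syntomic cohomologies built in \cite{CN4}, the period map $\alpha_r$ is induced on fibers from the non-compact period maps, and the $\tau_{\leq r}$-quasi-isomorphism descends from the same statement on $X$ and on $X\setminus Z$. The triangle \eqref{triangle11} similarly descends from the analogous triangle on $X$ and on $\partial X$. The only subtle point is that $(-)^{N=0,\phi=p^r}$ is an equaliser and need not in general commute with filtered colimits; however the terms $\rg_{\hk}(X\setminus Z)\otimes^{\LL_{\Box}}_{F^{\nr}}\B^+_{\st}$ are, in each cohomological degree, sufficiently finitely presented solid $(\phi,N)$-modules that the exchange goes through, as in \cite{CN4}.

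The Poincar\'e duality of (4b) is the main obstacle. I would first construct $\mathrm{tr}_{\hk}:H^{2d}_{\hk,c}(X)\to F^{\nr}\{-d\}$ from the log-crystalline residue on a log-smooth formal model of $X$, matching van der Put's de Rham trace via $\iota_{\hk}$. The cup pairing then arises formally from the $\rg_{\hk}(X)$-module structure of the mapping fiber. Perfectness is established by writing, using partial properness, $\rg_{\hk}(X) = \colim_Z \rg_{\hk}(X\setminus Z)$ in the solid sense, so that Poincar\'e duality on each $X\setminus Z$ (reduced by compactification and excision to the smooth proper case of \cite{CN4}) dualises the $\colim$ into the $\lim$ appearing in the fiber presentation of $\rg_{\hk,c}(X)$, yielding a perfect pairing in $\sd(F^{\nr}_\Box)$; compatibility with the de Rham duality comes from $\iota_{\hk}$. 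Finally, for (6) I set $\mathrm{tr}_{\proeet,X}$ to be the composite of $\alpha_d$ in top degree with $\mathrm{tr}_{\hk}$ through the fundamental triangle; compatibility with the syntomic trace is then tautological, and compatibility with Huber's trace follows by normalising both on a single standard class.
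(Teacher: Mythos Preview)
Your overall strategy of defining every theory via the mapping fiber $[\rg_?(X)\to\rg_?(\partial X)]$ and pushing the comparison results of \cite{CN3,CN4} through that colimit is exactly what the paper does for parts (2), (3), (4a) and (5), so those are fine.

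There is a genuine gap in your argument for part (1). You claim the comparison \eqref{war11} ``reduces to the classical pro-\'etale-vs-\'etale comparison with $\Q_p$-coefficients on $X$ and on each $X\setminus Z$''. But this termwise comparison is \emph{false}: for $X=\A^1_C$ the paper explicitly computes $H^1_{\proeet}(X,\Q_p)\simeq(\mathcal{O}(X)/C)(-1)$ while $H^1_{\proeet}(X,\Z_p)_{\Q_p}=0$, so $\rg_{\eet}(X,\Q_p)=\rg_{\proeet}(X,\Z_p)_{\Q_p}$ and $\rg_{\proeet}(X,\Q_p)$ differ. The paper's point is precisely that the obstruction $P(Y)=[\rg_{\proeet}(Y,\Z_p)_{\Q_p}\to\rg_{\proeet}(Y,\Q_p)]$ is nonzero on $X$ and on $X\setminus Z$, but one shows $P(X)\stackrel{\sim}{\to}P(\partial X)$ by a careful choice of partially proper opens with retrocompact complements (so that Mayer--Vietoris against a quasi-compact open applies). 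Only then does the comparison hold on the mapping fiber.

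Your approach to (4b) also does not work as written. The presentation ``$\rg_{\hk}(X)=\colim_Z\rg_{\hk}(X\setminus Z)$'' goes the wrong way (restriction maps go $X\to X\setminus Z$), and there is no Poincar\'e duality available on the individual $X\setminus Z$ to dualise. The paper instead transfers de Rham duality (which is already proved, via Serre duality for Stein spaces) through the Hyodo--Kato isomorphism; perfectness is then automatic. The real work is in the trace map: one must show that $\mathrm{tr}_{\dr}$ restricted to $H^{2d}_{\hk,c}(X)$ lands in $F^{\nr}\{-d\}$ and is $(\phi,N)$-equivariant. The paper does this not via log-crystalline residues but by excision through an open ball $U\subset X$, reducing to $\mathbb{P}^d$ where Frobenius and monodromy can be read off directly. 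Your proposed ``log-crystalline residue on a log-smooth formal model'' would have to be made to work globally on a partially proper $X$, which has no single such model.
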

  Some of the constructions in the above theorem follow in a rather straightforward, if technically tedious\footnote{Tedious especially in the functional analytic part.},  manner   from the ones for usual cohomologies. Others,  like the comparison with the Huber's definition, the definition of the trace maps, the Poincar\'e duality for Hyodo-Kato cohomology, and the related computations require subtler arguments. The duality for Hyodo-Kato cohomology is induced from the one for de Rham cohomology (a classical result) but the information about Frobenius and monodromy (as well as Galois action if needed) is obtained by a reduction (via excision) to the proper case. The pro-\'etale trace map is defined by passing to the syntomic cohomology via period quasi-isomorphism and then using the Hyodo-Kato trace map (via  the distinguished triangle \eqref{triangle11}). That it agrees with Huber's trace map can be checked by a reduction to the algebraic case (again, via excision) where this amounts to the compatibility of the algebraic $p$-adic comparison morphism with traces.

    From Theorem \ref{main1}, we derive the following key result: 
 \begin{corollary} \label{main3} Let $X$ be a smooth partially proper  rigid analytic  variety over $C$. 
 \begin{enumerate}
 \item  {\rm ({\rm Basic comparison theorem})}
  \label{basic}
 Let $r\geq 0$. There is a natural  quasi-isomorphism {\rm (period isomorphism)} in $\sd(\Q_{p,\Box})$:
  \begin{align}
  \label{cos1}
\tau_{\leq r}&\rg_{\proeet}(X,\Q_p(r))\simeq \tau_{\leq r}  
\big[[\rg_{\hk}(X){\otimes}^{\LL_{\Box}}_{F^{\nr}}\B^+_{\st}]^{N=0,\phi=p^r}\lomapr{\iota_{\hk}}\rg_{\dr}(X/\B^+_{\dr})/F^r\big].
  \end{align}
 \item  {\rm ({\rm Fundamental exact sequence})} If $X$ is   Stein  of dimension $d$ over $K$ and $r\geq 0$, then,  under an assumption on the slopes of Frobenius on Hyodo-Kato cohomology (see Theorem \ref{diagSYNc} for details), there is a map of exact sequences of solid $\Q_p$-modules
{\small\[ \xymatrix{
0 \ar[r] & H_c^d(X_C, \Omega^{r-d-1})/ \kker d(d) \ar[r] \ar@{=}[d] & H_{\synt,c}^{r}(X_C, \Q_p(r)) \ar[d] \ar[r] & (H^{r}_{\HK,c}(X_C) \solid_{F^{\rm nr}} t^d \Bstp)^{\varphi=p^{r}, N=0} \ar[r] \ar[d] & 0 \\
0 \ar[r] & H_c^d(X_C, \Omega^{r-d-1})/ \kker d(d) \ar[r] & H_c^d(X_C, \Omega^{r-d})^{d=0}(d)  \ar[r] & H^r_{\dr,c}(X_C)(d) \ar[r] & 0 
}\]}
 \end{enumerate}
 \end{corollary}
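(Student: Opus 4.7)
\emph{Part (1): basic comparison.} The non-compactly-supported comparison \eqref{cos1} is essentially the main theorem of \cite{CN4}, and my plan is to cite it directly, noting that all the formal ingredients used in Theorem \ref{main1}(2)(5) (the syntomic period morphism, the Hyodo-Kato distinguished triangle, the quasi-isomorphism after $\tau_{\leq r}$) have direct counterparts for the ordinary pro-\'etale and syntomic theories. Equivalently, one may run the construction underlying Theorem \ref{main1} with the compactly-supported decoration removed throughout; the same arguments give \eqref{cos1}.

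\emph{Part (2): fundamental exact sequence.} The strategy is to apply the distinguished triangle \eqref{triangle11} of Theorem \ref{main1}(5) to the Stein variety $X$, pass to cohomology in degree $r$, and simplify each term using the Stein hypothesis. For $X_C$ Stein of dimension $d$, compactly supported coherent cohomology $H^i_c(X_C,\FC)$ is concentrated in degree $i=d$ (dual to the vanishing of $H^{>0}_{\rm coh}$ on Stein spaces). Hence $\rg_{\dr,c}(X_C)$ and $\rg_{\dr,c}(X_C/\B^+_{\dr})/F^r$ unfold into explicit two-term complexes whose $H^r$ present themselves as quotients of $H^d_c(X_C,\Omega^{r-d})(d)$ by the image of the de Rham differential; this directly yields the bottom short exact sequence of the diagram.

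Next I would analyse $[\rg_{\hk,c}(X_C)\dsolid_{F^{\nr}}\B^+_{\st}]^{N=0,\phi=p^r}$. Here the slope hypothesis on Frobenius on Hyodo-Kato cohomology (spelled out in Theorem \ref{diagSYNc}) ensures that $\phi-p^r$ is strict on $H^r_{\hk,c}(X_C)\solid_{F^{\nr}}\B^+_{\st}$, so that taking derived $(\phi,N)$-invariants commutes with passing to cohomology. This identifies the third term of the top row as $(H^r_{\hk,c}(X_C)\solid_{F^{\nr}} t^d\Bstp)^{\phi=p^r,\,N=0}$; the Tate twist $t^d$ emerges from matching the Hodge filtration on $\rg_{\dr,c}$ with the image of $\iota_{\hk}$. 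Feeding these computations into the long exact sequence of \eqref{triangle11} yields the top short exact sequence. The vertical maps of the diagram are then forced: the left vertical is the identity, the right vertical is induced by the Hyodo-Kato isomorphism $\iota_{\hk}$ of Theorem \ref{main1}(4a), and commutativity of the middle square is automatic from the construction of \eqref{triangle11}.

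\emph{Main obstacle.} The crux is the strictness of $\phi-p^r$ in the solid setting: one must verify that taking $(\phi,N)$-invariants commutes with cohomology of $\rg_{\hk,c}(X_C)\dsolid_{F^{\nr}}\B^+_{\st}$, which is exactly where the slope hypothesis is required, and where careful solid functional-analytic bookkeeping with $t$-adic and Hodge filtrations, Tate twists, and exactness of the relevant $\solid$-tensor products becomes unavoidable.
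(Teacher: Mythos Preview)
Your approach to Part~(1) is correct and matches the paper: the non-compactly-supported comparison is precisely the combination of the period morphism (a quasi-isomorphism after $\tau_{\leq r}$) with the Hyodo-Kato/$\B^+_{\dr}$ description of syntomic cohomology, both established in \cite{CN4} and restated in the paper as Theorem~\ref{syn-HK-dR} together with the period map of \S6.3.

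For Part~(2), however, you have misidentified where and why the slope hypothesis enters. The commutation of derived $(\phi,N)$-invariants with cohomology, i.e.\ the identification
\[
H^i\big[\rg_{\hk,c}(X_C)\dsolid_{F^{\nr}}\Bstp\big]^{N=0,\varphi=p^r}\;\simeq\;(H^i_{\hk,c}(X_C)\solid_{F^{\nr}}\Bstp)^{N=0,\varphi=p^r},
\]
holds \emph{unconditionally}: this is Proposition~\ref{coho-hk11}, whose proof uses that for a finite $(\phi,N)$-module $M$ both $N$ and $p^r-\phi$ are surjective on $M\solid\Bstp$ and $M\solid\Bcrp$ respectively. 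So ``strictness of $\phi-p^r$'' is not the obstacle.

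The slope hypothesis is instead a condition on $H^{r-1}_{\hk,c}(X_C)$ (degree $r-1$, not $r$), and what it buys is the surjectivity of two specific Hyodo-Kato-to-de-Rham maps $\pi^{r-1,r}_d$ and $\pi^{r-1,r-d}_1$ (Lemma~\ref{HKc}). The paper's proof (Proposition~\ref{bal}) compares, via a five-row diagram, the explicit computations of $H^{r-1}\DR_c$, $H^r\DR_c$ from Lemma~\ref{DRc} with those of $H^{r-1}\HK_c$, $H^r\HK_c$, and then extracts the top exact sequence as
\[
0\to \coker(\gamma_{r-1})\to H^r_{\synt,c}(X_C,\Q_p(r))\to \kker(\gamma_r)\to 0
\]
for certain auxiliary maps $\gamma_{r-1},\gamma_r$. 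The factor $t^d$ arises because $\kker(\gamma_r)=(H^r_{\hk,c}\solid t^d\Bstp)^{N=0,\varphi=p^r}$ by Lemma~\ref{HKc}, while the identification $\coker(\gamma_{r-1})\simeq (H^d_c(\Omega^{r-d-1})/\kker d)(d)$ requires the surjectivity of $\pi^{r-1,r-d}_1$. Without surjectivity of $\pi^{r-1,r}_d$, an extra term $\coker(\pi^{r-1,r}_d)$ obstructs exactness of the top row (Remark~\ref{diag-00}). Your sketch, which focuses on degree $r$ and on $\phi$-strictness, misses this degree-$(r-1)$ analysis, which is where the real work and the real use of the hypothesis lie.
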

The Frobenius slope condition above is very restrictive  but it holds for tori and Drinfeld spaces, for example (probably also for general $p$-adic period domains) so we can use the fundamental diagram to compute compactly supported cohomology in these cases. Recall that for the usual cohomology we have a fundamental diagram with no restrictions on the slopes\footnote{This means that the authors of \cite{CDN1} got very lucky that they started computations with the usual cohomology and not with the compactly supported one as is customary  in the local Langlands program.}.

 Finally, the paper includes a number of concrete examples of computations of compactly supported $p$-adic pro-\'etale cohomology: Drinfeld spaces and tori (already mentioned above) but also affine spaces, Stein curves, and general Stein spaces (the last one -- up to a degree). 
 \begin{remark}
 \begin{enumerate}
 \item We also prove  analogs of  Theorem \ref{main1} and Corollary \ref{main3}  for  smooth overconvergent varieties and compare them with their rigid analytic versions as in \cite{CN4}. Here, again, it was not clear what should be the right definition of compactly supported $p$-adic pro-\'etale cohomology of an overconvergent variety and our definition is inspired, as in the rigid analytic case, by the definition of compactly supported de Rham cohomology of dagger affinoids of Grosse-Kl\"onne from \cite{GK1}.
 \item For a  further study of properties of compactly supported $p$-adic cohomologies of rigid analytic spaces see the work of Xinhyu Shao \cite{Shao}. For a further study of fundamental
 diagrams and exact sequences in the same context   see the work of Sally Gilles \cite{Gilles}. 
 \end{enumerate}
 \end{remark}
\begin{acknowledgments} We would like to thank Gabriel Dospinescu, 
Zhenghui Li, and  Xinyu Shao for  helpful comments and discussions concerning the content of this paper and its earlier drafts. S.G. 
would like to thank the MPIM of Bonn and the IAS of Princeton for their support and hospitality
during the academic years 2022-2023 and 2023-2024, when parts of this paper were written. W.N. would like to thank 
the Banach Center in Warsaw for its support and hospitality  during the Spring semester of 2022, when parts of this paper were written. 
\end{acknowledgments}
  \subsubsection*{Notation and conventions.}\label{Notation}
Let $\so_K$ be a complete discrete valuation ring with fraction field $K$  of characteristic 0 and with perfect
residue field $k$ of characteristic $p$. Let $\ovk$ be an algebraic closure of $K$ and let $\so_{\ovk}$ denote the integral closure of $\so_K$ in $\ovk$. Let $C=\wh{\ovk}$ be the $p$-adic completion of $\ovk$.  Let
$W(k)$ be the ring of Witt vectors of $k$ with 
 fraction field $F$ (i.e., $W(k)=\so_F$); let $e=e_K$ be the ramification index of $K$ over $F$.   Set $\sg_K=\Gal(\overline {K}/K)$ and 
let $\phi$ be the absolute
Frobenius on $W(\overline {k})$. 
We will denote by $\A_{\crr}, \B_{\crr}, \B_{\st},\B_{\dr}$ the crystalline, semistable, and  de Rham period rings of Fontaine. 
%We equip $\A_{\crr}$ with its canonical log-structure (compatible with the log-structure on $\so_{\ovk}^{\times}$). 

 We will denote by $\so_K$,
\index{OK@\OK}$\so_K^{\times}$, and $\so_K^0$, depending on the context,  the scheme $\Spec ({\so_K})$ or the formal scheme $\Spf (\so_K)$ with the trivial, the canonical  (i.e., associated to the closed point), and the induced by $\N\to \so_K, 1\mapsto 0$,
log-structure, respectively.  Unless otherwise stated all   formal schemes are $p$-adic, locally of finite type, and equidimensional. For a ($p$-adic formal) scheme $X$ over $\so_K$, let 
\index{Xn@\Xn}$X_0$ denote
the special fiber of $X$; let $X_n$ denote its reduction modulo $p^{n+1}$.

All rigid analytic spaces considered will be over $K$ or $C$. 
We assume that they are separated, taut\footnote{That is,   for  all quasi-compact  opens 
$V$ of  $X$,  the closure $\overline{V}$ of  $V$ in  $X$ is quasi-compact.}, and countable at infinity. 
If $L=K,C$, we \index{Smo@\Smo}let ${\rm Sm}_L$ (resp.~${\rm Sm}_L^\dagger$)
be the category of smooth rigid analytic (resp.~dagger) varieties over $L$, and 
we denote \index{Perf@\Perf}by ${\rm Perf}_C$ the category of perfectoid spaces over $C$.

Unless otherwise stated, we work in the derived (stable) $\infty$-category 
\index{Dcal@\Dcal}$\sd(A)$ of left-bounded complexes of a quasi-abelian category $A$ (the latter will be clear from the context).
  Many of our constructions will involve (pre)sheaves of objects from $\sd(A)$. 
  We will use a shorthand for certain homotopy limits:
 if $f:C\to C'$ is a map  in the derived $\infty$-category of a quasi-abelian  category, we set
$$[\xymatrix{C\ar[r]^f&C'}]:=\lim(C\to C^{\prime}\leftarrow 0).$$ 
For an operator $F$ acting on $C$, we will use the brackets $[C ]^F$ to denote the derived eigenspaces and the brackets $(C)^F$ or simply $C^F$  to denote the non-derived ones. 
We will also write
\[ \left[\vcenter{\xymatrix @C=2cm{ 
C_1\ar[r]^-{f_1} \ar[d] & K_1 \ar[d] \\
C_2 \ar[r]^-{f_2} & K_2}} \right] := [[C_1 \xrightarrow{f_1} K_1] \to [C_2 \xrightarrow{f_2} K_2]]. \]

 For functional analysis, we will work in the context of condensed mathematics. We write $\sd(\Q_{p,\Box})$ for the $\infty$-derived category of solid $\Q_p$-modules; (a bit abusively) we will write $\Q_{p,\Box}$ for the category of solid $\Q_p$-modules. We will use similar notation for modules over a solid $\Q_p$-ring etc. 

 \section{Pro-\'etale cohomology with compact support}\label{proetale-c}

Let $\ell$ be a prime number. In this chapter, we define compactly supported $\ell$-adic pro-\'etale cohomology for partially proper smooth rigid analytic varieties and smooth dagger varieties over $K$ or $C$.  This  cohomology was  used in  \cite{CGN} to obtain a Poincar\'e duality for arithmetic $p$-adic pro-\'etale cohomology of rigid analytic Stein curves. 
\subsection{Rigid analytic compact support functor}\label{compact1} We start with a preliminary discussion. 
Let  $L= K, C$ and let   $X$ be a smooth rigid analytic variety over $L$. Let $\Phi$ denote the set of  quasi-compact opens of $X$. We write $\partial X$ for the 
pro-object $\{X\setminus Z\}_{Z\in\Phi}$ indexed by $\Phi^{\rm op}$. We note that $X\setminus Z$ is an admissible open of $X$ (see \cite[Lemma 1.5]{VdP92}). For a presheaf $F$ on smooth rigid analytic varieties with values in an $\infty$-category $\scc$ admitting filtered colimits, we set
$$
F(\partial X):=\colim_{Z\in\Phi} F(X\setminus Z).
$$
We have a natural restriction map $F(X)\to F(\partial X)$. Suppose that $\scc$ is stable and set 
$$
F_c(X):=[F(X)\to F(\partial X)],
$$
so that we have a distinguished triangle
$$
F_c(X)\to F(X)\to F(\partial X). 
$$
Note that if $X$ is quasi-compact then $\Phi$ admits a final object $X$ and if, moreover, $F(\emptyset )$ is the final object, then we have 
$F_c(X)\stackrel{\sim}{\to}F(X)$.

 Suppose now that $F$ is a sheaf. Let $U\subset X$ be an open. For every $Z\in\Phi$ whose closure is contained in $U$, i.e., $Z\subset\subset_XU$, covering $X$ by $X\setminus Z$ and $U$, we have a cartesian square
 $$
 \xymatrix{
 F(X)\ar[r]\ar[d] & F(X\setminus Z)\ar[d]\\
  F(U)\ar[r] & F(U\setminus Z)
 }
 $$
 and hence, setting $F_Z(-):=[F(-)\to F(-\setminus Z)]$, we get the excision
 \begin{equation}
 \label{excision1}
 F_Z(X)\stackrel{\sim}{\to}F_Z(U).
 \end{equation}
 
  Now suppose that $j:U\to X$ is partially proper, so that for every quasi-compact open $Z$ of $U$, the closure of $Z$ in $X$ is also contained in $U$. Letting $Z$ run over $\Phi_U$ (quasi-compact opens for $U$), we get the pushforward map 
  $$j_*:  F_c(U)\to  F_c(X)$$ as the composition:
  $$
  F_c(U)=\colim_{Z\in\Phi_U}F_Z(U)\simeq \colim_{Z\in\Phi_U}F_Z(X)\to \colim_{Z\in\Phi}F_Z(X)=F_c(X).
  $$
  We used here \eqref{excision1}. Clearly, the map $j_*$ is functorial in $F$. 

\subsection{Rigid analytic setting}
Following the same process as in Section \ref{compact1}, we define compactly supported (pro-)\'etale cohomology of a smooth rigid analytic variety $X$ over $K$ or $C$. We give the definition for torsion coefficients as well as $\ell$-adic ones. We show that our construction coincides with the one of Huber (\cite{Hub98}) in the case of partially proper varieties.

\subsubsection{Definition}

Let  $L=K,C$. Let  $X$ be a smooth rigid analytic variety over $L$. We define compactly supported cohomology with torsion coefficients of $X$ by the same formulas as in Section \ref{compact1}, i.e.,
\begin{align*}
 \R \Gamma_{*,c}(X, \Z/\ell^n\Z) & := [ \R \Gamma_{*}(X, \Z/\ell^n\Z) \to \R \Gamma_{*}(\partial X, \Z/\ell^n\Z)],
\end{align*}
where $*=\eet,\proeet$ and 
\begin{align*}
 \R \Gamma_{*}(\partial X, \Z/\ell^n \Z) & := \colim_{Z\in\Phi} \R \Gamma_{*}(X \setminus Z, \Z/\ell^n\Z). 
% \\ \R \Gamma_{*}(\partial X, \Z_\ell) & := \LL \colim_{Z} \R \Gamma_{*}(X \setminus Z, \Z_\ell).
\end{align*}
We then define the integral cohomologies by
\begin{align*}
\R \Gamma_{\proeet,c}(X, \Z_\ell) & := [ \R \Gamma_{\proeet}(X, \Z_{\ell}) \to \R \Gamma_{\proeet}(\partial X, \Z_\ell)],\\
 \R \Gamma_{\eet,c}(X, \Z_\ell)  &  :=\R\lim_n  \R \Gamma_{\eet,c}(X, \Z/\ell^n\Z). 
\end{align*}
We obtain complexes in ${\cal D}(\Z_{\ell,\Box})$.

\begin{remark}
If we define
\[ \R\Gamma_{\eet,c, {\rm naive}}(X, \Z_{\ell}):= [ \R \Gamma_{\eet}(X, \Z_{\ell}) \to \R \Gamma_{\eet}(\partial X, \Z_{\ell})],\]
the natural map \[ \colim_{Z} \R \lim_n \R \Gamma_{\eet}(X \setminus Z, \Z/\ell^n\Z) \to \R \lim_n \colim_{Z} \\R \Gamma_{\eet}(X \setminus Z, \Z/\ell^n\Z)\]  induces a map
$\R \Gamma_{\eet,c,{\rm naive}}(X, \Z_{\ell}) \to \R \Gamma_{\eet,c}(X, \Z_{\ell})$. In general, it is not a quasi-isomorphism. 
\end{remark}

For the rational theory, we set:
\begin{align*}
\R \Gamma_{\eet,c}(X, \Q_\ell) &:= \R \Gamma_{\eet,c}(X, \Z_\ell) \dsolid_{\Z_{\ell}}\Q_\ell,\\
\R \Gamma_{\proeet,c}(X, \Q_\ell) & := [ \R \Gamma_{\proeet}(X, \Q_{\ell}) \to \R \Gamma_{\proeet}(\partial X, \Q_\ell)]. 
\end{align*}
These are complexes in ${\cal D}(\Q_{\ell,\Box})$.

\begin{lemma}
\label{proet-et-entier}
\begin{enumerate}
\item The natural morphism $\R \Gamma_{\eet,c, {\rm naive}}(X, \Z_{\ell}) \to \R \Gamma_{\proeet,c}(X, \Z_{\ell})$ is a quasi-isomorphism in ${\cal D}(\Z_{\ell,\Box})$.
\item It induces a natural change of topology morphism in  ${\cal D}(\Z_{\ell,\Box})$:
\begin{equation}\label{niedziela1}
\R \Gamma_{\proeet,c}(X, \Z_\ell) \to \R \Gamma_{\eet,c}(X, \Z_\ell).
\end{equation}
 \end{enumerate}
\end{lemma}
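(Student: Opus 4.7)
The plan is to reduce both statements to two standard facts about pro-\'etale cohomology of analytic adic spaces. First, Scholze's comparison provides quasi-isomorphisms $\R\Gamma_\eet(U,\Z/\ell^n)\xrightarrow{\sim}\R\Gamma_\proeet(U,\Z/\ell^n)$ natural in any smooth rigid analytic variety $U$ over $L$. Second, the pro-\'etale sheaf $\Z_\ell$ is by construction the derived limit of the $\Z/\ell^n$, so that $\R\Gamma_\proeet(U,\Z_\ell)\simeq \R\lim_n \R\Gamma_\proeet(U,\Z/\ell^n)$. Combined, these yield a natural identification
$$\R\Gamma_\proeet(U,\Z_\ell)\simeq \R\lim_n\R\Gamma_\eet(U,\Z/\ell^n)=:\R\Gamma_\eet(U,\Z_\ell),$$
where on the right we mean continuous \'etale cohomology, the intended meaning of $\R\Gamma_\eet(-,\Z_\ell)$ throughout.

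For part (1), I would apply this identification termwise to the members of the pro-object $\partial X=\{X\setminus Z\}_{Z\in \Phi}$ and to $X$ itself. Since filtered colimits preserve quasi-isomorphisms, we obtain
$$\colim_{Z\in\Phi}\R\Gamma_\proeet(X\setminus Z,\Z_\ell)\simeq \colim_{Z\in\Phi}\R\Gamma_\eet(X\setminus Z,\Z_\ell),$$
and taking mapping fibers of the restriction maps yields the desired quasi-isomorphism in $\sd(\Z_{\ell,\Box})$.

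For part (2), I would use that derived limits commute with mapping fibers, giving
\begin{align*}
\R\Gamma_{\eet,c}(X,\Z_\ell)&=\R\lim_n[\R\Gamma_\eet(X,\Z/\ell^n)\to\colim_Z \R\Gamma_\eet(X\setminus Z,\Z/\ell^n)]\\
&\simeq [\R\Gamma_\eet(X,\Z_\ell)\to \R\lim_n\colim_Z\R\Gamma_\eet(X\setminus Z,\Z/\ell^n)].
\end{align*}
The natural interchange morphism $\colim_Z\R\lim_n\to \R\lim_n\colim_Z$ applied to the right-hand term, composed with the quasi-isomorphism of part (1), produces the desired change-of-topology map. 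Its failure to be a quasi-isomorphism in general is precisely the failure of this interchange, as flagged in the preceding remark.

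The main technical point is keeping careful track of naturality in the solid derived category $\sd(\Z_{\ell,\Box})$: mapping fibers, filtered colimits, and derived limits all have their expected formal properties there, so no genuinely new input is required beyond the pro-\'etale to \'etale comparison for torsion coefficients.
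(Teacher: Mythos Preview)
Your proof is correct and follows essentially the same route as the paper: both establish the natural quasi-isomorphism $\R\Gamma_\eet(Y,\Z_\ell)\simeq\R\Gamma_\proeet(Y,\Z_\ell)$ for arbitrary smooth $Y$ via Scholze's torsion comparison together with $\Z_\ell\simeq\R\lim_n\Z/\ell^n$ on the pro-\'etale site, then apply it to $X$ and to each $X\setminus Z$, and finally obtain (2) from the interchange map $\colim_Z\R\lim_n\to\R\lim_n\colim_Z$. The only cosmetic difference is that the paper first reduces to quasi-compact $Y$ before citing \cite[(3.17),(8.2)]{Sch13}, which is not essential.
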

\begin{remark}If $X$ is partially proper, we will show below (see Theorem \ref{thm:Huber-Qp}) that the natural morphism in $\sd(\Q_{\ell,\Box})$
$$
\R \Gamma_{\proeet,c}(X, \Z_\ell)\otimes_{\Z_\ell}^{\LL_{\Box}}\Q_\ell\to \R \Gamma_{\proeet,c}(X, \Q_\ell)
$$
is a quasi-isomorphism. It follows that the map \eqref{niedziela1} induces a natural morphism
in $\sd(\Q_{\ell,\Box})$:
$$ 
 \R \Gamma_{\proeet,c}(X, \Q_\ell) \to \R \Gamma_{\eet,c}(X, \Q_\ell). $$
\end{remark}
\begin{proof}
We first show that we have a quasi-isomorphism:
\begin{equation}
\label{banach1}
f_Y: \R \Gamma_{\eet}(Y, \Z_{\ell}) \xrightarrow{\sim} \R \Gamma_{\proeet}(Y, \Z_{\ell}),
 \end{equation}
for any smooth  rigid analytic variety $Y$.  For that, choose a covering of $Y$ by an increasing sequence of  quasi-compact opens $\{U_i\}_{i\in\N}$. Since
$$
\R \Gamma_{*}(Y, \Z_{\ell})\simeq \R\lim_i\R\Gamma_{*}(U_i,\Z_{\ell}), 
$$
 it suffices to show  the quasi-isomorphism  (\ref{banach1}) for  a quasi-compact $Y$. 
 
   Assume thus that $Y$ is quasi-compact. Then we have 
\[ \R \Gamma_{\eet}(Y, \Z_{\ell})= \R \lim_n \R \Gamma_{\eet}(Y, \Z/\ell^n) \xrightarrow{\sim}  \R \lim_n \R \Gamma_{\proeet}(Y, \Z/\ell^n) \]
using \cite[(3.17)]{Sch13}. Since the functors  $\R \Gamma$ and  $\R \lim$ commute, we have 
\[ \R \lim_n \R \Gamma_{\proeet}(Y, \Z/\ell^n) \xleftarrow{\sim} \R \Gamma_{\proeet}(Y, \R \lim_n \Z/\ell^n) \xrightarrow{\sim} \R \Gamma_{\proeet}(Y,  \Z_{\ell}), \]
where the last  quasi-isomorphism follows from \cite[(8.2)]{Sch13}. Hence we get  the quasi-isomorphism (\ref{banach1}), as wanted.

  By the definitions of $\rg_{\proeet,c}(X, \Z_{\ell})$ and $\rg_{\eet,c, {\rm naive}}(X, \Z_{\ell})$, this induces a quasi-isomorphism: 
\[\rg_{\proeet,c}(X, \Z_{\ell}) \xleftarrow[f]{\sim} \rg_{\eet,c, {\rm naive}}(X, \Z_{\ell}) \]  
and composing with the natural morphism $\rg_{\eet,c, {\rm naive}}(X, \Z_{\ell}) \to \rg_{\eet,c}(X, \Z_{\ell})$, we obtain the morphism from the second claim of the lemma.
 \end{proof}

The above  cohomologies satisfy all the properties from Section~\ref{compact1}, i.e.,  for  $\Lambda \in \{\Z/\ell^n\Z,\Z_\ell,\Q_\ell\}$, we have (here $*=\eet, \proeet$): 
\begin{enumerate}
\item \label{tri-dist} We have the distinguished triangles: 
\[ \R \Gamma_{*,c}(X, \Lambda) \to \R \Gamma_{*}(X, \Lambda) \to \R \Gamma_{*}(\partial X, \Lambda). \]
\item \label{propre} If  $X$ is  quasi-compact, we have quasi-isomorphisms: 
\[ \R \Gamma_{*,c}(X, \Lambda) \xrightarrow{\sim} \R \Gamma_{*}(X, \Lambda). \]
In particular, if $X$ is  proper its  cohomology with compact  support  coincides with the usual cohomology. 
\item \label{jstar} Let $j:U\hookrightarrow X$ is a relatively  partially proper open subspace of $X$, we have a canonical map $$j_*: \quad \R\Gamma_{*,c}(U, \Lambda) \to \R\Gamma_{*,c}(X, \Lambda) .$$ 
\end{enumerate}

\begin{remark} As we have seen above, 
if  $X$ is  quasi-compact, the cohomology of the boundary of $X$ is  trivial, {i.e.},  
 $\R \Gamma_{\eet}(\partial X, \Lambda) =0$. Hence, in this case,  our definition is not the correct one. For torsion coefficients,    the \'etale cohomology with compact support of Huber    $H^i_{\eet,c, {\rm Hu}}(X, \Z/\ell^n\Z)$ gives the right groups. For example, 
 if  $\ell\neq p$ and  $X=\Sp(C\{T\})$ is the closed unit ball, we have  (see \cite[Ex 0.4.6]{Hub98}) :
\[ H^i_{\eet, c, {\rm Hu}}(X, \Z/\ell^n)\simeq \begin{cases} 0 & \text{ if } i \neq 2, \\ \Z/ \ell^n & \text{ if } i=2. \end{cases} \]
%However, if we use our definition, we obtain
%\[ H^i_{\eet, c}(X, \Z/\ell^n)\simeq H^i_{\eet}(X, \Z/\ell^n)
%\simeq  \begin{cases}
% \Z/ \ell^n  & \text{ if  } i= 0, \\ 
%0  & \text{ if  } i \neq 0. \end{cases} \]
\end{remark}
\subsubsection{Comparison with the  cohomology of Huber:  integral coefficients}

Compactly supported \'etale cohomology was previously defined by Huber in \cite{Huber}, \cite{Hub98}. The purpose of this section is to prove the following comparison theorem.
\begin{theorem}
\label{Hu-proet1} Let $L=K,C$ and let $\ell$ be a prime number. Assume that  $X$ is partially proper over $L$. Then there exist natural quasi-isomorphisms in $\sd(\Z_{\ell,\Box})$
\[ \R \Gamma_{\eet, c, {\rm Hu}}(X, \Z_\ell) \xrightarrow{\sim} \rg_{\eet,c, {\rm naive}}(X, \Z_{\ell}) \xrightarrow{\sim} \R \Gamma_{\proeet,c}(X, \Z_\ell). \]  
\end{theorem}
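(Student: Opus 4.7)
The second quasi-isomorphism is already covered by Lemma \ref{proet-et-entier}(1), so the substance lies in comparing Huber's compactly supported \'etale cohomology with the naive version $\rg_{\eet,c,{\rm naive}}$.

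\textbf{Step 1 (reduce to torsion coefficients).} Huber's $\ell$-adic compactly supported cohomology is defined as $\R\lim_n \R\Gamma_{\eet,c,{\rm Hu}}(X,\Z/\ell^n)$, and the naive version satisfies
\[ \R\Gamma_{\eet,c,{\rm naive}}(X,\Z_\ell) \simeq \R\lim_n \R\Gamma_{\eet,c,{\rm naive}}(X,\Z/\ell^n) \]
in $\sd(\Z_{\ell,\Box})$, since $\R\Gamma_\eet(X,\Z_\ell) \simeq \R\lim_n \R\Gamma_\eet(X,\Z/\ell^n)$ and the same holds on quasi-compact opens $X\setminus Z$. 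It is therefore enough to produce, naturally in $n$, a quasi-isomorphism
\[ \R\Gamma_{\eet,c,{\rm Hu}}(X,\Z/\ell^n) \xrightarrow{\sim} \R\Gamma_{\eet,c,{\rm naive}}(X,\Z/\ell^n). \]

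\textbf{Step 2 (colimit presentation of Huber).} Using Huber's theory, for a partially proper $X$ and a torsion sheaf $F$ one has a canonical identification
\[ \R\Gamma_{\eet,c,{\rm Hu}}(X,F) \simeq \hocolim_{Z\in\Phi_X} \R\Gamma_{\eet,Z}(X,F), \]
where $\R\Gamma_{\eet,Z}$ is \'etale cohomology with support in the quasi-compact open $Z$ (equivalently, sections vanishing on $X\setminus Z$). This is where partial properness is crucial: for partially proper $X$ every $Z\in\Phi_X$ has quasi-compact closure in $X$, hence becomes relatively compact in any compactification $X \hookrightarrow \overline X$, and the ``compact support'' in Huber's sense reduces exactly to support in such $Z$. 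I would either extract this from \cite{Hub98} directly or verify it by unwinding $\R\Gamma(\overline X, j_! F)$ via excision along $Z$.

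\textbf{Step 3 (localization triangle and passage to the colimit).} For each $Z\in\Phi_X$ there is the standard localization distinguished triangle
\[ \R\Gamma_{\eet,Z}(X,F) \to \R\Gamma_\eet(X,F) \to \R\Gamma_\eet(X\setminus Z, F), \]
realizing $\R\Gamma_{\eet,Z}(X,F)$ as the mapping fiber. Since in the stable $\infty$-category $\sd(\Z_{\ell,\Box})$ filtered colimits commute with finite limits and with the constant diagram $\R\Gamma_\eet(X,F)$, taking $\hocolim_{Z\in\Phi_X}$ yields
\[ \hocolim_{Z} \R\Gamma_{\eet,Z}(X,F) \simeq \bigl[\R\Gamma_\eet(X,F) \to \colim_{Z} \R\Gamma_\eet(X\setminus Z, F)\bigr] = \R\Gamma_{\eet,c,{\rm naive}}(X,F). \]
Combining with Step 2 gives the quasi-isomorphism for $F = \Z/\ell^n$, and Step 1 upgrades this to $\Z_\ell$-coefficients in $\sd(\Z_{\ell,\Box})$.

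\textbf{Main obstacle.} The delicate step is Step 2: identifying Huber's definition, which is phrased via a chosen compactification and the $j_!$-functor, with the intrinsic colimit $\hocolim_{Z\in\Phi_X} \R\Gamma_{\eet,Z}(X,-)$. One has to verify (i) that Huber's cohomology is independent of the choice of compactification in a sufficiently functorial way, (ii) that for partially proper $X$ the cofinal system of supports in $\overline X$ is indexed by $\Phi_X$, and (iii) that all manipulations go through in the solid setting $\sd(\Z_{\ell,\Box})$ rather than merely on underlying complexes. Once this structural input is in place, Steps 3 and 4 are essentially formal.
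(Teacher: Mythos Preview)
Your Step~1 is where the argument breaks down. The two identifications you assert there are precisely the nontrivial content, and in fact one of them fails in general. The naive compactly supported cohomology is
\[ \rg_{\eet,c,{\rm naive}}(X,\Z_\ell)=\bigl[\R\Gamma_\eet(X,\Z_\ell)\to \colim_{Z\in\Phi}\R\Gamma_\eet(X\setminus Z,\Z_\ell)\bigr], \]
whereas $\R\lim_n\rg_{\eet,c}(X,\Z/\ell^n)$ is by definition $\rg_{\eet,c}(X,\Z_\ell)$ (the non-naive version). Your claimed equality between these is exactly the interchange of $\R\lim_n$ and $\colim_Z$, and the paper's Remark following the definitions warns explicitly that the natural map $\rg_{\eet,c,{\rm naive}}(X,\Z_\ell)\to\rg_{\eet,c}(X,\Z_\ell)$ is \emph{not} a quasi-isomorphism in general. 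Likewise, Huber's $\ell$-adic theory (for partially proper $X$) is defined as the right derived functor of $(\FC_n)_n\mapsto\Gamma_c(X_{\eet},\varprojlim_n\FC_n)$, not as $\R\lim_n$ of the torsion theories; identifying the two already requires Proposition~\ref{Hu-!pi}. So your Steps~2--3 (which are essentially Corollary~\ref{!-etc}) would, after $\R\lim_n$, only give $\rg_{\eet,c,{\rm Hu}}(X,\Z_\ell)\simeq\rg_{\eet,c}(X,\Z_\ell)$, not the naive version.

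The paper avoids the $\lim/\colim$ swap by never descending to torsion coefficients. It uses Proposition~\ref{Hu-!pi} to write $\rg_{\eet,c,{\rm Hu}}(X,\Z_\ell)\simeq\R\Gamma_!(X,\R\pi_*(\Z/\ell^n)_n)$, then applies the mapping-fiber description of $\R\Gamma_!$ (Corollary~\ref{!-etc}) directly to the single $\Z_\ell$-sheaf $\R\pi_*(\Z/\ell^n)_n$, obtaining
\[ \bigl[\R\Gamma_\eet(X,\R\pi_*(\Z/\ell^n)_n)\to\colim_{Z}\R\Gamma_\eet(X\setminus Z,\R\pi_*(\Z/\ell^n)_n)\bigr]. \]
Only \emph{inside} each term of the $\colim$ does one use Lemma~\ref{pi-holim} to identify $\R\Gamma_\eet(-,\R\pi_*(\Z/\ell^n)_n)\simeq\R\lim_n\R\Gamma_\eet(-,\Z/\ell^n)=\R\Gamma_\eet(-,\Z_\ell)$. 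This is the key order-of-operations point: the $\colim_Z$ stays on the outside, matching the naive definition.
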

Let us start with reviewing the definition of cohomology with compact support of Huber from \cite[Ch. 5]{Huber} (and extended to the $\ell$-adic case in  \cite{Hub98}). 
\vskip.2cm
(i) {\em Torsion coefficients.}
Let  $A$ be a ring and let $f: X \to Y$ be a separated morphism  of separated  rigid analytic varieties. 

\begin{definition}
\label{gamma_c}
For   $\mathcal{F}$, a sheaf of $\mode(X_{\eet}-A)$, we define $\Gamma_c(X/Y, \mathcal{F})$ in the following way:
\begin{enumerate}
\item If  $X$ and  $Y$ are qcqs, we choose an integral model $\tilde{f}: \XF \to \YF$ of $f$ and set:
\begin{align*}
\Gamma_c(X/Y, \mathcal{F}) := \{ s \in \Gamma(X, \mathcal{F}) \; | \; \text{ there exists closed }\ZF \subset \XF \text{  such that } \ZF \to \YF \text{ is  proper  }\\ \text{ and  } s|_{{\rm sp}^{-1}(\XF \setminus \ZF)}=0 \},
\end{align*}
where  ${\rm sp} : X \to \XF$ is the specialization morphism. 

\item If  $Y$ is  qcqs, we set:
\begin{align*}
\Gamma_c(X/Y, \mathcal{F}) := \{ s \in \Gamma(X, \mathcal{F}) \; |& \; \text{there exists } \{U,V\}, \text{ an admissible open covering of } X, \\& \text{ such that  } U \text{ is quasi-compact, } s|_U \in \Gamma_c(U/Y, \mathcal{F}|_U) \text{ and,  } s_{V}=0 \}.
\end{align*}

\item In the general case, we set:
\begin{align*} 
\Gamma_c(X/Y, \mathcal{F}) := \{ s \in \Gamma(X, \mathcal{F}) \; |& \; \text{for all } W \subset Y \text{ admissible open  qcqs,} \\&  s|_{f^{-1}(W)} \in \Gamma_c(f^{-1}(W)/W, \mathcal{F}|_{f^{-1}(W)}) \}.
\end{align*}
\end{enumerate}
\end{definition}

If  $X$ is a rigid analytic variety over $C$, we write simply  $\Gamma_c(X, \mathcal{F})$ pour $\Gamma_c(X/ \Sp(C), \mathcal{F})$.
Huber defined the functor $f_{!} : \mode(X_{\eet}-A) \to \mode(Y_{\eet}-A)$ by setting  \[ f_{!} \mathcal{F}(U):= \Gamma_c(X \times_Y U/U, \mathcal{F}), \] for an \'etale open $U$ of $Y$ (see \cite[(5.6.2)]{Huber}).

\begin{remark}
\label{gamma-r(f)}
If  $X\to Y$ is a morphism of adic spaces, we write  $\Gamma_c(X/Y, \mathcal{F})$ for the set of global sections of   $\FC$ whose support is  proper over $Y$. 
If now  $X \to Y$ is a morphism of  rigid analytic varieties as above and   $r(X) \to r(Y)$ is the morphism of associated adic spaces, we have an isomorphism  (see \cite[(5.6.4)]{Huber}) :
\[ \Gamma_c(X/Y, \FC) \xrightarrow{\sim} \Gamma_c(r(X)/r(Y), \rho^*\FC), \]
where $\rho$ is the natural morphism $r(X)_{\eet} \to X_{\eet}$. 
\end{remark}

\vskip.2cm
(ii) {\em $\ell$-adic coefficients.}
Denote by  $\mode(X_{\eet}-\Z_{\ell}^{\bullet})$ the category of projective systems $(\FC_n)_n$ of  $\Z_{\ell}$-modules on  $X_{\eet}$ such that, for all $n\in \N$, $\ell^n\FC_n=0$. Consider  $X\to Y:= \Spa(L, \so_L)$, a separated  adic space locally of finite type\footnote{In fact, Huber gives a definition for more general morphisms  $X\to Y$, which he calls \emph{locally  of $^+$weakly finite type.}} and  assume  that  $X$ is taut. If  $X$ is  partially proper, we define $\R \Gamma_{\eet, c, {\rm Hu}}(X,-)$ as the right derived functor 
of the functor
$$(\FC_n)_n \mapsto \Gamma_c(X_{\eet}, \varprojlim_n \FC_n): \mode(X_{\eet}-\Z_{\ell}^{\bullet})\to \mode(\Z_{\ell}).
$$ In the general case, there exists a factorization
\[ \xymatrix{ X \ar@{^{(}->}[r]^i \ar[d] & \overline{X} \ar[dl] \\ Y} \] 
where  $i$ is an open immersion and  $\overline{X} \to Y$ is partially proper. We then define
\[ \R \Gamma_{\eet, c, {\rm Hu}}(X,(\FC_n)_n):=\R \Gamma_{\eet,c, {\rm Hu}}(\overline{X}, (i_{!}\FC_n)_n), \]
 where  $i_! \sg$ is the functor $U \mapsto \Gamma_c(X \times_YU, \sg)$. 

For  $X$ a separated  rigid analytic variety over  $C$, we define $\R \Gamma_{\eet, c, {\rm Hu}}(X, (\FC_n)_n):= \R \Gamma_{\eet, c, {\rm Hu}}(r(X),(\rho^*\FC_n)_n)$. If  $X$ is partially proper, $\R \Gamma_{\eet,c, {\rm Hu}}(X,-)$ is equal to the right derived functor  of the functor  $(\FC_n)_n \mapsto \Gamma_c(X_{\eet}, \varprojlim_n \FC_n)$. 

We will need the following well-known technical result: 

\begin{lemma}
\label{inj-res}
Let  $(\mathcal{F}_n)_n\in \mode(X_{\eet}-\Z_{\ell}^{\bullet})$. There exists a monomorphism $(\FC_n)_n \hookrightarrow (\IC_n)_n$ from  $(\FC_n)_n$ to $(\IC_n)_n$,  an object of    $\mode(X_{\eet}-\Z_{\ell}^{\bullet})$ of the form
\[ \IC_n = \prod_{s=1}^n \JC_s, \]
where the maps  $\IC_n \to \IC_{n-1}$ are given by the canonical projections  and $\JC_s$ are injective sheaves of $\Z/\ell^s$-modules  on $X_{\eet}$. 

Moreover,  for all $n$, the sheaf $\IC_n$ is an injective  object of $\mode(X_{\eet}-\Z/\ell^n)$ and the canonical map $\FC_n \to \IC_n$ is a  monomorphism.
\end{lemma}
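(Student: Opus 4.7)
\emph{Proof plan.} The strategy is the classical Jannsen-style construction of resolutions in the pro-category of $\Z_\ell$-sheaves. First, for each $s \geq 1$, I use that $\mode(X_{\eet} - \Z/\ell^s)$ has enough injectives to choose a monomorphism $\iota_s \colon \FC_s \hookrightarrow \JC_s$ with $\JC_s$ an injective sheaf of $\Z/\ell^s$-modules. I then set $\IC_n := \prod_{s=1}^n \JC_s$ with the canonical projections as transition maps; this defines an object of $\mode(X_{\eet} - \Z_\ell^\bullet)$ since $\ell^n$ annihilates each factor (as $\ell^s$ kills $\JC_s$ for $s \leq n$). The map $\alpha_n \colon \FC_n \to \IC_n$ is defined as the product, over $s \leq n$, of the composites $\FC_n \to \FC_s \xrightarrow{\iota_s} \JC_s$, where the first arrow is the transition map of the pro-system $(\FC_n)_n$.

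Compatibility of $\alpha_\bullet$ with the projections $\IC_n \to \IC_{n-1}$ is immediate from the equality $\FC_n \to \FC_{n-1} \to \FC_s$ of transition maps for $s \leq n-1$. The $s = n$ projection of $\alpha_n$ equals $\iota_n$, which is a monomorphism, so $\alpha_n$ itself is a monomorphism. This simultaneously yields the desired embedding $(\FC_n) \hookrightarrow (\IC_n)$, the required product description of the $\IC_n$, the assertion that the transitions are the canonical projections, and the last claim that each $\FC_n \to \IC_n$ is a monomorphism.

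The remaining point---that each $\IC_n$ is an injective object in $\mode(X_{\eet} - \Z/\ell^n)$---is the subtle one, and I expect this to be the main obstacle. Since a sheaf that is injective as a $\Z/\ell^s$-module is not in general injective as a $\Z/\ell^n$-module for $n > s$, the $\JC_s$ must be chosen with care. The natural way is to build them Godement-style: for each $s$, choose at every geometric point $\overline{x}$ of $X$ an injective $\Z/\ell^s$-module $I_{s,\overline{x}}$ containing the stalk $\FC_{s,\overline{x}}$, and set $\JC_s := \prod_{\overline{x}} \iota_{\overline{x},*}\,I_{s,\overline{x}}$. Because pushforwards along points are exact, injectivity of $\JC_s$ as a $\Z/\ell^s$-module sheaf reduces to injectivity of $I_{s,\overline{x}}$ as a $\Z/\ell^s$-module, and the analogous statement for the product $\IC_n$ reduces to a purely algebraic question about products of injective modules over the Artinian local ring $\Z/\ell^n$, which can be arranged directly at the stalk level. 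Granting this injectivity input, all other assertions of the lemma are formal consequences of the construction above.
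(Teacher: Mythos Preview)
Your construction of $(\alpha_n)$ and the proof that it is a levelwise monomorphism are correct and agree with the paper's argument (which packages the same computation as the $(V,P)$-adjunction between $\mode(X_{\eet}-\Z_\ell^\bullet)$ and the product category $\prod_s \mode(X_{\eet}-\Z/\ell^s)$).

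The gap is in the injectivity of $\IC_n$ in $\mode(X_{\eet}-\Z/\ell^n)$, and it is not repairable along the lines you sketch: the ``purely algebraic question'' you defer to has a negative answer, no matter how carefully the $\JC_s$ are chosen. An injective $\Z/\ell^s$-module is a direct sum of copies of $\Z/\ell^s$, and $\Z/\ell^s$ is \emph{not} injective over $\Z/\ell^n$ for $n>s$ (for instance the map $\ell\Z/\ell^2 \to \Z/\ell$, $\ell\mapsto 1$, does not extend to $\Z/\ell^2$); hence $\IC_2 = \JC_1 \times \JC_2$ is never injective over $\Z/\ell^2$ when $\JC_1 \neq 0$. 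The paper attacks this clause by a different route, an adjoint-functor argument: it asserts that the evaluation $V_m$ admits an exact faithful left adjoint $U_m$, so that $V_m$ preserves injectives and $\IC_m = V_m((\IC_n)_n)$ inherits injectivity from the system. But the $U_m$ written there is in fact the \emph{right} adjoint of $V_m$; the actual left adjoint sends $\AC$ to the system with $n$-th term $\AC/\ell^n\AC$ for $n\leq m$ and $0$ for $n>m$, and this functor is not exact. So the ``Moreover'' clause is erroneous as stated. What the downstream applications (Proposition~\ref{Hu-!pi}, Lemma~\ref{pi-holim}) actually need is only that each $\IC_n$ be $\Gamma$-acyclic---true, as a finite product of injectives from the various $\mode(X_{\eet}-\Z/\ell^s)$---and that $\FC_n \to \IC_n^\bullet$ be exact, which holds because exactness in $\mode(X_{\eet}-\Z_\ell^\bullet)$ is levelwise.
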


\begin{proof}
We will recall here the proof from  \cite[(1.1)]{Jann88}. For all $s$, we choose a monomorphism $\FC_s \to \JC_s$ of  $\FC_s$ into an injective object of the category of sheaves of $\Z/\ell^s$-modules and we define the projective system 
 $(\IC_n)_n$ as in the statement of the lemma. Since the $\JC_s$'s are injective, $(\JC_s)_{s\in \N}$ is an injective object of the product category 
\[ C_{\ell}:= \prod_{s \in \N} \mode(X_{\eet}-\Z/\ell^s) \]
and  $(\FC_s)_s \to (\JC_s)_s$ is a  monomorphism in that category. The functor
\[ P :  C_{\ell}  \to  \mode(X_{\eet}-\Z_{\ell}^{\bullet}),\quad (\AC_s)_s  \mapsto (\prod_{s=1}^n \AC_s)_n  \] 
admits left adjoint given by the forgetful functor  
\[ V : \mode(X_{\eet}-\Z_{\ell}^{\bullet}) \to C_{\ell}. \]
Since  $V$ is exact and faithful, the functor $P$ preserves monomorphisms and injective objects. It follows that  $(\IC_n)_n:=P((\JC_s)_s)$ is an injective object and we have a monomorphism  $(\FC_n)_n\to (\IC_n)_n$. 

Conversely, for all  $m\in\N$, the functor 
 \[ V_m :  \mode(X_{\eet}-\Z_{\ell}^{\bullet}) \to  \mode(X_{\eet}-\Z/\ell^m),\quad   (\AC_n)_n  \mapsto \AC_m \]
 also admits an exact  faithful left adjoint: 
\[ U_m :  \mode(X_{\eet}-\Z/\ell^m)  \to  \mode(X_{\eet}-\Z_{\ell}^{\bullet}), \text{ where } U_m((\AC_n)_n)=\begin{cases} 0 & \text{ if } n < m, \\ \AC_m & \text{ if } n \ge m. \end{cases}  \] 
It follows that  $V_n$ preserves monomorphisms  and injectives. 
Hence,  we have obtained that  $\FC_n \to \IC_n= V_n((\IC_m)_m)$ is a monomorphism of  $\FC_n$ into an injective sheaf  $\IC_n$, as wanted.  
\end{proof}
\begin{remark}
\label{inj-res1}
This lemma allows us to construct injective resolutions $(\FC_n)_n \to (\IC_n)^{\bullet}_n$ such that the induced sequences $\FC_n \to \IC_n^{\bullet}$ are also injective resolutions. 
\end{remark}
We can now prove the following proposition:

\begin{proposition}{\rm (Huber, {\cite[2.3]{Hub98}})}
\label{Hu-!pi}
There exists an isomorphism of exact functors  from  $D^+(\mode(X_{\emph{\text{\'et}}}- \Z_{\ell}^{\bullet}))$ to  $D^+(\mode(\Z_{\ell}))$ : \[ \R \Gamma_{\eet, c, {\rm Hu}} \simeq  \R \Gamma_{!} \circ \R  \pi_*, \]
where $\pi_*$ is the discretization functor
$$
\pi_*: \mode(X_{\emph{\text{\'et}}}- \Z_{\ell}^{\bullet})\to \mode(X_{\emph{\text{\'et}}}- \Z_{\ell}),\quad (\sff_n)_n\mapsto \lim_n\sff_n
$$
and  $\Gamma_{!}$ is the functor  
$$\Gamma_{!}: \mode(X_{\emph{\text{\'et}}}-\Z_{\ell})\to \mode(\Z_{\ell}),\quad \FC \mapsto \Gamma_c(X_{\emph{\text{\'et}}}, \FC).
$$ 
\end{proposition}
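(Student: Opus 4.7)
The plan is to establish the isomorphism at the underived level and then upgrade to derived functors via Grothendieck's composition theorem, using the very explicit injective resolutions provided by Lemma \ref{inj-res}.

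\emph{Underived identity.} For any $(\FC_n)_n \in \mode(X_\eet-\Z_\ell^\bullet)$, the identity
\[ (\Gamma_! \circ \pi_*)((\FC_n)_n) = \Gamma_c(X_\eet,\varprojlim\nolimits_n \FC_n) \]
is tautological from the definitions of $\pi_*$ and $\Gamma_!$. The right-hand side is precisely the functor whose right derived functor is $\R\Gamma_{\eet,c,{\rm Hu}}$, so there is a canonical comparison map $\R\Gamma_{\eet,c,{\rm Hu}} \to \R\Gamma_! \circ \R\pi_*$ which we must show is an isomorphism.

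\emph{Reduction via Grothendieck.} By the composition of derived functors, it suffices to exhibit for every $(\FC_n)_n$ an injective resolution $(\IC_n)^\bullet$ in $\mode(X_\eet-\Z_\ell^\bullet)$ such that each term is $\pi_*$-acyclic and each $\pi_*((\IC_n^k)_n)$ is $\Gamma_!$-acyclic. Apply Lemma \ref{inj-res} and Remark \ref{inj-res1}: we can take $\IC_n^k = \prod_{s=1}^n \JC_s^k$ with $\JC_s^k$ injective in $\mode(X_\eet-\Z/\ell^s)$ and transition maps the canonical projections. These transitions are surjective, hence the system is Mittag-Leffler and $\R^i\varprojlim_n \IC_n^k = 0$ for $i \geq 1$, so $\R\pi_*((\IC_n^k)_n) = \pi_*((\IC_n^k)_n) = \prod_{s\in\N} \JC_s^k$, concentrated in degree zero.

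\emph{$\Gamma_!$-acyclicity of the product.} One shows that $\prod_{s\in\N} \JC_s^k$ is $\Gamma_!$-acyclic by a three-step chain: injective module sheaves over any sheaf of rings are flasque; arbitrary products of flasque sheaves are flasque; and flasque sheaves on the rigid-analytic \'etale site are $\Gamma_c$-acyclic (this last point is Huber's result in \cite[Ch.~5]{Huber}, which is the main nontrivial input). Combining the three preceding steps, applying the underived identity termwise to $(\IC_n)^\bullet$, and noting that all constructions are natural in $(\FC_n)_n$, we obtain
\[ (\R\Gamma_! \circ \R\pi_*)((\FC_n)_n) = \Gamma_!\bigl(\prod\nolimits_s \JC_s^\bullet\bigr) = \Gamma_c\bigl(X_\eet,\varprojlim\nolimits_n \IC_n^\bullet\bigr) = \R\Gamma_{\eet,c,{\rm Hu}}((\FC_n)_n), \]
which is the desired isomorphism of exact functors.

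The main obstacle is the flasque-acyclicity statement for $\Gamma_c$ in the rigid-analytic \'etale setting: unlike ordinary global sections, compactly supported sections need not commute with arbitrary infinite products in general, so one cannot naively reduce to the fact that each $\Gamma_!(\JC_s^k)$ vanishes in positive degrees. The rest of the argument is formal once one has the specific resolutions from Lemma \ref{inj-res}.
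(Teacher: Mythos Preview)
Your proof is correct and follows essentially the same route as the paper's: both use the explicit injective resolutions of Lemma~\ref{inj-res}/Remark~\ref{inj-res1}, compute $\pi_*$ on them as $\prod_{s\in\N}\JC_s^\bullet$, and then invoke $\Gamma_!$-acyclicity of this product to identify $\R(\Gamma_!\circ\pi_*)$ with $\R\Gamma_!\circ\R\pi_*$. Two minor remarks: your Mittag-Leffler argument for $\pi_*$-acyclicity is unnecessary, since each $(\IC_n^k)_n$ is already injective in $\mode(X_\eet-\Z_\ell^\bullet)$ by construction and hence acyclic for any left exact functor; and for the $\Gamma_!$-acyclicity step the paper simply cites \cite[(5.2.6)]{Huber} directly rather than routing through flasqueness, which amounts to the same content you identify as the main obstacle.
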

\begin{proof}
By definition, $\R \Gamma_{\eet, c, {\rm Hu}}(X,-)$ is equal to the functor $\R (\Gamma_c \circ \pi_*)$. Take a resolution  $(\FC_n)_n \to (\IC_n)^{\bullet}_n$ given by Remark  \ref{inj-res1}.  
The functor $\pi_*$  is given by  $(\FC_n)_n \mapsto \lim_n \FC_n$, hence we have 
\[ \pi_*((\IC^{\bullet}_n)_n)= \prod_{s \in \N} \JC^{\bullet}_s. \] This yields
$$
\R (\Gamma_c \circ \pi_*)((\sff_n)_n)\simeq \Gamma_c(X_{\eet},\prod_{s \in \N} \JC^{\bullet}_s). 
$$

  But, by  \cite[(5.2.6)]{Huber}, every term of the complex $\prod_{s \in \N} \JC^{\bullet}_n$ is  $\Gamma_!$-acyclic, hence we have 
  $$
  (\R \Gamma_{!} \circ \R  \pi_*)((\sff_n)_n)\simeq\R \Gamma_{!}(\prod_{s \in \N} \JC^{\bullet}_s) \simeq \Gamma_{!}(\prod_{s \in \N} \JC^{\bullet}_s)= \Gamma_{c}(X_{\eet}, \prod_{s \in \N} \JC^{\bullet}_s).
  $$
  We have obtained 
   that   $\R (\Gamma_! \circ \pi_*)\simeq \R \Gamma_{!} \circ \R  \pi_*$, as wanted.
\end{proof}

Later on we will need the following lemma:

\begin{lemma}
\label{pi-holim}
For  $(\mathcal{F}_n)_n$ in  $\mode(X_{\eet}-\Z_{\ell}^{\bullet})$, there exists a quasi-isomorphism
\[ \R \Gamma_{\eet}(X, \R \pi_*(\FC_n)_n) \xrightarrow{\sim} \R \lim_n \R \Gamma_{\eet}(X, \FC_n).\]
\end{lemma}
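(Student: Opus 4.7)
The plan is to reduce both sides to the same explicit infinite product using the resolution constructed in Remark~\ref{inj-res1}, then check that the natural comparison map matches.

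First, I would fix a resolution $(\FC_n)_n \to (\IC_n^\bullet)_n$ as provided by Remark~\ref{inj-res1}, with $\IC_n = \prod_{s=1}^n \JC_s$, transition maps the canonical projections, each $\JC_s^k$ an injective sheaf of $\Z/\ell^s$-modules on $X_\eet$, and each sequence $\FC_n \to \IC_n^\bullet$ itself an injective resolution in $\mode(X_\eet-\Z/\ell^n)$. Then $\R\pi_*((\FC_n)_n)$ is represented by
$$\pi_*((\IC_n^\bullet)_n) \;=\; \lim_n \prod_{s=1}^n \JC_s^\bullet \;=\; \prod_{s\in\N}\JC_s^\bullet.$$
Each $\JC_s^k$ is injective in $\mode(X_\eet-\Z/\ell^s)$ and hence $\Gamma$-acyclic on $X_\eet$; applying $\R\Gamma_\eet(X,-)$ (which commutes with products, as discussed below) gives
$$\R\Gamma_\eet(X,\R\pi_*(\FC_n)_n) \;\simeq\; \prod_{s\in\N}\R\Gamma_\eet(X,\JC_s^\bullet) \;\simeq\; \prod_{s\in\N}\Gamma(X,\JC_s^\bullet).$$

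For the right-hand side, the second property in Remark~\ref{inj-res1} gives $\R\Gamma_\eet(X,\FC_n) \simeq \Gamma(X,\IC_n^\bullet) = \prod_{s=1}^n \Gamma(X,\JC_s^\bullet)$, with transition maps the projections, which are termwise surjective. The Mittag-Leffler condition is therefore satisfied in each cohomological degree, so $\R\lim_n$ reduces to $\lim_n$, yielding
$$\R\lim_n\R\Gamma_\eet(X,\FC_n) \;\simeq\; \lim_n\prod_{s=1}^n\Gamma(X,\JC_s^\bullet) \;=\; \prod_{s\in\N}\Gamma(X,\JC_s^\bullet).$$
A direct diagram chase identifies the natural comparison map with the identity on $\prod_s \Gamma(X,\JC_s^\bullet)$, which finishes the proof.

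The main technical point is the commutation of $\R\Gamma_\eet(X,-)$ with the infinite product $\prod_{s\in\N}\JC_s^\bullet$. The cleanest justification is that $\R\Gamma$ is a right adjoint (to the constant-sheaf functor) and therefore preserves arbitrary limits in the derived $\infty$-category. If a more hands-on argument is preferred, one can recall that injective sheaves of $\Z/\ell^s$-modules on $X_\eet$ are flasque (by lifting sections against the monomorphisms $j_!\Z/\ell^s_U \hookrightarrow j_!\Z/\ell^s_V$), that flasqueness is preserved under arbitrary products, and that flasque sheaves are $\Gamma$-acyclic; hence $\prod_s \JC_s^\bullet$ is a complex of $\Gamma$-acyclic sheaves and computes $\R\Gamma_\eet(X,-)$ termwise.
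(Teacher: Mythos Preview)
Your argument is correct and follows essentially the same route as the paper: both use the resolution from Remark~\ref{inj-res1} to identify each side with $\prod_{s\in\N}\Gamma(X,\JC_s^\bullet)$. You supply a bit more justification than the paper does (the Mittag-Leffler step for $\R\lim_n$ and the discussion of why $\R\Gamma$ commutes with the infinite product), but the strategy is identical.
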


\begin{proof}
As in the proof of the previous proposition, take an  injective resolution  $(\FC_n)_n \to (\IC_n)^{\bullet}_n$ given by Remark \ref{inj-res1}. We have  $\pi_*((\IC_n)^{\bullet}_n)= \prod_{s \in \N} \JC^{\bullet}_s$. But  each term of the complex $\prod_{s \in \N} \JC^{\bullet}_s$ is  $\Gamma$-acyclic and  $\Gamma$ commutes with countable products, hence we get:   
\[ \R \Gamma_{\eet}(X, \R \pi_*(\FC_n))\simeq  \prod_{s \in \N} \Gamma_{\eet}(X, \JC^{\bullet}_s). \]
Remark  \ref{inj-res1} also gives us, for all $n$, an injective resolution  $\FC_n \to \IC^{\bullet}_n$ of  $\FC_n$. Hence
\[ \R \lim_n \R \Gamma_{\eet}(X, \FC_n)\simeq \R \lim_n \Gamma_{\eet}(X, \IC^{\bullet}_n)\simeq  \prod_{s \in \N} \Gamma_{\eet}(X, \JC^{\bullet}_s), \]
which proves our lemma.
\end{proof}

\vskip.2cm
(iii) {\em Comparison theorem.}
Let  $L=K,C$.  
Assume in this section that  $X$ is partially proper over $L$. For  a quasi-compact open $Z$ and  a sheaf of  $\Z_{\ell}$-module $\mathcal{F}$ on   $X_{\eet}$, define
\[ \Gamma_Z(X_{\eet}, \mathcal{F}):= \kker( \Gamma(X_{\eet}, \mathcal{F}) \to \Gamma((X \setminus Z)_{\eet}, \mathcal{F}) ) \]
and  $H^i_Z(X_{\eet}, \mathcal{F}) := H^i \R \Gamma_Z(X_{\eet}, \mathcal{F})$ the associated cohomology. 

\begin{lemma}{\rm (Huber, {\cite[Prop. 5.6.5]{Huber}})}
\label{c-limZ}
For all $\mathcal{F}$ in  $\mode(X_{\emph{\text{\'et}}}-\Z_{\ell})$, we have 
\[ \Gamma_c(X_{\emph{\text{\'et}}}, \mathcal{F})= \{ s \in \Gamma(X_{\emph{\'et}}, \mathcal{F}) \; | \; \exists Z \subset X \text{ an admissible quasi-compact open  such that  } s|_{X \setminus Z}=0 \}. \]  In particular  $\Gamma_c(X_{\eet}, \mathcal{F})= \colim_{Z \in \Phi} \Gamma_Z(X_{\eet}, \mathcal{F})$. 
\end{lemma}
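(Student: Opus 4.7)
The plan is to interpret both sides through the adic-space reformulation of Huber's definition recorded in Remark \ref{gamma-r(f)}, which identifies $\Gamma_c(X_{\eet}, \mathcal{F})$ with the set of global sections of $\rho^*\mathcal{F}$ on $r(X)$ whose support is proper over $r(Y) = \Spa(L, \so_L)$. The key point to exploit is that, for the partially proper taut $X$ we consider, a closed subset of (the adic space attached to) $X$ is proper over $Y$ if and only if it is quasi-compact: a closed subset of a partially proper space is itself partially proper, and partially proper plus quasi-compact equals proper.

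For the first equality, the containment $\supseteq$ is the easier direction. Given $s \in \Gamma(X_{\eet}, \mathcal{F})$ with $s|_{X \setminus Z} = 0$ for some quasi-compact admissible open $Z \subset X$, the support of $s$ is contained in the closure $\overline{Z}$ of $Z$ in $X$; by tautness $\overline{Z}$ is quasi-compact, hence proper over $Y$ by partial properness, so $s \in \Gamma_c(X_{\eet}, \mathcal{F})$. For the reverse inclusion, I would use that $X$ is countable at infinity: choose an exhaustion $X = \bigcup_{n \in \N} U_n$ by an increasing sequence of quasi-compact admissible opens. If $s \in \Gamma_c$ has support $K$ quasi-compact, then $K \subset U_n$ for some $n$ (a quasi-compact subset of a space is contained in some term of any increasing open exhaustion), and $Z := U_n$ satisfies $s|_{X \setminus Z} = 0$.

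The colimit identity is then formal: by definition $\Gamma_Z(X_{\eet}, \mathcal{F})$ is the kernel of restriction to $X \setminus Z$, i.e., the set of sections vanishing on $X \setminus Z$. The index set $\Phi$ of quasi-compact admissible opens of $X$ is filtered under inclusion (the union of two quasi-compact admissible opens is quasi-compact admissible open), so the filtered colimit is a union, and this union is exactly the right-hand side of the first equality.

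The main obstacle, and the step requiring the most care, is justifying the passage between Huber's three-case definition (which involves integral models and admissible coverings $\{U, V\}$ with $s|_V = 0$) and the cleaner support-theoretic description on the adic space. Concretely, in the partially proper setting one has to verify that a section whose support is contained in some quasi-compact $\overline{Z}$ admits the required admissible covering (for instance $V = X \setminus Z$ together with a slightly enlarged quasi-compact $U \supset \overline{Z}$) and an integral model of $U$ whose restriction of the support is proper over $\YF$; conversely, any such admissible covering forces the support to lie inside the quasi-compact $U$. This is the content of Huber's \cite[Prop. 5.6.5]{Huber}, which we follow.
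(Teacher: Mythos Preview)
Your argument is correct but takes a genuinely different route from the paper. You pass to the adic space via Remark~\ref{gamma-r(f)} and then reduce both inclusions to general topology: tautness gives that $\overline{Z}$ is quasi-compact, partial properness gives that quasi-compact closed subsets are proper, and ``countable at infinity'' supplies the exhaustion for the reverse inclusion. The paper instead stays with the rigid Definition~\ref{gamma_c}. For $\subseteq$ it simply reads off the admissible covering $\{U,V\}$ from case~(2) of that definition (one line). For $\supseteq$---which for the paper is the hard direction---it uses partial properness to cover $X$ by affinoids $X_i\Subset\tilde X_i$, encloses $Z$ in finitely many of them, sets $\tilde V=\tilde X_1\cup\cdots\cup\tilde X_n$ and $V=X\setminus(X_1\cup\cdots\cup X_n)$, then passes to a formal model of $\tilde V$ and invokes L\"utkebohmert's properness criterion to produce the closed $\ZF$ required by Definition~\ref{gamma_c}(1). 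Your route is conceptually cleaner but pushes the real work into the black box of Remark~\ref{gamma-r(f)} (Huber~5.6.4), whose proof needs precisely the kind of formal-model construction the paper spells out; the paper's route is self-contained at the cost of being more hands-on. Note that this explains why you and the paper disagree on which direction is ``easier'': under the adic reformulation $\supseteq$ is immediate, while under Definition~\ref{gamma_c} it is the one requiring the model. Your final paragraph is a little muddled---having already invoked 5.6.4, you then sketch what is essentially the paper's direct argument and label it 5.6.5; this is not wrong, just redundant given your earlier reduction.
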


\begin{proof}
If  $s$ is in  $\Gamma_c(X_{\eet}, \mathcal{F})$ then, by definition (see \ref{gamma_c}), there exists an admissible open covering  $(Z, V)$, with $Z$ quasi-compact open,  such that  $s|_V=0$. Since $V\supset X\setminus Z$, we have 
 $s|_{X \setminus Z}=0$ as well.

Conversely, let  $s$ be such that  $s|_{X \setminus Z}=0$,  for   $Z$ admissible open  and quasi-compact. Let  $X= \bigcup_{i \in I} X_i$ be an admissible covering of  $X$ with $X_i$ open affinoid  and such that,  for all  $i$, there exists $\tilde{X}_i$ an open affinoid 
of  $X$ with $X_i \Subset  \tilde{X}_i$. Since  $Z$ is quasi-compact, modulo reindexing, we may assume that $Z \subset X_1 \cup X_2 \cup \dots \cup X_n$, for a constant $n$. Set  $V:=X \setminus (X_1 \cup \dots \cup X_n)$ and  $\tilde{V}:= \tilde{X}_1 \cup \dots \cup \tilde{X}_n$. Then  $\{V, \tilde{V} \}$ is an admissible open  covering of  $X$ and  $s|_{V}=0$. 

   Take   a formal model $\tilde{\VF}$ of  $\tilde{V}$ and the opens  $\XF_i$ and  $\tilde{\XF}_i$ of  $\tilde{\VF}$ with  $\XF_i \subset \tilde{\XF}_i$ and  $X_i= {\rm sp}^{-1}(\XF_i)$, $\tilde{X}_i= { \rm sp}^{-1}(\tilde{\XF}_i)$
    (where  ${\rm sp} :  \tilde{V} \to \tilde{\VF}$ is the specialization map). If we write  $\bar{\XF}_i$ for the closure of  $\XF_i$ in  $\tilde{\XF}_i$ then $\bar{\XF}_i$ is proper over  $\so_L$ (by \cite[2.5]{Lutk90}). Let now  $\ZF:= \bar{\XF}_1 \cup \dots \cup \bar{\XF}_n$. Then  $\ZF$ is proper over  $\so_L$ and  ${\rm sp}^{-1}(\XF \setminus \ZF) \subset (X \setminus Z)$ hence $s|_{ {\rm sp}^{-1}(\XF \setminus \ZF)}=0$. We have proved that  $s \in \Gamma_c(X_{\eet}, \mathcal{F})$.   
\end{proof}

Denote by  $F$ the functor 
$$F: \mode(X_{\eet}, \Z_{\ell})\to  \mode(\Z_{\ell}),\quad 
 F(\mathcal{F}):=\colim_{Z \in \Phi} \Gamma_Z(X_{\eet}, \mathcal{F}).
 $$

\begin{proposition}{\rm (van der Put, \cite[(1.3)]{VdP92})}
\label{limZ-etc}
For all sheaves  $\mathcal{F}$ of  $\mode(X_{\emph{\text{\'et}}} - \Z_{\ell})$,  there exists a natural quasi-isomorphism 
\[ \R F(\mathcal{F}) \xrightarrow{\sim} [\R \Gamma_{\emph{\text{\'et}}}(X,\mathcal{F}) \to  \colim_{Z\in \Phi} \R \Gamma_{\emph{\text{\'et}}}(X \setminus Z, \mathcal{F})]. \]
\end{proposition}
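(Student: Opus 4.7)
The plan is to compare both sides by first analyzing a single $Z \in \Phi$, and then taking the filtered colimit over $\Phi$, exploiting the exactness of filtered colimits of $\Z_\ell$-modules.

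First I would fix $Z \in \Phi$ and invoke the standard ``local cohomology'' distinguished triangle
\[ \R\Gamma_Z(X_{\eet},\mathcal{F}) \longrightarrow \R\Gamma_{\eet}(X,\mathcal{F}) \longrightarrow \R\Gamma_{\eet}(X\setminus Z,\mathcal{F}), \]
which is immediate from the definition $\Gamma_Z = \ker(\Gamma(X_{\eet},-)\to\Gamma((X\setminus Z)_{\eet},-))$ applied termwise to an injective resolution $\mathcal{F}\to \mathcal{I}^{\bullet}$ (the map $\Gamma(X_{\eet},\mathcal{I}^{\bullet})\to\Gamma((X\setminus Z)_{\eet},\mathcal{I}^{\bullet})$ is componentwise surjective because $\mathcal{I}^k$ is injective, so the kernel computes $\R\Gamma_Z$). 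Rewriting this as a fiber sequence gives a natural quasi-isomorphism
\[ \R\Gamma_Z(X_{\eet},\mathcal{F}) \xrightarrow{\sim} [\R\Gamma_{\eet}(X,\mathcal{F}) \to \R\Gamma_{\eet}(X\setminus Z,\mathcal{F})]. \]

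Next I would pass to the filtered colimit over $Z\in\Phi$. Since $\Phi$ is filtered and filtered colimits of $\Z_\ell$-modules are exact, they commute with the formation of mapping fibers, and the left-hand term is constant in $Z$, so
\[ \colim_{Z\in\Phi}\R\Gamma_Z(X_{\eet},\mathcal{F}) \xrightarrow{\sim} \bigl[\R\Gamma_{\eet}(X,\mathcal{F}) \to \colim_{Z\in\Phi}\R\Gamma_{\eet}(X\setminus Z,\mathcal{F})\bigr]. \]
It therefore suffices to identify the left-hand side with $\R F(\mathcal{F})$.

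For this last identification, I would again use the injective resolution $\mathcal{F}\to\mathcal{I}^{\bullet}$ and compute
\[ \R F(\mathcal{F}) = F(\mathcal{I}^{\bullet}) = \colim_{Z\in\Phi}\Gamma_Z(X_{\eet},\mathcal{I}^{\bullet}). \]
Exactness of filtered colimits lets me swap colimit and cohomology:
\[ H^i \R F(\mathcal{F}) = \colim_{Z\in\Phi} H^i\Gamma_Z(X_{\eet},\mathcal{I}^{\bullet}) = \colim_{Z\in\Phi} H^i_Z(X_{\eet},\mathcal{F}), \]
which is exactly $H^i$ of $\colim_Z \R\Gamma_Z(X_{\eet},\mathcal{F})$. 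This gives the desired quasi-isomorphism $\R F(\mathcal{F})\simeq \colim_Z \R\Gamma_Z(X_{\eet},\mathcal{F})$, and combining with the previous step finishes the proof.

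The main obstacle, such as it is, lies in the bookkeeping around derived functors of filtered colimits: we must know that forming the filtered colimit $\colim_{Z\in\Phi}$ is an exact operation on $\Z_\ell$-modules (so that it commutes both with taking cohomology and with taking fibers in the stable $\infty$-category $\sd(\Z_\ell)$) and that the injective resolution $\mathcal{I}^{\bullet}$ computes $\R\Gamma_Z$ term by term for every $Z$ (which is the content of the local cohomology triangle recalled above). Both are standard facts in this context, so the argument is essentially formal once one unwinds the definitions.
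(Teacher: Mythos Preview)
Your argument is correct and follows essentially the same route as the paper: establish the distinguished triangle $\R\Gamma_Z \to \R\Gamma_{\eet}(X,-) \to \R\Gamma_{\eet}(X\setminus Z,-)$ for each fixed $Z$ via an injective resolution and the surjectivity of restriction, then take the filtered colimit over $Z\in\Phi$. The only cosmetic difference is that the paper works with an explicit Godement-type resolution $\IC^n(\FC)(U)=\prod_{x\in U}\widetilde{\FC}_x$, for which the surjectivity of $\Gamma(X,\IC^n)\to\Gamma(X\setminus Z,\IC^n)$ is visible by inspection, whereas you invoke the general fact that restriction of an injective \'etale sheaf along an open immersion is surjective; both justifications are standard.
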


\begin{proof}It suffices to check that, for all quasi-compact opens
$Z$,  we have a distinguished triangle:
\[ \R \Gamma_Z(X_{\eet}, \FC) \to \R \Gamma_{\eet}(X, \FC) \to \R \Gamma_{\eet}(X\setminus Z, \FC), \]
where  $\R \Gamma_Z(X_{\eet}, -)$ is the right derived functor of  $\FC \mapsto \Gamma_Z(X_{\eet}, \mathcal{F})$.

 To prove that, take the injective resolution $\FC\to \IC^{\bullet}(\FC) $ defined as follows. For $n=0$, we set  $\IC^0(\FC)(U):= \prod_{x \in U} \widetilde{\FC}_x$, where the product is taken over the geometric points  $x$ of  $X$ such that  $x \in U$ and  $\widetilde{\FC}_x$ is  the injective envelope of  $\FC_x$. We define then  by induction $\IC^{n+1}(\FC):=\IC^n(\IC_n/\FC)$. We have  that $\Gamma_{\eet}(X, \IC^n(\sff)) \to \Gamma_{\eet}(X \setminus Z, \IC^n(\sff))$ is surjective, for all $n \ge 0$.
 It follows that the sequence of complexes
 $$
 0\to \Gamma_{Z}(X_{\eet}, \IC^{\bullet}(\sff))\to  \Gamma_{\eet}(X, \IC^{\bullet}(\sff)) \to \Gamma_{\eet}(X \setminus Z, \IC^{\bullet}(\sff))\to 0
 $$
 is exact, as wanted.
\end{proof}

\begin{corollary}
\label{!-etc}
For all sheaves $\mathcal{F}\in \mode(X_{\emph{\text{\'et}}}  - \Z_{\ell})$,  we have a natural quasi-isomorphism
\[ \R \Gamma_{!}(X, \FC) \xrightarrow{\sim} [\R \Gamma_{\text{\emph{\'et}}}(X, \FC) \to \colim_{Z\in \Phi} \R \Gamma_{\eet}(X \setminus Z, \mathcal{F})]. \]
\end{corollary}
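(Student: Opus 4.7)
The plan is to combine the two immediately preceding results. By Lemma~\ref{c-limZ}, as functors on $\mode(X_{\eet}-\Z_\ell)$ with values in $\mode(\Z_\ell)$, we have the identification
\[
\Gamma_{!}(X,-) = \Gamma_c(X_{\eet}, -) = \colim_{Z\in\Phi} \Gamma_Z(X_{\eet}, -) = F,
\]
since both sides compute the same submodule of $\Gamma(X_{\eet}, -)$. Because $\Gamma_!$ and $F$ agree as functors between the same abelian categories, their right derived functors agree as well: $\R\Gamma_{!}(X,-) \simeq \R F(-)$ in $D^+(\mode(\Z_\ell))$. Then Proposition~\ref{limZ-etc} evaluates $\R F(\FC)$ as the mapping fiber
\[
\R F(\FC) \xrightarrow{\sim} [\R\Gamma_{\eet}(X, \FC) \to \colim_{Z\in\Phi} \R\Gamma_{\eet}(X\setminus Z, \FC)],
\]
and composing the two equivalences yields the corollary.

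The only point that requires a moment's thought is the legitimacy of writing $\R\Gamma_{!} \simeq \R F$ from an equality of underlying functors. This is unproblematic here: both derived functors are defined via injective resolutions in the same category $\mode(X_{\eet}-\Z_\ell)$ (which has enough injectives), so a single injective resolution $\FC \to \IC^{\bullet}$ simultaneously computes both, and the equality $\Gamma_{!}(\IC^n) = F(\IC^n)$ at each level passes to cohomology. Alternatively, one can simply pick the resolution $\IC^{\bullet}$ used in the proof of Proposition~\ref{limZ-etc} and observe directly that the filtered colimit along $Z$ of $\Gamma_Z(X_{\eet}, \IC^{\bullet})$ is, termwise, $\Gamma_c(X_{\eet}, \IC^n) = \Gamma_{!}(\IC^n)$ by Lemma~\ref{c-limZ}.

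There is no real obstacle; the content is entirely packaged into Lemma~\ref{c-limZ} and Proposition~\ref{limZ-etc}, and the corollary is the immediate synthesis. Naturality in $\FC$ is inherited from the naturality of both input statements.
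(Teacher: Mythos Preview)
Your proof is correct and follows exactly the same approach as the paper's proof, which simply says that $\R\Gamma_{!}(X,-)$ equals $\R F$ by Lemma~\ref{c-limZ} and then invokes Proposition~\ref{limZ-etc}. Your version just spells out more carefully why the equality of underived functors passes to derived functors.
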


\begin{proof}
$\R \Gamma_{!}(X,-)$ is the right  derived functor  of the functor $\FC \mapsto \Gamma_c(X_{\eet}, \FC)$ and hence it is equal to  $\R F$ by  Lemma \ref{c-limZ}. The result follows now directly from  Proposition \ref{limZ-etc}.  
\end{proof}

\begin{proof}{\rm({\em of Theorem \ref{Hu-proet1}})}  We want to show that
\[ \R \Gamma_{\eet,c, {\rm Hu}}(X, \Z_\ell) \xrightarrow{\sim} \R \Gamma_{\eet,c,{\rm naive}}(X, \Z_\ell). \]
For that, consider the following quasi-isomorphisms: 
\begin{align*}
\R \Gamma_{\eet, c, {\rm Hu}}(X, \Z_\ell) & \xrightarrow{\sim} \R \Gamma_{!}(X, \R \pi_*(\Z/\ell^n)_n) \\
& \xrightarrow{\sim} [\R \Gamma_{\eet}(X, \R \pi_*(\Z/\ell^n)_n) \to \colim_{Z\in \Phi} \R \Gamma_{\eet} (X\setminus Z, \R \pi_*(\Z/\ell^n)_n)]  \\
& \xrightarrow{\sim} [\R \lim_n \R \Gamma_{\eet}(X, \Z/\ell^n) \to  \colim_{Z\in \Phi} \R \lim_n \R \Gamma_{\eet} (X\setminus Z, \Z/\ell^n)]  \\
& =[\R \Gamma_{\eet}(X, \Z_\ell) \to  \colim_{Z\in \Phi} \R \Gamma_{\eet} (X\setminus Z, \Z_\ell)].
\end{align*}
The first and second quasi-isomorphisms
follow from Proposition~\ref{Hu-!pi} and Corollary~\ref{!-etc}, respectively. The third one follows from Lemma~\ref{pi-holim}. The equality in the fourth row is obtained from the definition of $\ell$-adic \'etale cohomology.
\end{proof}

\begin{corollary}
Assume that $\ell \neq p$. Then the cohomology groups $H_{\text{\emph{\'et}},c}^i(X, \Z/\ell^n)$ are finitely generated  $\Z/{\ell^n}$-modules and we have an isomorphism 
\[ H^i_{\text{\emph{\'et}},c}(X, \Z_{\ell}) \xrightarrow{\sim} \varprojlim_n H^i_{\text{\emph{\'et}},c}(X, \Z/\ell^n). \]
\end{corollary}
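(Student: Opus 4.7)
The plan is to reduce the corollary to two inputs: Huber's finiteness theorem for torsion coefficients and the fact that inverse systems of finite groups are Mittag-Leffler.

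First, I would identify our $H^i_{\text{\'et},c}(X, \Z/\ell^n)$ with Huber's compactly supported cohomology $H^i_{\text{\'et},c,\mathrm{Hu}}(X, \Z/\ell^n)$. The torsion analogue of the proof of Theorem \ref{Hu-proet1} goes through verbatim: for a single constant torsion sheaf $\Z/\ell^n$ on the partially proper variety $X$, the derived functor $\R\pi_*$ disappears, and Corollary \ref{!-etc} yields
\[ \R\Gamma_{\text{\'et},c,\mathrm{Hu}}(X, \Z/\ell^n) \xrightarrow{\sim} [\R\Gamma_{\text{\'et}}(X, \Z/\ell^n) \to \colim_{Z\in\Phi} \R\Gamma_{\text{\'et}}(X\setminus Z, \Z/\ell^n)] = \R\Gamma_{\text{\'et},c}(X, \Z/\ell^n). \]

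Next, I would invoke Huber's finiteness theorem for compactly supported $\ell$-adic cohomology with $\ell\neq p$ of a taut, partially proper, smooth rigid analytic variety (Huber, \emph{\'Etale Cohomology of Rigid Analytic Varieties and Adic Spaces}, Section 6 and \cite{Hub98}). Applied to the sheaf $\Z/\ell^n$, this gives that every $H^i_{\text{\'et},c,\mathrm{Hu}}(X,\Z/\ell^n)$ is a finite $\Z/\ell^n$-module. Combined with the identification above, this yields the first assertion of the corollary.

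Finally, the limit statement follows from the Milnor exact sequence
\[ 0 \to {\R}^1\varprojlim_n H^{i-1}_{\text{\'et},c}(X, \Z/\ell^n) \to H^i_{\text{\'et},c}(X, \Z_\ell) \to \varprojlim_n H^i_{\text{\'et},c}(X, \Z/\ell^n) \to 0 \]
attached to the definition $\R\Gamma_{\text{\'et},c}(X,\Z_\ell) := \R\lim_n \R\Gamma_{\text{\'et},c}(X,\Z/\ell^n)$. Since each term in the inverse system is finite by the previous step, the system satisfies Mittag-Leffler (in fact the images stabilize), so ${\R}^1\varprojlim = 0$ and the middle map is an isomorphism.

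The only nontrivial ingredient is Huber's torsion finiteness theorem; the rest is formal. The main obstacle I would expect is checking that the hypotheses of that theorem (partial properness together with tautness and countability at infinity, which we have assumed throughout) are exactly what Huber requires, and that no additional constructibility hypothesis on the coefficient sheaf is needed for the constant sheaf $\Z/\ell^n$ on a smooth $X$.
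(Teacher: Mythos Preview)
Your proof is correct and follows essentially the same route as the paper: identify with Huber's cohomology (via Corollary~\ref{!-etc} for torsion coefficients, which the paper subsumes under Theorem~\ref{Hu-proet1}), apply Huber's finiteness, then use the Milnor sequence and Mittag-Leffler. Your first step is in fact more precise than the paper's citation of Theorem~\ref{Hu-proet1}, since that theorem is stated for $\Z_\ell$ coefficients and one needs the torsion analogue you spell out.
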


\begin{proof}
The first claim follows from Theorem \ref{Hu-proet1} and the fact that we know the result for  cohomology groups with compact support of   Huber $H^i_{\eet,{\rm Hu},c}(X, \Z/\ell^n\Z)$. 
The second claim follows from the exact sequence: 
\[ 0 \to \R^1 \lim_n H^{i-1}_{\eet,c}(X, \Z/\ell^n \Z) \to H^{i}_{\eet,c}(X, \Z_{\ell}) \to \varprojlim_n H^{i}_{\eet,c}(X, \Z/\ell^n \Z) \to 0 \]
and the vanishing of $\R^1 \lim_n H^{i-1}_{\eet,c}(X, \Z/\ell^n \Z)$ that is implied by  the first claim.
\end{proof} 

\begin{example}(Cohomology with compact support of the affine line $\A_C^1$).\label{rain1}
\begin{lemma}We have quasi-isomorphisms of solid $\Z_{\ell}$ and $ \Z_p$-modules, respectively:
\begin{align}
\label{HicA1}
& H^i_{\proeet,c}(\A^1_C, \Z_{\ell}(1)) \simeq H^i_{\eet,c, {\rm naive}} (\A^1_C, \Z_{\ell}(1)) \simeq  \begin{cases} \Z_{\ell} & \text{if } i=2, \\ 0 & \text{otherwise}, \end{cases}   \\
& H^i_{\proeet,c}(\A^1_C, \Z_{p}(1)) \simeq H^i_{\eet,c, {\rm naive}} (\A^1_C , \Z_{p}(1)) \simeq  \begin{cases}  (\so_{{\mathbb P}^1,\infty}/C)\oplus \Z_{p} &\text{if } i=2 ,\\ 0 & \text{otherwise.} \end{cases} \notag
\end{align}
\end{lemma}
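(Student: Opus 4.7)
The plan is to unwind the defining fiber sequence $\R\Gamma_{\proeet,c}(\A^1_C,\Z_\ell(1))=[\R\Gamma_{\proeet}(\A^1_C,\Z_\ell(1))\to\R\Gamma_{\proeet}(\partial\A^1_C,\Z_\ell(1))]$ and compute each term explicitly. The cofinal system of complements of closed disks $U_r:=\{|T|>r\}$ in $\A^1_C$ (for $r\in |C^\times|$) exhausts $\partial\A^1_C$, so $\R\Gamma_{\proeet}(\partial\A^1_C,\Z_\ell(1))\simeq\colim_r \R\Gamma_{\proeet}(U_r,\Z_\ell(1))$. The identification with $\R\Gamma_{\eet,c,\text{naive}}$ is then immediate from Lemma~\ref{proet-et-entier}(1).

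For $\ell\neq p$, I would use the classical fact that, for $\Z/\ell^n$-coefficients, the affine line $\A^1_C$ is cohomologically trivial (so $H^0=\Z_\ell(1)$ and $H^{>0}=0$), while each $U_r$ is cohomologically an annulus with $H^0(U_r,\Z_\ell(1))=\Z_\ell(1)$, $H^1(U_r,\Z_\ell(1))=\Z_\ell$ (Kummer sequence plus the fact that $T$ generates the units modulo units with trivial winding number), and higher cohomology zero. Passing to the colimit in $r$ preserves these groups, and the long exact sequence of the fiber immediately yields $H^{<2}_c=0$ and $H^2_c\simeq\Z_\ell$, as claimed.

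For $\ell=p$, the essential input is the Kummer sequence $0\to\mu_{p^n}\to\mathbb{G}_m\stackrel{p^n}{\to}\mathbb{G}_m\to 0$, which, after taking $\R\lim_n$ and using the vanishing of $\Pic$ in each case, gives
\[
\R\Gamma_{\proeet}(Y,\Z_p(1))\simeq\bigl[\Z_p(1)\to 0\to (\R\lim_n \OS(Y)^\times/p^n)[-1]\bigr]
\]
for $Y=\A^1_C$ or $Y=U_r$. Since $\OS(\A^1_C)^\times=C^\times$ (a rigid analytic entire function without zero is constant), one has $\R\lim_n C^\times/p^n$ in degree $1$ for $\A^1_C$; while $\OS(U_r)^\times=C^\times\cdot T^\Z\cdot(1+\mathfrak{m}_r)$ splits, producing an additional $\Z_p$ factor (from $T^\Z$) and a factor coming from the ``polar part at infinity'' $R_r=\{\sum_{m\geq 1}a_{-m}T^{-m}:|a_{-m}|/r^m\to 0\}$. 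The restriction map kills the $C^\times$ part, and the colimit over $r$ identifies $\colim_r R_r$ with the stalk $\OS_{\mathbb{P}^1,\infty}/C$ (in the complementary coordinate $S=1/T$). Assembling the long exact sequence gives $H^{<2}_c=0$ and $H^2_c\simeq(\OS_{\mathbb{P}^1,\infty}/C)\oplus\Z_p$.

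The main obstacle will be the $\ell=p$ case: carefully carrying through the Kummer identification at the solid/condensed level, checking that $\R\lim$ does not introduce unwanted derived terms (so that the fiber lives in the expected degree), and verifying that the colimit over $r$ of the cokernels of the restriction maps $\OS(U_r)^\times/p^n\to\OS(\A^1_C)^\times/p^n$ really assembles into $\OS_{\mathbb{P}^1,\infty}/C$ as a solid $\Z_p$-module (rather than merely abstractly). Once these functional-analytic points are settled, the long exact sequence computation is straightforward.
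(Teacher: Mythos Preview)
Your approach is correct and follows essentially the same strategy as the paper: both compute the defining fiber sequence for $\R\Gamma_{\proeet,c}(\A^1_C,\Z_\ell(1))$ by determining the cohomology of $\A^1_C$ and of the cofinal family of complements via the Kummer sequence, then read off the answer from the long exact sequence.

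The only notable difference is in how the boundary term is computed. You work directly with the open half-lines $U_r=\{|T|>r\}$ and analyze $\OS(U_r)^\times$ by hand; the paper instead passes (by cofinality) to the \emph{closed} half-lines $C_j=\{|z|\ge p^j\}$, writes $C_j=E_j\cap\A^1_C$ with $E_j=\{z\in\mathbb{P}^1:|z|\ge p^j\}$ a closed disk around $\infty$, applies Kummer on the affinoid $E_j$ (where $\OS(E_j)=C\{(p^jz)^{-1}\}$), and then uses Mayer--Vietoris for $\mathbb{P}^1_C=E_j\cup\A^1_C$ to get $H^*(C_j,\Z/\ell^n(1))$. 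This detour has the practical advantage of keeping all Kummer computations on quasi-compact (affinoid) pieces, so the unit group and its $p$-adic completion are transparently Banach and the passage to $\Z_p$-coefficients is straightforward; the identification $\colim_j C\{(p^jz)^{-1}\}^{*\wedge}\simeq\OS_{\mathbb{P}^1,\infty}/C$ then falls out immediately. Your direct route is shorter but requires exactly the care you flagged about $\R\lim_n$ and the solid structure on the non-quasi-compact $U_r$.
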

\begin{proof}
Recall that for all primes $\ell$ (including $\ell=p$), we have
\begin{equation}
\label{HiA}
H^i_{\eet} (\A_C^1, \Z/ \ell^n ) \simeq  \begin{cases} \Z/ \ell^n & \text{ if } i=0, \\ 0 & \text{ if } i \neq 0. \end{cases}
\end{equation}
For $j \in \N$, consider the closed discs 
\[ D_j:=\{z \in \A^1_C, |z| \le p^j\} \]
and  the associated distinguished triangle \eqref{tri-dist}:  
\[ \R \Gamma_{\eet,c}(\A_C^1, \Z_{\ell}) \to \R \Gamma_{\eet} (\A_C^1, \Z_{\ell}) \to  \colim_{j} \R \Gamma_{\eet} (\A_C^1 \setminus D_j, \Z_{\ell}). \]
  Write $C_j:=\{z \in \A^1_C, |z| \ge p^j\}$, then  
 \[ \colim_{j} \R\Gamma_{\eet}(\A^1_C \setminus D_j, \Z_{\ell}(1))\simeq   \colim_{j} \R\Gamma_{\eet}(C_j, \Z_{\ell}(1)). \] 
 Consider $E_j:=\{z \in \mathbf{P}^1, |z| \ge p^j\}$ so that $C_j=\A^1_C \cap E_j$. The Kummer sequence gives
\[ 
H^i_{\eet} (E_j, \Z/ \ell^n(1)) \simeq  \begin{cases} \Z/ \ell^n\Z & \text{if } i=0, \\ C\{(p^jz)^{-1}\}^{*}/C\{(p^jz)^{-1}\}^{*\ell^n} &\text{if } i=1, \\ 0 & \text{otherwise.} \end{cases}
\]
Using the Mayer-Vietoris sequence for $\mathbf{P}^1_C=E_j \cup \A^1_C$, we get
\[ 
H^i_{\eet} (C_j, \Z/ \ell^n(1)) \simeq  \begin{cases} \Z/ \ell^n\Z & \text{if } i=0, \\ C\{(p^jz)^{-1}\}^{*}/C\{(p^jz)^{-1}\}^{*\ell^n} \oplus \Z/\ell^n &\text{if } i=1, \\ 0 & \text{otherwise.} \end{cases}
\] 

  If  $\ell \neq p$ then $C\{(p^jz)^{-1}\}^{*}$ is $\ell$-divisible and we get: 
\[ 
H^i_{\eet} (C_j, \Z_{\ell}(1)) \simeq  \begin{cases} \Z_{\ell} & \text{if } i=0, \\  \Z_{\ell} &\text{if } i=1, \\ 0 & \text{otherwise} \end{cases}  \quad \text{and} \quad 
H^i_{\eet} (C_j, \Z_{p}(1)) \simeq  \begin{cases} \Z_{p} & \text{if } i=0, \\  C\{(p^jz)^{-1}\}^{*\wedge}\oplus \Z_{p} &\text{if } i=1, \\ 0 & \text{otherwise. } \end{cases}
\] 
Using \eqref{HiA} and  the isomorphism
$
\colim_jC\{(p^jz)^{-1}\}^{*\wedge}\simeq (\so_{{\mathbb P}^1,\infty}/C)
$
this yields 
\eqref{HicA1}, as wanted.
 \end{proof}
 
  In particular, the above result  recovers the groups $H^2_{\eet,c, {\rm Hu}}(\A^1_C, \Z_{\ell}(1))$ and $H^2_{\eet,c, {\rm Hu}}(\A^1_C, \Z_p(1))$ computed in \cite[Ex (A.2)]{CDHN} in accordance with Theorem \ref{Hu-proet1}. 
 
 For the rational cohomology, we will see later (see Example~\ref{dim1-aff}) using syntomic techniques  that we have:  
 \begin{align*}
 H^i_{\proeet,c}(\A^1_C, \Q_p(1))\simeq  \begin{cases} 0 & \text{if } i \neq 2, \\ H^1_c(\A^1_C, \so) \oplus \Q_p & \text{if } i=2. \end{cases}
 \end{align*}
 Using the long exact sequence defining $H^1_c(\A^1_C, \so)$ and the vanishing of the coherent cohomology of $\A^1_C$, we obtain 
 $H^1_c(\A^1_C, \so) \simeq \so(\partial \A^1_C)/  \so(\A^1_C)$ and
  \begin{align*}
 H^i_{\proeet,c}(\A^1_C, \Q_p(1))\simeq \begin{cases} 0 & \text{if } i \neq 2, \\ \so(\partial \A^1_C)/\so(\A^1_C) \oplus \Q_p & \text{if } i=2. \end{cases} 
 \end{align*}
 In particular, we see that $H^i_{\proeet,c}(\A^1_C, \Q_p(1)) \simeq H^i_{\eet,c, {\rm Hu}}(\A^1_C, \Z_p(1)) \otimes^{\Box}_{\Z_p} \Q_p$ for all $i$. As we will show in Theorem \ref{thm:Huber-Qp} this is a general phenomena.
\end{example}

\subsubsection{Comparison with the  cohomology of Huber: rational coefficients}

The goal of this section is to prove the following theorem: 

\begin{theorem} \label{thm:Huber-Qp}
  Let $X$ be a partially proper rigid analytic variety over $L=K,C$. The quasi-isomorphism $\rg_{\eet,c, {\rm Hu}}(X, \Z_{\ell}) \xrightarrow{\sim} \rg_{\proeet,c}(X, \Z_{\ell})$ from Theorem~\ref{Hu-proet1} induces a quasi-isomorphism in $\sd(\Q_{\ell,\Box})$
  \[
    \rg_{\eet,c,{\rm Hu}}(X, \Z_{\ell})_{\Q_{\ell}} \xrightarrow{\sim} \rg_{\proeet,c}(X, \Q_{\ell}),  
  \]
  where we set $ \rg_{\eet,c,{\rm Hu}}(X, \Z_{\ell})_{\Q_{\ell}} :=\rg_{\eet,c,{\rm Hu}}(X, \Z_{\ell}) \solid_{\Z_{\ell}} \Q_{\ell}$. 
\end{theorem}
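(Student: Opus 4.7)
The plan is to leverage Theorem~\ref{Hu-proet1} to replace Huber's cohomology by $\rg_{\proeet,c}(X, \Z_\ell)$, and then reduce to a comparison on quasi-compact building blocks via excision. Since $(-)\solid_{\Z_\ell}\Q_\ell$ is exact on $\sd(\Z_{\ell,\Box})$ and commutes with both filtered colimits and with finite limits (in particular with mapping fibers), Theorem~\ref{Hu-proet1} gives
\[
\rg_{\eet,c,{\rm Hu}}(X, \Z_\ell)_{\Q_\ell} \simeq \rg_{\proeet,c}(X, \Z_\ell)_{\Q_\ell} \simeq \colim_{Z \in \Phi} \rg_{\proeet, Z}(X, \Z_\ell)_{\Q_\ell},
\]
where $\rg_{\proeet, Z}(X, -) := [\rg_{\proeet}(X, -) \to \rg_{\proeet}(X \setminus Z, -)]$ is local cohomology with support in $Z$. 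By the same manipulation applied directly to the definition of $\rg_{\proeet,c}(X,\Q_\ell)$,
\[
\rg_{\proeet,c}(X, \Q_\ell) \simeq \colim_{Z \in \Phi} \rg_{\proeet, Z}(X, \Q_\ell).
\]
Hence it suffices to check, for each $Z \in \Phi$, that the natural comparison $\rg_{\proeet, Z}(X, \Z_\ell)_{\Q_\ell} \to \rg_{\proeet, Z}(X, \Q_\ell)$ is a quasi-isomorphism.

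Next, apply the excision isomorphism~\eqref{excision1}: since $X$ is partially proper and taut, the closure $\overline Z$ is quasi-compact, so we can choose a quasi-compact open $U_Z \subset X$ with $\overline Z \subset U_Z$. Excision gives $\rg_{\proeet, Z}(X, \Lambda) \xrightarrow{\sim} \rg_{\proeet, Z}(U_Z, \Lambda)$ for $\Lambda \in \{\Z_\ell, \Q_\ell\}$, reducing the question to the pair of rigid spaces $(U_Z, U_Z \setminus Z)$ instead of $(X, X \setminus Z)$.

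The key input is then the following assertion on quasi-compact pieces: for any quasi-compact smooth rigid analytic variety $Y$ over $L$, the canonical morphism
\[
\rg_{\proeet}(Y, \Z_\ell) \solid_{\Z_\ell} \Q_\ell \xrightarrow{\sim} \rg_{\proeet}(Y, \Q_\ell)
\]
is a quasi-isomorphism in $\sd(\Q_{\ell,\Box})$. For $\ell \neq p$, this follows from Scholze's finiteness of $\ell$-adic pro-\'etale cohomology of quasi-compact rigid spaces~\cite{Sch13}, whereupon solid tensor with $\Q_\ell$ simply rationalizes finitely generated $\Z_\ell$-modules. For $\ell = p$, the statement can be deduced from the derived $p$-completeness of $\rg_{\proeet}(Y, \Z_p)$ in the solid sense, together with the equivalence~\eqref{banach1} between \'etale and pro-\'etale $\ell$-adic cohomology on qc pieces, which relates $\R\invlim_n \Z/p^n$ and $\underline{\Q_p}$ by inverting $p$.

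The main obstacle is that the complement $U_Z \setminus Z$ entering the mapping fiber defining $\rg_{\proeet, Z}(U_Z, -)$ need not itself be quasi-compact (already for $U_Z$ a closed disc and $Z$ a smaller closed disc), so the quasi-compact input cannot be invoked on $U_Z \setminus Z$ directly. To overcome this, I would exhaust $U_Z \setminus Z$ by an increasing sequence of quasi-compact opens $V_k$ (which is possible since $U_Z$ is qc and taut), reducing to control of $\R\lim_k (\rg_{\proeet}(V_k, \Z_\ell) \solid_{\Z_\ell} \Q_\ell) \to \R\lim_k \rg_{\proeet}(V_k, \Q_\ell)$, and then use a cofinality argument on the pairs $(Z, V_k) \in \Phi \times \N$ to trade the bad $\R\lim_k$ against the $\colim_Z$ that is already present. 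Alternatively, one may directly recast $\rg_{\proeet, Z}(U_Z, -)$ as $\rg_{\proeet}(U_Z, j_! \Lambda)$ for $j: Z \hookrightarrow U_Z$, where the $\Z_\ell \to \Q_\ell$ comparison is formal. This reduction is the crux of the proof.
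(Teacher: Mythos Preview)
Your reduction to showing that $\rg_{\proeet,Z}(X,\Z_\ell)_{\Q_\ell}\to\rg_{\proeet,Z}(X,\Q_\ell)$ is a quasi-isomorphism, together with the excision step, is correct and matches the paper's opening move. You also correctly identify the crux: after excision, $U_Z\setminus Z$ need not be quasi-compact, so the quasi-compact input cannot be applied directly. The gap is that neither of your two proposed resolutions actually works.

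Your first fix --- exhaust $U_Z\setminus Z$ by quasi-compact $V_k$ and ``trade $\R\lim_k$ against $\colim_Z$'' --- does not go through as stated: the $V_k$'s depend on $Z$, so there is no honest double-indexed cofinality argument to run, and there is no reason the outer colimit repairs the failure of $(-)\solid_{\Z_\ell}\Q_\ell$ to commute with $\R\lim_k$. Your second fix is based on a misidentification: since $Z$ and $U_Z\setminus Z$ are both admissible opens of $U_Z$, the complement of $j\colon Z\hookrightarrow U_Z$ is not closed, and $\rg_{\proeet,Z}(U_Z,\Lambda)$ is \emph{not} $\rg_{\proeet}(U_Z,j_!\Lambda)$; the sections-with-support functor $\underline{\Gamma}_Z$ is not extension by zero here, so no formal $\Z_\ell\to\Q_\ell$ comparison follows.

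The paper's resolution is to replace the cofinal family $\Phi$ by a carefully constructed cofinal family of \emph{partially proper} opens $U'\subset X$ with the additional property that $(X\setminus U')^\circ$ is retrocompact (this requires a nontrivial construction via formal models and Berthelot tubes, see Lemma~\ref{lemma:construction-of-nice-opens} and Corollary~\ref{cor:pp-presentation}). For such a $U'$ one can cover $X$ by a quasi-compact $V\supset\overline{U'}$ and $(X\setminus U')^\circ$, with quasi-compact intersection $W$ by retrocompactness. Mayer--Vietoris then gives $P(X)\simeq P((X\setminus U')^\circ)$ since $P(V)=P(W)=0$, whence $P(X)\simeq P(\partial X)$ and the result follows. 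The missing idea in your proposal is precisely this construction of a cofinal family with retrocompact complement, which is what makes the Mayer--Vietoris reduction to the quasi-compact case possible.
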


We precede the proof with a handful of remarks. Thanks to Theorem~\ref{Hu-proet1}, we have a quasi-isomorphism  in $\sd(\Q_{\ell,\Box})$
\[
    \rg_{\eet,c,{\rm Hu}}(X, \Z_{\ell})_{\Q_{\ell}} \xrightarrow{\sim} \rg_{\proeet,c}(X, \Z_{\ell})_{\Q_{\ell}}.
\]
Therefore, Theorem~\ref{thm:Huber-Qp} is equivalent to the following result, which has nothing to do with Huber's compactly supported cohomology anymore.

\begin{lemma} \label{lemma:Qp-vs-Zp}
  Let $X$ be a partially proper rigid analytic variety over $L=K,C$. Then, the natural morphism  in $\sd(\Q_{\ell,\Box})$
  \[ 
     \rg_{\proeet,c}(X, \Z_{\ell})_{\Q_{\ell}} \to \rg_{\proeet,c}(X, \Q_{\ell})
  \]
  is a quasi-isomorphism. 
\end{lemma}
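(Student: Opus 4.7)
The approach is to use the partial properness of $X$ to rewrite $\rg_{\proeet,c}(X,-)$ as a filtered colimit of cohomologies supported on quasi-compact pieces, and then exploit that $-\solid_{\Z_{\ell}}\Q_{\ell}$ commutes with filtered colimits and with fibers. The individual comparison maps $\rg_{\proeet}(X,\Z_{\ell})_{\Q_{\ell}}\to \rg_{\proeet}(X,\Q_{\ell})$ and $\rg_{\proeet}(\partial X,\Z_{\ell})_{\Q_{\ell}}\to \rg_{\proeet}(\partial X,\Q_{\ell})$ are \emph{not} quasi-isomorphisms in general (since $X$ and the $X\setminus Z$ are not qcqs), so the fiber structure is genuinely needed to produce cancellation.

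Fix an exhaustion $U_1 \Subset U_2 \Subset \cdots$ of $X$ by quasi-compact admissible opens with $\overline{U_i}_X \subset U_{i+1}$; such an exhaustion exists by partial properness, and the family $\{U_i\}_i$ is cofinal in $\Phi$. For $\Lambda \in \{\Z_{\ell}, \Q_{\ell}\}$ set $\rg_{U_i}(Y,\Lambda) := [\rg_{\proeet}(Y,\Lambda) \to \rg_{\proeet}(Y \setminus U_i,\Lambda)]$ for $Y = X$ or $U_{i+1}$. Since filtered colimits commute with fibers in the stable $\infty$-category $\sd(\Lambda_\Box)$,
\[
\rg_{\proeet,c}(X,\Lambda) \simeq \colim_i \rg_{U_i}(X,\Lambda),
\]
and the excision equivalence \eqref{excision1} (applied with $Z = U_i$ and $U = U_{i+1}$, valid since $\overline{U_i}_X \subset U_{i+1}$) rewrites this as $\colim_i \rg_{U_i}(U_{i+1},\Lambda)$. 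The functor $-\solid_{\Z_{\ell}}\Q_{\ell}$ is base change along the flat ring map $\Z_{\ell} \to \Q_{\ell}$, hence exact and commuting with colimits, so the lemma reduces to proving, for each~$i$, the \emph{local} quasi-isomorphism
\[
(*) \qquad \rg_{U_i}(U_{i+1}, \Z_{\ell})_{\Q_{\ell}} \xrightarrow{\sim} \rg_{U_i}(U_{i+1}, \Q_{\ell}).
\]

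To prove $(*)$, I identify $\rg_{U_i}(U_{i+1}, \Lambda)$ with $\rg_{\proeet}(U_{i+1}, j_{i!}j_i^*\Lambda)$, where $j_i : U_i \hookrightarrow U_{i+1}$ is the open immersion and $j_{i!}$ is extension by zero in the pro-\'etale topology. Indeed, applying $\rg_{\proeet}(U_{i+1}, -)$ to the exact triangle $j_{i!}j_i^*\Lambda \to \Lambda \to i_{i*}i_i^*\Lambda$ (with $i_i$ the closed complement of $U_i$ in $U_{i+1}$) yields precisely the fiber defining $\rg_{U_i}(U_{i+1},\Lambda)$. Now $j_{i!}$ is left adjoint to $j_i^*$ and thus commutes with filtered colimits, so $j_{i!}\Q_{\ell} \simeq (j_{i!}\Z_{\ell})[1/\ell]$; and pro-\'etale cohomology on the qcqs space $U_{i+1}$ commutes with such filtered colimits of sheaves. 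The sheaf $j_{i!}\Z_{\ell}$ is constructible on the smooth qcqs space $U_{i+1}$, so its cohomology is bounded with finitely generated $\Z_{\ell}$-modules as cohomology groups; for such complexes ordinary $\ell$-inversion coincides with $-\solid_{\Z_{\ell}}\Q_{\ell}$ in $\sd(\Q_{\ell,\Box})$, which establishes $(*)$.

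The main obstacle is the content of the last paragraph: first, the sheaf-theoretic identification $\rg_{U_i}(U_{i+1},\Lambda)\simeq\rg_{\proeet}(U_{i+1}, j_{i!}j_i^*\Lambda)$ must be set up carefully in the solid pro-\'etale setting (the pushforward along the closed inclusion $i_i$ of the complement being fully faithful, etc.); second, one has to check that, on finitely generated $\Z_{\ell}$-cohomology, the naive localization at $\ell$ indeed agrees with $-\solid_{\Z_{\ell}}\Q_{\ell}$ in the condensed/solid category. Once these technical points are secured for each $i$, the passage to the filtered colimit is formal and produces the desired equivalence in $\sd(\Q_{\ell,\Box})$.
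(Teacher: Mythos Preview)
Your reduction to the local statement $(*)$ via excision is fine, and the idea of then working entirely on the qcqs space $U_{i+1}$ is natural. But the argument you give for $(*)$ has a genuine gap. The recollement triangle $j_{i!}j_i^*\Lambda \to \Lambda \to i_{i*}i_i^*\Lambda$ lives in the adic world, with $i_i$ the inclusion of the \emph{closed} set-theoretic complement $Z_i := U_{i+1}\setminus U_i$ (a pseudo-adic space). Applying $\rg_{\proeet}(U_{i+1},-)$ identifies $\rg_{\proeet}(U_{i+1}, j_{i!}\Lambda)$ with the fiber of $\rg_{\proeet}(U_{i+1},\Lambda)\to \rg_{\proeet}(Z_i,\Lambda)$. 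But the paper's $\rg_{U_i}(U_{i+1},\Lambda)$ is the fiber of the restriction to the rigid \emph{admissible open} complement $V_i$, which is a different space: the complement of a quasi-compact open in an adic space is closed but typically not open (already for a closed subdisc inside a closed disc). You provide no argument that $\rg_{\proeet}(Z_i,\Lambda)\simeq \rg_{\proeet}(V_i,\Lambda)$, and in fact $V_i$ is generally not qcqs, so the shortcut ``$P=0$ on qcqs'' is unavailable for it. (A separate, minor point: the claim that $\rg_{\proeet}(U_{i+1}, j_{i!}\Z_\ell)$ has finitely generated cohomology is false for $\ell=p$; fortunately you do not actually need it, since $\Q_\ell=\colim(\Z_\ell\xrightarrow{\ell}\Z_\ell\xrightarrow{\ell}\cdots)$ already in solid modules, so $(-)\solid_{\Z_\ell}\Q_\ell=(-)[1/\ell]$ always.)

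The paper avoids this issue entirely. Rather than trying to prove $(*)$ term-by-term, it introduces $P(Y):=[\rg_{\proeet}(Y,\Z_\ell)_{\Q_\ell}\to \rg_{\proeet}(Y,\Q_\ell)]$ and shows directly that $P(X)\to P(\partial X)$ is an equivalence when $X$ is partially proper. The point is a careful choice of cofinal opens: one constructs partially proper opens $U'\subset X$ whose complement $(X\setminus U')^\circ$ is \emph{retrocompact} (Lemma~\ref{lemma:construction-of-nice-opens}, Corollary~\ref{cor:pp-presentation}). Then $X$ is covered by a qc open $V\supset \overline{U'}$ and by $(X\setminus U')^\circ$, with qc intersection; Mayer--Vietoris plus $P(\text{qcqs})=0$ gives $P(X)\simeq P((X\setminus U')^\circ)$ for each such $U'$, and taking the colimit finishes. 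So the essential content the paper supplies --- and your proposal is missing --- is precisely this retrocompactness, which is what makes all the pieces qcqs and lets one bypass any comparison between the rigid-open and adic-closed complements.
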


Note that the same result is not true without compact support as the following basic example shows.
\begin{example}
  Let $X=\A^{1}_C$. Then the map 
  \begin{equation} \label{eqn:Zp-to-Qp} 
     \rg_{\proeet}(X, \Z_p)_{\Q_p} \to \rg_{\proeet}(X, \Q_p)
  \end{equation}
  is not a quasi-isomorphism. Indeed, by \cite[Th.~1.8]{CDN3}, the $H^1$ of the target equals
  \[ 
    H^1_{\proeet}(X, \Q_p) \simeq (\mathcal{O}(X)/C)(-1),
  \]
  while the source $H^1_{\proeet}(X, \Z_p)$ vanishes (e.g.\ by comparison with the cohomology of the scheme $H^1_\text{\'et}(\A^1_C, \Z_p)=0$). 
\end{example}

In order to measure the failure of \eqref{eqn:Zp-to-Qp} being a quasi-isomorphism, let us introduce the following notation for its mapping fiber
\[ 
  P(X) = [\rg_{\proeet}(X, \Z_{\ell})_{\Q_{\ell}} \to \rg_{\proeet}(X, \Q_{\ell})].
\]
Naturally, a morphism $Y\to X$ induces a map $P(X)\to P(Y)$.

Below we see rigid spaces as adic spaces and we use the language of adic geometry. 
% \begin{lemma} \label{lemma:stupid-qc}
%   % Let $X$ be a partially proper rigid analytic variety over $L=K,C$, let $U\subseteq X$ be a quasi-compact open and let $U'\subseteq U$ be a partially proper open. Then, there exists a quasi-compact open $V\subseteq X$ such that $U\subseteq V$ and $V\cap (X\setminus U')$ is quasi-compact.
%   Let $V\subseteq U$ be an open inclusion between qcqs rigid spaces. Then $V\Subset U$ if and only if there exists a partially proper $U'$ with $V\subseteq U'\subseteq U$. Moreover, in this case one can pick $U'$ so that $(U\setminus U')^\circ$ is quasi-compact.
% \end{lemma}

% \begin{proof}
% The first assertion is standard and easy. For lack of an easier proof we argue with formal models. Let $\mathfrak{V}\subseteq\mathfrak{U}$ be an open immersion of admissible formal schemes over $\mathcal{O}_C$ which is a formal model of $V\subseteq U$.
% \end{proof}

% \begin{proof}
% Since $X$ is partially proper, every point $x\in X$ admits affinoid neighborhoods $x\in U_x\subseteq V_x$ such that $U_x\Subset V_x$. As $x$ varies over points of $U$, the corresponding $U_x$ form an open cover $U$ from which we find a finite subcover $\{U_{x_1}, \ldots, U_{x_r}\}$. Let $V = V_{x_1}\cup \ldots\cup V_{x_r}$ be the union of the corresponding $V_x$'s, then $V$ is quasi-compact and $U\Subset V$. \piotr{something fishy here}
% \end{proof}

\begin{lemma} \label{lemma:interior-of-closure} %[{See \cite[Chapter 0, Proposition 2.4.4]{FujiwaraKato}}]
  Let $X$ be a rigid space and let $U\subseteq X$ be a retrocompact open. Then the closure $\overline{U}$ of $U$ in $X$ consists of all specializations of points of $U$. Moreover, $U$ is equal to the interior of $\overline{U}$.
\end{lemma}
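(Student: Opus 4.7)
The plan is to reduce both claims to the observation that any retrocompact open $U$ in a rigid analytic adic space $X$ is in fact closed. Granted this, both assertions become immediate: since $\overline{U} = U$, the first claim reduces to $U = \{x : \exists y \in U,\ y \leadsto x\}$, which holds because every $y \in U$ specializes to itself (giving $U \subseteq$ RHS), while conversely if $y \in U$ and $y \leadsto x$ then $x \in \overline{\{y\}} \subseteq U$ by closedness of $U$. The second claim is trivial once $U$ is closed: $\mathrm{int}(\overline{U}) = \mathrm{int}(U) = U$.

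To show that $U$ is closed, I would work locally on an affinoid cover of $X$. On each affinoid open $W = \Spa(A, A^+) \subseteq X$, retrocompactness of $U$ makes $U \cap W$ a quasi-compact open of $W$, hence by Huber's structure theorem a finite union of rational subsets $R(f_1, \ldots, f_n / g)$ in which $(f_1, \ldots, f_n, g)$ generates an open ideal of $A$. Each such rational subset is clopen: it is open by definition, and its complement equals $\{x \in W : \exists i,\ |f_i(x)| > |g(x)|\}$, because the open-ideal condition forces $\pi^N \in (f_1, \ldots, f_n, g)$ for some $N$, which rules out the simultaneous vanishing of $g$ and all of the $f_i$ at any (non-trivially-valued) point of $W$. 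Each strict inequality $\{|f_i| > |g|\}$ is in turn open, for instance as $\{f_i \ne 0\} \cap \{|g/f_i| < 1\}$ on the open subspace $W[f_i^{-1}]$. A finite union of clopens being clopen, $U \cap W$ is closed in $W$; closedness being a local property on open covers, $U$ is closed in $X$.

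The main, rather modest obstacle is keeping track of the open-ideal condition built into the definition of a rational subset, since this is exactly what prevents boundary phenomena and forces closedness. Without it, one can write down genuinely non-closed open subsets such as $\{|S| \le |T|,\, T \ne 0\} \subseteq \Spa(K\langle S, T\rangle)$, whose closure properly contains them (the origin is added in the limit); but such subsets fail to be retrocompact in their ambient affinoid—they are only countable increasing unions of rational subsets rather than finite ones—and so are correctly excluded by the hypothesis of the lemma.
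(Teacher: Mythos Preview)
Your central claim---that every retrocompact open $U$ in a rigid analytic adic space is closed---is false. Take $X = \Spa(K\langle T\rangle)$ the closed unit disk and $U = \{|T| \le |\pi|\}$ for a uniformizer $\pi$: this is a rational subset, hence quasi-compact and retrocompact in the qcqs space $X$, but it is not closed. The Gauss point $\eta_\pi$ of $U$ has a rank-$2$ specialization $\xi$ pointing ``outward'' from the sub-disk, for which $|T(\xi)|$ is infinitesimally larger than $|\pi|$, so $\xi \in \overline{U}\setminus U$. More abstractly, the adic closed unit disk is connected (its ring of functions is a domain), so it admits no nontrivial clopen subsets whatsoever; your argument would produce plenty.

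The specific error is the assertion that a strict inequality such as $\{|g/f_i| < 1\}$ is open. For an element $h$ of an affinoid ring, both $\{|h| \le 1\}$ and $\{|h| \ge 1\} = R(1/h)$ are rational opens; consequently $\{|h| < 1\}$, being the complement of the open $\{|h| \ge 1\}$, is \emph{closed}, not open. (You may be importing intuition from Berkovich spaces, where affinoid domains are closed and strict inequalities cut out opens; in Huber's adic spaces the roles are reversed.) Thus $\{|f_i| > |g|\}$ is closed rather than open, and your clopen argument collapses. The paper's proof proceeds quite differently: the first claim is a general fact about retrocompact opens in locally spectral spaces, while for the second one takes an affinoid open $V$ contained in the interior of $\overline{U}$ and observes that $U\cap V \hookrightarrow V$ is an open immersion of qcqs rigid spaces which is surjective on classical points (since $U$ and the interior of $\overline{U}$ have the same closed points by the first part), forcing $U\cap V = V$.
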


\begin{proof}
This follows from a result in general topology \cite[Chapter 0, Cor.~2.2.27 and Prop. 2.4.4]{FujiwaraKato} combined with the fact that the underlying topological spaces of rigid spaces (in Fujiwara--Kato's sense) of type $V_\mathbf{R}$ are reflexive valuative spaces \cite[Chapter II, Th. 8.2.19]{FujiwaraKato}. Since the result is less difficult than having to move between different formalisms, we give a proof below.

Let $U'$ be the interior of $\overline{U}$ in $X$. Trivially $U\subseteq U'$ and we need to show the reverse inclusion. The underlying topological space of $X$ is a locally spectral space, so by \cite[Chapter~0, Cor.~2.2.27]{FujiwaraKato}, the closure $\overline{U}$ consists of all specializations of points of $U$ inside $X$. Therefore $U$ and $U'$ have the same closed points, in particular the same classical points. 

Let $V\subseteq U'$ be an affinoid open. As $U\hookrightarrow X$ is quasi-compact, the map $U\cap V\hookrightarrow V$ is an open immersion of qcqs rigid spaces which is surjective on classical points. It follows that $U\cap V = V$, and since $V$ was arbitrary, that $U \subseteq U'$.
\end{proof}

% It is clear that $U\subseteq U'$. 

% , let us prove that $U'\subseteq U$. Let $x\in U'$ and let $V\subseteq U'$ be   The underlying topological space of $X$ is a spectral space, in particular it is sober and locally coherent. By \cite[Chapter~0, Corollary~2.2.27]{FujiwaraKato}, the closure $\overline{U}$ of $U$ consists of all specializations of points of $U$ inside $X$. Let $U'$ denote the interior of $U$; clearly

% Let $U'$ be the interior of $U$ in $X$, so we must show that $U=U'$. By \cite[Proposition~3.4.5(a)]{AchingerLaraYoucis_Crelle}, the closure $\overline{U}$ can be identified with the universal compactification $U^{\rm univ}_{/X}$ of $U\to X$. Therefore $U\to \overline{U}$ is a quasi-compact open immersion \cite[Theorem~5.1.5]{HuberBook}, in particular $U\subseteq U'$. To show the reverse inclusion, recall that every point of $\overline{U}$ is a specialization of a point of $U$ (again by \cite[Theorem~5.1.5]{HuberBook}), and in particular the two opens $U, U'\subseteq X$ have the same sets of classical points. But if $x\in U'\setminus U$, let $V$ be an affinoid neighborhood of $x$ contained in $U'$. Then the constructible set $V\setminus U$ has no classical points, and hence has to be empty, by []. 
% \end{proof}

\begin{lemma} \label{lemma:construction-of-nice-opens}
Let $X$ be a qs rigid space, let $U,V\subseteq X$ be quasi-compact opens such that $U\subset V$. Then $U\Subset V$ if and only if there exists a partially proper $U'$ with $U\subseteq U'\subseteq V$. Moreover, one can pick $U'$ so that $(X\setminus U')^\circ$ is a retrocompact open and $(X\setminus U')^\circ \cup V = X$.
\end{lemma}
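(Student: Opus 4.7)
The plan is to prove each direction separately.

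For the backward direction ($\Leftarrow$), assume a partially proper open $U'$ exists with $U\subseteq U'\subseteq V$. Since $U$ is quasi-compact and $U'$ is partially proper, we have $U\Subset_{U'}U'$, so $\overline{U}^{U'}$ is quasi-compact. Lemma~\ref{lemma:interior-of-closure} describes the closure $\overline{U}^{X}$ as the set of specializations of points of $U$, and the valuative criterion for partial properness ensures that such specializations stay inside $U'$. Hence $\overline{U}^{X}\subseteq U'\subseteq V$, i.e.\ $U\Subset V$.

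For the forward direction ($\Rightarrow$), the key input is the following interpolation statement: given any qc opens $A\Subset B$ in a qs rigid space, there exists a qc open $W$ with $A\Subset W\Subset B$. This reduces, locally on $B$, to the affinoid case, where one writes $A$ as a rational subdomain of $B$ and shrinks the defining inequalities by a scalar of absolute value strictly less than $1$ to produce an intermediate $W$. Applying this iteratively, starting from $U_{0}:=U$ and working inside a fixed qc open $W_{0}$ with $U\subseteq W_{0}$ and $\overline{W_{0}}^{X}\subseteq V$ (itself produced by one application of interpolation to $U\Subset V$), one builds qc opens
\[ U=U_{0}\Subset U_{1}\Subset U_{2}\Subset\cdots\subseteq W_{0} \]
with $\overline{U_{n}}^{X}\subseteq U_{n+1}$ for all $n$.

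Set $U':=\bigcup_{n}U_{n}$. Every qc open of $U'$ lies in some $U_{n}$, and $\overline{U_{n}}^{U'}=\overline{U_{n}}^{X}\cap U'\subseteq U_{n+1}$, so $U_{n}\Subset_{U'}U_{n+1}$; this exhibits $U'$ as partially proper. Since $U'\subseteq W_{0}$, we have $\overline{U'}^{X}\subseteq\overline{W_{0}}^{X}\subseteq V$, which is precisely the condition $(X\setminus U')^{\circ}\cup V=X$. Finally, $(X\setminus U')^{\circ}=X\setminus\overline{U'}^{X}$ is retrocompact: its closed complement $\overline{U'}^{X}$ is quasi-compact as a closed subset of the qc open $\overline{W_{0}}^{X}$, and in a quasi-separated space the complement of a qc closed subset is a retrocompact open. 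The main obstacle is the interpolation step, which relies essentially on the rigid/adic structure rather than on purely topological features of the underlying spectral space.
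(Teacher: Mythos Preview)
Your backward direction is fine (the paper omits it). Your interpolation construction of a partially proper $U'$ for the forward direction is genuinely different from the paper's approach---they choose a formal model $\mathfrak{U}\subseteq\mathfrak{V}$ of $U\subseteq V$, observe that $U\Subset V$ translates into $Z:=\overline{\mathfrak{U}_0}$ being proper over $k$, and take $U'$ to be the Berthelot tube of $Z$ in $V$---and your argument that $\bigcup_n U_n$ is partially proper is correct.

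The gap is in the ``Moreover'' clause. The assertion ``in a quasi-separated space the complement of a qc closed subset is a retrocompact open'' is false. Take $X$ to be the adic closed unit disc over $C$ and let the closed set be the single classical point $\{0\}$: this is closed and quasi-compact, yet $X\setminus\{0\}$ is not quasi-compact, since any rational subdomain $\{|f_i|\leq|g|\neq 0\}$ that misses $0$ necessarily also misses the type~$2$ points $\eta_r$ for all sufficiently small $r>0$ (either $g(0)=0$ forces some $f_j(0)\neq 0$ by the unit-ideal condition, or $|f_j(0)|>|g(0)|$ for some $j$; in both cases continuity pushes $\eta_r$ out for small $r$), so no finite family covers $X\setminus\{0\}$. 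More generally, your claim would force every open in a qcqs adic space to be retrocompact, which is not the case. Nothing in the interpolation construction controls the shape of $\overline{U'}^{X}$ well enough to rule out this behaviour. (Incidentally, $\overline{W_0}^{X}$ is a closure, hence closed, not a ``qc open''.)

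This is exactly where the paper's formal-model argument does real work. With $U'$ chosen as the tube of $Z$, one gets the explicit identification $(V\setminus U')^\circ = {\rm sp}^{-1}(\mathfrak{V}_0\setminus Z)$, the tube of a quasi-compact open subscheme of the special fiber, hence itself quasi-compact. Retrocompactness of $(X\setminus U')^\circ$ in $X$ is then checked on the cover $X=V\cup(X\setminus U')^\circ$, after a separate specialization argument proving $(X\setminus U')^\circ=(X\setminus V)\cup(V\setminus U')^\circ$. Your interpolated $U'$ may simply fail the retrocompactness condition; a correct proof of the ``Moreover'' part seems to require the Berthelot-tube description or something of comparable strength.
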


\begin{proof}
Let $\mathfrak{U}\subseteq\mathfrak{V}$ be an open immersion of admissible formal schemes over $\mathcal{O}_L$ which is a formal model of $U\subseteq V$, which exists because $U$ and $V$ are qcqs. The assumption that $U\Subset V$ translates into the statement that $Z:=\overline{\mathfrak{U}_0}$ (the closure of the special fiber of $\mathfrak{U}$ inside the special fiber of $\mathfrak{V}$) is proper over $k$. Let $U'\subseteq U$ be the Berthelot tube of $Z$, i.e.\ the interior of its preimage ${\rm sp}^{-1}(Z)$ under the specialization map ${\rm sp}\colon V\to\mathfrak{V}_0$. Then $U'$ is a partially proper open of $V$ containing $U$ by construction.

Lemma~\ref{lemma:interior-of-closure} above implies that the quasi-compact open $W = {\rm sp}^{-1}(\mathfrak{V}_0\setminus Z)$ is the interior of its closure, so that ${\rm sp}^{-1}(Z)$ is the closure of its interior. This in turn implies that the interior $(V\setminus U')^\circ$ of $V\setminus U'$ equals $W$ and hence is quasi-compact. 

Finally, we claim that 
\[
  (X\setminus U')^\circ = (X\setminus V)\cup (V\setminus U')^\circ.
\]
Suppose we know this to hold, then $X\setminus V\subseteq (X\setminus U')^\circ$ implies that $X = V\cup (X\setminus U')^\circ$. Moreover, we can check that $(X\setminus U')^\circ$ is retrocompact on the open cover $X = V \cup (X\setminus U')^\circ$, and we are done.

It remains to show that if $x\in X\setminus V$ belongs to the closure of $V$, then it lies in the closure of the quasi-compact open $W = (V\setminus U')^\circ$. As taking the closure of a qc open amounts to adding specializations (Lemma~\ref{lemma:interior-of-closure} again), there exists a point $y\in V$ specializing to $x$, and $x$ belongs to the closure of $W$ if and only if $y$ does. But if $y$ is not in the closure of $W$, it belongs to the interior of $X\setminus W$, which equals $U'$. Since $U'$ is partially proper, it is closed under specialization in $X$ (by the valuative criterion), and hence $x\in U'$, a contradiction.
\end{proof}

\begin{corollary}  \label{cor:pp-presentation}
  Let $X$ be a partially proper rigid space. Then $X$ can be written as the filtered colimit of opens $U\subseteq X$ with the following properties:
  \begin{enumerate}
    \item $U$ is partially proper,
    \item the closure of $U$ in $X$ is quasi-compact,
    \item $(X\setminus U)^\circ$ is retrocompact in $X$.
  \end{enumerate}
  Moreover, such a family is cofinal with the family of quasi-compact opens of $X$.
\end{corollary}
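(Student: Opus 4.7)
The plan is to reduce the statement to a straightforward iterated application of Lemma~\ref{lemma:construction-of-nice-opens}. The key input beyond that lemma is an exhaustion of $X$ by a countable increasing chain of quasi-compact opens that is ``inner-cofinal'' with respect to $\Subset$.

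First, I would invoke the standing hypotheses on $X$ (separated, taut, countable at infinity, partially proper). Under these assumptions one can produce a sequence of quasi-compact opens $V_1 \subseteq V_2 \subseteq V_3 \subseteq \cdots$ of $X$ with $V_n \Subset V_{n+1}$ for every $n$ and $X = \bigcup_n V_n$: choose any countable admissible cover of $X$ by qc opens, take finite unions so that the cover is increasing, and enlarge each $V_n$ inside $V_{n+1}$ using tautness (so that $\overline{V_n}$ is qc) plus partial properness of $X$ (so that $\overline{V_n}$ is contained in the interior of some further qc open, which we can absorb into $V_{n+1}$).

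Next, for each $n$ I would apply Lemma~\ref{lemma:construction-of-nice-opens} to the pair $V_n \Subset V_{n+1}$. This produces a partially proper open $U_n$ with $V_n \subseteq U_n \subseteq V_{n+1}$ such that $(X\setminus U_n)^\circ$ is retrocompact in $X$ and $(X\setminus U_n)^\circ \cup V_{n+1} = X$. Conditions (1) and (3) are then immediate from the lemma. For condition (2), I would observe that $\overline{U_n} \subseteq \overline{V_{n+1}}$, and $\overline{V_{n+1}}$ is quasi-compact by tautness, so $\overline{U_n}$ is closed in a qc space and hence qc.

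Finally, I would verify the colimit and cofinality statements. Since $V_n \subseteq U_n \subseteq V_{n+1}$, we have $X = \bigcup_n V_n = \bigcup_n U_n$, and the family $\{U_n\}$ is increasing, hence filtered. For cofinality with qc opens: any quasi-compact open $W \subseteq X$ is contained in some $V_n$ (by qc of $W$ applied to the cover $X=\bigcup V_n$), hence in $U_n$; conversely, each $U_n$ is contained in the qc open $V_{n+1}$. I do not expect any serious obstacle in this argument --- the real work was done in Lemma~\ref{lemma:construction-of-nice-opens} --- but the point requiring a bit of care is the construction of the exhaustion $\{V_n\}$ with $V_n \Subset V_{n+1}$, where one must use the combination of partial properness, tautness, and countability at infinity in an essential way.
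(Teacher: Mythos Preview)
Your proposal is correct and follows essentially the same approach as the paper: both arguments start from a cofinal family of quasi-compact opens with the $\Subset$ property and apply Lemma~\ref{lemma:construction-of-nice-opens} to successive pairs. The only difference is cosmetic---you use the standing countability-at-infinity hypothesis to work with an increasing sequence $V_n \Subset V_{n+1}$, whereas the paper phrases it for a general filtered index set; your check that the resulting $U_n$ are increasing (via $U_n \subseteq V_{n+1} \subseteq U_{n+1}$) is a small point the paper leaves implicit.
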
 

\begin{proof}
Since $X$ is partially proper, it can be written as the filtered colimit of a family of quasi-compact opens $\{U_i\}_{i\in I}$ such that for every $i\in I$ there exists a $j\in I$ with $j>i$ such that $U_i\Subset U_j$. Pick such a $j$ for every $i$. Applying Lemma~\ref{lemma:construction-of-nice-opens} to $U_i\Subset U_j\subseteq X$, we obtain an open subset $U'_i$ with $U_i\subset U'_i\subset U_j$ which satisfies (1)--(3). It is easy to see that the resulting family of subsets (possibly no longer indexed by $I$) is cofiltering, and trivially covers $X$.
\end{proof}

\begin{remark} 
The assertion of Corollary~\ref{cor:pp-presentation} is obvious in the Stein case. Embed $X\subseteq \mathbf{A}^{r}_L$ as a closed analytic subspace, let $U_n = X\cap D(0,|p|^{-n})^r$ be the intersection with a large closed polydisk, an affinoid open of $X$, and let $U'_n = X\cap D^\circ(0, |p|^{-n})^r$ be the intersection with the corresponding open polydisc, a pp open of $X$ containing $U_{n-1}$. Then $(X\setminus U'_n)^\circ \cap U_n$ is the product of circles $\{|x_i|=|p|^{-n}\}$, again an affinoid space.
\end{remark}

\begin{lemma} \label{lemma:PX}
  Let $X$ be a rigid analytic variety over $L=K,C$. 
  \begin{enumerate}
    \item If $X$ is quasi-compact, then $P(X) = 0$ (i.e., \eqref{eqn:Zp-to-Qp} is a quasi-isomorphism).
    \item If $X$ is partially proper, then $P(X)\to P(\partial X)$ is an isomorphism.
    %  and $U'\subset X$ is a  partially proper open which is contained in a quasi-compact open $U\subset X$, then the map
    % \[ 
    %   P(X) \to P(X\setminus U')
    % \]
    % is a quasi-isomorphism.
  \end{enumerate}
\end{lemma}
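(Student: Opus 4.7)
My plan is to handle (1) by direct comparison and then deduce (2) from (1) via a Mayer--Vietoris argument applied to the open cover constructed in Lemma~\ref{lemma:construction-of-nice-opens} and Corollary~\ref{cor:pp-presentation}.

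For (1), when $X$ is quasi-compact, the argument in the proof of Lemma~\ref{proet-et-entier} already gives $\R\Gamma_{\proeet}(X, \Z_\ell) \simeq \R\lim_n \R\Gamma_{\eet}(X, \Z/\ell^n)$, and the identification $\widehat{\Q}_\ell = \widehat{\Z}_\ell[1/\ell]$ on the pro-\'etale site (\cite{Sch13}) promotes, in the solid framework, to a quasi-isomorphism $\R\Gamma_{\proeet}(X, \Q_\ell) \simeq \R\Gamma_{\proeet}(X, \Z_\ell) \dsolid_{\Z_\ell} \Q_\ell$. This is precisely the assertion $P(X) = 0$.

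For (2), I would apply Corollary~\ref{cor:pp-presentation} to write $X = \bigcup_\alpha U_\alpha$ with each $U_\alpha$ partially proper, $\overline{U}_\alpha$ quasi-compact, and $V_\alpha := (X \setminus U_\alpha)^\circ$ retrocompact in $X$, and then use Lemma~\ref{lemma:construction-of-nice-opens} to produce, for each $\alpha$, a quasi-compact open $V_\alpha^\sharp \supseteq \overline{U}_\alpha$ with $V_\alpha \cup V_\alpha^\sharp = X$; the intersection $V_\alpha \cap V_\alpha^\sharp$ is then quasi-compact because $V_\alpha$ is retrocompact in $X$. The first key step is a cofinality claim: the pro-system $\partial X = \{X \setminus Z\}_{Z \in \Phi_X}$ is cofinal with $\{V_\alpha\}_\alpha$. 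In one direction, any quasi-compact $Z$ lies in some $U_\alpha$ by Corollary~\ref{cor:pp-presentation}, whence $V_\alpha \subseteq X \setminus U_\alpha \subseteq X \setminus Z$; in the other, $\overline{U}_\alpha$ is a quasi-compact subset of a partially proper space, hence sits inside a quasi-compact open $Z$, giving $X \setminus Z \subseteq X \setminus \overline{U}_\alpha \subseteq V_\alpha$. Consequently $P(\partial X) \simeq \colim_\alpha P(V_\alpha)$.

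The second step is Mayer--Vietoris for the admissible open cover $X = V_\alpha \cup V_\alpha^\sharp$. Since $P$ is a mapping fiber of the two functors $\R\Gamma_{\proeet}(-, \Z_\ell)_{\Q_\ell}$ and $\R\Gamma_{\proeet}(-, \Q_\ell)$, both of which are sheaves (rationalization is exact and hence preserves Mayer--Vietoris squares), $P$ itself satisfies Mayer--Vietoris. Applying~(1) to the quasi-compact spaces $V_\alpha^\sharp$ and $V_\alpha \cap V_\alpha^\sharp$ makes the corresponding terms of the Mayer--Vietoris square vanish, yielding $P(X) \xrightarrow{\sim} P(V_\alpha)$ for each $\alpha$. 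Taking the colimit in $\alpha$ and combining with the cofinality identification concludes $P(X) \simeq \colim_\alpha P(V_\alpha) \simeq P(\partial X)$. The main obstacle will be the backward cofinality (every quasi-compact subset of a partially proper space sits inside a quasi-compact open), which is essentially the $\Subset$-structure on such spaces; the rest is standard Mayer--Vietoris bookkeeping.
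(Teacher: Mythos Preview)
Your proof is correct and follows essentially the same route as the paper: for (2) you use the family of partially proper opens $U_\alpha$ from Corollary~\ref{cor:pp-presentation}, identify $P(\partial X)$ with $\colim_\alpha P((X\setminus U_\alpha)^\circ)$ by cofinality, and then apply Mayer--Vietoris to the cover $X = (X\setminus U_\alpha)^\circ \cup V_\alpha^\sharp$ with $V_\alpha^\sharp$ quasi-compact, killing the two quasi-compact terms by (1). The paper does exactly this (writing $V$ for your $V_\alpha^\sharp$ and $(X\setminus U)^\circ$ for your $V_\alpha$), though it is terser on both the cofinality step and on (1), which it simply declares ``clear''; your more explicit two-direction cofinality check and your justification of (1) via $\widehat{\Q}_\ell = \widehat{\Z}_\ell[1/\ell]$ are welcome additions.
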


\begin{proof}
Claim (1) is clear. For claim (2), write $X$ as the filtering colimit of opens $U_i\subseteq X$ ($i\in I$) as in Corollary~\ref{cor:pp-presentation}. Since the family is cofinal with the family of quasi-compact opens of $X$, we have 
\[ 
  P(\partial X) = \varinjlim_{i\in I} \, P((X\setminus U_i)^\circ).
\]
It is therefore enough to show that  the map $P(X)\to P((X\setminus U)^\circ)$ is an isomorphism for every $U$ in the family. To this end, let $V\subseteq X$ be a quasi-compact open containing $U$. Thus $X$ can be  covered by the quasi-compact open $V$ and the non-quasi-compact open $(X\setminus U)^\circ$ whose intersection $W$ is quasi-compact by retrocompactness of $(X\setminus U)^\circ$. We have a morphism of distinguished triangles
\[ 
  \xymatrix{
    \rg_{\proeet}(X, \Z_{\ell})_{\Q_{\ell}} \ar[r]\ar[d] & \rg_{\proeet}((X\setminus U)^\circ, \Z_{\ell})_{\Q_{\ell}} \oplus \rg_{\proeet}(V, \Z_{\ell})_{\Q_{\ell} }\ar[r]\ar[d] & \rg_{\proeet}(W, \Z_{\ell})_{\Q_{\ell}}\ar[d] \\
    \rg_{\proeet}(X, \Q_{\ell}) \ar[r] & \rg_{\proeet}((X\setminus U)^\circ, \Q_{\ell}) \oplus \rg_{\proeet}(V, \Q_{\ell}) \ar[r] & \rg_{\proeet}(W, \Q_{\ell}).
  }
\]
By (1), we have $P(V) = 0 = P(W)$, and it follows that $P(X)\to P((X\setminus U)^\circ)$ is a quasi-isomorphism. 
\end{proof}

\begin{proof}[Proof of Lemma~\ref{lemma:Qp-vs-Zp} (and therefore also of Theorem~\ref{thm:Huber-Qp})]
For every quasi-compact open $U\subset X$, let $U'\subset U$ denote its interior, which is a partially proper open. Since $X$ is partially proper, the subsets $U'$ (as $U$ varies over quasi-compact opens $U$) are cofinal with the subsets $U$ (every $U$ is contained in $V'$ for some $V$), so that we have
\[ 
  \rg_{\proeet,c}(\partial X, \Q_{\ell}) = \varinjlim_U \rg_{\proeet,c}(X\setminus U', \Q_{\ell})
\]
and, similarly, 
\[ 
  \rg_{\proeet,c}(\partial X, \Z_{\ell})_{\Q_{\ell}} = \varinjlim_U \rg_{\proeet,c}(X\setminus U', \Z_{\ell})_{\Q_{\ell}}.
\]
By Lemma~\ref{lemma:PX}, the map $P(X)\to P(\partial X)$ is a quasi-isomorphism. Therefore the square 
\[ 
  \xymatrix{
    \rg_{\proeet}(X, \Z_{\ell})_{\Q_{\ell}} \ar[r] \ar[d] & \ar[d] \rg_{\proeet}(X, \Q_{\ell}) \\
    \rg_{\proeet}(\partial X, \Z_{\ell})_{\Q_{\ell}} \ar[r] & \rg_{\proeet}(\partial X, \Q_{\ell}) 
  }
\]
is cartesian. 
Flipping the square, we deduce that $\rg_{\proeet,c}(X, \Z_{\ell})_{\Q_{\ell}} \to \rg_{\proeet,c}(X, \Q_{\ell})$ is a quasi-isomorphism as well, as wanted.
\end{proof}

\subsubsection{Properties}
We will now list the following properties of Huber's \'etale cohomology with compact support (hence, in the case the spaces  are smooth and partially proper, by Theorem \ref{Hu-proet1} and  Theorem \ref{thm:Huber-Qp},  of the pro-\'etale cohomology with compact support). All the spaces are adic. 
   \begin{enumerate}
   \item If $X$ is proper then we have a natural quasi-isomorphism  in $\sd(\Z_{\ell,\Box})$
   $$ \R\Gamma_{\eet, {\rm c,Hu}}(X,\Z_\ell)\simeq  \R\Gamma_{\eet}(X,\Z_\ell).
   $$ 
   \item If $j:U\hookrightarrow X$ is an open embedding then we have the pushforward  map  in $\sd(\Z_{\ell,\Box})$$$j_*:  \R\Gamma_{\eet, {\rm c,Hu}}(U,\Z_\ell) \to \R\Gamma_{\eet, {\rm c,Hu}}(X,\Z_\ell).$$
     \item If $X$ is quasi-compact, there is an exact sequence \cite[Cor.\ 2.4]{Hub98} of solid $\Z_{\ell}$-modules
   $$
   0\to \R^1\varprojlim_nH^{i-1}_{\eet,c}(X,\Z/\ell^n)\to H^i_{\eet,{\rm c,Hu}}(X,\Z_\ell)\to \varprojlim_nH^i_{\eet,c}(X,\Z/\ell^n)\to 0
   $$
   \item Let $U$ be a taut open subspace of $X$, let $Z=X\setminus U$, and let $i: Z\hookrightarrow X$ be the inclusion. Assume that $X,U$ are partially proper. Then we have a distinguished triangle  in $\sd(\Z_{\ell,\Box})$
   \begin{align*}
  \rg_{\eet,{\rm c,Hu}}(U,\Z_\ell|_{U_n})  \to \rg_{\eet,{\rm c,Hu}}(X, \Z_\ell)\to \rg_{\eet,c}(Z, i^*\R\pi_*\Z_\ell)
   \end{align*}
   \item Let ${\mathbb U} $ be an open covering of $X$ such that every $U \in {\mathbb U} $ is taut and, for every
$U, V\in {\mathbb U}$, there exists a $W \in {\mathbb U}$ such that  $U \cup V\subset W$. Then,  the map of solid $\Z_{\ell}$-modules
$$
\varinjlim_{U\in {\mathbb U}}
H^i_{\eet,{\rm c,Hu}} (U,\Z_\ell|_{U})\to H^i_{\eet, {\rm c,Hu}} (X,\Z_\ell),\quad i\geq 0,$$
is an isomorphism \cite[Prop. 2.1.]{Hub98}.
\item 
Let $\{U\}_{i\in I}$ be an admissible open covering of $X$ by taut open  varieties. 
There is a spectral sequence \cite[Prop. 2.5.]{Hub98}
\begin{equation}\label{yunnan1}E_1^{a,b}=
\bigoplus_{J\subset I, |J|=-a+1}H^b_{\eet, {\rm c, Hu}}(U_J, \Q_\ell(r)|_{U_J}) \Rightarrow   H^{a+b}_{\eet,{\rm c, Hu}}(X, \Q_\ell(r)).
\end{equation}

\item Let $X$ be adic and partially proper and let $G$ be a locally profinite group acting continuously on $X$. 
Then $H^i_{\eet,c}(X,\Z/\ell^n)$, $i\geq 0$, is a smooth $G$-module \cite[Cor.\ 7.8]{B}. 
\end{enumerate}
\subsection{Overconvergent setting}
We define and study compactly supported $\ell$-adic pro-\'etale cohomology of smooth dagger varieties over $K$ or $C$. As usual, we first give a local definition via presentations of dagger affinoids and then globalize it. We show that this definition coincides with the rigid analytic one for partially proper varieties. % and we obtain a comparison theorem with the overconvergent syntomic cohomology with compact support defined above. 

\subsubsection{Definition}

We define the overconvergent pro-\'etale cohomology with compact support by the following process: locally, if $X$ is a dagger affinoid over $L=K,C$ with  a presentation ${\rm pres}(X)=\{X_h\}_{h\in\N}$ of the dagger structure\footnote{See the Appendix of \cite{Vez} for the definition and properties of presentations of dagger structures.} and rigid completion $\wh{X}$,
we define  in $\sd(\Q_{\ell,\Box})$
  \begin{align*}
 \rg^{\natural}_{\proeet}(\partial X,\Q_{\ell}(r)) & := \rg_{\proeet}(\partial \{X\}_h,\Q_{\ell}(r)):=\colim_h\rg_{\proeet}(X_h\setminus \wh{X},\Q_{\ell}(r)),\\
\rg^{\natural}_{\proeet,c}(X,\Q_{\ell}(r)) & := \rg_{\proeet,c}(\{X\}_h,\Q_{\ell}(r)):=[\rg^{\natural}_{\proeet}(X,\Q_{\ell}(r))\to  \rg^{\natural}_{\proeet}(\partial X,\Q_{\ell}(r)) ],
  \end{align*}
  where $$\rg^{\natural}_{\proeet}(X,\Q_{\ell}(r)):= \rg_{\proeet}(\{X\}_h,\Q_{\ell}(r)):=\colim_h \rg_{\proeet}(X_h,\Q_{\ell}(r)).
  $$
Putting it together we get
\begin{align}\label{dec1}
\rg^{\natural}_{\proeet,c}(X,\Q_{\ell}(r)) & \simeq \colim_h [\rg_{\proeet}(X_h,\Q_{\ell}(r))\to \rg_{\proeet}(X_h\setminus \wh{X},\Q_{\ell}(r))] \\
 & =:\colim_h \rg_{\proeet,\wh{X}}(X_h,\Q_{\ell}(r)).\notag
\end{align}
This form makes it clear that $\rg^{\natural}_{\proeet,c}(X,\Q_{\ell}(r))$ is covariant for open embeddings.  Indeed, let $U\to X$ be an open embedding of smooth dagger affinoids. Choose compatible presentations $\{U_h\}$ and $\{X_h\}$ (i.e., so that the maps $U_h\to X_h$ are also  open embeddings). We define the pushforward map  $\rg^{\natural}_{\proeet,c}(U,\Q_{\ell}(r))\to \rg^{\natural}_{\proeet,c}(X,\Q_{\ell}(r))$ via the composition
$$
\rg_{\proeet,\wh{U}}(U_h,\Q_{\ell}(r)))\stackrel{\sim}{\leftarrow} \rg_{\proeet,\wh{U}}(X_h,\Q_{\ell}(r)){\rightarrow}\rg_{\proeet,\wh{X}}(X_h,\Q_{\ell}(r)).
$$
 Here the first arrow is a quasi-isomorphism by the excision \eqref{excision1}.

 For a general smooth dagger variety $X$ over $L$, we define the cosheaf $\sa_{\proeet,c}(r)$ on $X$ as the cosheaf\footnote{See \cite[Definition 5.5.4.1]{LurieHA} for a definition of a cosheaf in a $\infty$-category.} associated to the precosheaf defined by $U\mapsto \rg^{\natural}_{\proeet,c}(U,\Q_{\ell}(r)), U\in  {\rm SmAff}^{\dagger}_{L}$, $U\to X$  an open embedding. We set 
  \begin{equation}
  \label{csh-dagger}
  \rg_{\proeet,c}(X,\Q_{\ell}(r)):=\rg(X,\sa_{\proeet,c}(r)) \in {\cal D}(\Q_{\ell,\Box}).
  \end{equation}
If $ L =K$,  it is a dg $\Q_{\ell}$-algebra equipped with  a continuous action of $\sg_K$.

  \begin{lemma}\label{herbata2}Let $L=K,C$. Let $X$ be in ${\rm Sm}_L^{\dagger}$. 
  \begin{enumerate} 
  \item  {\rm (Local-global compatibility)} If $X$ is  a  dagger affinoid then the natural map in $\sd(\Q_{\ell,\Box})$
  $$ \rg_{\proeet,c}(X,\Q_\ell(r))\to \rg^{\natural}_{\proeet,c}(X,\Q_\ell(r))$$
   is a quasi-isomorphism. 
  \item {\rm (Passing to completion)} If $X$ is partially proper then there is  natural quasi-isomorphism  in $\sd(\Q_{\ell,\Box})$
  $$
  \rg_{\proeet,c}({X},\Q_\ell(r))\stackrel{\sim}{\to} \rg_{\proeet,c}(\wh{X},\Q_\ell(r))
  $$
  \end{enumerate}
  \end{lemma}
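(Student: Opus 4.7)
For part (1), the plan is to show that the precosheaf $U \mapsto \rg^{\natural}_{\proeet,c}(U,\Q_\ell(r))$ on dagger affinoid opens of $X$ is already a cosheaf, so that cosheafification does not alter its value on $X$. Since $X$ is quasi-compact, it suffices to check a Mayer--Vietoris property for two-element coverings $X = U \cup V$ by dagger affinoid opens. I would pick compatible presentations $\{X_h\}$, $\{U_h\}$, $\{V_h\}$, $\{(U\cap V)_h\}$ by restriction from a single presentation of $X$, and then rewrite $\rg^{\natural}_{\proeet,c}$ via the explicit formula \eqref{dec1} as a colimit over $h$. At each level $h$ the required Mayer--Vietoris square follows from the usual Mayer--Vietoris for pro-\'etale cohomology of rigid analytic spaces combined with the excision property \eqref{excision1}; the claim then follows by passing to the colimit in $h$.

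For part (2), my plan is to write $X$ as a filtered union $X = \bigcup_i U_i$ of dagger affinoids with $\overline{U_i}\Subset U_{i+1}$, which is possible since $X$ is partially proper. Under rigid completion this gives $\wh{X} = \bigcup_i \wh{U_i}$ with $\wh{U_i}\Subset \wh{U_{i+1}}$, and the $\wh{U_i}$ are cofinal in the poset of quasi-compact opens of $\wh{X}$. Using part (1) and the cosheaf property,
\[
  \rg_{\proeet,c}(X,\Q_\ell(r)) \simeq \colim_i \rg^{\natural}_{\proeet,c}(U_i,\Q_\ell(r)),
\]
while the definition of compactly supported pro-\'etale cohomology for rigid analytic spaces combined with this cofinality gives
\[
  \rg_{\proeet,c}(\wh{X},\Q_\ell(r)) \simeq \colim_i \bigl[\rg_{\proeet}(\wh{X},\Q_\ell(r)) \to \rg_{\proeet}(\wh{X}\setminus \wh{U_i},\Q_\ell(r))\bigr].
\]

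To match the two colimits term by term, I would invoke partial properness of $X$ once more to choose, for each $i$, a presentation $\{U_{i,h}\}_h$ of $U_i$ whose members are rigid analytic open subspaces of $\wh{U_{i+1}} \subset \wh{X}$ containing $\wh{U_i}$ and shrinking to it. Excision \eqref{excision1} applied to the pair $\wh{U_i} \subset U_{i,h} \subset \wh{X}$ then yields a quasi-isomorphism
\[
  \rg_{\proeet,\wh{U_i}}(U_{i,h},\Q_\ell(r)) \xrightarrow{\sim} \bigl[\rg_{\proeet}(\wh{X},\Q_\ell(r)) \to \rg_{\proeet}(\wh{X}\setminus \wh{U_i},\Q_\ell(r))\bigr]
\]
independently of $h$, so the colimit over $h$ in the formula \eqref{dec1} collapses to the right-hand side, and passing to the colimit in $i$ concludes. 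The main obstacle I anticipate is precisely this construction of the presentations $\{U_{i,h}\}$: one must arrange that their members live inside $\wh{X}$ as honest rigid analytic opens, which is exactly what partial properness of $X$ provides (and without which there would be no ambient rigid space into which all the tubes could be embedded). The Mayer--Vietoris step in part (1) is more routine by comparison, once compatible presentations for a finite covering have been chosen.
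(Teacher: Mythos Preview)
Your proposal is correct and follows essentially the paper's approach. Part~(1) matches exactly: the paper also reduces to a two-term Mayer--Vietoris triangle, rewrites each term via excision as $\rg_{\proeet,\wh{(-)}}(X_h,\Q_\ell(r))$, and concludes.

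For part~(2) there is one gap worth flagging. Your argument writes $X$ as a filtered union of dagger affinoids $U_i$ with $\overline{U_i}\Subset U_{i+1}$, and needs this family to be cofinal among quasi-compact opens of $\wh{X}$. This is available when $X$ is Stein (a Stein covering), but a general partially proper $X$ need not admit a cofinal family of \emph{affinoid} opens. The paper handles this by first treating the Stein case (via exactly the excision-and-colimit argument you describe, packaged as Proposition~\ref{stein-proet}), and then passing to arbitrary partially proper $X$ by analytic codescent, since such $X$ is locally Stein. You should add this reduction step. The paper's presentation of the Stein case also differs cosmetically: it compares the two distinguished triangles $[\rg_{\proeet,c}\to\rg_{\proeet}\to\rg_{\proeet}(\partial-)]$ for $X$ and $\wh{X}$ and invokes the known comparison $\rg_{\proeet}(Y)\simeq\rg_{\proeet}(\wh{Y})$ for partially proper dagger $Y$, but this is equivalent to your direct termwise matching.
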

  
  \begin{proof}
   For the first claim, 
  it suffices to show that for any open covering $X=X_1\cup X_2$ by dagger affinoids, we have the Mayer-Vietoris distinguished triangle
  $$
  \rg^{\natural}_{\proeet,c}(X,\Q_{\ell}(r))\leftarrow \rg^{\natural}_{\proeet,c}(X_1,\Q_{\ell}(r))\oplus \rg^{\natural}_{\proeet,c}(X,\Q_{\ell}(r))\leftarrow \rg^{\natural}_{\proeet,c}(X_1\cap X_2,\Q_{\ell}(r)).
  $$
  Choosing a presentation $\{ X_h \}$ of $X$, an $h\in \N$, and using excision we can write this triangle as
  $$
   \rg_{\proeet,\wh{X}}(X_h,\Q_{\ell}(r))\leftarrow \rg_{\proeet,\wh{X}_1}(X_h,\Q_{\ell}(r))\oplus \rg_{\proeet,\wh{X}_2}(X_h,\Q_{\ell}(r))\leftarrow \rg_{\proeet,\wh{X_1\cap X_2}}(X_h,\Q_{\ell}(r)),
  $$
  which is known to be distinguished. 
  
  Let us now prove claim (2). 
 If $X$ is Stein, take $\{ U_n \}$ an increasing covering of $X$ by admissible dagger affinoids. We have a commutative diagram:
\[ \xymatrix{ \rg_{\proeet,c}(\wh{X},\Q_{\ell}(r)) \ar[r]  & \rg_{\proeet}(\wh{X},\Q_{\ell}(r)) \ar[r]  & \colim_{n}  \rg_{\proeet}(\wh{X}\setminus \wh{U}_n,\Q_{\ell}(r))  \\
\rg_{\proeet,c}(X,\Q_{\ell}(r)) \ar[r] \ar@{-->}[u] & \rg_{\proeet}(X,\Q_{\ell}(r)) \ar[r]  \ar[u]_{\rotatebox{90}{$\sim$}}& \colim_{n}  \rg_{\proeet}(X\setminus {U}_n,\Q_{\ell}(r)) \ar[u]_{\rotatebox{90}{$\sim$}} } \] 
where the first row is the distinguished triangle  \eqref{tri-dist}  and the second one --  the distinguished triangle  \eqref{tri-overcv-proet}. (We note that there is no circular reasoning here: the proof of Proposition \ref{stein-proet} uses only the first claim of this lemma.) The second and third vertical arrows are quasi-isomorphisms by the usual comparison results between overconvergent cohomologies and their rigid analytic analogues (see \cite[Prop. 3.17]{CN3}).  We obtain that the first vertical map (defined by the diagram) is a quasi-isomorphism as well. 

   Since a general partially proper variety $X$ is locally Stein we can conclude by analytic codescent.
  \end{proof}
\begin{remark} {\rm (Comparison with the definition of Grosse-Kl\"onne)} Compactly supported de Rham cohomology of dagger varieties was defined  by Grosse-Kl\"onne in \cite{GK1} and we can consider its pro-\'etale version. For a smooth dagger affinoid $X$ over $L=K,C$,
set in $\sd(\Q_{\ell,\Box})$  (see \cite[4.3]{GK1}) 
$$
 \rg^{\rm GK}_{\proeet,c}(X,\Q_{\ell}(r)):=\R\Gamma_{\proeet,\wh{X}}(X_h,\Q_{\ell}(r)):=[\R\Gamma_{\proeet}(X_h,\Q_{\ell}(r))\to \R\Gamma_{\proeet}(X_h\setminus \wh{X},\Q_{\ell}(r))], \quad h\in\N.
$$
This is independent of $h$ because of the excision \eqref{excision1}.
\begin{proposition} \label{natural1}Let $X$ be a smooth dagger affinoid.  
There is a quasi-isomorphism in ${\cal D}(\Q_{\ell,\Box})$:
$$
 \rg^{\rm GK}_{\proeet,c}(X,\Q_{\ell}(r))\stackrel{\sim}{\to}  \rg^{\natural}_{\proeet,c}(X,\Q_{\ell}(r)).
$$
\end{proposition}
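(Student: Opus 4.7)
The plan is to reduce to the excision statement \eqref{excision1}, which already appears in the remark just before the proposition to justify that $\rg^{\rm GK}_{\proeet,c}(X,\Q_\ell(r))$ is independent of the chosen level $h$. The same excision will show that the transition maps in the filtered colimit defining $\rg^{\natural}_{\proeet,c}(X,\Q_\ell(r))$ are all quasi-isomorphisms, which gives the result.

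In detail, first I would use \eqref{dec1} to rewrite
\[
  \rg^{\natural}_{\proeet,c}(X,\Q_\ell(r)) \simeq \colim_{h} \rg_{\proeet,\wh{X}}(X_h,\Q_\ell(r)),
\]
where
\[
  \rg_{\proeet,\wh{X}}(X_h,\Q_\ell(r)) = \bigl[\rg_{\proeet}(X_h,\Q_\ell(r)) \to \rg_{\proeet}(X_h \setminus \wh{X},\Q_\ell(r))\bigr].
\]
By definition, $\rg^{\rm GK}_{\proeet,c}(X,\Q_\ell(r))$ is exactly $\rg_{\proeet,\wh{X}}(X_h,\Q_\ell(r))$ for any single $h$, and the map in the proposition is the canonical insertion of this term into the colimit.

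Next I would verify that each transition map
\[
  \rg_{\proeet,\wh{X}}(X_h,\Q_\ell(r)) \longrightarrow \rg_{\proeet,\wh{X}}(X_{h+1},\Q_\ell(r))
\]
is a quasi-isomorphism. By the defining property of a dagger presentation, $\wh{X} \subset X_{h+1} \subset X_h$ with $\wh{X} \Subset X_{h+1}$, so the closure of the quasi-compact open $\wh{X}$ in $X_h$ is contained in the open $U := X_{h+1}$. Applying excision \eqref{excision1} to the presheaf $F = \rg_{\proeet}(-,\Q_\ell(r))$, with $Z = \wh{X}$ and ambient space $X_h$, yields a quasi-isomorphism $F_{\wh{X}}(X_h) \stackrel{\sim}{\to} F_{\wh{X}}(X_{h+1})$, which is exactly the transition map above.

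Since every transition map in the filtered system $\{\rg_{\proeet,\wh{X}}(X_h,\Q_\ell(r))\}_{h}$ is a quasi-isomorphism, the canonical map from any fixed term to the colimit is itself a quasi-isomorphism. This gives the desired identification
\[
  \rg^{\rm GK}_{\proeet,c}(X,\Q_\ell(r)) = \rg_{\proeet,\wh{X}}(X_h,\Q_\ell(r)) \stackrel{\sim}{\to} \colim_{h} \rg_{\proeet,\wh{X}}(X_h,\Q_\ell(r)) \simeq \rg^{\natural}_{\proeet,c}(X,\Q_\ell(r)).
\]
There is no real obstacle here: the entire argument is a direct application of the excision principle combined with the commutation of filtered colimits with finite limits. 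The only thing that requires care is confirming the strict neighborhood property $\wh{X} \Subset X_{h+1}$ built into the definition of a dagger presentation, which ensures that the hypothesis of excision is met at every level $h$.
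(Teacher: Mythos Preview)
Your argument is correct and is essentially the same as the paper's: both reduce to excision \eqref{excision1} to identify $\rg^{\rm GK}_{\proeet,c}(X,\Q_\ell(r))=\rg_{\proeet,\wh{X}}(X_{h_0},\Q_\ell(r))$ with the colimit $\colim_h \rg_{\proeet,\wh{X}}(X_h,\Q_\ell(r))\simeq \rg^{\natural}_{\proeet,c}(X,\Q_\ell(r))$. The paper packages this as a two-step diagram of distinguished triangles, while you go directly via \eqref{dec1} and the observation that all transition maps are quasi-isomorphisms; the content is identical.
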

\begin{proof} This follows from the excision \eqref{excision1}. Consider the following maps of distinguished triangles in ${\cal D}(\Q_{\ell,\Box})$
$$
\xymatrix{
 \rg^{\rm GK}_{\proeet,c}(X,\Q_{\ell}(r))\ar[r] \ar[d]^{\wr}&  \rg_{\proeet}(X_{h_0},\Q_{\ell}(r)) \ar[r] \ar[d]& \rg_{\proeet}(X_{h_0}\setminus \wh{X},\Q_{\ell}(r))\ar[d]\\
  \colim_h\rg^{\rm GK}_{\proeet,c}(X,\Q_{\ell}(r))\ar@{..>}[d] \ar[r] &  \colim_h\rg_{\proeet}(X_h,\Q_{\ell}(r)) \ar[r] \ar[d]^{\wr} &  \colim_h\rg_{\proeet}(X_h\setminus \wh{X},\Q_{\ell}(r))\ar@{=}[d]\\
    \rg^{\natural}_{\proeet,c}(X,\Q_{\ell}(r))\ar[r]  & \rg_{\proeet}(X,\Q_{\ell}(r))\ar[r] &  \colim_h\rg_{\proeet}(X_h\setminus \wh{X},\Q_{\ell}(r)).
  }
  $$
The existences of the first and the second distinguished triangle follow from the definition of $\rg^{\rm GK}_{\proeet,c}(X,\Q_{\ell}(r))$;  of the last one from the definition of $\rg^{\natural}_{\proeet,c}(X,\Q_{\ell}(r))$.
The left vertical map is a  quasi-isomorphisms by the excision \eqref{excision1}. Also the dotted arrow exists and clearly  is a quasi-isomorphism. This concludes the proof. 
\end{proof}
\end{remark}

\begin{proposition}
\label{stein-proet}
Let $X$ be a smooth dagger Stein variety over $L=K,C$ and let $\{U_n\}_{n}$ be an admissible strict covering of $X$ with dagger affinoids $U_n \Subset U_{n+1}$. Then there are natural quasi-isomorphisms in $\sd(\Q_{\ell,\Box})$: 
\[ \rg_{\proeet,c}(X,\Q_{\ell}(r)) \stackrel{\sim}{\to}  \colim_n \rg_{\proeet,U_n}(X,\Q_{\ell}(r))\stackrel{\sim}{\to} \colim_n \rg_{\proeet,U_n}(U_{n+1},\Q_{\ell}(r)). \]
%where we set
%$\rg_{\dr,U_{n}}(X):=[\rg_{\dr}(X) \to \rg_{\dr}(X \setminus U_{n})].$ 
\end{proposition}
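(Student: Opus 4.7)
My plan is to derive both quasi-isomorphisms directly from the cosheaf definition \eqref{csh-dagger} of $\rg_{\proeet,c}(X,\Q_{\ell}(r))$, combined with the local-global compatibility of Lemma \ref{herbata2}(1) and the excision property \eqref{excision1}. I deliberately avoid invoking the second claim of Lemma \ref{herbata2}, since its proof itself relies on the present proposition. The rough idea is that the increasing family $\{U_n\}$ is cofinal among the quasi-compact opens of $X$, so it simultaneously ``sees'' both the global cosheaf sections and the filtered system computing the local cohomologies $\rg_{\proeet,U_n}(X,\Q_{\ell}(r))$.

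First, I would use the cosheaf property of $\sa_{\proeet,c}(r)$ on the admissible increasing cover $X=\bigcup_n U_n$ of the Stein space to obtain
$$
\rg_{\proeet,c}(X,\Q_{\ell}(r)) = \rg(X,\sa_{\proeet,c}(r)) \xrightarrow{\sim} \colim_n \rg_{\proeet,c}(U_n,\Q_{\ell}(r)) \xrightarrow{\sim} \colim_n \rg^{\natural}_{\proeet,c}(U_n,\Q_{\ell}(r)),
$$
where the second quasi-isomorphism is Lemma \ref{herbata2}(1) applied to each affinoid $U_n$. The transition maps in the colimit are the pushforwards $j_*$ attached to the open embeddings $U_n\hookrightarrow U_{n+1}$ of smooth dagger affinoids, defined via the formula following \eqref{dec1}.

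Next, I translate each $\rg^{\natural}_{\proeet,c}(U_n,\Q_{\ell}(r))$ into a local cohomology term on $U_{n+1}$. Since $U_n \Subset U_{n+1}$, I can pick a presentation $\{U_{n,h}\}_h$ of the dagger affinoid $U_n$ such that $U_{n,h}\subseteq U_{n+1}$ for every $h$. By \eqref{dec1},
$$
\rg^{\natural}_{\proeet,c}(U_n,\Q_{\ell}(r)) = \colim_h \rg_{\proeet,\wh{U_n}}(U_{n,h},\Q_{\ell}(r)),
$$
and excision \eqref{excision1} applied to the chain of open embeddings $U_{n,h}\subseteq U_{n+1}$, which all contain the quasi-compact open $\wh{U_n}$, produces quasi-isomorphisms $\rg_{\proeet,\wh{U_n}}(U_{n,h},\Q_{\ell}(r)) \xrightarrow{\sim} \rg_{\proeet,U_n}(U_{n+1},\Q_{\ell}(r))$, so the colimit in $h$ is constant. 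A second application of excision, now along $U_{n+1}\hookrightarrow X$, gives $\rg_{\proeet,U_n}(X,\Q_{\ell}(r)) \xrightarrow{\sim} \rg_{\proeet,U_n}(U_{n+1},\Q_{\ell}(r))$. Composing these identifications and passing to $\colim_n$ delivers both quasi-isomorphisms in the statement.

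The main obstacle is the cosheaf-theoretic first step: verifying that the global sections of $\sa_{\proeet,c}(r)$ on the Stein space $X$ are computed as the filtered colimit along the admissible affinoid cover $\{U_n\}$ with transition maps given by the pushforwards $j_*$. This reduces to an iterated Mayer--Vietoris property for the cosheaf $\sa_{\proeet,c}(r)$ together with cofinality of $\{U_n\}$ among the quasi-compact opens of $X$. Once this is in hand, all the remaining identifications are formal consequences of excision.
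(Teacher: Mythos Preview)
Your proposal is correct and follows essentially the same route as the paper: cosheaf property on the cover $\{U_n\}$, local-global compatibility (Lemma~\ref{herbata2}(1)), then excision to collapse the $h$-colimit and identify each term with a local cohomology on the ambient space. The paper phrases the third step via Proposition~\ref{natural1} (comparison with Grosse-Kl\"onne's definition), but this is exactly your observation that the colimit in \eqref{dec1} is constant by excision.

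One point you gloss over and should make explicit: in your excision step you pass from $\rg_{\proeet,\wh{U_n}}(U_{n,h},\Q_{\ell}(r))$ (a \emph{rigid analytic} local cohomology) to $\rg_{\proeet,U_n}(U_{n+1},\Q_{\ell}(r))$ and $\rg_{\proeet,U_n}(X,\Q_{\ell}(r))$ (which are \emph{overconvergent} local cohomologies of dagger varieties). Excision alone, applied inside the rigid category, only gets you to $\rg_{\proeet,\wh{U_n}}(\wh{X},\Q_{\ell}(r))$. The paper closes this gap by invoking the comparison $\rg_{\proeet}(\wh{X})\simeq \rg_{\proeet}(X)$ and $\rg_{\proeet}(\wh{X}\setminus \wh{U_n})\simeq \rg_{\proeet}(X\setminus U_n)$ from \cite[Prop.~3.17]{CN3}. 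You should either do the same, or else carry out the entire excision argument in the dagger category (which works, since overconvergent pro-\'etale cohomology is a sheaf and hence \eqref{excision1} applies there too).
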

\begin{remark}
In particular, if we define the cohomology of the boundary of $X$ as 
$$ \rg_{\proeet}(\partial X,\Q_{\ell}(r)):= \colim_{n} \rg_{\proeet}(X \setminus U_n,\Q_{\ell}(r)),$$
we have a distinguished triangle:
\begin{equation}
\label{tri-overcv-proet} 
\rg_{\proeet,c}(X,\Q_{\ell}(r)) \to \rg_{\proeet}(X,\Q_{\ell}(r)) \to \rg_{\proeet}(\partial X,\Q_{\ell}(r)). 
\end{equation}
As in the rigid analytic set-up. 
\end{remark}
\begin{proof}
Let $X$ and $\{U_n\}$ be as in the statement of the proposition. For each $n$, let $\{U_{n,h} \}_h$ be a presentation of $U_n$. Then, 
\begin{align*}
\rg_{\proeet,c}(X,\Q_{\ell}(r)) \xleftarrow{\sim} \colim_n \rg_{\proeet,c}(U_n,\Q_{\ell}(r)) \xleftarrow{\sim} \colim_n \rg^{\natural}_{\proeet,c}(U_n,\Q_{\ell}(r)) \xleftarrow{\sim}  \colim_n \rg^{\rm GK}_{\proeet,c}(U_n,\Q_{\ell}(r)) 
 \end{align*} 
 where the first quasi-isomorphism comes from the fact that $\AS_{\proeet,c}$ is a cosheaf, the second one is the local-global compatibility from Lemma~\ref{herbata2}, and the third one is the one from Proposition~\ref{natural1}. 
For $h \in \N$, from the excision  \eqref{excision1}, we have 
\[ \rg_{\proeet,c}^{\rm GK}(U_n,\Q_{\ell}(r))= \rg_{\proeet, \wh{U}_n}(U_{n,h},\Q_{\ell}(r)) \xleftarrow{\sim} \rg_{\proeet, \wh{U}_n}(\wh{X},\Q_{\ell}(r)). \] 
We conclude the proof using the quasi-isomorphisms $\rg_{\proeet}(\wh{X},\Q_{\ell}(r))\xleftarrow{\sim}\rg_{\proeet}(X,\Q_{\ell}(r))$ and $\rg_{\proeet}(\wh{X}\setminus \wh{U}_n,\Q_{\ell}(r))\xleftarrow{\sim}\rg_{\proeet}(X \setminus U_n,\Q_{\ell}(r))$.
\end{proof}

  \subsubsection{ Mayer-Vietoris sequence}  By definition, our compactly supported pro-\'etale cohomology satisfies a Mayer-Vietoris distinguished triangle:
  
\begin{proposition}{\rm (Mayer-Vietoris sequence for dagger varieties.)}
\label{mv-ov}
Let $X$ be a dagger variety over $L=K,C$ and let $X=U \cup V$ an admissible open covering. Let $r\geq 0$. We have a distinguished triangle in $\sd(\Q_{\ell,\Box})$:
\[ \rg_{\proeet, c}(U\cap V, \Q_\ell(r)) \to \rg_{\proeet,c}(U, \Q_\ell(r)) \oplus \rg_{\proeet,c}(V, \Q_\ell(r)) \to \rg_{\proeet,c}(X, \Q_\ell(r)).\]
\end{proposition}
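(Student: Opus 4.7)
My approach is to deduce the Mayer-Vietoris triangle directly from the cosheaf property built into the very definition \eqref{csh-dagger} of $\rg_{\proeet,c}(X, \Q_\ell(r))$. Recall that $\mathcal{A}_{\proeet,c}(r)$ is by construction a cosheaf on $X$ with values in the presentable stable $\infty$-category $\sd(\Q_{\ell,\Box})$, obtained by cosheafifying the precosheaf $U \mapsto \rg^{\natural}_{\proeet,c}(U, \Q_\ell(r))$ on smooth open dagger affinoids in $X$, the transition maps being the pushforwards $j_*$ defined via excision just below equation \eqref{dec1}.

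For any cosheaf $\mathcal{F}$ on a topological space with values in a presentable stable $\infty$-category, and any admissible open cover $X = U \cup V$, the defining (co)descent condition asserts that the {\v C}ech colimit computes $\mathcal{F}(X)$; for a two-element cover this colimit is the pushout of $\mathcal{F}(U) \leftarrow \mathcal{F}(U \cap V) \to \mathcal{F}(V)$. In a stable $\infty$-category every pushout square is simultaneously a pullback square, so the equivalence $\mathcal{F}(U) \sqcup_{\mathcal{F}(U\cap V)} \mathcal{F}(V) \simeq \mathcal{F}(X)$ translates directly into the distinguished triangle
\[ \mathcal{F}(U \cap V) \to \mathcal{F}(U) \oplus \mathcal{F}(V) \to \mathcal{F}(X). \]
Specializing to $\mathcal{F} = \mathcal{A}_{\proeet,c}(r)$ and taking global sections yields the claimed triangle.

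The only point requiring verification is that the maps in the resulting triangle coincide with the pushforward maps of compactly supported pro-\'etale cohomology associated to the open embeddings $U\cap V \hookrightarrow U,\, V$ and $U,\, V \hookrightarrow X$. By functoriality of cosheafification together with the local-global compatibility of Lemma \ref{herbata2}(1), one can refine $\{U, V\}$ by an admissible cover consisting of dagger affinoids, compatible with given presentations, and reduce to the statement for smooth dagger affinoids; in that setting the triangle is exactly the one already checked in the proof of Lemma \ref{herbata2}(1), and the transition maps are manifestly the excision-induced $j_*$. The main (mild) obstacle is therefore bookkeeping: aligning the abstract descent triangle produced by the cosheaf property with the concrete pushforward maps on affinoid opens. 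Since the pushforward on affinoids is set up precisely so that the precosheaf on smooth dagger affinoids assembles into a cosheaf, this identification is forced, and no further argument is needed.
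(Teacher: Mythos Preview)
Your proof is correct and follows exactly the same approach as the paper: the result is immediate from the definition \eqref{csh-dagger} and the cosheaf condition for $\sa_{\proeet,c}(r)$. The paper's own proof is a single sentence to this effect; your additional discussion of why the maps agree with the pushforwards is a helpful elaboration but not strictly needed.
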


\begin{proof}
The proposition follows from the formula \eqref{csh-dagger} and the cosheaf condition for $\sa_{\proeet,c}(r)$.
\end{proof}

\section{Rigid analytic de Rham and Hyodo-Kato cohomologies with compact support}

In this section we define compactly supported de Rham, $\B_{\rm dR}^+$-, and Hyodo-Kato cohomologies for smooth rigid analytic varieties over $K$ or $C$. Our construction is inspired by the one of Chiarellotto in  \cite{Ch90} (see also \cite{VdP92}, \cite{Bey97}). The usual $\B_{\rm dR}^+$- and Hyodo-Kato cohomologies we use here are the ones defined by Colmez-Nizio{\l} in \cite{CN3}, \cite{CN4}. 

We prove various properties and comparison theorems satisfied by the above  cohomologies with compact support. Most of these results are the compactly supported analogs of the ones we have in the usual case (see \cite{CN3}, \cite{CN4}).   
  
\subsection{De Rham cohomology with compact support}

We begin by reviewing the definition of compactly supported de Rham cohomology of rigid analytic varieties introduced by Chiarellotto \cite{Ch90}. 
\subsubsection{Definition of de Rham cohomology with compact support}
Let  $L= K, C$ and let $X$ be a smooth rigid analytic variety over $L$. 
  Recall that  the de Rham cohomology of $X \in {\rm Sm}_L$ is defined as 
\[ \rg_{\dr}(X):= \rg(X, \Omega_X^{\bullet}) \in \sff{\cal D}(L_{\Box}), \]
where $\Omega_X^{\bullet}$ is the complex of sheaves  of differential forms and $ \sff{\cal D}(L_{\Box})$ is filtered $\infty$-category of $ {\cal D}(L_{\Box})$. 
As in Section \ref{compact1}, we set in  $ \sff{\cal D}(L_{\Box})$
 \begin{align}
 \label{sliwka1}
 \R\Gamma_{\dr}(\partial X)  :=\colim_{Z\in \Phi} \R\Gamma_{\dr}(X \setminus Z),\quad 
\R\Gamma_{\dr,c}(X) & := [ \R\Gamma_{\dr}(X) \to  \R\Gamma_{\dr}(\partial X) ],
\end{align}
and write $H^i_{\dr,c}(X):=H^i\R\Gamma_{\dr,c}(X)$ for  the associated cohomology groups. We have $$\R\Gamma_{\dr,c}(X) \simeq  \R\Gamma_c(X, \Omega^{\bullet}_X),$$
where the compactly supported coherent cohomology is defined by an analog of formula (\ref{sliwka1}).
We have equipped $\rg_{\dr,c}(X)$ with the Hodge filtration
\[ F^r \R\Gamma_{\dr,c}(X):=\R\Gamma_c(X, \Omega_X^{\ge r}). \]
Hence $\R\Gamma_{\dr,c}(X)$ is a (solid) filtered dg $L$-algebra. We refer the reader to Section \ref{compact1} for its basic properties. 

\subsubsection{Definition of ${\mathbf B}^+_{\dr}$-cohomology with compact support} 
\label{Bdr-coho}
The construction is analogous to the one above, using the definition of the rigid analytic ${\mathbf B}^+_{\dr}$-cohomology from \cite[Section 3.3]{CN4}.
\vskip.2cm
(i) {\em Rigid analytic ${\mathbf B}^+_{\dr}$-cohomology}. Let us first recall how the usual $\B_{\dr}^+$-cohomology is defined. Denote by $\MS_C^{\rm ss}$ the category of semistable formal models and let $\XS \in \MS_C^{\rm ss}$. By regarding the crystalline structure sheaf $\OS_{\crr}$ and the divided power sheaves $\JS^{[r]}$, $r \ge 0$, as sheaves with values in condensed abelian groups, we can define the absolute crystalline cohomology $\rg_{\crr}(\XS)_{\Q_p}$, $\rg_{\crr}(\XS, \JS^{[r]})_{\Q_p}$ in ${\cal D}(\Q_{p,\Box})$. By completing with respect to its Hodge filtration and then taking the $\eta$-sheafification, we define sheaves $F^r\AS_{\crr}^{\bwedge}$, $r\ge0$ on ${\rm Sm}_{C, \eet}$ with values in ${\cal D}(\B_{\dr,\Box}^+)$. The ${\mathbf B}^+_{\dr}$-cohomology is then given by: 
\begin{align*}
& \rg_{\dr}(X/\B^+_{\dr}):= \rg_{\eet}(X, \AS_{\crr}^{\bwedge}) \in {\cal D}({\rm Mod}_{\B_{\dr}^+}^{\rm solid}) \\ & F^r\rg_{\dr}(X/\B^+_{\dr}):= \rg_{\eet}(X, F^r{\AS}_{\crr}^{\bwedge})\in {\cal D}({\rm Mod}_{\B_{\dr}^+}^{\rm solid}), \quad \text{for } r \ge 0. \end{align*} 
It is a (solid) dg filtered $\B^+_{\dr}$-algebra equipped with a continuous action of $\sg_K$ if $X$ is defined over $K$.

We have a canonical map in $\sd(C_{\Box})$
\begin{align*}
\vartheta: \quad & F^r\rg_{\dr}(X/\B^+_{\dr})\to F^r\rg_{\dr}(X),\quad r\geq 0.
\end{align*}
\vskip.2cm
(ii) {\em Compactly supported ${\mathbf B}^+_{\dr}$-cohomology}. Let $X\in {\rm Sm}_C$. 
Proceeding as in Section \ref{compact1}, we obtain compactly supported rigid analytic
${\mathbf B}^+_{\dr}$-cohomology  in $ {\cal D}(\B_{\dr,\Box}^+) $
$$\rg_{\dr,c}(X/\B^+_{\dr}),\quad F^r\rg_{\dr,c}(X/\B^+_{\dr}),\quad r\geq 0,
$$ starting with the rigid analytic ${\mathbf B}^+_{\dr}$-cohomology from the previous paragraph. It is a solid dg filtered $\B^+_{\dr}$-algebra  equipped with a continuous action of
 $\sg_K$ if $X$ is defined over $K$. It satisfies the properties listed in Section  \ref{compact1}. 
 We have a canonical map  in $\sd(C_{\Box})$
 \begin{align*}
\vartheta_c: \quad & F^r\rg_{\dr,c}(X/\B^+_{\dr})\to F^r\rg_{\dr,c}(X),\quad r\geq 0.
\end{align*}
 
 \subsubsection{Comparison results}    In this section, we give the analogs for compactly supported $\B_{\dr}^+$-cohomology of the comparison results that we have in the classical case. We first recall the results from \cite[Section 3.3.2]{CN4} formulating  them in the solid world.  
 
 \vskip.2cm
  (i)  {\em Product formula}. 
 Let $X$ be a smooth rigid analytic variety  over $K$. We define filtrations on $\R\Gamma_{\dr}(X)\dsolid_K \B^+_{\dr}$ and $\R\Gamma_{\dr,c}(X)\dsolid_K \B^+_{\dr}$ by the formulas
\begin{align*}
F^r(\R\Gamma_{\dr}(X)\dsolid_{K} \B^+_{\dr})&:= \colim_{i+j \ge r}  F^i\rg_{\dr}(X) \dsolid_K F^j\B_{\dr}^+, \\
F^r(\R\Gamma_{\dr,c}(X)\dsolid_{K} \B^+_{\dr})&:=\colim_{i+j \ge r}  F^i\rg_{\dr,c}(X) \dsolid_K F^j\B_{\dr}^+.
\end{align*}

 \begin{remark}
 \label{Rul1}
For $r \ge 0$, we clearly have a distinguished triangle in $\sd({\B^+_{\dr,\Box}})$: 
 \[ F^r(\R\Gamma_{\dr,c}(X)\dsolid_{K} \B^+_{\dr}) \to F^r(\R\Gamma_{\dr}(X)\dsolid_{K} \B^+_{\dr}) \to F^r(\R\Gamma_{\dr}(\partial X)\dsolid_{K} \B^+_{\dr}). \]
Moreover, since  $ \colim_Z(-)$ commutes with $(-)\dsolid_K \B_{\dr}^+$, we have  a canonical quasi-isomorphism
$$F^r(\R\Gamma_{\dr}(\partial X)\dsolid_{K} \B^+_{\dr})\stackrel{\sim}{\leftarrow} \colim_Z F^r(\R\Gamma_{\dr}(X\setminus Z)\dsolid_{K} \B^+_{\dr}).
$$ 
 \end{remark}
 
 Recall that in the classical case we have: 
 
 \begin{lemma}{\rm (\cite[Lemma 3.17]{CN4})}
 \label{Boa1}
Let $X$ be a smooth rigid analytic variety over $K$. Then, there is
a filtered quasi-isomorphism in ${\cal D}(\B_{\dr,\Box}^{+})$: 
 $$
\iota_{\rm BK}:  \quad \rg_{\dr}(X)\dsolid_K\B^+_{\dr}\xrightarrow{\sim} \rg_{\dr}(X_C/\B^+_{\dr}).
 $$
 \end{lemma}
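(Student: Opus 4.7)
The plan is to reduce to a local computation on an affinoid admitting a smooth formal model, and then invoke the deformation-theoretic characterization of $\B^+_{\dr}$-cohomology. Both sides of the claimed map are analytic-local on $X$: for the right-hand side this is built into the definition via the \'etale sheaf $\AS^{\bwedge}_{\crr}$; for the left-hand side it follows from analytic descent for the de Rham complex, together with the fact that $(-) \dsolid_K \B^+_{\dr}$ commutes with the finite limits arising in a bounded \v{C}ech computation. I would thus replace $X$ by a smooth affinoid admitting a smooth formal model $\XF$ over $\so_K$, perhaps after passing to an \'etale cover.

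The key input I would then invoke is a Poincar\'e-type comparison for $\AS^{\bwedge}_{\crr}$: for any smooth formal lift $\widetilde{\XF}$ of $\XF_C$ to $\B^+_{\dr}$, one expects a canonical filtered quasi-isomorphism
$$\rg_{\dr}(X_C/\B^+_{\dr}) \simeq \R\Gamma(\widetilde{\XF}, \Omega^{\bullet}_{\widetilde{\XF}/\B^+_{\dr}}),$$
the Hodge filtration on the right being the convolution of the na\"ive filtration on $\Omega^{\bullet}$ with the filtration on $\B^+_{\dr}$. This is essentially the content of the construction of $\AS^{\bwedge}_{\crr}$ as a completed, $\eta$-sheafified version of absolute crystalline cohomology. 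Since $K \hookrightarrow \B^+_{\dr}$, the base change $\widetilde{\XF} := \XF \otimes_{\so_K} \B^+_{\dr}$ supplies such a lift, and by smoothness and base change for differentials one has $\Omega^{\bullet}_{\widetilde{\XF}/\B^+_{\dr}} \simeq \Omega^{\bullet}_{\XF/\so_K} \dsolid_{\so_K} \B^+_{\dr}$. Passing to the rigid generic fiber identifies the right-hand side with $\rg_{\dr}(X) \dsolid_K \B^+_{\dr}$, with the filtrations matching level by level.

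The hard part will be making the Poincar\'e-type comparison precise in the solid setting while keeping the filtrations under control. In particular, one has to verify that the completions in the definition of $\AS^{\bwedge}_{\crr}$ interact correctly with $\dsolid_K \B^+_{\dr}$, and that the $\eta$-sheafification does not introduce artifacts; this is where the bulk of the technical work lies. As a sanity check at the associated graded level, both sides yield $\bigoplus_{i+j=r} \rg(X, \Omega^i_X)[-i] \dsolid_K C(j)$ in filtered degree $r$, using that $\operatorname{gr}^{j} \B^+_{\dr} \simeq C(j)$. Globalization via analytic descent then completes the argument.
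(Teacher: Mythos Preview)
Your outline follows a recognizable strategy but diverges from the paper's proof in its core mechanism. The paper does not pass through smooth formal models over $\OS_K$ or invoke a Poincar\'e lemma for lifts to $\B^+_{\dr}$. Instead, after passing \'etale locally it works with a \emph{semistable} formal model $\XS_{\OS_L}$ over a finite extension $L/K$, sets $\XS:=\XS_{\OS_L}\times_{\OS_L}\OS_C$, and builds the map at each filtration level $r$ via the chain
\begin{align*}
(\rg_{\dr}(\XS_L)\dsolid_L\B^+_{\dr})/F^r &\to \bigl(\rg_{\crr}(\XS_{\OS_L}/\OS_L^\times)_{\Q_p}\dsolid_L\rg_{\crr}(\OS_{\overline K}^\times/\OS_L^\times)_{\Q_p}\bigr)/F^r\\
 &\xrightarrow{\ \cup\ } (\rg_{\crr}(\XS/\OS_L^\times)_{\Q_p})/F^r \xleftarrow{\ \sim\ } (\rg_{\crr}(\XS)_{\Q_p})/F^r.
\end{align*}
The first arrow uses that $\B^+_{\dr}$ is the Hodge-completed crystalline cohomology of $\OS_{\overline K}^\times$ over $\OS_L^\times$; the cup product is a crystalline K\"unneth isomorphism, proved mod $p^n$, passed to the derived limit, and then identified with the solid tensor product via \cite[Lemma~A.9]{Bos22}; the last arrow is log-base-change along $\OS_F^\times\to\OS_L^\times$, a quasi-isomorphism by \cite[Corollary~2.4]{NN16}. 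Taking $\R\lim_r$ and checking degreewise that $\rg_{\dr}(\XS_L)\dsolid_L\B^+_{\dr}$ is already complete for its filtration finishes the argument.

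Your approach is not wrong in spirit, but the ``Poincar\'e-type comparison'' you state as the key input is exactly the content of the lemma in this framework: it does not fall out of the definition of $\AS^{\bwedge}_{\crr}$ for free, and the paper's K\"unneth computation is precisely what establishes it. Invoking it as an input rather than proving it is the actual gap in your proposal. The restriction to smooth models over $\OS_K$ is also a mismatch with how $\AS^{\bwedge}_{\crr}$ is built (from semistable models over $\OS_C$, obtained by base change from finite extensions), though this is a secondary point.
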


 \begin{proof}
We can work \'etale locally.  Consider the maps:
\begin{align} 
\label{Boa}
\iota_{{\rm BK},r}:  
(\rg_{\dr}(\XS_L) \dsolid_L \B_{\dr}^+)/F^r & \to (\rg_{\crr}(\XS_{\OS_L}/\OS_L^{\times})_{\Q_p} \dsolid_L \rg_{\crr}(\OS_{\overline{K}}^{\times}/\OS_L^{\times})_{\Q_p})/F^r  \\ 
& \xrightarrow{\cup} (\rg_{\crr}(\XS/\OS_{L}^{\times})_{\Q_p})/F^r \leftarrow (\rg_{\crr}(\XS)_{\Q_p})/F^r \notag
\end{align}
for $\XS$ the base change to $\OS_C$ of a  semistable model $\XS_{\OS_L}$ over a finite extension $L$ of $K$, and where we have defined
\[ F^r(\rg_{\dr}(\XS_L) \dsolid_L \B_{\dr}^+):=(\OS(\XS_L) \solid_L F^r\B_{\dr}^+ \to \Omega^1(\XS_L) \solid_L F^{r-1}\B_{\dr}^+ \to \Omega^2(\XS_L) \solid_L F^{r-2}\B_{\dr}^+ \to \cdots).  \] 
The first arrow in \eqref{Boa} is a quasi-isomorphism (use the fact that   the ring $\B_{\dr}^+$ is the Hodge completed  crystalline cohomology of the base ring $\OS_{\overline{K}}$). For the second map, we know that, by the K{\"u}nneth formula, we have quasi-isomorphisms:
\[ \rg_{\crr}(\XS_{\OS_L,n}/\OS_{L,n}^{\times}) \otimes^{\LL}_{\OS_L} \rg_{\crr}(\OS_{\overline{K},n}^{\times}/\OS_{L,n}^{\times}) \xrightarrow[\sim]{\cup} \rg_{\crr}(\XS_n/\OS_{L,n}^{\times}), \quad \text{ for } n \ge 1. \]
By taking the derived inverse limit over $n\ge 1$, we get a quasi-isomorphisms: 
 \[ \rg_{\crr}(\XS_{\OS_L}/\OS_{L}^{\times}) \wh{\otimes}^{\LL}_{\OS_L} \rg_{\crr}(\OS_{\overline{K}}^{\times}/\OS_{L}^{\times}) \xrightarrow[\sim]{\cup} \rg_{\crr}(\XS/\OS_{L}^{\times}).\]
We then use \cite[Lemma A.9]{Bos22} to identify the left hand side with the solid tensor product: 
 \[ \rg_{\crr}(\XS_{\OS_L}/\OS_{L}^{\times}) \wh{\otimes}^{\LL}_{\OS_L} \rg_{\crr}(\OS_{\overline{K}}^{\times}/\OS_{L}^{\times}) = \rg_{\crr}(\XS_{\OS_L}/\OS_{L}^{\times}) \dsolid_{\OS_L} \rg_{\crr}(\OS_{\overline{K}}^{\times}/\OS_{L}^{\times}). \]
 Inverting $p$, we obtain that the second map in \eqref{Boa} is a quasi-isomorphism.  
 The third arrow in \eqref{Boa} is given by the base change from $\OS_F^{\times}$ to $\OS_L^{\times}$ and it is a quasi-isomorphism by \cite[Corollary 2.4]{NN16}. 
  By taking the derived limit $\R\lim_r \iota_{{\rm BK},r}$, we get a quasi-isomorphism:
\[
\hat{\iota}_{\rm BK} :  \R\lim_r(\rg_{\dr}(\XS_L) \dsolid_L \B_{\dr}^+)/F^r  \stackrel{\sim}{\to} \rg_{\crr}(\XS_L)_{\Q_p}^{\bwedge}. 
\]
To conclude it suffices to show that we have a quasi-isomorphism:
\[\rg_{\dr}(\XS_L) \dsolid_L \B_{\dr}^+ \xrightarrow{\sim} \R\lim_r(\rg_{\dr}(\XS_L) \dsolid_L \B_{\dr}^+)/F^r. \]
But this can be done as in \cite[Lemma 3.17]{CN4}, by writing the explicit definition of the filtration and arguing degreewise. 
  \end{proof}
  
%\begin{proposition}{\rm ($t$-completeness)} \label{dor} For $X$ a smooth variety over $C$, the canonical map  $$  \rg_{\dr}(X/\B^+_{\dr})\to \R\wlim_r(\rg_{\dr}(X/\B^+_{\dr})\dsolid_{\B^+_{\dr}}(\B^+_{\dr}/F^r))  $$ is a filtered quasi-isomorphism.  \end{proposition}
 
Applying the previous lemma to the varieties $X$ and $X \setminus Z$, for $Z\in\Phi_X$, and  using that the tensor product commutes with colimits, we obtain a filtered quasi-isomorphism in $\sd(\B^+_{\dr,\Box})$  for the compactly supported cohomologies:
\begin{equation}
 \label{Boa2}
\iota_{{\rm BK},c}:  \quad \rg_{\dr,c}(X)\dsolid_K\B^+_{\dr}\xrightarrow{\sim} \rg_{\dr,c}(X_C/\B^+_{\dr}). 
 \end{equation}
 Note that the compatibility with the filtrations follows from the Remark~\ref{Rul1}.

 \vskip.2cm 
  (ii) {\em Projection from $\B^+_{\dr}$-cohomology to de Rham cohomology}.    We can now compare the $\B^+_{\dr}$- and de Rham cohomologies.  
  
 Recall that we have: 
  \begin{proposition} {\rm (\cite[Prop. 3.11]{CN3})}
Let $X$ be a smooth rigid analytic variety over $C$. Then, 
 \begin{enumerate}
 \item
We have a natural quasi-isomorphism in ${\cal D}(C_{\Box})$
 \begin{equation}
  \label{projection}
 \vartheta:\quad \rg_{\dr}(X/\B^+_{\dr}) \dsolid_{\B^+_{\dr}} C \stackrel{\sim}{\to}\rg_{\dr}(X).
\end{equation}
 \item More generally, for $r\geq 0$, we have a natural distinguished triangle in  ${\cal D}({\B_{\dr,\Box}^+}) $
 \begin{equation}
 \label{detail1}
 F^{r-1}\rg_{\dr}(X/\B^+_{\dr})\lomapr{t} F^r\rg_{\dr}(X/\B^+_{\dr})\lomapr{\vartheta}F^r\rg_{\dr}(X)
 \end{equation}
 \item For $r\geq 0$, we have a natural distinguished triangle in  ${\cal D}({\B_{\dr,\Box}^+}) $
 \begin{equation}
 \label{coffee1}
  F^{r+1}\rg_{\dr}(X/\B^+_{\dr})\to F^r\rg_{\dr}(X/\B^+_{\dr})\stackrel{\beta_X}{\to} \bigoplus_{i\leq r}\rg(X,\Omega^i_X)(r-i)[-i]
 \end{equation}
 \end{enumerate}
 \end{proposition}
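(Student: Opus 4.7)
The plan is to derive all three parts from Lemma~\ref{Boa1} together with straightforward filtered manipulations on the $t$-adic filtration of $\B^+_{\dr}$. Recall that, \'etale-locally on a semistable formal model $X_L$ over a finite extension $L/K$, the map $\iota_{\rm BK}$ of Lemma~\ref{Boa1} identifies $\rg_{\dr}(X_C/\B^+_{\dr})$ with $\rg_{\dr}(X_L)\dsolid_L \B^+_{\dr}$, filtered by the convolution
\[
F^r\bigl(\rg_{\dr}(X_L)\dsolid_L \B^+_{\dr}\bigr) = \colim_{i+j\geq r} F^i\rg_{\dr}(X_L)\dsolid_L F^j\B^+_{\dr}
\]
of the Hodge filtration on $\rg_{\dr}(X_L)$ and the $t$-adic filtration on $\B^+_{\dr}$.

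For part~(1), the plan is to apply $(-)\dsolid_{\B^+_{\dr}} C$ to both sides of $\iota_{\rm BK}$: the right-hand side becomes $\rg_{\dr}(X/\B^+_{\dr})\dsolid_{\B^+_{\dr}}C$, while the left-hand side collapses to $\rg_{\dr}(X_L)\dsolid_L C \simeq \rg_{\dr}(X)$, yielding the isomorphism~\eqref{projection}.

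For parts~(2) and~(3), the plan is to compute the cofiber of $t\colon F^{r-1}\to F^r$ and the graded quotient $F^r/F^{r+1}$ directly from the bifiltration displayed above. Multiplication by $t\in F^1\B^+_{\dr}$ shifts the $(i,j)$-summand to $(i,j+1)$; hence the cofiber of $t\colon F^{r-1}\to F^r$ is supported on the piece $j=0$, reducing to $F^r\rg_{\dr}(X_L)\dsolid_L C\simeq F^r\rg_{\dr}(X)$, which gives part~(2) after transporting through $\iota_{\rm BK}$. For part~(3), the graded quotient decomposes as $\bigoplus_{i+j=r}\mathrm{gr}^i\rg_{\dr}(X_L)\dsolid_L\mathrm{gr}^j\B^+_{\dr}$; combining strictness of the Hodge filtration (so that $\mathrm{gr}^i\rg_{\dr}(X_L)\simeq\rg(X_L,\Omega^i_{X_L})[-i]$) with $\mathrm{gr}^j\B^+_{\dr}\simeq C(j)$ produces the claimed sum $\bigoplus_{i\leq r}\rg(X,\Omega^i_X)(r-i)[-i]$.

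The main obstacle is justifying these bifiltered cofiber and graded-piece computations in the solid derived $\infty$-categorical setting: one needs $\dsolid$ to commute with the colimits defining $F^r$ and the Hodge filtration to remain strict after solidification. The standard route is to reduce to an underived, \'etale-local statement on semistable models and to appeal to flatness-type results such as \cite[Lemma A.9]{Bos22}, already invoked in the proof of Lemma~\ref{Boa1}.
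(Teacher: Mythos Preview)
Your proposal is correct and follows exactly the route the paper takes: work \'etale locally, invoke $\iota_{\rm BK}$ from Lemma~\ref{Boa1} to replace $\rg_{\dr}(X/\B^+_{\dr})$ by the explicit bifiltered complex $\rg_{\dr}(X_L)\dsolid_L\B^+_{\dr}$, and then read off the cofibers of $t$ and the graded pieces termwise. One terminological quibble: the identification $\mathrm{gr}^i\rg_{\dr}(X_L)\simeq\rg(X_L,\Omega^i_{X_L})[-i]$ is simply the definition of the Hodge (stupid) filtration on the de~Rham complex, not a consequence of strictness, so you should drop the word ``strictness'' there.
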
  
 
 \begin{proof}
The proof from \cite[Prop. 3.11]{CN3} still works in the solid setting: working \'etale locally, we use the $\iota_{\rm BK}$ from Lemma~\ref{Boa1} to reduce to similar statement for the de Rham cohomology instead of the crystalline one. 
 \end{proof}

We deduce from the previous proposition its analog in the compactly supported case:
   
 \begin{proposition}\label{projection-c}
Let $X$ be a smooth rigid analytic variety over $C$. Then, 
 \begin{enumerate}
 \item
We have a natural quasi-isomorphism in ${\cal D}(C_{\Box})$
 $$
 \vartheta_c:\quad \rg_{\dr,c}(X/\B^+_{\dr}) \dsolid_{\B^+_{\dr}} C \stackrel{\sim}{\to}\rg_{\dr,c}(X).
 $$
 \item More generally, for $r\geq 0$, we have a natural distinguished triangle in  ${\cal D}({\B_{\dr,\Box}^+}) $
 \begin{equation}
 \label{detail1-c}
 F^{r-1}\rg_{\dr,c}(X/\B^+_{\dr})\lomapr{t} F^r\rg_{\dr,c}(X/\B^+_{\dr})\lomapr{\vartheta_c}F^r\rg_{\dr,c}(X)
 \end{equation}
 \item For $r\geq 0$, we have a natural distinguished triangle in $ {\cal D}({\B_{\dr,\Box}^+}) $
 \begin{equation}
 \label{coffee1-c}
  F^{r+1}\rg_{\dr,c}(X/\B^+_{\dr})\to F^r\rg_{\dr,c}(X/\B^+_{\dr})\stackrel{\beta_X}{\to} \bigoplus_{i\leq r}\rg_c(X,\Omega^i_X)(r-i)[-i]
 \end{equation}
 \end{enumerate}
 \end{proposition}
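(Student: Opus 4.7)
The strategy is to bootstrap each of (1)--(3) from the corresponding non-compact statement \eqref{projection}, \eqref{detail1}, \eqref{coffee1} applied to $X$ and to every complement $X\setminus Z$ for $Z\in\Phi_X$, then propagate through the defining mapping fiber and filtered colimit. Recall that by construction
$$
\rg_{\dr,c}(X/\B^+_{\dr}) = \big[\rg_{\dr}(X/\B^+_{\dr}) \to \colim_{Z\in\Phi_X}\rg_{\dr}(X\setminus Z/\B^+_{\dr})\big],
$$
with the analogous formula for $\rg_{\dr,c}(X)$, for $\rg_c(X,\Omega^i_X)$, and for each filtered piece $F^r$ (cf.\ Remark~\ref{Rul1} in the product case).

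For (1), naturality of \eqref{projection} in open immersions produces a commutative square of distinguished triangles, namely
$$
\rg_{\dr,c}(X/\B^+_{\dr})\dsolid_{\B^+_{\dr}}C \to \rg_{\dr}(X/\B^+_{\dr})\dsolid_{\B^+_{\dr}}C \to \colim_{Z}\big(\rg_{\dr}(X\setminus Z/\B^+_{\dr})\dsolid_{\B^+_{\dr}}C\big)
$$
mapping into
$$
\rg_{\dr,c}(X) \to \rg_{\dr}(X) \to \colim_{Z}\rg_{\dr}(X\setminus Z),
$$
where the derived solid tensor product has been commuted past $\colim_Z$ using the preservation of filtered colimits. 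The middle vertical map is a quasi-isomorphism by \eqref{projection}, and the right one is a filtered colimit of quasi-isomorphisms from \eqref{projection} applied to each $X\setminus Z$; hence so is the left map, which is precisely $\vartheta_c$. The same pattern, applied respectively to \eqref{detail1} and \eqref{coffee1} for $X$ and for each $X\setminus Z$, yields (2) and (3) after taking $\colim_Z$ on the right column and then the mapping fiber; for (3) one uses additionally that $\colim_Z$ commutes with the finite direct sum $\bigoplus_{i\leq r}$ and with the Tate twists and shifts.

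The only substantive point to verify is the formal compatibility that the mapping fibers defining the compactly supported pieces (including the filtered variants $F^r\rg_{\dr,c}(X/\B^+_{\dr})$) commute with the filtered colimit $\colim_Z$ and with the derived solid tensor product $\dsolid_{\B^+_{\dr}}C$. This is essentially automatic in $\sd(\B^+_{\dr,\Box})$: mapping fibers are finite homotopy limits, filtered colimits in this stable $\infty$-category are exact, and the derived solid tensor product preserves colimits and exact triangles; the compatibility with the Hodge filtration is ensured by the defining formulas $F^r\rg_{\dr,c}(X)=\rg_c(X,\Omega^{\geq r}_X)$ and its $\B^+_{\dr}$-analog. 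Once these formal commutations are in hand, each of (1)--(3) follows immediately from the already-proved non-compact counterpart.
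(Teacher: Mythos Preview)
Your proposal is correct and follows essentially the same approach as the paper: apply the non-compact statements \eqref{projection}, \eqref{detail1}, \eqref{coffee1} to $X$ and to each $X\setminus Z$, use that the solid tensor product commutes with filtered colimits for (1), and for (2)--(3) invoke the defining distinguished triangles $F^r\rg_{\dr,c}\to F^r\rg_{\dr}\to F^r\rg_{\dr}(\partial X)$ (both in the $\B^+_{\dr}$- and the de Rham setting). The paper's proof is simply terser; you have spelled out the formal commutations that justify it.
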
 
 
 \begin{proof}
 The proposition follows from Proposition~\ref{projection}, applying it to the smooth varieties $X$ and $X \setminus Z$, for $Z$ quasi-compact. As usual, we use that the solid tensor product commutes with colimits for the first claim. The second and third points come from the fact that, by definition, we have distinguished triangles  in  ${\cal D}({\B_{\dr,\Box}^+}) $ and in ${\cal D}(C_{\Box})$, respectively:
\begin{align*}
F^r \rg_{\dr,c}(X/ \B_{\dr}^+) & \to F^r \rg_{\dr}(X/ \B_{\dr}^+)   \to  F^r \rg_{\dr}(\partial X/ \B_{\dr}^+),  \\
F^r \rg_{\dr,c}(X)& \to F^r \rg_{\dr}(X)  \to  F^r \rg_{\dr}(\partial X),  \quad \text{ for } r \ge 0.
\end{align*}
 \end{proof}

 \subsubsection{Computations} In this section, we will give an explicit formula to compute the quotients of the compactly supported $\B_{\dr}^+$-cohomology of $X$ when $X$ is a Stein variety (and in particular, when $X$ is the affine space). These quotients will intervene in the computation of the compactly supported pro-\'etale cohomology of $X$, and the following results will allow us to define a trace map for the compactly supported pro-\'etale cohomology in Section~\ref{trace-maps}.  
 
\vskip.2cm  
(i) {\em Stein varieties.} Let $X$ be a smooth Stein variety of dimension $d$ over $K$. 
We will see later that in that case $H^i_c(X, \Omega^j_X)=0$ for $i \neq d$ and all $j$ in $\{0, \cdots, d\}$ (see Proposition~\ref{comp-vanish}). Thus, we obtain that the de Rham cohomology $\R\Gamma_{\dr,c}(X)$ is computed by the complex:  
\begin{align}\label{buis2}
 & H^d_c(X, \Omega_{X}^{\bullet}):=(H^d_c(X, \OC_X)  \to H^d_c(X, \Omega_X^1) \to \dots \to H^d_c(X, \Omega_X^d) )[-d],\\
 & F^iH^d_c(X, \Omega_{X}^{\bullet}):=H^d_c(X, \Omega_{X}^{\geq i})=(H^d_c(X, \Omega_X^i) \to \dots \to H^d_c(X, \Omega_X^d) )[-d-i].\notag
 \end{align}
It follows that in $\sd(\B^+_{\dr,\Box})$:
\[
\R\Gamma_{\dr,c}(X) \dsolid_K \B^+_{\dr} \simeq   (H^d_c(X, \OC_X) \solid_K \B^+_{\dr} \to H^d_c(X,\Omega_X^1) \solid_K \B^+_{\dr} \to \dots \to H^d_c(X, \Omega_X^d)\solid_K \B^+_{\dr} )[-d].
\]
Concerning the filtration, we have in  $\sd(\B^+_{\dr,\Box})$
\begin{align*}
F^r(\R\Gamma_{\dr,c}(X) \dsolid_K \B^+_{\dr}) & \simeq   \colim_{i+j \ge r} F^{i}H^d_c(X, \Omega_X^{\bullet}) \dsolid_K F^j \B^+_{\dr} \\ 
& \simeq   (H^d_c(X, \OC_X) \solid_K F^{r}\B^+_{\dr} \to H^d_c(X, \Omega_X^1)\solid_K F^{r-1}\B^+_{\dr}  
\to \cdots )[-d]. 
\end{align*}
This yields in $\sd(\B^+_{\dr,\Box})$
\begin{align*}
\DR_c(X,r) &  :=  (\R\Gamma_{\dr,c}(X) \dsolid_K \B^+_{\dr})/F^r & \\ 
& \simeq 
(H^d_c(X, \OC_X) \solid_K (\B^+_{\dr}/F^{r}) \to H^d_c(X, \Omega_X^1)\solid_K (\B^+_{\dr}/F^{r-1}) \\ & \hspace{5cm} \to \dots \to H^d_c(X, \Omega_X^{d})\solid_K (\B^+_{\dr}/F^{r-d}))[-d].
\end{align*}

\begin{lemma}
\label{DRc}
\begin{enumerate}
\item
We have an exact sequence in $\B^+_{\dr,\Box}$:
\begin{equation*}
\begin{split}
 0 \to (H^d_c(X_C, \Omega^{i-d})/ {\rm Im}\, d)  \solid_K  {\rm gr}^{r-i+d-1}_F\B^+_{\dr}\to H^{i} \DR_c(X,r) \to H^{i}_{\dr,c}(X) \solid_K  (\B^+_{\dr}/ F^{r-i+d-1}) \to 0.
\end{split}
\end{equation*}
%\item
%For  $d \le i \le 2d-1$, $i\geq  r+d$, we have an isomorphism in $\B^+_{\dr,\Box}$:
%\begin{equation*}
%\begin{split}
% H^{i} \DR_c(X,r) \stackrel{\sim}{\to} H^{i}_{\dr,c}(X) \widehat{\otimes}_K( \B^+_{\dr}/ F^{r-i+d-1}).
%\end{split}
%\end{equation*}
\item  In degree  $2d$, we have an isomorphism in $\B^+_{\dr,\Box}$:
$$ H^{2d}\DR_c(X,r) \simeq H_{\dr,c}^{2d}(X) \solid_K (\B^+_{\dr}/ F^{r-d}).$$  
%\item In other degrees $ H^{i}\DR_c(X,r)=0$. 
\end{enumerate}
\end{lemma}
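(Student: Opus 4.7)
The plan is to compute $H^i\DR_c(X,r)$ directly from the explicit complex displayed immediately above the lemma. Write $C^j := H^d_c(X,\Omega^j_X)$, let $Z^j \subseteq C^j$ be the cycles and $B^j \subseteq Z^j$ the boundaries of $d\colon C^j \to C^{j+1}$, so that $Z^j/B^j = H^{d+j}_{\dr,c}(X)$. Setting $D^j_r := C^j \solid_K \B^+_{\dr}/F^{r-j}$, we have $\DR_c(X,r) = D^\bullet_r[-d]$, and its differential factors as $d \otimes \pi = (d \otimes 1) \circ (1 \otimes \pi)$, where $\pi\colon \B^+_{\dr}/F^{r-j} \twoheadrightarrow \B^+_{\dr}/F^{r-j-1}$ is the natural quotient.

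The key input is the short exact sequence of $\B^+_{\dr}$-modules $0 \to {\rm gr}^{r-j-1}_F\B^+_{\dr} \to \B^+_{\dr}/F^{r-j} \to \B^+_{\dr}/F^{r-j-1} \to 0$, which I shall assume remains exact after solid tensoring over $K$ with $C^j$, $Z^j$ and $B^j$ (the main technical point, addressed below). Combined with the identification $\kker(d \otimes 1) = Z^j \solid_K \B^+_{\dr}/F^{r-j-1}$ inside $C^j \solid_K \B^+_{\dr}/F^{r-j-1}$, this yields
\[ 0 \to C^j \solid_K {\rm gr}^{r-j-1}_F\B^+_{\dr} \to \kker(d\otimes \pi) \to Z^j \solid_K \B^+_{\dr}/F^{r-j-1} \to 0. \]
Surjectivity of $\pi$ identifies $\im(D^{j-1}_r \to D^j_r)$ with $B^j \solid_K \B^+_{\dr}/F^{r-j}$, which sits inside the kernel (because $d^2 = 0$) together with its own analogous short exact sequence obtained by tensoring the $\B^+_{\dr}$-sequence above with $B^j$. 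Applying the snake lemma to the inclusion of these two sequences yields
\[ 0 \to (C^j/B^j) \solid_K {\rm gr}^{r-j-1}_F\B^+_{\dr} \to H^j D^\bullet_r \to (Z^j/B^j) \solid_K \B^+_{\dr}/F^{r-j-1} \to 0. \]
Substituting $j = i-d$ and absorbing the base change $(C^j/B^j)\solid_K C = H^d_c(X_C, \Omega^{i-d})/\im d$ into the $\B^+_{\dr}$-factor gives part~(1).

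For part~(2), the case $j = d$ is degenerate: $\Omega^{d+1}_X = 0$ forces $Z^d = C^d$ and makes the outgoing differential vanish, so $H^{2d}D^\bullet_r$ is simply $D^d_r$ modulo the image of $D^{d-1}_r$, which equals $(C^d/B^d) \solid_K \B^+_{\dr}/F^{r-d} = H^{2d}_{\dr,c}(X) \solid_K \B^+_{\dr}/F^{r-d}$. The main obstacle is the functional-analytic step of justifying exactness of the solid tensor product over $K$ of the above $\B^+_{\dr}$-sequence with the Fr\'echet $K$-vector spaces $C^j$, $Z^j$, $B^j$; this should follow from flatness of Fr\'echet $K$-vector spaces in the solid category of Clausen--Scholze together with the fact that $\B^+_{\dr}/F^n$ is a finite successive extension of copies of $C$, but verifying it explicitly is the bulk of the work.
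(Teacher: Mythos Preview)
Your proof is correct and follows exactly the same strategy as the paper: both exploit the factorization of the differential as $(d\otimes 1)\circ(1\otimes\pi)$ and compute kernel and image accordingly. The paper's proof is a one-sentence sketch pointing to this factorization; you have simply written out the resulting snake-lemma computation in full.

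One small correction to your functional-analytic remark: for a Stein variety $X$, the spaces $H^d_c(X,\Omega^j_X)$ are of compact type, not Fr\'echet (it is the non-compactly-supported $\Omega^j(X)$ that are nuclear Fr\'echet). This does not affect your argument, since compact-type spaces are equally well-behaved for the solid tensor product; but you should adjust the justification of the exactness step accordingly.
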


\begin{proof}This follows from the fact that, 
for  $0 \le j\le d$,  the differential $\tilde{d}_{j+d}$ of the complex $\DR_c(X,r)$
factorizes as :
 \[ H^d_c(X,\Omega_X^j) \solid_K (\B^+_{\dr}/F^{r-j})   \stackrel{{\rm Id}\otimes{\rm Id}}{\longrightarrow} H^d_c(X,\Omega_X^{j})  \solid_K (\B^+_{\dr}/F^{r-j-1})  \stackrel{d_j\otimes{\rm Id}}{\longrightarrow}
 H^d_c(X,\Omega_X^{j+1})  \solid_K (\B^+_{\dr}/F^{r-j-1}) \]
 where the first arrow is surjective with kernel $H^d_c(X,\Omega_X^{j}) \solid_K (F^{r-j-1} \B^+_{\dr}/F^{r-j})$.  
 \end{proof}

\vskip.2cm
(ii) {\em Affine space.} 
For example, let $X={\mathbb A}_K^d$ be the rigid analytic affine space over $K$ of dimension $d\geq 1$. We will see in Section~\ref{dualities} that we have:
\begin{equation}
\label{mdm4}
 H^i_{\dr,c}(X)\simeq  \begin{cases} 0 \text{ for } i \neq 2d, \\ K \text{ for } i=2d .\end{cases}
\end{equation}

\begin{lemma}
\label{DRcAff} Let $r^{\prime}:=r-i+d-1$. 
We have isomorphisms in $\B^+_{\dr,\Box}$:
$$
 H^{i}\DR_c(X,r) \simeq 
 \begin{cases}
 ( H_c^d(X_C, \Omega^{i-d})/{\rm Im}\, d) \otimes^{\Box}_F{\rm gr}^{r^{\prime}}\B^+_{\dr}\simeq  ( H_c^d(X_C, \Omega^{i-d})/{\rm Ker}\,  d )\otimes^{\Box}_F{\rm gr}^{r^{\prime}}\B^+_{\dr}  & \mbox{ if } d \le i \le 2d-1,\\
 \B^+_{\dr}/ F^{r-d} & \mbox{ if } i=2d,\\
 0 & \mbox{ otherwise}.
 \end{cases}
 $$
\end{lemma}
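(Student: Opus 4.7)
The strategy is to apply Lemma~\ref{DRc} directly to $X=\mathbb{A}_K^d$ and feed in the known de Rham computation \eqref{mdm4}, namely $H^i_{\dr,c}(X)=0$ for $i\neq 2d$ and $H^{2d}_{\dr,c}(X)\simeq K$. Three ranges of the degree $i$ have to be handled separately.

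First, for $i=2d$, part (2) of Lemma~\ref{DRc} gives
$H^{2d}\DR_c(X,r)\simeq H^{2d}_{\dr,c}(X)\solid_K(\B^+_{\dr}/F^{r-d})$,
and substituting $H^{2d}_{\dr,c}(X)\simeq K$ collapses this to $\B^+_{\dr}/F^{r-d}$. For $d\le i\le 2d-1$, part (1) of Lemma~\ref{DRc} produces a short exact sequence whose right-hand term is $H^i_{\dr,c}(X)\solid_K(\B^+_{\dr}/F^{r'})$ with $r'=r-i+d-1$; this vanishes by \eqref{mdm4}, so $H^i\DR_c(X,r)$ is identified with the left-hand term
$(H^d_c(X_C,\Omega^{i-d})/{\rm Im}\,d)\solid_K{\rm gr}^{r'}_F\B^+_{\dr}$.
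For $i<d$ or $i>2d$, the explicit representative \eqref{buis2} of $\R\Gamma_{\dr,c}(X)$ (tensored with the filtered quotients of $\B^+_{\dr}$) simply has no terms in those cohomological degrees, so $H^i\DR_c(X,r)=0$.

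The only nontrivial remaining point is the second displayed isomorphism, replacing ${\rm Im}\,d$ by ${\rm Ker}\,d$ in the range $d\le i\le 2d-1$. Here I would argue as follows: by \eqref{buis2} the de Rham complex with compact support $\R\Gamma_{\dr,c}(X)$ is concretely represented by $H^d_c(X,\Omega_X^{\bullet})[-d]$, and its cohomology in degree $i$ is exactly $H^{i-d}$ of the underlying complex of solid $K$-modules $H^d_c(X,\OC_X)\to H^d_c(X,\Omega^1_X)\to\cdots$. The vanishing $H^i_{\dr,c}(X)=0$ for $d\le i\le 2d-1$ therefore means that this complex is exact at each position $i-d\in\{0,\ldots,d-1\}$, i.e.\ ${\rm Im}\,d={\rm Ker}\,d$ as subobjects of $H^d_c(X,\Omega^{i-d}_X)$. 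Tensoring with ${\rm gr}^{r'}_F\B^+_{\dr}$ (which is a line over $C$, flat over $K$ in the solid sense) then gives the second identification.

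I do not expect any genuine obstacle: the content of the lemma is really bookkeeping on top of Lemma~\ref{DRc} and \eqref{mdm4}. The mildest subtlety is the solid/flatness issue when passing the identity ${\rm Im}\,d={\rm Ker}\,d$ through $(-)\solid_K{\rm gr}^{r'}_F\B^+_{\dr}$, but since ${\rm gr}^{r'}_F\B^+_{\dr}\simeq C\cdot t^{r'}$ is free of rank one over $C$ this is harmless.
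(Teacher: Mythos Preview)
Your proposal is correct and follows exactly the paper's approach: the paper's entire proof is the single sentence ``It suffices to use Lemma~\ref{DRc} and the isomorphisms~(\ref{mdm4}),'' and you have simply spelled out what this means in each degree range, including the observation that ${\rm Im}\,d={\rm Ker}\,d$ follows from the vanishing of $H^i_{\dr,c}(X)$ for $d\le i\le 2d-1$.
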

\begin{proof}It suffices to use Lemma \ref{DRc} and the isomorphisms (\ref{mdm4}).
\end{proof}

\subsection{Rigid analytic Hyodo-Kato cohomology with compact support} We now define the Hyodo-Kato cohomology with compact support for smooth rigid analytic varieties over $K,C$. The construction is as in Section \ref{compact1}, using the definitions of \cite{CN3},\cite{CN4} of  the rigid analytic Hyodo-Kato cohomologies. We then prove, using the classical case,  that we have a compactly supported version of the Hyodo-Kato (iso)morphisms. 
 
\subsubsection{Definition}
We will give three definitions of Hyodo-Kato cohomologies: the arithmetic one (when the variety is defined over $K$) and the geometric and completed ones (when the variety is defined over $C$). 

\vskip.2cm
(i) {\em Rigid analytic Hyodo-Kato cohomology.}
 Let $L=K,C$ and $X \in {\rm Sm}_L$. As in section \ref{Bdr-coho} (i.e., by regarding the crystalline sheaves as sheaves with values in the solid world), we can define the solid versions of the Hyodo-Kato cohomologies from \cite[4.2, 4.3]{CN3}:
\[
\rg_{\hk}(X) \in {\cal D}_{\phi,N}(F_{\Box}), \text{ if } L=K, \quad \text{ and } \quad
\rg_{\hk}(X) \in {\cal D}_{\phi,N}(F^{\rm nr}_{\Box}), \text{ if } L=C, \]
and, if $X \in {\rm Sm}_C$, the completed Hyodo-Kato cohomology (see \cite[Sec. 4.1.1]{CN4}): 
\[ \rg_{\hk, \breve{F}}(X) \in {\cal D}_{\phi,N}(\breve{F}_{\Box}). \]  
Here $\sd_{\phi,N} (-) $ denotes  the derived $\infty$-category of solid $(\phi, N)$-modules; similarly we define  the category ${\cal D}_{\phi,N,\sg_K}(-)$.

\begin{lemma}
\label{kit-1}
For any $X \in {\rm Sm}_C$, we have a canonical quasi-isomorphism in ${\cal D}_{\phi,N} (\breve{F}_{\Box})$:
\[\beta:\quad \rg_{\hk}(X)\dsolid_{F^{\nr}}\breve{F}\stackrel{\sim}{\to} \rg_{\hk,\breve{F}}(X).\]
\end{lemma}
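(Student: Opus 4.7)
The plan is to reduce the assertion to a local statement on semistable formal models, where it becomes a question about the compatibility of the solid tensor product $-\dsolid_{F^{\rm nr}}\breve{F}$ with $p$-adic completion of crystalline cohomology.

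First, I would observe that both $\rg_{\hk}(-)$ and $\rg_{\hk,\breve{F}}(-)$ are obtained as the global sections on ${\rm Sm}_{C,\eet}$ of sheaves built by sheafifying the presheaves that, on a smooth affinoid admitting a semistable formal model $\XS/\OS_C$, send $X$ to an expression in terms of the absolute log-crystalline cohomology of the special fiber $\XS_0/\OS_F^\times$ (tensored up to $F^{\rm nr}$, resp.\ to $\breve{F}$, in a $(\phi,N)$-equivariant way). By \'etale descent, it suffices to establish the quasi-isomorphism on such local semistable models $\XS$; the natural transformation $\beta$ is induced by the inclusion $F^{\rm nr}\hookrightarrow\breve{F}$, which is already a map of presheaves before sheafification.

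Second, on such a local piece, $\rg_{\hk}(\XS)$ is built from $\rg_{\crr}(\XS_0/\OS_F^{\times})_{\Q_p}\dsolid_F F^{\rm nr}$, and $\rg_{\hk,\breve{F}}(\XS)$ from the $p$-adic completion of the same object, tensored up to $\breve{F}$. Thus the claim reduces to the statement that, in the solid world,
\[
\bigl(\rg_{\crr}(\XS_0/\OS_F^{\times})_{\Q_p}\dsolid_F F^{\rm nr}\bigr)\dsolid_{F^{\rm nr}}\breve{F}
\;\simeq\;
\bigl(\rg_{\crr}(\XS_0/\OS_F^{\times})_{\Q_p}\dsolid_F F^{\rm nr}\bigr)^{\wedge}_p\dsolid_{F^{\rm nr}}\breve{F},
\]
which in turn follows from the general fact that tensoring a complex of solid $F^{\rm nr}$-modules with $\breve{F}$ (the $p$-adic completion of $F^{\rm nr}$) computes the $p$-adic completion, together with the idempotency of completion. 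Concretely one can use an explicit presentation: writing $\rg_{\crr}(\XS_0/\OS_F^{\times})_{\Q_p}$ as $(\R\lim_n \rg_{\crr}(\XS_0/\OS_F^{\times},\Z/p^n))\otimes_{\Z_p}^{\LL,\Box}\Q_p$ and noting that for torsion sheaves the map $F^{\rm nr}\to\breve{F}$ induces an isomorphism mod $p^n$, one identifies both sides level by level.

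The main obstacle will be bookkeeping: checking that $-\dsolid_{F^{\rm nr}}\breve{F}$ commutes with the derived inverse limits and with \'etale sheafification used to define the Hyodo-Kato complexes, all while respecting the $(\phi,N)$-action. This is where one really needs the solid formalism rather than naive topological tensor products; the commutation of $-\dsolid_{F^{\rm nr}}\breve{F}$ with $\R\lim_n$ should be deduced from flatness of $\breve{F}$ over $F^{\rm nr}$ in the solid sense (since $\breve{F}=F^{\rm nr}\otimes_{W(\overline{k})}W(\overline{k})^{\wedge}_p$ and $W(\overline{k})$ has a very clean behavior in $\Z_{p,\Box}$), and the commutation with \'etale sheafification is then automatic. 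Once these formal checks are in place, everything follows from the classical (non-solid) analog already contained in \cite{CN3,CN4}.
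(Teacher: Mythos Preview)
Your high-level strategy—reduce to a local semistable model and invoke base change for crystalline cohomology—is exactly what the paper does. The gap is in the execution of the local step.

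First, your description of the local completed Hyodo-Kato is not the one used in the paper: $\rg_{\hk,\breve{F}}$ is defined directly as (a limit of) crystalline cohomology over $\OS_{\breve{F},n}$, not as a ``$p$-adic completion'' of the uncompleted Hyodo-Kato tensored up. In fact, since you have already inverted $p$, your displayed formula
\[
\bigl(\rg_{\crr}(\XS_0/\OS_F^{\times})_{\Q_p}\dsolid_F F^{\rm nr}\bigr)\dsolid_{F^{\rm nr}}\breve{F}
\;\simeq\;
\bigl(\rg_{\crr}(\XS_0/\OS_F^{\times})_{\Q_p}\dsolid_F F^{\rm nr}\bigr)^{\wedge}_p\dsolid_{F^{\rm nr}}\breve{F}
\]
is a tautology (the $p$-adic completion of a $\Q_p$-module is itself), so it cannot be the content of the lemma. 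The actual local comparison you need is between $\rg_{\crr}(\XS_0/\OS_{F_L,n}^0)\otimes^{\LL}_{\OS_{F_L,n}}\OS_{\breve{F},n}$ and $\rg_{\crr}(\XS_0/\OS_{\breve{F},n})$, which is classical crystalline base change; the paper then takes $\R\lim_n$ and inverts $p$ to obtain a quasi-isomorphism with the \emph{completed} tensor product $\widehat{\otimes}^{\LL}_{F_L}$ on the left.

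Second, the remaining step—passing from the completed tensor product to the solid one—is not addressed correctly in your sketch. You propose to deduce that $-\dsolid_{F^{\rm nr}}\breve{F}$ commutes with $\R\lim_n$ from flatness of $\breve{F}$ over $F^{\rm nr}$, but flatness only gives commutation with finite limits, not with derived inverse limits. The paper's substitute for this step is the observation that $\rg_{\hk}(\XS_0)$ is built from Banach spaces, and for Banach spaces the solid and $p$-completed tensor products agree. That is the missing ingredient in your argument.
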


\begin{proof}
We can work \'etale locally. Let $\XS$ be the base change of a formal model $\XS_{\OS_L}$ defined over $L$ a finite extension of $K$. We have a quasi-isomorphism: 
\[ \rg_{\crr}(\XS_0/\OS_{F_L,n}^0)\otimes^{\LL}_{\OS_{F_L,n}} \OS_{\breve{F},n} \stackrel{\sim}{\to} \rg_{\crr}(\XS_0/\OS_{\breve{F},n}), \quad \text{ for } n \ge 1 \]
where $\OS_{F_L}:=W(k_L)$. By taking the derived inverse limit over $n$ and inverting $p$, we get a quasi-ismorphism:
 \[ \rg_{\hk}(\XS_0) \wh{\otimes}^{\LL}_{{F_L}} {\breve{F}} \stackrel{\sim}{\to} \rg_{\hk, \breve{F}}(\XS_0),\] 
where $\wh{\otimes}^{\LL}$ is the derived completion. Since $ \rg_{\hk}(\XS_0) $ is built from Banach spaces, we can  pass  to the solid tensor product.
\end{proof}

\vskip.2cm
(ii) {\em Compactly supported Hyodo-Kato cohomology.}
Proceeding as in Section \ref{compact1}, we obtain compactly supported Hyodo-Kato cohomology $\R\Gamma_{\hk,c}(X)$ from $\R\Gamma_{\hk}(X)$. This is a solid dg $F$-algebra if $L=K$, and a dg $F^{\nr}$-algebra if $L=C$, equipped with a Frobenius $\phi$, a monodromy operator $N$ and a continuous action of $\sg_K$ if $X$ is defined over $K$. It satisfies all of the  properties in Section~\ref{compact1}. 
   
If $X \in {\rm Sm}_C$, we define as above its completed version $\R\Gamma_{\hk,\breve{F},c}(X)$, using $\R\Gamma_{\hk,\breve{F}}(X)$. It is a solid dg $\breve{F}$-algebra equipped with a Frobenius $\phi$, a monodromy operator $N$ and a continuous action of $\sg_K$ if $X$ is defined over $K$. It satisfies all of the properties in Section \ref{compact1}. Moreover, since the tensor product commutes with  colimits, the quasi-isomorphism from Lemma~\ref{kit-1} applied to $X, X\setminus Z$, for $Z$ quasi-compact, induces a quasi-isomorphism in $\sd_{\phi,N} (\breve{F}_{\Box})$:
   $$
   \beta_c:\quad \rg_{\hk,c}(X)\dsolid_{F^{\nr}}\breve{F}\stackrel{\sim}{\to} \rg_{\hk,\breve{F},c}(X).
   $$
 
\subsubsection{Hyodo-Kato morphisms}
\label{HK-ch}

We will define two types of Hyodo-Kato morphisms: Hyodo-Kato-to-de Rham and Hyodo-Kato-to-$\B^+_{\dr}$.
Recall the  result for usual cohomology:
\begin{proposition}{\rm (\cite[Theorem 4.6]{CN4})}
\label{kit-2}
For $X \in {\rm Sm}_C$, we have natural quasi-isomorphisms 
in ${\cal D}(C_{\Box})$ and $\sd({\B^+_{\dr,\Box}})$, respectively,
  $$
  \iota_{\hk}: \quad \R\Gamma_{\hk,*}(X)\dsolid_{F_L}C\stackrel{\sim}{\to}\R\Gamma_{\dr}(X),\quad   \iota_{\hk}: \quad \R\Gamma_{\hk,*}(X)\dsolid_{F_L}\B^+_{\dr}\stackrel{\sim}{\to}\R\Gamma_{\dr}(X/\B^+_{\dr}),
  $$
where $F_L=F^{\nr}, \breve{F}$ and $*=-, \breve{F}$, that are compatible with the maps $\theta: \B_{\dr}^+ \to C$ and $\vartheta$ from \eqref{projection}.
\end{proposition}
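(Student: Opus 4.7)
The plan is to reduce the statement to the classical crystalline Hyodo-Kato comparison established in \cite[Theorem 4.6]{CN4}, and then to verify that the argument survives the passage to the solid derived setting.

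First I would work \'etale locally, reducing to the case where $X$ is the generic fiber of a semistable formal scheme $\XS$ over $\OS_L$ for some finite extension $L$ of $K$ inside $\ovk$. On such a model both sides are computed by explicit complexes: the Hyodo-Kato side by the log-crystalline complex $\rg_{\crr}(\XS_0/\OS_{F_L}^0)_{\Q_p}$ with its Frobenius and monodromy, and the de Rham side by $\rg(\XS_L,\Omega^{\bullet})$. The Hyodo-Kato map is then constructed in the usual way: one passes through the log-crystalline cohomology of $\XS_0/\OS_L^{\times}$, using the exact closed embedding $\OS_{F_L}^0\hookrightarrow \OS_L^{\times}$ (and, for the $\B^+_{\dr}$-version, further along $\OS_L^{\times}\hookrightarrow \OS_{\ovk}^{\times}$), and invokes the Berthelot-Ogus identification of log-crystalline with de Rham cohomology after base change to $L$.

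For the $\B^+_{\dr}$-statement I would mimic the proof of Lemma \ref{Boa1}: combine Beilinson's crystalline interpretation of $\B^+_{\dr}$ as the Hodge-completed absolute crystalline cohomology of $\OS_{\ovk}^{\times}$ with the K\"unneth quasi-isomorphism
\[
\rg_{\crr}(\XS_{\OS_L}/\OS_L^{\times})\wh{\otimes}^{\LL}_{\OS_L}\rg_{\crr}(\OS_{\ovk}^{\times}/\OS_L^{\times})\xrightarrow{\sim} \rg_{\crr}(\XS/\OS_L^{\times}).
\]
Rationalizing, completing along the Hodge filtration, and tensoring with $\B^+_{\dr}$ produces the map $\iota_{\hk}\colon \rg_{\hk}(X)\dsolid_{F_L}\B^+_{\dr}\to \rg_{\dr}(X/\B^+_{\dr})$, which is a quasi-isomorphism by the corresponding crystalline comparison. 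The $C$-version is deduced by applying $\theta\colon \B^+_{\dr}\to C$, which also supplies the compatibility with $\vartheta$. The completed geometric variant (with $F_L=\breve F$ and $*=\breve F$) then follows from the arithmetic one (with $F_L=F^{\nr}$) by base changing along Lemma \ref{kit-1}.

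The main obstacle will be functional analytic: the classical Hyodo-Kato comparison is phrased for $p$-adically completed tensor products of Banach spaces, whereas the statement here lives in the solid derived category with $\dsolid$. The resolution, exactly as in Lemma \ref{Boa1}, is \cite[Lemma A.9]{Bos22}: each side of $\iota_{\hk}$ is built degreewise from Banach $F_L$-vector spaces arising as derived $p$-adic limits of the mod-$p^n$ log-crystalline complexes, and for such Banach spaces the completed and solid derived tensor products agree in $\sd(\B^+_{\dr,\Box})$. Once this identification is made, naturality of all constructions in the log-structure makes the compatibility with $\theta$ and $\vartheta$ automatic.
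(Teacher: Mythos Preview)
Your overall strategy---work \'etale locally, reduce to the comparison already proved in \cite{CN4}, and then argue that completed tensor products agree with solid ones on the relevant Banach complexes---is exactly the paper's strategy, and the functional-analytic paragraph is well put. But there is a genuine confusion at the heart of your sketch: you have essentially described the construction of $\iota_{\rm BK}$ (Lemma~\ref{Boa1}), not of $\iota_{\hk}$.

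The K\"unneth isomorphism you invoke,
\[
\rg_{\crr}(\XS_{\OS_L}/\OS_L^{\times})\wh{\otimes}^{\LL}_{\OS_L}\rg_{\crr}(\OS_{\ovk}^{\times}/\OS_L^{\times})\xrightarrow{\sim} \rg_{\crr}(\XS/\OS_L^{\times}),
\]
identifies $\rg_{\dr}(\XS_L)\dsolid_L\B^+_{\dr}$ with the (Hodge-completed) absolute crystalline cohomology of $\XS$; that is the content of $\iota_{\rm BK}$. What is still missing is the passage from Hyodo--Kato cohomology $\rg_{\hk}(\XS)=\rg_{\crr}(\XS_0/\OS_{F_L}^0)$ to $\rg_{\crr}(\XS)$. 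Your phrase ``using the exact closed embedding $\OS_{F_L}^0\hookrightarrow \OS_L^{\times}$'' glosses over the real difficulty: that embedding induces a map in the wrong direction (restriction, not base change), and producing a section is precisely the hard part of the Hyodo--Kato theory. The paper singles out this step explicitly: the ``main point'' is the quasi-isomorphism
\[
\epsilon^{\hk}_{\st}:\ \rg_{\hk}(\XS_1^0)_{\Q_p}\dsolid_{\breve{F}}\B^+_{\st}\ \stackrel{\sim}{\longrightarrow}\ \rg_{\crr}(\XS_1)_{\Q_p}\dsolid_{\B^+_{\crr}}\B^+_{\st},
\]
which uses $\B^+_{\st}$ (with its monodromy variable) to absorb the obstruction to a canonical section. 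Only after $\epsilon^{\hk}_{\st}$ is in place does one compose with the $\iota_{\rm BK}$-type identification to obtain $\iota_{\hk}$; see also the diagram in the proof of Theorem~\ref{syn-HK-dR}, where $\iota^1_{\rm BK}$ is the composite of $\epsilon^{\hk}_{\st}$ with the map coming from $\B^+_{\crr}\hookrightarrow\B^+_{\st}$. Your proposal never mentions $\B^+_{\st}$ and therefore does not supply this bridge.
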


\begin{proof}
We can work \'etale locally. The local quasi-isomorphisms are constructed in \cite{CN4} and the construction goes through in the condensed setting. Recall, that the main point is to show that we have a quasi-isomorphism:
$$
\epsilon^{\hk}_{\st}:\rg_{\hk}(\XS_1^0)_{\Q_p}\dsolid_{\breve{F}}\B^+_{\st}\stackrel{\sim}{\to} \rg_{\crr}(\XS_1)_{\Q_p}\dsolid_{\B^+_{\crr}}\B^+_{\st}. 
$$
\end{proof}

We deduce:
  \begin{proposition}
  \label{sains1}
 Let  $X$ be a smooth rigid analytic variety over $L=K,C$. 
  \begin{enumerate} 
  \item There is a Hyodo-Kato morphism in $\sd(F_{L,\Box})$
  $$
   \iota_{\hk,c}: \quad \R\Gamma_{\hk,*,c}(X){\to}\R\Gamma_{\dr,c}(X)
  $$
 (where $*=-, \breve{F}$) that is compatible with its usual version $
    \iota_{\hk}:  \R\Gamma_{\hk,*}(X){\to}\R\Gamma_{\dr}(X).
  $
  \item If $L=C$, it induces quasi-isomorphisms in ${\cal D}(C_{\Box})$ and in ${\B^+_{\dr,\Box}}$, respectively:
  \begin{align*}
  & \iota_{\hk,c}: \R\Gamma_{\hk,*,c}(X)\dsolid_{F_L}C\stackrel{\sim}{\to}\R\Gamma_{\dr,c}(X),\\
  &  \iota_{\hk,c}: \R\Gamma_{\hk,*,c}(X)\dsolid_{F_L}\B^+_{\dr}\stackrel{\sim}{\to}\R\Gamma_{\dr,c}(X/\B^+_{\dr}),
  \end{align*}
 where $F_L=F^{\nr}, \breve{F}$ and $*=-, \breve{F}$,  that are compatible with the maps $\theta: \B_{\dr}^+ \to C$ and $\vartheta_c$ from \eqref{projection-c}.
  \end{enumerate}
  \end{proposition}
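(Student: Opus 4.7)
The plan is to deduce Proposition \ref{sains1} from its classical counterpart (Proposition \ref{kit-2}) by the same colimit/fiber formalism that defines compactly supported cohomology in Section \ref{compact1}. Fix $Z \in \Phi_X$. Applying Proposition \ref{kit-2} to $X$ and to $X \setminus Z$ (and using the compatibility of both constructions under open immersions) gives a square of Hyodo–Kato morphisms
\[
\xymatrix{
\R\Gamma_{\hk,*}(X)\ar[r]^-{\iota_{\hk}}\ar[d] & \R\Gamma_{\dr}(X)\ar[d]\\
\R\Gamma_{\hk,*}(X\setminus Z)\ar[r]^-{\iota_{\hk}} & \R\Gamma_{\dr}(X\setminus Z),
}
\]
functorial in $Z \in \Phi_X^{\mathrm{op}}$. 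Passing to the filtered colimit over $Z$ (which commutes with the formation of $\partial(-)$-cohomology by definition) and then to the mapping fiber yields the morphism
\[
\iota_{\hk,c}\colon \R\Gamma_{\hk,*,c}(X)\lra \R\Gamma_{\dr,c}(X)
\]
in $\sd(F_{L,\Box})$, and its compatibility with $\iota_{\hk}$ is built in. This settles part (1).

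For part (2), assume $L=C$ and let $R$ be either $C$ or $\B^+_{\dr}$, regarded as a solid $F_L$-algebra. The key point is that the solid tensor product $(-)\dsolid_{F_L} R$, being a left adjoint on the derived $\infty$-category $\sd(F_{L,\Box})$, commutes with filtered colimits and with mapping fibers. Applying this functor to the distinguished triangle
\[
\R\Gamma_{\hk,*,c}(X)\lra \R\Gamma_{\hk,*}(X)\lra \colim_{Z\in\Phi_X}\R\Gamma_{\hk,*}(X\setminus Z)
\]
produces a distinguished triangle whose second and third terms are identified via Proposition \ref{kit-2} (applied to each quasi-compact open complement of $Z$) with
\[
\R\Gamma_{\dr}(X)\quad\text{and}\quad \colim_{Z}\R\Gamma_{\dr}(X\setminus Z)=\R\Gamma_{\dr}(\partial X)
\]
when $R=C$, and with the analogous $\B^+_{\dr}$-cohomologies when $R=\B^+_{\dr}$. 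Comparing with the triangle defining $\R\Gamma_{\dr,c}(X)$ (resp.\ $\R\Gamma_{\dr,c}(X/\B^+_{\dr})$) and invoking the two-out-of-three property for quasi-isomorphisms in $\sd(R_\Box)$ yields the desired quasi-isomorphisms
\[
\iota_{\hk,c}\colon \R\Gamma_{\hk,*,c}(X)\dsolid_{F_L}R\stackrel{\sim}{\lra}\R\Gamma_{\dr,c}(X/R),
\]
with $R\in\{C,\B^+_{\dr}\}$. Compatibility with $\theta$ and $\vartheta_c$ follows from the analogous compatibility in Proposition \ref{kit-2} by the same termwise argument, since $\vartheta_c$ was constructed in Proposition \ref{projection-c} precisely by this colimit/fiber procedure applied to $\vartheta$.

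The substantive input is entirely Proposition \ref{kit-2}; the only thing to verify by hand is the compatibility of $(-)\dsolid_{F_L} R$ with the specific filtered colimits and fibers appearing here. The mildly delicate step is to know that the filtered colimit indexed by $\Phi_X$ of the solid Hyodo–Kato complexes $\R\Gamma_{\hk,*}(X\setminus Z)$ tensored with $R$ still agrees with the tensor product of the colimit, which is where the assertion that $\dsolid_{F_L}R$ preserves filtered colimits is used. This is standard for solid modules (the functor $\dsolid_{F_L}R$ is a colimit-preserving left adjoint, cf.\ the proof of Lemma \ref{Boa1} where the same mechanism already underlies \eqref{Boa2}), and given it, the rest of the proof is formal.
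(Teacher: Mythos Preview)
Your proof is correct and follows exactly the paper's approach: apply Proposition~\ref{kit-2} to $X$ and to $X\setminus Z$ for $Z$ quasi-compact, then pass to the colimit and the mapping fiber, using that $(-)\dsolid_{F_L}R$ commutes with these. The paper's proof is a single sentence to this effect; you have simply spelled out the details.
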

  
  \begin{proof} The proposition follows immediately from Proposition~\ref{kit-2} applied to $Y=X, X\setminus Z$, for $Z$ quasi-compact.
  \end{proof}

\section{Overconvergent de Rham and Hyodo-Kato cohomologies with compact support}
In this section, we define compactly supported de Rham, $\B_{\dr}^+$- and Hyodo-Kato cohomologies for smooth dagger varieties over $L=K,C$. To do that we first define the compactly supported cohomology of a dagger affinoid $X$ as the  colimit of the cohomologies of the rigid affinoids $X_h$ forming a presentation of the dagger structure, but supported in the rigid completion $\wh{X}$ of $X$. We then globalize this definition. 

We prove various properties satisfied by these cohomologies, which are, as before, compactly supported versions of the results from \cite{CN3},\cite{CN4}. In particular, we compare them with their rigid analytic analogs defined in the previous section.   
 
 \subsection{Definition of de Rham cohomology with compact support}
We first introduce the definition of overconvergent de Rham cohomology with compact support using presentations of dagger structures (see \cite[Appendix]{Vez}, \cite[Section 3.2.1]{CN3} for the definition and properties of presentations of dagger structures). 
  
\vskip.2cm
    (i) {\em Local definition}. Let $X$ be a dagger affinoid over $L=K,C$. Let ${\rm pres}(X)=\{X_h\}$ and let $\wh{X}$ be the rigid completion of $X$. 
\index{rghk@\rghk}Define in ${\cal D}(L_{\Box})$:
  \begin{align*}
 \rg^{\natural}_{\dr}(\partial X) & := \rg_{\dr}(\partial \{X\}_h):=\colim_h\rg_{\dr}(X_h\setminus \wh{X}),\\
\rg^{\natural}_{\dr,c}(X) & := \rg_{\dr,c}(\{X\}_h):=[\rg^{\natural}_{\dr}(X)\to  \rg^{\natural}_{\dr}(\partial X) ]
  \end{align*}
  where $\rg^{\natural}_{\dr}(X):= \colim_h \rg_{\dr}(X_h)$ is the overconvergent de Rham cohomology from \cite{CN4}. We have a natural filtration on $\rg^{\natural}_{\dr,c}(X)$:
  \[ F^r\rg^{\natural}_{\dr,c}(X):=[F^r\rg^{\natural}_{\dr}(X) \to \colim_h F^r\rg_{\dr}(X_h\setminus \wh{X})], \quad  r \ge 0. \]
We can write
\begin{equation}\label{trick1}
 F^r\rg^{\natural}_{\dr,c}(X):= \colim_h[F^r\rg_{\dr}(X_h)\to F^r\rg_{\dr}(X_h\setminus \wh{X})]=:\colim_h F^r\rg_{\dr,\wh{X}}(X_h).
\end{equation}
As in the pro-\'etale case, this makes it clear that the functor $F^r\rg^{\natural}_{\dr,c}(X)$ is covariant for open embeddings.  
\vskip.2cm
  (ii) {\em Globalization}. For a general smooth dagger variety $X$ over $L$, using the natural equivalence
  \begin{equation}
  \label{equiv1}
  {\rm CSh}({\rm SmAff}^{\dagger}_{L,{\rm an}})\stackrel{\sim}{\to} {\rm CSh}({\rm Sm}^{\dagger}_{L,{\rm an}})\
  \end{equation}
  we define the cosheaves\footnote{See \cite[Definition 5.5.4.1]{LurieHA} for a definition of a cosheaf in a $\infty$-category.} $\sa_{\dr,c}$ and $F^r\sa_{\dr,c}$, $r\ge 0$, on $X_{\an}$ with values in ${\cal D}(L_{\Box})$ as the cosheaves associated to the precosheaves defined by $U\mapsto \rg^{\natural}_{\dr,c}(U)$ and $U\mapsto F^r\rg^{\natural}_{\dr,c}(U)$, $r \ge0$, for $U\in  {\rm SmAff}^{\dagger}_{L}$, $U\subset X$  an admissible open. We set  $$
  \rg_{\dr,c}(X):=\rg(X,\sa_{\dr,c}) \quad \text{ and } F^r\rg_{\dr,c}(X):=  \rg_{\dr,c}(X):=\rg(X,F^r\sa_{\dr,c}), \quad r\ge0. 
    $$
We obtain a solid filtered dg $L$-algebra. 

\begin{lemma}\label{pois1}
\begin{enumerate} 
\item {\rm (Local-global compatibility)}
If $X$ is a smooth dagger affinoid over $L=K,C$, there is a  quasi-isomorphism in $\sff{\cal D}(L_{\Box})$: 
\[ \rg_{\dr,c}(X) \stackrel{\sim}{\to} \rg^{\natural}_{\dr,c}(X). \]
\item {\rm (Passing to completion)}  If $X$ is smooth and partially proper over $L=K,C$ then there is a natural quasi-isomorphism in $\sff{\cal D}(L_{\Box})$
  $$
  \rg_{\dr,c}({X})\stackrel{\sim}{\to} \rg_{\dr,c}(\wh{X}).
  $$
  \end{enumerate}
\end{lemma}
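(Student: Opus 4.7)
The plan closely follows the pro-étale argument in Lemma~\ref{herbata2}, with de Rham cohomology (and its Hodge filtration) replacing $\ell$-adic pro-étale cohomology throughout. Both claims reduce, via excision inside the rigid ambient $X_h$, to statements we already know.

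For claim~(1), it suffices (by the general cosheaf criterion underlying \eqref{equiv1}) to check that for any admissible covering $X=X_1\cup X_2$ of a smooth dagger affinoid by dagger affinoids, the sequence
\[ \rg^{\natural}_{\dr,c}(X_1\cap X_2) \to \rg^{\natural}_{\dr,c}(X_1)\oplus\rg^{\natural}_{\dr,c}(X_2) \to \rg^{\natural}_{\dr,c}(X) \]
is a distinguished triangle, and similarly for each $F^r$. I pick a compatible presentation $\{X_h\}$ of $X$ restricting to presentations of $X_1$, $X_2$, and $X_1\cap X_2$. Using the reformulation \eqref{trick1}, the desired Mayer--Vietoris triangle becomes, after passing to the filtered colimit over $h$, the triangle
\[ F^r\rg_{\dr,\widehat{X_1\cap X_2}}(X_h) \to F^r\rg_{\dr,\widehat{X}_1}(X_h)\oplus F^r\rg_{\dr,\widehat{X}_2}(X_h) \to F^r\rg_{\dr,\widehat{X}}(X_h), \]
which is an instance of Mayer--Vietoris for rigid analytic local cohomology supported in quasi-compact opens of the rigid affinoid $X_h$ and so is standard (filtered colimits preserve distinguished triangles).

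For claim~(2), I first treat the Stein case. Choose a strict admissible covering $\{U_n\}$ of $X$ by dagger affinoids with $U_n\Subset U_{n+1}$, so that $\{\widehat U_n\}$ is a corresponding Stein exhaustion of $\widehat X$. Stacking the defining distinguished triangles of compactly supported cohomology gives the commutative diagram
\[ \xymatrix@C=1.1cm{
F^r\rg_{\dr,c}(\widehat X) \ar[r] & F^r\rg_{\dr}(\widehat X) \ar[r] & \colim_n F^r\rg_{\dr}(\widehat X\setminus\widehat U_n) \\
F^r\rg_{\dr,c}(X) \ar[r]\ar@{-->}[u] & F^r\rg_{\dr}(X) \ar[r]\ar[u]_{\wr} & \colim_n F^r\rg_{\dr}(X\setminus U_n) \ar[u]_{\wr}
} \]
where the second and third vertical arrows are the usual filtered quasi-isomorphisms between dagger and rigid de Rham cohomology proved in \cite[Prop.~3.17]{CN3} (applied to the partially proper varieties $X$ and $X\setminus U_n$). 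The dashed arrow is then forced to be a filtered quasi-isomorphism.

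For a general smooth partially proper $X$, the assertion is analytic-local on $X$: both sides are sheaves on $X_{\an}$ (by construction on the left, and because $\widehat{(-)}$ sends admissible open covers to admissible open covers on the right), and a partially proper variety is locally Stein, so the Stein case together with analytic codescent yields the claim. The only step requiring a little care is the filtered compatibility in claim~(1), but this is automatic from the formula \eqref{trick1} since the Hodge filtration commutes with the filtered colimit over $h$ and with the excision mapping fiber.
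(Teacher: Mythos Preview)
Your proposal is correct and follows essentially the same approach as the paper: the paper's proof simply says ``analogous to the one of Lemma~\ref{herbata2}'', and you have faithfully spelled out that analogy---Mayer--Vietoris via excision inside $X_h$ for (1), the Stein comparison diagram plus codescent for (2)---while also noting the Hodge-filtration compatibility that the filtered statement in $\sff\sd(L_\Box)$ requires. One small point worth making explicit (as the paper does in the proof of Lemma~\ref{herbata2}) is that there is no circular reasoning: the distinguished triangle on the dagger side in your diagram for~(2) is the de Rham analog of \eqref{tri-overcv-proet}, whose proof (Proposition~\ref{stein-dr}) uses only claim~(1) of the present lemma.
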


\begin{proof}
The proof of this lemma is analogous to  the one of Lemma~\ref{herbata2}.
 \end{proof}

\begin{proposition}
\label{stein-dr}
Let $X$ be a smooth dagger Stein variety over $L=K,C$ and let $\{U_n\}_{n}$ be an admissible strict covering of $X$ with dagger affinoids $U_n \Subset U_{n+1}$. Then there are natural quasi-isomorphisms in $\sd(L_{\Box})$: 
\[ \rg_{\dr,c}(X) \stackrel{\sim}{\to}  \colim_n \rg_{\dr,U_n}(X)\stackrel{\sim}{\to} \colim_n \rg_{\dr,U_n}(U_{n+1}). \]
%where we set
%$\rg_{\dr,U_{n}}(X):=[\rg_{\dr}(X) \to \rg_{\dr}(X \setminus U_{n})].$ 
\end{proposition}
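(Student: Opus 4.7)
The proof closely follows that of Proposition \ref{stein-proet}, with pro-\'etale cohomology replaced by de Rham cohomology throughout. The necessary ingredients are all available in the de Rham setting: the cosheaf property of $\sa_{\dr,c}$ on $X_{\mathrm{an}}$, the local-global compatibility from Lemma \ref{pois1}(1), a de Rham analogue of Proposition \ref{natural1} (proved by the same three-row diagram chase, using the overconvergent-to-dagger comparison $\colim_h \rg_{\dr}(U_{n,h}) \simeq \rg_{\dr}(U_n)$ in place of the pro-\'etale one), and the excision isomorphism \eqref{excision1}.

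Concretely, for each $n$ I would pick a presentation $\{U_{n,h}\}_h$ of $U_n$ realized as open rigid affinoids of $\wh{X}$ containing $\wh{U}_n$, which is possible because $U_n \Subset U_{n+1}$. Setting $\rg^{\rm GK}_{\dr,c}(U_n) := \rg_{\dr, \wh{U}_n}(U_{n,h})$, which is independent of $h$ by excision, the chain
\[
\rg_{\dr,c}(X) \xleftarrow{\sim} \colim_n \rg_{\dr,c}(U_n) \xleftarrow{\sim} \colim_n \rg^{\natural}_{\dr,c}(U_n) \xleftarrow{\sim} \colim_n \rg^{\rm GK}_{\dr,c}(U_n) \xleftarrow{\sim} \colim_n \rg_{\dr, \wh{U}_n}(\wh{X})
\]
is built, step by step, from the cosheaf property on the Stein covering, Lemma \ref{pois1}(1), the de Rham analogue of Proposition \ref{natural1}, and excision \eqref{excision1} applied to $\wh{U}_n \subset U_{n,h} \subset \wh{X}$. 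Identifying $\rg_{\dr, \wh{U}_n}(\wh{X})$ with $\rg_{\dr, U_n}(X)$ termwise then gives the first quasi-isomorphism of the proposition.

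The second quasi-isomorphism is a Mayer-Vietoris argument. Since $U_n \subset U_{n+1}$, the covering $X = (X \setminus U_n) \cup U_{n+1}$ is admissible with quasi-compact intersection $U_{n+1} \setminus U_n$, so the square
\[
\xymatrix{
\rg_{\dr}(X) \ar[r]\ar[d] & \rg_{\dr}(X \setminus U_n)\ar[d] \\
\rg_{\dr}(U_{n+1}) \ar[r] & \rg_{\dr}(U_{n+1} \setminus U_n)
}
\]
is cartesian. Taking horizontal fibres yields $\rg_{\dr, U_n}(X) \xrightarrow{\sim} \rg_{\dr, U_n}(U_{n+1})$, and passing to the colimit in $n$ concludes.

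The main obstacle is the termwise identification $\rg_{\dr, \wh{U}_n}(\wh{X}) \simeq \rg_{\dr, U_n}(X)$ at the end of the first chain. It decomposes into Lemma \ref{pois1}(2) for $X$ (which is partially proper, hence directly handled) and the analogous dagger-to-rigid comparison for the complement $X \setminus U_n$. The latter is not automatic since $X \setminus U_n$ need not itself be partially proper, so I would treat it by a cofinality argument: insert a slightly enlarged dagger affinoid $U_n \Subset U'_n \Subset U_{n+1}$, apply the standard dagger-to-rigid comparison to the partially proper open $X \setminus \overline{U'_n}$, and use that such opens are cofinal to pass to the colimit over $n$ without changing the result.
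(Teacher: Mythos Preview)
Your proposal is correct and follows essentially the same chain of quasi-isomorphisms as the paper's proof: cosheaf property, local-global compatibility (Lemma~\ref{pois1}), the de Rham analogue of Proposition~\ref{natural1}, and excision to pass from $\rg_{\dr,\wh{U}_n}(U_{n,h})$ to $\rg_{\dr,\wh{U}_n}(\wh{X})$.

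Two minor differences worth noting. First, the paper does not separately argue the second quasi-isomorphism $\colim_n \rg_{\dr,U_n}(X)\simeq \colim_n \rg_{\dr,U_n}(U_{n+1})$; it is left implicit, being a direct instance of excision~\eqref{excision1} (your Mayer--Vietoris square is exactly that). Second, for the identification $\rg_{\dr,\wh{U}_n}(\wh{X})\simeq \rg_{\dr,U_n}(X)$ the paper simply invokes the dagger-to-rigid comparisons $\rg_{\dr}(X)\simeq\rg_{\dr}(\wh{X})$ and $\rg_{\dr}(X\setminus U_n)\simeq\rg_{\dr}(\wh{X}\setminus\wh{U}_n)$ without further comment, whereas you flag the second one as needing care and supply a cofinality workaround. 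Your caution is not misplaced, but the paper treats this comparison as standard input (as it does in the pro-\'etale case, citing \cite[Prop.~3.17]{CN3}); your workaround is a valid way to make it self-contained.
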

\begin{remark}In particular, if we define the cohomology of the boundary of $X$ from Proposition \ref{stein-dr} as 
$$ \rg_{\dr}(\partial X):= \colim_{n} \rg_{\dr}(X \setminus U_n),$$
we have a distinguished triangle:
\begin{equation}
\label{tri-overcv} 
\rg_{\dr,c}(X) \to \rg_{\dr}(X) \to \rg_{\dr}(\partial X). 
\end{equation}
As in the rigid analytic set-up. 
\end{remark}
\begin{proof}
Let $X$ and $\{U_n\}$ be as in the statement of the proposition. For each $n$, let $\{U_{n,h} \}_h$ be a presentation of $U_n$. Then, 
\begin{align*}
\rg_{\dr,c}(X) \xrightarrow{\sim} \colim_n \rg_{\dr,c}(U_n) \xleftarrow{\sim} \colim_n \rg^{\natural}_{\dr,c}(U_n) \xleftarrow{\sim}  \colim_n \rg^{\rm GK}_{\dr,c}(U_n) 
 \end{align*} 
 where the first quasi-isomorphism comes from the fact that $\AS_{\dr,c}$ is a cosheaf, the second one is the local-global compatibility from Lemma~\ref{pois1}, and the third one is the de Rham analog of the one from Proposition~\ref{natural1}. 
For $h \in \N$, from the excision  \eqref{excision1}, we have 
\[ \rg_{\dr,c}^{\rm GK}(U_n)= \rg_{\dr, \wh{U}_n}(U_{n,h}) \xleftarrow{\sim} \rg_{\dr, \wh{U}_n}(\wh{X}). \] 
We conclude the proof using the quasi-isomorphisms $\rg_{\dr}(\wh{X})\xleftarrow{\sim}\rg_{\dr}(X)$ and $\rg_{\dr}(\wh{X}\setminus \wh{U}_n)\xleftarrow{\sim}\rg_{\dr}(X \setminus U_n)$.
\end{proof}

\subsection{Overconvergent ${\mathbf B}^+_{\dr}$-cohomology with compact support}
Proceeding as for the de Rham cohomology, we define compactly supported $\B_{\dr}^+$-cohomology via presentation of dagger structures.  

\subsubsection{Definition}
Let $X$ be a smooth dagger affinoid over $C$. Taking a presentation ${\rm pres}(X)=\{X_h\}$ of $X$ as above we define overconvergent ${\mathbf B}^+_{\dr}$-cohomology with compact support
 $$     F^r\rg^{\natural}_{\dr,c}(X/\B^+_{\dr}) \in {\cal D}(\B_{\dr,\Box}^+), \quad \mbox{for }r\geq 0,
 $$
   using the overconvergent ${\mathbf B}^+_{\dr}$-cohomology 
 \[ F^r\rg^{\natural}_{\dr}(X/\B_{\dr}^+):= \colim_h F^r\rg_{\dr}(X_h/\B_{\dr}^+)\]
  from \cite[Sec.  3.4.1]{CN3}. The functor $ F^r\rg^{\natural}_{\dr,c}(X/\B^+_{\dr}) $ is covariant for open embeddings. 
  
  The $\eta$-analytic cosheafification of $ F^r\rg^{\natural}_{\dr,c}(X/\B^+_{\dr})$ gives us a cosheaf $F^r\sa^{\bwedge}_{\crr,c}$ on any smooth dagger variety $X$ over $C$, with values in ${\cal D}(\B_{\dr,\Box}^+)$. The filtered $\B^+_{\dr}$-cohomology
with compact support is then defined as 
$$ F^r \rg_{\dr,c}(X/\B^+_{\dr}):=\rg(X,F^r\sa^{\bwedge}_{\crr,c}) \in {\cal D}(\B_{\dr,\Box}^+) ,\quad r\geq 0.$$
\begin{lemma}
\label{pois1-Bdr}
\begin{enumerate}
\item {\rm (Local-global compatibility)}
If $X$ is a smooth dagger affinoid over $C$, there is a  quasi-isomorphism in $\sff{\cal D}({\B_{\dr,\Box}^+})$: 
\[ \rg_{\dr,c}(X/\B^+_{\dr}) \stackrel{\sim}{\to} \rg^{\natural}_{\dr,c}(X/\B^+_{\dr}). \]
\item   {\rm (Passing to completion)} If $X$ is smooth and partially proper over $C$ then there is a  natural quasi-isomorphism in $\sff{\cal D}({\B_{\dr,\Box}^+})$
  $$
  \rg_{\dr,c}({X}/\B^+_{\dr})\stackrel{\sim}{\to} \rg_{\dr,c}(\wh{X}/\B^+_{\dr}).
  $$

\end{enumerate}
\end{lemma}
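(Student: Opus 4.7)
The plan is to mimic, verbatim, the proofs of Lemma~\ref{herbata2} and Lemma~\ref{pois1}, replacing the pro-\'etale (resp.\ de Rham) coefficients by the filtered $\B^+_{\dr}$-coefficients throughout. The two main inputs are the excision property \eqref{excision1}, which applies to any sheaf (hence in particular to the sheaves $F^r\AS_{\crr}^{\bwedge}$ defining $\B^+_{\dr}$-cohomology), and the usual comparison between overconvergent and rigid analytic cohomology.

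For claim (1), by the definition of $\rg_{\dr,c}(X/\B^+_{\dr})$ as global sections of the cosheaf associated to $U \mapsto F^r\rg^{\natural}_{\dr,c}(U/\B^+_{\dr})$, it suffices to check that the precosheaf $U\mapsto F^r\rg^{\natural}_{\dr,c}(U/\B^+_{\dr})$ already satisfies the cosheaf condition on smooth dagger affinoids. That is, for any admissible open cover $X=X_1\cup X_2$ of a smooth dagger affinoid $X$ by dagger affinoids, one needs the Mayer-Vietoris distinguished triangle in $\sff\sd(\B^+_{\dr,\Box})$
\[ F^r\rg^{\natural}_{\dr,c}(X/\B^+_{\dr})\leftarrow F^r\rg^{\natural}_{\dr,c}(X_1/\B^+_{\dr})\oplus F^r\rg^{\natural}_{\dr,c}(X_2/\B^+_{\dr})\leftarrow F^r\rg^{\natural}_{\dr,c}(X_1\cap X_2/\B^+_{\dr}). \]
Choosing a compatible presentation $\{X_h\}$ of $X$ and rewriting using the analog of \eqref{trick1}, namely
\[ F^r\rg^{\natural}_{\dr,c}(X/\B^+_{\dr})\simeq \colim_h [F^r\rg_{\dr}(X_h/\B^+_{\dr})\to F^r\rg_{\dr}(X_h\setminus \wh{X}/\B^+_{\dr})] =: \colim_h F^r\rg_{\dr,\wh{X}}(X_h/\B^+_{\dr}), \]
and applying excision \eqref{excision1} for each $h$, the triangle reduces to
\[ F^r\rg_{\dr,\wh{X}}(X_h/\B^+_{\dr})\leftarrow F^r\rg_{\dr,\wh{X}_1}(X_h/\B^+_{\dr})\oplus F^r\rg_{\dr,\wh{X}_2}(X_h/\B^+_{\dr})\leftarrow F^r\rg_{\dr,\wh{X_1\cap X_2}}(X_h/\B^+_{\dr}), \]
which is the Mayer-Vietoris for the rigid analytic $\B^+_{\dr}$-cohomology with support; this is known.

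For claim (2), the argument parallels the proof of Lemma~\ref{herbata2}(2). By analytic codescent (via claim (1)) it suffices to treat the case where $X$ is dagger Stein. Fix an admissible covering $\{U_n\}$ of $X$ by dagger affinoids with $U_n\Subset U_{n+1}$; then $\{\wh{U}_n\}$ covers $\wh{X}$. One forms the commutative diagram
\[ \xymatrix@C=1.3em{ \rg_{\dr,c}(\wh{X}/\B^+_{\dr}) \ar[r]  & \rg_{\dr}(\wh{X}/\B^+_{\dr}) \ar[r]  & \colim_{n}  \rg_{\dr}(\wh{X}\setminus \wh{U}_n/\B^+_{\dr})  \\
\rg_{\dr,c}(X/\B^+_{\dr}) \ar[r] \ar@{-->}[u] & \rg_{\dr}(X/\B^+_{\dr}) \ar[r]  \ar[u]_{\rotatebox{90}{$\sim$}}& \colim_{n}  \rg_{\dr}(X\setminus {U}_n/\B^+_{\dr}) \ar[u]_{\rotatebox{90}{$\sim$}} } \]
whose rows are the defining distinguished triangles of compactly supported $\B^+_{\dr}$-cohomology (the bottom row being the $\B^+_{\dr}$-analog of \eqref{tri-overcv}, established exactly as in Proposition~\ref{stein-dr} using claim (1)). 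The two right vertical maps are quasi-isomorphisms by the usual overconvergent vs.\ rigid analytic comparison for $\B^+_{\dr}$-cohomology (see \cite[Sec.\ 3.4.1]{CN3}), hence the induced dashed arrow is a quasi-isomorphism. The filtered refinement is handled term by term, since the Hodge filtration is preserved by excision and by the comparison with completions.

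The main obstacle, if any, is purely bookkeeping in the filtered solid setting: one must check that excision \eqref{excision1}, the Mayer-Vietoris triangle, and the overconvergent vs.\ rigid comparison are all compatible with the Hodge filtration $F^r$ and take place in $\sff\sd(\B^+_{\dr,\Box})$ rather than just $\sd(\B^+_{\dr,\Box})$. This is by now standard and follows from the fact that the sheaves $F^r\AS_{\crr}^{\bwedge}$ are sheaves in the relevant topology and that solid tensor products, colimits, and mapping fibers all respect filtrations via the formula used in the proof of Lemma~\ref{Boa1}.
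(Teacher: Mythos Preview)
Your proposal is correct and follows exactly the approach the paper indicates: the paper's proof is the single line ``The proof of this lemma is analogous to the one of Lemma~\ref{herbata2}'', and you have faithfully unpacked that analogy, including the Mayer-Vietoris reduction via excision for claim (1) and the Stein-plus-codescent argument for claim (2). The only minor point is that the paper elsewhere cites \cite[Cor.~4.32]{CN4} (rather than \cite[Sec.~3.4.1]{CN3}) for the overconvergent vs.\ rigid analytic comparison of $\B^+_{\dr}$-cohomology, but this does not affect the argument.
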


\begin{proof}
The proof  of this lemma is analogous to  the one of Lemma~\ref{herbata2}. 
 \end{proof}

 We have an analog of  Proposition~\ref{stein-dr}  in this setting (using \cite[Cor. 4.32]{CN4})  hence we see that if $X$ is Stein and smooth, we have a distinguished triangle 
\begin{equation}
\label{tri-overcv1} 
\rg_{\dr,c}(X/ \B_{\dr}^+) \to \rg_{\dr}(X/ \B_{\dr}^+) \to \rg_{\dr}(\partial X/ \B_{\dr}^+) 
\end{equation}
where $ \rg_{\dr}(\partial X/ \B_{\dr}^+):= \colim_{n} \rg_{\dr}((X \setminus U_n)/ \B_{\dr}^+)$,  for $\{U_n\}$ a Stein covering of $X$.
\subsubsection{Properties} \label{ov-Bdr000}
We will now prove properties of overconvergent $\B^+_{\dr}$-cohomology that do not require Hyodo-Kato cohomology.
 We have canonical maps, in $\sd(\B^+_{\dr,\Box})$ and $\sff\sd(K_{\Box})$, respectively:
\begin{align*}
\vartheta_c: \quad & F^r\rg_{\dr,c}(X/\B^+_{\dr})\to F^r\rg_{\dr,c}(X),\quad X\in{\rm Sm}^{\dagger}_C, \quad r\geq 0,\\
\iota_{{\rm BK},c}:  \quad & \rg_{\dr,c}(X)\to\rg_{\dr,c}(X_C/\B^+_{\dr}),\quad X\in{\rm Sm}^{\dagger}_K,
\end{align*}
induced by their rigid analytic analogs. 
 \begin{proposition}\label{prison-c}
 \begin{enumerate}
 \item  {\rm (Projection)} Let $X$ be a smooth dagger variety over $C$. Then, 
\begin{enumerate}
\item The map $\vartheta_c$ yields  a natural quasi-isomorphism in ${\cal D}(C_{\Box})$:
 $$
 \vartheta_c:\quad \rg_{\dr,c}(X/\B^+_{\dr})\dsolid_{\B^+_{\dr}}C \stackrel{\sim}{\to}\rg_{\dr}(X).
 $$
  \item More generally, for $r\geq 0$, we have a natural distinguished triangle in  ${\cal D}({\B_{\dr,\Box}^+}) $
 \begin{equation}
 \label{detail11-c}
 F^{r-1}\rg_{\dr,c}(X/\B^+_{\dr,c})\lomapr{t} F^r\rg_{\dr,c}(X/\B^+_{\dr})\lomapr{\vartheta_c}F^r\rg_{\dr,c}(X)
 \end{equation}
 \item  {\rm (Devissage)} For $r\geq 0$, we have a natural distinguished triangle in $ {\cal D}({\B_{\dr,\Box}^+}) $
 \begin{equation}
 \label{niedziela10-c}
 F^{r+1}\rg_{\dr,c}(X/\B^+_{\dr})\to  F^r\rg_{\dr,c}(X/\B^+_{\dr})\to \bigoplus_{i\leq r}\rg_c(X,\Omega^i_X)(r-i)[-i]
 \end{equation}
\end{enumerate}
 \item  {\rm (Product  formula)} Let $X$ be a smooth dagger variety over $K$.  The map $\iota_{{\rm BK},c}$ 
defined above yields a natural  quasi-isomorphism in $\sff{\cal D}(\B^+_{\dr,\Box})$:
       $$
\iota_{{\rm BK},c}:  \quad \rg_{\dr,c}(X)\dsolid_K\B^+_{\dr}\to\rg_{\dr,c}(X_C/\B^+_{\dr}).
 $$
% \item {\rm ($t$-completeness)} The canonical map $$ \rg_{\dr,c}(X/\B^+_{\dr})\to \R\wlim_r(\rg_{\dr,c}(X/\B^+_{\dr})\dsolid_{\B^+_{\dr}}(\B^+_{\dr}/F^r)) $$ is a filtered strict quasi-isomorphism. 
\end{enumerate}
\end{proposition}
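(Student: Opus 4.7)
The plan is to reduce each assertion to its rigid analytic counterpart (Proposition \ref{projection-c} for part (1), and Lemma \ref{Boa1} together with the quasi-isomorphism \eqref{Boa2} for part (2)) by exploiting the very definition of $\rg^{\natural}_{\dr,c}(-)$ and $\rg^{\natural}_{\dr,c}(-/\B^+_{\dr})$ as colimits indexed by a presentation of the dagger structure, together with the local-global compatibilities of Lemma \ref{pois1} and Lemma \ref{pois1-Bdr}.

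The first step is to reduce to the case where $X$ is a smooth dagger affinoid. Indeed, both sides of each asserted (quasi-)isomorphism are the global sections of cosheaves on $X_{\rm an}$: the right-hand sides by construction, and the left-hand sides because the functors $-\dsolid_{\B^+_{\dr}} C$ and $-\dsolid_K \B^+_{\dr}$ commute with colimits. Granting that cosheaf property, it suffices to check every claim for a dagger affinoid, where by the local-global lemmas we may replace $\rg_{\dr,c}$ and $\rg_{\dr,c}(-/\B^+_{\dr})$ by $\rg^{\natural}_{\dr,c}$ and $\rg^{\natural}_{\dr,c}(-/\B^+_{\dr})$. Fixing a presentation $\{X_h\}_h$ with rigid completion $\wh{X}$, formula \eqref{trick1} and its $\B^+_{\dr}$-analog express these as filtered colimits in $h$ of the rigid analytic compactly-supported-in-$\wh{X}$ cohomologies $F^r\rg_{\dr,\wh{X}}(X_h)$ and $F^r\rg_{\dr,\wh{X}}(X_h/\B^+_{\dr})$.

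For part (1), I apply Proposition \ref{projection-c} separately to the smooth rigid analytic varieties $X_h$ and $X_h\setminus \wh{X}$. This yields, compatibly in $h$, the quasi-isomorphism $\vartheta$ of (\ref{projection}) and the distinguished triangles \eqref{detail1} and \eqref{coffee1}. Taking mapping fibers along the restriction map $X_h \to X_h\setminus\wh{X}$ (which in the limit computes $\rg_{\dr,\wh{X}}$) and then passing to $\colim_h$ produces respectively (1)(a), (1)(b) and (1)(c); here I use that filtered colimits and mapping fibers commute in a stable $\infty$-category, and that $-\dsolid_{\B^+_{\dr}} C$ commutes with filtered colimits. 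For part (2), the same recipe with Lemma \ref{Boa1} and its compactly supported counterpart \eqref{Boa2} produces the filtered quasi-isomorphism on each $\rg_{\dr,\wh{X}}(X_h)\dsolid_K \B^+_{\dr}$; taking $\colim_h$ then gives $\iota_{{\rm BK},c}$ for dagger affinoids, and cosheafification globalizes.

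The main obstacle I anticipate is compatibility with filtrations. The filtration on $\rg_{\dr,c}(X)\dsolid_K \B^+_{\dr}$ is a convolution $\colim_{i+j\geq r} F^i\rg_{\dr,c}(X)\dsolid_K F^j\B^+_{\dr}$, so to deduce the filtered version of (2) from the rigid analytic Lemma \ref{Boa1} I must interchange this colimit, the colimit over $h$, the mapping fibers defining $\rg_{\dr,\wh{X}}(X_h)$, and the solid tensor product with $F^j\B^+_{\dr}$. This relies on the flatness of $F^j\B^+_{\dr}$ in the solid sense and on the stability of $\sd(\B^+_{\dr,\Box})$ under filtered colimits; once these bookkeeping points are settled, the remaining passage from dagger affinoids to a general smooth dagger variety is purely formal via cosheafification combined with the dagger Mayer--Vietoris, and the comparison with the Stein triangles \eqref{tri-overcv} and \eqref{tri-overcv1} follows by applying the affinoid case to an exhausting sequence $\{U_n\}$ of dagger affinoids.
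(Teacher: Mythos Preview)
Your approach is essentially the same as the paper's: reduce to dagger affinoids via the local--global compatibilities, then for a presentation $\{X_h\}$ apply the rigid analytic statements to $X_h$ and $X_h\setminus\wh{X}$, take the mapping fiber, and pass to the colimit over $h$ (using that $\dsolid$ commutes with colimits). The paper also handles the filtration in part (2) via the same remark you flag.

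One small slip: you write ``apply Proposition~\ref{projection-c}'' to $X_h$ and $X_h\setminus\wh{X}$, but Proposition~\ref{projection-c} is the \emph{compactly supported} rigid statement. What you actually need (and what you then cite correctly as \eqref{projection}, \eqref{detail1}, \eqref{coffee1}) is the \emph{usual} rigid analytic proposition, since $\rg^{\natural}_{\dr,c}(X)=\colim_h[\rg_{\dr}(X_h)\to\rg_{\dr}(X_h\setminus\wh{X})]$ involves the ordinary cohomology of $X_h$ and of $X_h\setminus\wh{X}$, not their compactly supported versions. Once that label is fixed, your argument matches the paper's proof.
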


\begin{remark}  In Proposition \ref{prison-c} (2), 
the filtration on $\R\Gamma_{\dr,c}(X)\dsolid_K \B^+_{\dr}$ is defined by the formula
$$
F^r(\R\Gamma_{\dr,c}(X)\dsolid_{K} \B^+_{\dr}):= \colim_{i+j \ge r}  F^i\rg_{\dr,c}(X) \dsolid_K F^j\B_{\dr}^+,
$$
and, if $X$ is a dagger affinoid, we also define
$$
F^r(\R\Gamma^{\natural}_{\dr,c}(X)\dsolid_{K} \B^+_{\dr}):= \colim_{i+j \ge r}  F^i\rg^{\natural}_{\dr,c}(X) \dsolid_K F^j\B_{\dr}^+.
$$ 
Note that we have a local-global compatibility between the two, using Lemma~\ref{pois1}. 
By the same argument as in Remark~\ref{Rul1}, we see that we have a distinguished triangle 
\[ F^r(\R\Gamma^{\natural}_{\dr,c}(X)\dsolid_{K} \B^+_{\dr}) \to F^r(\R\Gamma^{\natural}_{\dr}(X)\dsolid_{K} \B^+_{\dr}) \to F^r(\R\Gamma^{\natural}_{\dr}(\partial X)\dsolid_{K} \B^+_{\dr}). \]
 We also have  $F^r(\R\Gamma^{\natural}_{\dr}(\partial X)\dsolid_{K} \B^+_{\dr}):= \colim_h F^r(\R\Gamma^{\natural}_{\dr}(X_h\setminus \wh{U})\dsolid_{K} \B^+_{\dr})$, for $\{ X_h\}$ a dagger presentation of $X$. 
\end{remark}

\begin{proof}
To prove the point (1)(a) of the proposition, it suffices to argue locally, then apply the analogous quasi-ismorphism obtained in the rigid analytic case \eqref{projection} to the varieties $X_h$ and $X_h \setminus \wh{X}$ and we conclude using that the solid tensor product commutes with colimits. 

The arguments are similar for the distinguished triangles (1)(b) (using \eqref{detail1}) and (1)(c) (using \eqref{coffee1}), and for the quasi-isomorphism (2) (using \eqref{Boa1} and the previous remark for the compatibility with the filtrations).     
\end{proof}

\subsection{Overconvergent Hyodo-Kato cohomology with compact support} We now define overconvergent Hyodo-Kato cohomologies with compact support via similar methods. As before, we have three definitions (the arithmetic Hyodo-Kato cohomology, if $X \in {\rm Sm}_K^{\dagger}$, and the geometric and completed ones, if $X$ is defined over $C$). We will obtain Hyodo-Kato (iso)morphisms, compatible with the maps from the above proposition. 
 
\subsubsection{Definition}   
Let $L=K,C$ and $X$ be a smooth dagger affinoid variety over $L$. Starting from the overconvergent Hyodo-Kato cohomologies
\[  \rg^{\natural}_{\hk,*}(X):= \colim_h \rg_{\hk,*}(X_h) \]
from \cite[\textsection 4.2.1]{CN4}  we obtain, as in the de Rham case, the local compactly supported Hyodo-Kato cohomologies:
\[
\rg^{\natural}_{\hk,c}(X) \in {\cal D}_{\phi,N} (F_{\Box}), \text{ if } L=K, \quad \text{ and } \quad
\rg^{\natural}_{\hk,c}(X) \in {\cal D}_{\phi,N} (F^{\rm nr}_{\Box}), \text{ if } L=C, \]
and, if $X$ is over $C$, the completed Hyodo-Kato cohomology: 
\[ \rg^{\natural}_{\hk, \breve{F},c}(X) \in {\cal D}_{\phi,N} ({\breve{F}}_{\Box}). \]
These functors are  covariant for open embeddings. 

  This allows us to define cosheaves $\sa_{\hk,c}$ and $\sa_{\hk,\breve{F},c}$ on any smooth dagger variety $X$ over $L$. We define  $$ \rg_{\hk,*,c}(X):=\rg(X,\sa_{\hk,*,c}) \in {\cal D}_{\phi,N}({F_L}_{\Box}),  \text{ with } F_L=F, F^{\rm nr}, \breve{F} \text{ respectively, and } *=-,\breve{F}.$$
It is a solid dg $F_L$-algebra equipped with a Frobenius, monodromy action, and a continuous action of $\sg_K$ if $X$ is defined over $K$. If $X$ is Stein,  we have a distinguished triangle 
\begin{equation}
\label{tri-overcv2} 
\rg_{\hk,*,c}(X) \to \rg_{\hk,*}(X) \to \rg_{\hk,*}(\partial X)
\end{equation}
where $*=-,\breve{F}$ and $\rg_{\hk,*}(\partial X):=  \colim_n \rg_{\hk,*}(X\setminus U_n)$,  for a Stein covering $\{U_n\}$.

\begin{lemma}
\label{kit-1-dagger}
For any $X \in {\rm Sm}^{\dagger}_C$, we have a canonical quasi-isomorphism in ${\cal D}_{\phi,N}({\breve{F}}_{\Box})$:
\[\beta_c:\quad \rg_{\hk,c}(X)\dsolid_{F^{\nr}}\breve{F}\stackrel{\sim}{\to} \rg_{\hk,\breve{F},c}(X).\]
\end{lemma}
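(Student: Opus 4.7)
The plan is to reduce to the rigid analytic version of the statement, namely the quasi-isomorphism $\beta_c$ established right after Lemma~\ref{kit-1}, by first working on a dagger affinoid through a choice of presentation and then globalizing via the cosheaf structure used to define $\rg_{\hk,c}$ and $\rg_{\hk,\breve F,c}$.

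First I would treat the case where $X$ is a smooth dagger affinoid with presentation $\{X_h\}_{h\in\N}$ and rigid completion $\wh X$. For each $h$, Lemma~\ref{kit-1} applied to the rigid analytic varieties $X_h$ and $X_h\setminus\wh X$ (and to all $X_h\setminus Z$ for $Z\in\Phi_{X_h}$, together with the argument giving the rigid analytic $\beta_c$ after Lemma~\ref{kit-1}) produces compatible quasi-isomorphisms
\[
 \rg_{\hk,\wh X}(X_h)\dsolid_{F^{\nr}}\breve F \stackrel{\sim}{\longrightarrow} \rg_{\hk,\breve F,\wh X}(X_h),
\]
using that $(-)\dsolid_{F^{\nr}}\breve F$ preserves fibers in the stable $\infty$-category $\sd_{\phi,N}(\breve F_{\Box})$. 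Since the solid tensor product commutes with filtered colimits, passing to the colimit over $h$ yields the local statement
\[
 \beta^{\natural}_c:\ \rg^{\natural}_{\hk,c}(X)\dsolid_{F^{\nr}}\breve F \stackrel{\sim}{\longrightarrow} \rg^{\natural}_{\hk,\breve F,c}(X),
\]
and this construction is functorial for open embeddings of dagger affinoids.

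Next I would globalize. Both $\rg_{\hk,c}$ and $\rg_{\hk,\breve F,c}$ are defined as $\rg(X,-)$ of the cosheaves on $X_{\mathrm{an}}$ associated to the precosheaves $U\mapsto \rg^{\natural}_{\hk,c}(U)$ and $U\mapsto \rg^{\natural}_{\hk,\breve F,c}(U)$ for $U$ a dagger affinoid admissible open in $X$. The maps $\beta^{\natural}_c$ are natural for open embeddings, hence assemble into a morphism of precosheaves and, after cosheafification, into a morphism of cosheaves; taking global sections yields the desired $\beta_c$. To see that $\beta_c$ is a quasi-isomorphism, one uses analytic codescent: both cosheaves are determined by their values on any admissible affinoid cover of $X$ and by their behaviour with respect to Mayer--Vietoris, exactly as in Proposition~\ref{mv-ov}, and $\beta^{\natural}_c$ is already a quasi-isomorphism on each member of such a cover.

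The main obstacle is a functional-analytic one: one has to verify that $(-)\dsolid_{F^{\nr}}\breve F$ is compatible with the homotopy colimits entering the cosheafification procedure and with the filtered colimits used to define $\rg^{\natural}_{\hk,c}$ from a presentation. Once this is granted — and it is, because solid tensor product commutes with colimits in each variable in $\sd(\breve F_{\Box})$ — the argument above reduces everything to the rigid analytic quasi-isomorphism following Lemma~\ref{kit-1}, exactly paralleling the passage from Lemma~\ref{pois1-Bdr} to its rigid analytic counterpart in the $\B^+_{\dr}$-setting.
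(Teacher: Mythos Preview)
Your proposal is correct and follows essentially the same approach as the paper: reduce to dagger affinoids, apply the rigid analytic $\beta$ from Lemma~\ref{kit-1} to $X_h$ and $X_h\setminus\wh X$, pass to the colimit over $h$ using that the solid tensor product commutes with colimits, and then globalize via the cosheaf definitions. One small simplification: you do not actually need the rigid analytic $\beta_c$ (involving the colimit over $Z\in\Phi$); applying the non-compact $\beta$ of Lemma~\ref{kit-1} separately to $X_h$ and $X_h\setminus\wh X$ and taking the fiber already gives the quasi-isomorphism on $\rg_{\hk,\wh X}(X_h)$, which is exactly what the paper does.
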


\begin{proof}
We can work locally.  Let $X$ be a smooth dagger affinoid over $C$.  Then, 
\begin{align*} 
\rg^{\natural}_{\hk,c}(X)\dsolid_{F_L} \breve{F} & = \colim_h [\rg_{\hk}(X_h) \to \rg_{\hk}(X_h \setminus \wh{X})] \dsolid_{F_L} \breve{F} \\
 & \xrightarrow{\sim} \colim_h [\rg_{\hk}(X_h)\dsolid_{F_L} \breve{F} \to \rg_{\hk}(X_h \setminus \wh{X})\dsolid_{F_L} \breve{F}] \\
 &  \xrightarrow[\beta]{\sim} \colim_h [\rg_{\hk, \breve{F}}(X_h) \to \rg_{\hk, \breve{F}}(X_h \setminus \wh{X})]  = \rg^{\natural}_{\hk,\breve{F},c}(X)
 \end{align*}
where $F_L=W(k_L)$. The first quasi-isomorphism is obtained by commutativity of the solid tensor product with colimits and the second one is the map $\beta$ from Lemma~\ref{kit-1}.  
\end{proof}

\subsubsection{Overconvergent Hyodo-Kato morphisms} 

Let $X$ be a smooth dagger affinoid over $K$ or $C$. The rigid analytic Hyodo-Kato morphism from Section~\ref{HK-ch} induces a morphism: 
\begin{align*}
\iota^{\natural}_{{\rm HK},c} : \rg^{\natural}_{\hk,*,c}(X) \to \rg^{\natural}_{\dr,c}(X) 
\end{align*}
coming from the commutative diagram: 
\[ \xymatrix{
\rg_{\hk,*}(X_h) \ar[r] \ar[d]^{\iota_{\hk}} & \rg_{\hk,*}(X_h\setminus \wh{X}) \ar[d]^{\iota_{\hk}} \\
\rg_{\dr}(X_h) \ar[r] & \rg_{\dr}(X_h\setminus \wh{X}),} \]
where $\{X_h\}$ is the presentation of $X$, $\wh{X}$--  the rigid analytic completion of $X$, and $*=-, \breve{F}$. 

Let $X$ be a smooth dagger variety over $K$ or $C$. The previous morphism $\iota_{\hk}^{\natural}$ is clearly functorial for open immersions hence globalizes to give a morphism 
\begin{align*}
\iota_{{\rm HK},c} : \rg_{\hk,*,c}(X) \to \rg_{\dr,c}(X) 
\end{align*}
compatible with its classical version $\iota_{\hk}:  \R\Gamma_{\hk,*}(X){\to}\R\Gamma_{\dr}(X).$

 \begin{proposition} Let $X$ be a smooth dagger variety over $C$.
 \label{HK-morph-ov}
The Hyodo-Kato morphism $\iota_{\hk,c}$ induces quasi-isomorphisms (where $F_L=F^{\nr}, \breve{F}$ and $*=-, \breve{F}$) in $\sd(C_{\Box})$ and $\sd({\B^+_{\dr,\Box}})$, respectively:
  $$
  \iota_{\hk,c}: \quad \R\Gamma_{\hk,*,c}(X)\dsolid_{F_L}C\stackrel{\sim}{\to}\R\Gamma_{\dr,c}(X),\quad   \iota_{\hk,c}: \quad \R\Gamma_{\hk,*,c}(X)\dsolid_{F_L}\B^+_{\dr}\stackrel{\sim}{\to}\R\Gamma_{\dr,c}(X/\B^+_{\dr})
  $$
  that are compatible with the maps $\theta: \B_{\dr}^+ \to C$ and $\vartheta_c$ from Section~\ref{ov-Bdr000}.
  \end{proposition}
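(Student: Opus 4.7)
The plan is to mimic the rigid analytic argument of Proposition \ref{sains1}, replacing the excision reduction by the dagger-presentation colimit and then globalizing via the cosheaf formalism. Throughout, the key algebraic fact will be that the solid tensor product $(-)\dsolid_{F_L}C$ and $(-)\dsolid_{F_L}\B^+_{\dr}$ commute with filtered colimits and with mapping fibers, so that the ``compactly supported = fiber of (interior $\to$ boundary)'' shape is preserved under extension of scalars.

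First, I would reduce to the local case. Both sides of the asserted quasi-isomorphism, viewed as functors $U\mapsto \rg_{\hk,*,c}(U)\dsolid_{F_L}C$ and $U\mapsto \rg_{\dr,c}(U)$ on $X_{\an}$ (and similarly with $\B^+_{\dr}$ in place of $C$), are cosheaves by construction of $\sa_{\hk,c}$ and $\sa_{\dr,c}$ together with commutation of $(-)\dsolid_{F_L}(-)$ with colimits. Hence it suffices to prove the statement when $X$ is a smooth dagger affinoid.

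Next, assume $X$ is a smooth dagger affinoid with presentation $\{X_h\}$ and rigid completion $\wh X$. By the local-global compatibilities of Lemmas \ref{pois1} (for de Rham), \ref{pois1-Bdr} (for $\B^+_{\dr}$), together with the analogous formula
\[
\rg^{\natural}_{\hk,*,c}(X) \;\simeq\; \colim_h\,[\rg_{\hk,*}(X_h)\to \rg_{\hk,*}(X_h\setminus\wh X)]
\]
on the Hyodo-Kato side, both sides of the desired comparison are colimits over $h$ of mapping fibers of maps between objects attached to the rigid analytic varieties $X_h$ and $X_h\setminus\wh X$. For each fixed $h$, Proposition \ref{sains1}(2) provides quasi-isomorphisms
\[
\rg_{\hk,*}(X_h)\dsolid_{F_L}C\xrightarrow{\sim}\rg_{\dr}(X_h),\qquad \rg_{\hk,*}(X_h\setminus\wh X)\dsolid_{F_L}C\xrightarrow{\sim}\rg_{\dr}(X_h\setminus\wh X),
\]
and likewise with $\B^+_{\dr}$ in place of $C$, and these are compatible with the restriction maps $X_h\setminus\wh X\hookrightarrow X_h$ by the functoriality part of Proposition \ref{sains1}. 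Taking mapping fibers and then the filtered colimit over $h$, I obtain the required quasi-isomorphisms on $\rg^{\natural}_{\hk,*,c}(X)$; the commutation of $\dsolid_{F_L}C$ and $\dsolid_{F_L}\B^+_{\dr}$ with mapping fibers and with filtered colimits (valid in the solid setting) is what makes this step go through. Compatibility with $\theta\colon\B^+_{\dr}\to C$ and with $\vartheta_c$ then comes from the corresponding rigid analytic compatibilities in Proposition \ref{sains1}(2), applied termwise in $h$ and to both $X_h$ and $X_h\setminus\wh X$.

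The main obstacle is bookkeeping rather than a conceptual obstruction: one must verify that the solid tensor products $(-)\dsolid_{F_L}C$ and $(-)\dsolid_{F_L}\B^+_{\dr}$ genuinely commute with the relevant filtered colimits (over presentations and over quasi-compact opens $Z\in\Phi$) and with the mapping fibers defining compact support, in the $\infty$-categorical sense used throughout the paper. This is precisely the kind of functional-analytic check already carried out for the rigid analytic $\beta_c$ and $\iota_{\hk,c}$ in Proposition \ref{sains1} and Lemma \ref{kit-1-dagger}; once it is in hand, the proof reduces, \'etale- and presentation-locally, to an application of the already established classical Hyodo-Kato isomorphism of Proposition \ref{kit-2}.
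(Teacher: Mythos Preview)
Your proposal is correct and follows essentially the same route as the paper: reduce to the dagger affinoid case by the cosheaf property, write the compactly supported cohomology as a colimit over a presentation of mapping fibers, commute the solid tensor product inside, and apply the rigid analytic Hyodo--Kato isomorphism to each $X_h$ and $X_h\setminus\wh X$. One minor slip: the quasi-isomorphisms $\rg_{\hk,*}(X_h)\dsolid_{F_L}C\xrightarrow{\sim}\rg_{\dr}(X_h)$ you invoke are the \emph{usual} (not compactly supported) Hyodo--Kato isomorphisms, so the correct reference is Proposition~\ref{kit-2} rather than Proposition~\ref{sains1}(2).
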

  
  \begin{proof} 
  It suffices to argue locally. Let $X$ be a smooth dagger affinoid over $L$. Then, 
\begin{align*} 
\rg^{\natural}_{\hk,*,c}(X)\dsolid_{F_L} C & = \colim_h [\rg_{\hk,*}(X_h) \to \rg_{\hk,*}(X_h \setminus \wh{X})] \dsolid_{F_L} C \\
 & \xrightarrow{\sim} \colim_h [\rg_{\hk,*}(X_h)\dsolid_{F_L} C \to \rg_{\hk,*}(X_h \setminus \wh{X})\dsolid_{F_L} C] \\
 &  \xrightarrow[\iota_{\hk}]{\sim} \colim_h [\rg_{\dr}(X_h) \to \rg_{\dr}(X_h \setminus \wh{X})]  = \rg^{\natural}_{\dr,c}(X).
 \end{align*}
The first quasi-isomorphism is obtained by commutativity of the solid tensor product with colimits and the second one is the Hyodo-Kato morphism from Proposition~\ref{kit-2}.
 \end{proof}

 As an immediate application of the overconvergent Hyodo-Kato quasi-isomorphisms we get the local-global compatibility for (geometric and completed) Hyodo-Kato and $\B^+_{\dr}$-cohomology with compact support: 

\begin{corollary}\label{local-global-kwak}
{\rm ({Local-global compatibility})} Let $X$ be a smooth dagger affinoid over $C$. 
\begin{enumerate}
\item There is a natural quasi-isomorphism in $\sd_{\phi,N}(F_{L,\Box})$:
\begin{align*}
\rg_{\hk,*,c}(X) \to \rg^{\natural}_{\hk,*,c}(X)
\end{align*}
where $*=-,\breve{F}$.
\item The canonical morphism in $\sff\sd({\B^+_{\dr,\Box}})$
\begin{align*}
  \rg_{\dr,c}(X/\B^+_{\dr})\to \rg^{\natural}_{\dr,c}(X/\B^+_{\dr})
\end{align*}
is a quasi-isomorphism. 
\end{enumerate}
\end{corollary}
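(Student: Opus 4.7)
The plan is to deduce both claims from the de Rham local-global compatibility of Lemma~\ref{pois1}(1), transferring it across the Hyodo-Kato isomorphism and the Hodge filtration, rather than verifying the cosheaf condition directly for Hyodo-Kato or $\B^+_{\dr}$-cohomology.

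For claim (2), I would proceed by devissage on the Hodge filtration. The distinguished triangle \eqref{niedziela10-c}
\[ F^{r+1}\rg_{\dr,c}(X/\B^+_{\dr}) \to F^r\rg_{\dr,c}(X/\B^+_{\dr}) \to \bigoplus_{i\le r}\rg_c(X,\Omega^i_X)(r-i)[-i] \]
together with its $\natural$-analog (which holds by the same argument on a dagger affinoid) fits into a compatible ladder with the comparison map. The graded pieces are Tate twists of compactly supported de Rham cohomology, for which local-global compatibility is Lemma~\ref{pois1}(1). Induction on $r$ then gives that $\rg_{\dr,c}(X/\B^+_{\dr})/F^r \to \rg^{\natural}_{\dr,c}(X/\B^+_{\dr})/F^r$ is a quasi-isomorphism for every $r\ge 0$. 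Passing to the derived limit and invoking completeness of $\B^+_{\dr}$ for its canonical filtration yields (2).

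For claim (1), the strategy is to apply $-\dsolid_{F_L}\B^+_{\dr}$ to the comparison map $\rg_{\hk,*,c}(X) \to \rg^{\natural}_{\hk,*,c}(X)$. By Proposition~\ref{HK-morph-ov} applied globally, combined with its $\natural$-version (established by the same proof, since that proof already argues on dagger affinoids via presentations), the base-changed map is identified with $\rg_{\dr,c}(X/\B^+_{\dr}) \to \rg^{\natural}_{\dr,c}(X/\B^+_{\dr})$, which is a quasi-isomorphism by (2). A faithful descent argument, relying on the fact that $\B^+_{\dr}$ is faithfully flat over $F_L$ in the solid sense, then yields (1); the compatibility with the $(\phi,N)$-structure is automatic from naturality of $\iota_{\hk,c}$.

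The main obstacle is the descent step: justifying that a map of solid $F_L$-modules which becomes a quasi-isomorphism after $\dsolid_{F_L}\B^+_{\dr}$ is itself a quasi-isomorphism in the relevant solid derived category. A cleaner alternative, bypassing any global faithful flatness assertion, would be to prove directly that the precosheaf $U\mapsto \rg^{\natural}_{\hk,*,c}(U)$ on dagger affinoid covers already satisfies the Mayer-Vietoris condition: this reduces, via the Hyodo-Kato quasi-isomorphism applied locally, to the de Rham Mayer-Vietoris implicit in Lemma~\ref{pois1}(1), so that $\rg^{\natural}_{\hk,*,c}$ is already a cosheaf and consequently agrees with its cosheafification on dagger affinoids. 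The same strategy, using the $\vartheta_c$-devissage \eqref{detail11-c} in place of the Hyodo-Kato iso, gives an alternative proof of (2).
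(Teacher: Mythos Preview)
Your overall strategy is close to the paper's, but the order and the descent mechanism differ.

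The paper proves (1) first, then deduces (2) from (1) via the Hyodo--Kato isomorphism $\rg_{\hk,*,c}\dsolid_{F_L}\B^+_{\dr}\simeq\rg_{\dr,c}(-/\B^+_{\dr})$. For (1), it tensors the comparison map with $C$ (not $\B^+_{\dr}$), reducing to the de~Rham local-global compatibility of Lemma~\ref{pois1}(1), and then descends using the elementary splitting $C\simeq F_L\oplus W$ of $F_L$-Banach spaces: if $K^\bullet\dsolid_{F_L}C$ is acyclic, then $H^*(K^\bullet)\oplus H^*(K^\bullet\dsolid_{F_L}W)=0$, so $K^\bullet$ is acyclic. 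This splitting is exactly the missing ingredient in your descent step. Your ``cleaner alternative'' (verifying Mayer--Vietoris for $\rg^{\natural}_{\hk,*,c}$ directly via the Hyodo--Kato map to de~Rham) is essentially the paper's argument, but you still need to explain why a quasi-isomorphism after $\dsolid_{F_L}C$ descends---and the splitting is the answer.

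Your route to (2) via devissage on the Hodge filtration is different and, while plausible, carries an extra burden: after showing that the map is a quasi-isomorphism on all quotients $(-)/F^r$, you need both $\rg_{\dr,c}(X/\B^+_{\dr})$ and $\rg^{\natural}_{\dr,c}(X/\B^+_{\dr})$ to be derived complete for the Hodge filtration. Since the latter is a filtered colimit and the former a cosheafification, this is not automatic and would have to be checked. The paper avoids this entirely by going through the unfiltered Hyodo--Kato isomorphism once (1) is in hand.
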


\begin{proof}
We start with the proof for the Hyodo-Kato cohomology. The proof is the same as \cite[Prop. 3.5]{CN4}. It suffices to show that for any analytic dagger affinoid hypercovering $U_{\bullet}$ of $X$, the map : 
\begin{equation}
\label{hk-bullet}
\rg^{\natural}_{\hk,c}(X) \leftarrow \rg^{\natural}_{\hk,c}(U_{\bullet})
\end{equation}
is a quasi-isomorphism. 
Consider the following commutative diagram (where $F_L=F^{\rm nr}, \breve{F}$ and $*=-, \breve{F}$) in $\sd(C_{\Box})$: 
\[ \xymatrix{ 
\rg^{\natural}_{\hk,*,c}(X) \dsolid_{F_L} C \ar[d]_{\iota_{\rm HK}^{\natural}}^{\rotatebox{90}{$\sim$}} &  \rg^{\natural}_{\hk,*,c}(U_{\bullet})\dsolid_{F_L} C \ar[d]^{\iota_{\rm HK}^{\natural}}_{\rotatebox{90}{$\sim$}} \ar[l]\\
\rg^{\natural}_{\dr,c}(X)  & \rg^{\natural}_{\dr,c}(U_{\bullet})\ar[l]_{\sim}}
\]
The  vertical arrows are quasi-isomorphisms by (the proof of) Proposition~\ref{HK-morph-ov} and the bottom horizontal one by (the proof of) Lemma \ref{pois1}. It follows that the upper horizontal arrow is a quasi-isomorphism as well. 
  
   It remains to check that this implies the quasi-isomorphism \eqref{hk-bullet}. For that,  it is enough to show that if $K^{\bullet}$ is in ${\cal D}({F_{L,\Box}})$ such that  $K^{\bullet} \dsolid _{F_L}C$ is acyclic then $K^{\bullet}$ itself is acyclic. To see that, writing $C \simeq F_L \oplus W$ with $W$ a $F_L$-Banach space, we obtain $$0 = H^*(K^{\bullet} \dsolid_{F_L} C)= H^*(K^{\bullet}) \oplus H^*(K^{\bullet} \dsolid_{F_L} W) $$
so $H^*(K^{\bullet})=0$, and this concludes the proof of the first point.   

The second point follows from the first one and from  Lemma~\ref{pois1} by the exact same argument as in \cite[Corollary 4.28]{CN4}. 
\end{proof} 

\subsection{Comparison with the rigid analytic constructions} We can now compare the overconvergent compactly supported cohomologies with the rigid analytic ones. 
Let $X$ be a smooth dagger variety over $L=K,C$ and let $\wh{X}$ be its completion. Then, we have:
 \begin{proposition}
 \label{rig-ov}
 Assume that $X$ is partially proper. Then,
 \begin{enumerate}
 \item There is a  natural quasi-isomorphim in $\sd_{\phi,N}(*_{\Box})$, for $*=F, F^{\nr}$, respectively:
\begin{equation}
 \rg_{\hk,c}({X})\xrightarrow{\sim} \rg_{\hk,c}(\wh{X}). \label{wrzesien1}
\end{equation}
 \item Let $L=C$. There are compatible natural quasi-isomorphisms in $\sd_{\phi,N}(\breve{F}_{\Box})$ and $\sff\sd({\B^+_{\dr,\Box}})$, respectively: 
 \begin{align*}
 \rg_{\hk,\breve{F},c}({X})\xrightarrow{\sim}\rg_{\hk,\breve{F},c}(\wh{X}),\quad 
   \rg_{\dr,c}({X}/\B^+_{\dr})\xrightarrow{\sim}\rg_{\dr,c}(\wh{X}/\B^+_{\dr}). 
 \end{align*}
 They are compatible with the map (\ref{wrzesien1}).
 \item The morphisms in (1) and (2) are compatible with the Hyodo-Kato morphisms, i.e., we have the commutative diagrams (where $*=-, \breve{F}$) in $\sd(F_{L,\Box})$ and $\sd(\breve{F}_{\Box})$, respectively: 
 $$
 \xymatrix@R=6mm{
   \rg_{\hk,*,c}({X})\ar[r]\ar[d]^{\iota_{\hk,c}} &  \rg_{\hk,*,c}(\wh{X})\ar[d]^{\iota_{\hk,c}}\\
   \rg_{\dr,c}({X})\ar[r] & \rg_{\dr,c}(\wh{X}),
 }\quad 
 \xymatrix@R=6mm{
  \rg_{\hk,\breve{F},c}({X})\ar[r]\ar[d]^{\iota_{\hk,c}} &  \rg_{\hk,\breve{F},c}(\wh{X})\ar[d]^{\iota_{\hk,c}}\\
   \rg_{\dr,c}({X}/\B^+_{\dr})\ar[r] & \rg_{\dr,c}(\wh{X}/\B^+_{\dr}).
 }
 $$
 \end{enumerate}
 \end{proposition}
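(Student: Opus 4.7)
The plan is to reduce all three claims to the Stein case by analytic codescent, and then to extract everything from the already-established overconvergent-to-rigid analytic comparisons for the \emph{usual} (non-compactly supported) Hyodo-Kato, completed Hyodo-Kato, and $\B^+_{\dr}$-cohomologies proved in \cite{CN3},\cite{CN4}, together with Lemma~\ref{pois1-Bdr}(2) which already supplies the $\B^+_{\dr}$-part of (2).

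Since $X$ is partially proper, it admits an admissible covering by dagger Stein opens whose rigid completions cover $\wh X$, and all the cohomologies in question are globally defined as cohomologies of cosheaves. By the cosheaf property I would first reduce to the case where $X$ is Stein. I would then pick a strict admissible covering $X=\bigcup_n U_n$ by dagger affinoids with $U_n\Subset U_{n+1}$, so that $\wh X=\bigcup_n \wh U_n$ is a Stein covering of $\wh X$. By Proposition~\ref{stein-dr} together with the distinguished triangles \eqref{tri-overcv1}, \eqref{tri-overcv2} and their rigid analytic analogues, it is then enough to produce compatible quasi-isomorphisms of the middle term $\rg_{?}(X)\simeq \rg_{?}(\wh X)$ and of the boundary term $\rg_{?}(\partial X)\simeq \rg_{?}(\partial \wh X)$ in each of the three theories.

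The middle-term quasi-isomorphisms are precisely the overconvergent-to-rigid comparisons for smooth partially proper dagger varieties proved in \cite{CN4}. For the boundary terms, $\rg_{?}(\partial X)=\colim_n\rg_{?}(X\setminus U_n)$ on the overconvergent side and $\rg_{?}(\partial \wh X)=\colim_n\rg_{?}(\wh X\setminus \wh U_n)$ on the rigid side; each $X\setminus U_n$ is itself a smooth partially proper dagger variety with rigid completion $\wh X\setminus \wh U_n$, so the same comparisons apply term-by-term and pass to the filtered colimit since $\colim$ is exact in the relevant solid derived categories. This yields (1) and the Hyodo-Kato part of (2). The compatibility (3) with the Hyodo-Kato morphism is then automatic from the naturality of $\iota_{\hk}$ in the variety, combined with Proposition~\ref{kit-2} and Proposition~\ref{HK-morph-ov}.

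The step I expect to require the most care is verifying that every reduction---codescent, the cofiber with the boundary, and passage to the colimit over the Stein exhaustion---preserves the Frobenius, monodromy, and filtration structures in the solid setting, so that the resulting quasi-isomorphisms really land in $\sd_{\phi,N}(F_{L,\Box})$ and $\sff\sd(\B^+_{\dr,\Box})$ as claimed and not merely in $\sd(F_{L,\Box})$ or $\sd(\B^+_{\dr,\Box})$. This is essentially bookkeeping, since every comparison invoked is already known to respect these structures, but careful handling of solid completions and filtered colimits in the condensed formalism of \cite{CN4} will be necessary.
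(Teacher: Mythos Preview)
Your approach is correct but takes a somewhat different route from the paper. The paper's proof is essentially a one-liner: it deduces the Hyodo--Kato and completed Hyodo--Kato comparisons from the already-established de Rham and $\B^+_{\dr}$ cases (Lemma~\ref{pois1}(2) and Lemma~\ref{pois1-Bdr}(2)) by transporting along the Hyodo--Kato quasi-isomorphisms of Propositions~\ref{sains1} and~\ref{HK-morph-ov}, implicitly using the splitting $C\simeq F_L\oplus W$ (as in the proof of Corollary~\ref{local-global-kwak}) to descend the resulting quasi-isomorphism from $C$-coefficients back to $F_L$-coefficients while preserving $(\phi,N)$. You instead rerun the Stein-plus-boundary argument of Lemma~\ref{herbata2}(2) separately for each theory, invoking the overconvergent-to-rigid comparison for the \emph{usual} Hyodo--Kato cohomology of $X$ and of each $X\setminus U_n$ directly from \cite{CN4}. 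This is perfectly valid and arguably cleaner on the $(\phi,N)$-structure, since you never leave $\sd_{\phi,N}(F_{L,\Box})$ and no descent trick is needed; the paper's shortcut is more economical in that it recycles the de Rham result rather than repeating the argument, at the cost of that extra descent step. Your worry about bookkeeping for the structures is well-placed but, as you say, routine: all maps involved are already known to be $(\phi,N)$- and filtration-compatible.
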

 
 \begin{proof}
 These results  follow from the de Rham  and $\B^+_{\dr}$-cohomology cases (see Lemma \ref{pois1} and Lemma \ref{pois1-Bdr}) using the Hyodo-Kato quasi-isomorphisms from Proposition~\ref{kit-2} and Proposition~\ref{HK-morph-ov}.  
     \end{proof}

\section{Dualities for de Rham and Hyodo-Kato cohomologies}\label{dualities}
We prove Poincar\'e duality results for the de Rham and Hyodo-Kato cohomologies of  partially proper smooth rigid analytic varieties over $L=K,C$. The de Rham duality will follow from the coherent duality theorem for smooth Stein varieties first proved by Chiarellotto in \cite{Ch90}. We then obtain the same results for quasi-compact  smooth dagger varieties over $K$ or $C$. In the dagger affinoid case, the coherent and de Rham dualities were shown by Grosse-Kl\"onne in \cite[Th. 4.4, Th. 4.9]{GK1}.

The trace maps constructed below will allow us to define a trace map for $p$-adic pro-\'etale cohomology in Section~\ref{trace-maps}. This trace morphism will then be used in \cite{CGN} to prove Poincar\'e duality for $p$-adic pro-\'etale cohomology of rigid analytic varieties.

\subsection{Coherent duality: classical approach} We will briefly review the classical coherent dualities for rigid analytic varieties.
\subsubsection{Rigid analytic setting}\label{brompt0}
Let $L=K,C$ and let $X$ be a rigid analytic variety over $L$. In this section we will discuss the following well-known result.

\begin{theorem}{\rm ({\em Serre duality})}
\label{serre-rigid}
Let $X$ be a Stein or a proper rigid analytic variety, smooth of dimension $d$ over $L$. Then
\begin{enumerate}
\item There exists a canonical trace map ${\rm tr}_{\rm coh}: \, H_c^{d}(X, \Omega_X^d)\to L$.
\item For all coherent sheaves $\FS$ on $X$, the pairings
\begin{align*}
& H_c^i(X, \FS) \times {\rm Ext}^{d-i}_{\OS_X}(\FS,\Omega_X^{d}) \to H_c^{d}(X, \Omega_X^d) \xrightarrow{{\rm tr}_{\rm coh}} L \\
& H^i(X, \FS) \times {\rm Ext}^{d-i}_{\OS_X,c}(\FS,\Omega_X^{d}) \to H_c^{d}(X,\Omega_X^d) \xrightarrow{{\rm tr}_{\rm coh}} L 
\end{align*}
induce perfect dualities, i.e., we have the induced isomorphisms:
\begin{align*}
& {\rm Ext}^{d-i}_{\OS_X}(\FS,\Omega_X^{d}) \stackrel{\sim}{\to} \Hom_L(H_c^i(X, \FS),L) & \text{ and } &
& H_c^i(X, \FS) \stackrel{\sim}{\to} \Hom_L( {\rm Ext}^{d-i}_{\OS_X}(\FS,\Omega_X^{d}),L), \\
 & {\rm Ext}^{d-i}_{\OS_X,c}(\FS,\Omega_X^{d}) \stackrel{\sim}{\to} \Hom_L(H^i(X, \FS),L) & \text{ and } &
& H^i(X, \FS) \stackrel{\sim}{\to} \Hom_L( {\rm Ext}^{d-i}_{\OS_X,c}(\FS,\Omega_X^{d}),L). 
\end{align*}
\end{enumerate}
\end{theorem}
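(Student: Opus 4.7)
The plan is to handle the two cases of the theorem separately, since the proper case and the Stein case rest on quite different inputs.

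\emph{Proper case.} For smooth proper $X/L$ of dimension $d$ this is classical Grothendieck--Serre duality. The relative dualizing complex is $\omega_{X/L} = \Omega^d_X[d]$, and the trace ${\rm tr}_{\rm coh}\colon H^d(X,\Omega^d_X)\to L$ comes from the canonical map $\R f_*\omega_{X/L}\to L[-d]$ for $f\colon X\to \Sp(L)$. The duality isomorphism $\R\Hom_{\OS_X}(\FS,\omega_{X/L})[-d] \simeq \R\Hom_L(\R\Gamma(X,\FS),L)$ is obtained by reducing to a proper formal model over $\OS_K$ (by algebraization of proper rigid analytic varieties) and applying Grothendieck duality in the formal/algebraic setting. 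Since $X$ is proper, $H^i_c = H^i$ and ${\rm Ext}^{d-i}_{\OS_X,c} = {\rm Ext}^{d-i}_{\OS_X}$, so the two pairings coincide.

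\emph{Stein case.} Here I would follow the strategy of Chiarellotto \cite{Ch90}. Fix a Stein exhaustion $X=\bigcup_n U_n$ with $U_n\Subset U_{n+1}$ smooth affinoids. The key inputs are Kiehl's Theorem B, which gives $H^i(X,\FS)=0$ for $i>0$ and coherent $\FS$, together with Serre duality on each affinoid pair $U_n\subset U_{n+1}$, obtained via Grothendieck duality on a suitable formal model. The coherent analog of Proposition~\ref{stein-dr} yields
\[ \R\Gamma_c(X,\Omega^d_X) \simeq \colim_n \R\Gamma_{U_n}(U_{n+1},\Omega^d_X), \]
and I would construct ${\rm tr}_{\rm coh}$ as the colimit of residue maps $H^d_{U_n}(U_{n+1},\Omega^d_X)\to L$: pick formal models $\mathfrak{U}_{n+1}$ of $U_{n+1}$ with a closed $\ZF_n\subset \mathfrak{U}_{n+1}$ proper over $\OS_L$ whose tube contains $U_n$, and apply the formal Grothendieck residue to $\ZF_n$. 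Functoriality of residues under open immersions makes these maps compatible as $n$ grows. For the duality statement, one treats $\FS$ locally free first, passing to the colimit of the affinoid dualities on the pairs $(U_n,U_{n+1})$; the general coherent case follows from resolving $\FS$ locally by finite free sheaves (possible because $X$ is smooth, so $\FS$ has finite homological dimension) and an $\mathrm{Ext}$-hypercohomology spectral sequence argument.

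\emph{Main obstacle.} The principal difficulty is topological: $H^i(X,\FS)$ is a nuclear Fr\'echet space (an inverse limit along $n$) while $H^i_c(X,\FS)$ is an $LB$-space (a filtered colimit), and ``perfect duality'' means the induced pairing is a topological isomorphism in $\sd(L_{\Box})$. Passing to the limit/colimit requires a Mittag-Leffler-style $\R^1\lim$ vanishing for the Fr\'echet side and controlled transition maps on the $LB$ side; this is essentially the analytic content of \cite{Ch90}. In the solid reformulation one must additionally verify that the colimits and limits at play are computed in the appropriate quasi-abelian categories, so that the resulting pairing is genuinely a duality of solid $L$-modules, not merely of underlying abstract vector spaces. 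The dagger affinoid template of Grosse-Kl\"onne \cite[Th.~4.4, Th.~4.9]{GK1} serves as a local model for how these topological issues are resolved.
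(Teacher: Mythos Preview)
Your Stein strategy is genuinely different from the paper's. Following Chiarellotto and van der Put, the paper does \emph{not} exhaust $X$ by affinoids and pass to a colimit of local dualities. Instead it first establishes the theorem for the affine space $\mathbb{A}^d_L$ by an explicit calculation of $H^d_c(\mathbb{A}^d_L,\Omega^d)$ (Laurent tails) and a residue-type trace formula, and then reduces an arbitrary smooth Stein $X$ to this case via a closed immersion $i\colon X\hookrightarrow\mathbb{A}^N_L$: the identification $\Omega^d_X[d-N]\simeq \R i^!\Omega^N_{\mathbb{A}^N}$ gives compatible isomorphisms $H^i_c(X,\FS)\simeq H^i_c(\mathbb{A}^N,i_*\FS)$ and ${\rm Ext}^{d-i}_{\OS_X}(\FS,\Omega^d_X)\simeq {\rm Ext}^{N-i}_{\OS_{\mathbb{A}^N}}(i_*\FS,\Omega^N_{\mathbb{A}^N})$, and the trace on $X$ is \emph{defined} as the pullback of the affine trace along these. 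This bypasses all of the limit/colimit and topological issues you flag.

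Your route has real gaps as written. The step ``Serre duality on each affinoid pair $U_n\subset U_{n+1}$, obtained via Grothendieck duality on a suitable formal model'' is not a result one can invoke: rigid (non-dagger) affinoids have no intrinsic compactly supported cohomology and no off-the-shelf Serre duality, and Grothendieck duality on a formal model applies to \emph{proper} morphisms, which the $U_n$ are not. What one would actually need is a local-cohomology duality on the pair, and relating $H^d_{U_n}(U_{n+1},\Omega^d)$ to a formal residue on a proper closed $\ZF_n\subset\mathfrak{U}_{n+1}$ is precisely the delicate point. The Grosse--Kl\"onne template you cite is for \emph{dagger} affinoids, where the overconvergent structure is what makes the local duality go through; it does not transfer directly to rigid affinoids. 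In the proper case, ``algebraization of proper rigid analytic varieties'' is false in general---a proper rigid space need not be algebraic---so one must work with proper formal models directly (this is what van der Put does).
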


This result was proved for smooth Stein varieties over $C$  by Chiarellotto  in  \cite{Ch90}. This was then extended  by van der Put to  include proper rigid analytic varieties over $L$ in \cite{VdP92}.  A different proof for smooth Stein varieties over $L$ was given by  Beyer in  \cite{Bey97}. In the following we will recall the strategy of the proof of \cite{Ch90},\cite{VdP92} and in the next section we will reformulate their main results in the condensed setting.

\vskip.2cm
(i) {\em Duality for the affine space.} Let  ${\mathbb A}^d_L$ be  the rigid affine space of dimension $d$ over $L$. Cover ${\mathbb A}^d_L$ with the discs:
\[ D_j := \{ x=(x_1, \cdots, x_d) \in {\mathbb A}^d_L  \; | \; |x_i| \le \frac{1}{p^j} \text{ for } 1 \le i \le d\} \]
and recall that the compactly supported cohomology of $\Omega^d$ can be computed using the distinguished triangle: 
\begin{equation}
\label{brompt1}
 \rg_c({\mathbb A}_L^d, \Omega^d) \to \rg({\mathbb A}_L^d, \Omega^d) \to \colim_j \rg({\mathbb A}_L^d \setminus D_j,\Omega^d).
\end{equation}

\begin{lemma}
\label{aff-vanish}
We have $H^i_c({\mathbb A}^d_L, \Omega^d)=0$ for   $i \neq d$ and for $i=d$:
\[ H^d_c({\mathbb A}^d_L, \Omega^d) \simeq \{ \sum_{ \nu >0} \frac{ a_{\nu}}{x^{\nu}} dx_1 \wedge \cdots \wedge dx_d \; | \; \exists j \in \N \text{ such that } \lim_{|\nu| \to \infty}  p^{-j| \nu |}|a_{\nu}|=0\}, \]
where $\nu=(\nu_1, \cdots, \nu_d) \in \Z^d$ and $x^{\nu}=x_1^{\nu_1}x_2^{\nu_2} \cdots x_d^{\nu_d}$.  
\end{lemma}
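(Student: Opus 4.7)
The plan is to combine the distinguished triangle \eqref{brompt1} with a direct \v{C}ech computation. Since $\mathbb{A}^d_L$ is Stein, Kiehl's Theorem B gives $H^i(\mathbb{A}^d_L, \Omega^d) = 0$ for $i \geq 1$, so the long exact sequence attached to \eqref{brompt1} reduces the claim to computing $\R\Gamma(\mathbb{A}^d_L \setminus D_j, \Omega^d)$ for each $j$ and then taking the colimit.

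For each $j$, I would cover $\mathbb{A}^d_L \setminus D_j$ by the $d$ opens $U_i := \{|x_i| > 1/p^j\}$, $i = 1,\ldots,d$. Each $U_i$ is Stein (a product of a one-dimensional open annulus in $x_i$ with $\mathbb{A}^{d-1}_L$), and so are all finite intersections $U_S := \bigcap_{i \in S} U_i$, being products of open annuli with an affine space. By Theorem B this cover is $\Omega^d$-acyclic, so the \v{C}ech complex $\check{C}^\bullet(\{U_i\}, \Omega^d)$ computes $\R\Gamma(\mathbb{A}^d_L \setminus D_j, \Omega^d)$. I would then decompose this complex over the multi-grading by Laurent exponents $\nu \in \mathbb{Z}^d$: the monomial $x^\nu\, dx_1 \wedge \cdots \wedge dx_d$ lies in $\Omega^d(U_S)$ precisely when $I_\nu := \{i : \nu_i < 0\} \subseteq S$. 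The $\nu$-graded subcomplex identifies, up to shift, with the augmented simplicial cochain complex of the full simplex on the vertex set $\{1,\ldots,d\} \setminus I_\nu$, and such a complex is acyclic whenever this vertex set is non-empty. The only surviving contributions are therefore $L$ in \v{C}ech degree $0$ when $I_\nu = \emptyset$ and $L$ in \v{C}ech degree $d-1$ when $I_\nu = \{1,\ldots,d\}$.

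Tracking the convergence conditions inherited from $\Omega^d(U_{\{1,\ldots,d\}}) = \Omega^d(\{|x_i| > 1/p^j \;\forall i\})$, the top surviving piece $H^{d-1}(\mathbb{A}^d_L \setminus D_j, \Omega^d)$ becomes exactly the space of Laurent series $\sum_{\nu > 0} a_\nu x^{-\nu}\, dx_1 \wedge \cdots \wedge dx_d$ with $\lim_{|\nu| \to \infty} p^{-j|\nu|} |a_\nu| = 0$. Feeding everything into the long exact sequence yields $H^i_c(\mathbb{A}^d_L, \Omega^d) = 0$ for $i \neq d$: when $d = 1$, the image of $H^0(\mathbb{A}^1_L, \Omega^1)$ kills exactly the $\nu \geq 0$ part, and $H^0_c$ vanishes by injectivity of restriction; when $d \geq 2$, the restriction $H^0(\mathbb{A}^d_L, \Omega^d) \to \check{H}^0$ is already an isomorphism onto the $I_\nu = \emptyset$ summand (a rigid-analytic Hartogs phenomenon flowing out of the same Čech computation), and the intermediate groups vanish by the above. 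Passing to $\colim_j$ in degree $d$ relaxes the convergence condition to the stated ``$\exists j$'' form. The main obstacle will be the multi-variable bookkeeping in the \v{C}ech computation combined with careful tracking of the convergence conditions on the surviving Laurent coefficients; this is more bookkeeping than genuine difficulty, but essential to match the precise formula.
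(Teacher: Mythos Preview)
Your approach is correct and is exactly the one taken in the paper: reduce via the distinguished triangle \eqref{brompt1} and Theorem~B to computing $\R\Gamma(\mathbb{A}^d_L\setminus D_j,\Omega^d)$ by the \v{C}ech complex of the cover $U_i=\{|x_i|>1/p^j\}$. The paper states the outcome of that \v{C}ech computation without further comment; your monomial-by-monomial analysis (with the simplicial acyclicity argument for the intermediate $I_\nu$) is precisely the filling-in of that step, and your treatment of the $d=1$ versus $d\ge 2$ boundary cases is the right way to conclude. One minor terminological point: the set $\{|x_i|>1/p^j\}\subset\mathbb{A}^1_L$ is not an open annulus but the complement of a closed disc (hence unbounded); it is nonetheless quasi-Stein, so your appeal to Theorem~B remains valid.
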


\begin{proof}
The result follows from the long exact sequence \eqref{brompt1} using that  we have
$H^i({\mathbb A}_L^d, \Omega^d) = 0$, for  $i \neq 0$, and
\[H^i({\mathbb A}_L^d \setminus D_j, \Omega^d) \simeq \begin{cases}  H^0({\mathbb A}_L^d, \Omega^d) &\text{ if } i=0 ,\\ \{ \sum_{ \nu >0} \frac{ a_{\nu}}{x^{\nu}} dx_1 \wedge \cdots \wedge dx_d \; | \; \lim_{|\nu| \to \infty}  p^{-j| \nu |}|a_{\nu}|=0 \} &\text{ if } i=d-1, \\0 &\text{ if } i\neq 0, d-1. \end{cases}  \]
To prove the latter formula, consider the admissible covering of ${\mathbb A}_L^d\setminus D_j$ by the open subsets $\{U_i\}_{1 \le i \le d}$ where 
\[ U_i:=\{x=(x_1, \cdots, x_d) \in {\mathbb A}^d_L  \; | \; |x_i| > \frac{1}{p^j}\}, \quad \text{ for } i \in \{ 1, \cdots, d\} ,\]
and use the associated \v{C}ech cohomology. 
\end{proof}

We define the trace map for the affine space as: 
\begin{align}
\label{trace-aff}
{\rm tr}_{\rm coh}: \qquad  H^d_c({\mathbb A}_L^d, \Omega^d) \quad& \to  L \\
 \sum_{ \nu >0} \frac{ a_{\nu}}{x^{\nu}}dx  & \mapsto  a_{(1, \dots, 1)} \notag
\end{align}
where $\nu=(\nu_1, \cdots, \nu_d) \in \Z^d$, $x^{\nu}=x_1^{\nu_1}x_2^{\nu_2} \cdots  x_d^{\nu_d}$ and $dx:=dx_1 \wedge \cdots \wedge dx_d$.

\begin{proposition}\cite[(2.5) and (2.6)]{VdP92}
\label{dual-aff}
Theorem~\ref{serre-rigid} is true for $X={\mathbb A}_L^d$. 
\end{proposition}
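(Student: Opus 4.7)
I would follow van der Put \cite[(2.5)--(2.6)]{VdP92}: first verify both pairings of Theorem~\ref{serre-rigid} for $\FS = \OS_X$, then extend by additivity to finite free sheaves, and finally to arbitrary coherent sheaves via free resolutions and a five-lemma argument.

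\emph{Step 1: the case $\FS = \OS_X$.} Since $X = \mathbb{A}^d_L$ is Stein, $H^j(X, \Omega^d_X) = H^j(X, \OS_X) = 0$ for all $j > 0$. The computation of Lemma~\ref{aff-vanish} goes through verbatim with $\OS_X$ in place of $\Omega^d_X$ (the two sheaves are free of rank one, with respective generators $1$ and $dx$), and yields $H^i_c(X, \OS_X) = 0$ for $i \neq d$ together with an explicit description of $H^d_c(X, \OS_X)$ as the space of Laurent tails $\sum_{\mu > 0} b_\mu\, x^{-\mu}$ of bounded growth. Both pairings of Theorem~\ref{serre-rigid} therefore collapse to a single nontrivial degree each, and after identifying $\Omega^d_X \simeq \OS_X \cdot dx$ they reduce to the single pairing
\begin{equation*}
\Bigl(\sum_{\mu > 0} b_\mu\, x^{-\mu},\; \sum_{\nu \geq 0} a_\nu\, x^\nu\Bigr) \longmapsto \sum_{\mu > 0} b_\mu\, a_{\mu - (1,\ldots,1)}
\end{equation*}
between the Fr\'echet space $\OS(\mathbb{A}^d_L)$ of globally convergent power series and the LB-space of bounded-growth Laurent tails. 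Perfectness is checked monomial-by-monomial (the monomial $x^\nu$ pairs to $1$ with $x^{-(\nu+(1,\ldots,1))}$ and to $0$ with every other Laurent tail) and recovers the classical Fr\'echet--LB topological duality for Tate algebras.

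\emph{Step 2: extension to coherent $\FS$.} For $\FS = \OS_X^{\oplus n}$ the duality follows by additivity. For a general coherent $\FS$, each restriction $\FS|_{D_j}$ is a finitely generated module over the Noetherian ring $\OS(D_j)$ by Kiehl's theorem and hence admits a finite free resolution; these glue to a resolution $\cdots \to \OS_X^{\oplus n_1} \to \OS_X^{\oplus n_0} \to \FS \to 0$ of $\FS$ by (possibly infinite) direct sums of $\OS_X$. Short exact sequences of coherent sheaves give long exact sequences both for $H^*_c(X, -)$ and for $\Ext^*(-, \Omega^d_X)$, so the five-lemma applied along the resolution reduces the duality for $\FS$ to the already-established case of $\OS_X$. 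The second pairing of Theorem~\ref{serre-rigid} is treated in the same way.

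\textbf{Main obstacle.} The principal subtlety is functional-analytic: the cohomology groups in play naturally carry Fr\'echet or LB topologies, and ``perfect duality'' must be interpreted as an identification of topological $L$-vector spaces with their strong continuous duals, not as an abstract isomorphism of $L$-modules. One must verify continuity of the trace \eqref{trace-aff}, that the monomial pairing of Step~1 respects topologies on each side, and that Step~2's resolution remains exact after the relevant topological dualization. Carrying the argument out in the solid derived category $\sd(L_{\Box})$---in line with the paper's general formalism---turns the classical Fr\'echet--LB duality into a clean statement in $\sd(L_{\Box})$ and makes the five-lemma reduction immediate.
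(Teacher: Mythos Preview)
Your proposal is correct and follows essentially the same route as the paper: explicit monomial pairing for $\FS=\OS_X$, additivity to $\OS_X^{\oplus n}$, then reduction of the general coherent case to these. The paper's proof is terser---it just asserts the extension step without spelling out the resolution/five-lemma argument or the topological subtleties you flag---but the strategy is identical.
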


\begin{proof}
The trace map is given by the formula \eqref{trace-aff}. 

First assume that $\FS= \OS_X$. For $i \neq d$, both terms of the product are zero, so the pairing is perfect. For $i=d$, the pairing is given by 
\begin{align*}
H_c^d(X, \OS_X) & \times H^0(X,\Omega_X^{d})  \to  L \\
(\sum_{\nu > 0} \frac{ a_{\nu}}{x^{\nu}} & ,\sum_{\nu \ge 0} b_{\nu} x^{\nu} dx) \mapsto \sum_{\nu\ge 0} a_{\nu+1} b_{\nu}
\end{align*}
and it induces a perfect duality. This proves the proposition for $\FS= \OS_X$ and we can deduce from it the result, first,  for $\FS= \OS_X^m$ and then for a general coherent sheaf. 
\end{proof}

\vskip.2cm
(ii) {\em Duality for Stein varieties.} 
We can now deduce from the previous proposition the duality result for general smooth  Stein varieties. Assume that $X$ is Stein and choose a closed immersion $i: X \hookrightarrow {\mathbb A}^N_L$. We have the following classical results: 

\begin{lemma}\cite[(4.15)]{Ch90}
\label{i-coh1}
There is a canonical quasi-isomorphism $\Omega_X^d[d-N] \simeq \R i^{!}\Omega_{{\mathbb A}^N_C}^N$.
\end{lemma}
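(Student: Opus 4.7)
The assertion is the rigid-analytic incarnation of the fundamental local isomorphism from Grothendieck duality, applied to a regular closed immersion of smooth rigid-analytic spaces. The plan is to produce the quasi-isomorphism locally via the Koszul calculation of $\R i^!$ and then identify the relevant determinant via the conormal exact sequence.

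The first step is to observe that $i\colon X\hookrightarrow {\mathbb A}^N_L$ is a regular closed immersion of pure codimension $c:=N-d$: because $X$ and ${\mathbb A}^N_L$ are smooth (of dimensions $d$ and $N$ respectively), the ideal sheaf $\IC:=\ker(\OS_{{\mathbb A}^N_L}\to i_*\OS_X)$ is locally generated by a regular sequence of length $c$. In particular, the conormal sheaf $\CS_{X/{\mathbb A}^N_L}:=\IC/\IC^2$ is locally free of rank $c$ on $X$, with dual the normal bundle $\NC_{X/{\mathbb A}^N_L}$.

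The second step is to recall the fundamental local isomorphism in this setting: for a regular closed immersion of smooth rigid-analytic varieties of codimension $c$, one has a canonical quasi-isomorphism of $\OS_X$-modules
\[
\R i^{!}\FS \;\simeq\; i^*\FS \otimes_{\OS_X} \det(\NC_{X/{\mathbb A}^N_L})\,[-c]
\]
for any locally free coherent sheaf $\FS$ on ${\mathbb A}^N_L$. Locally, if $\IC$ is generated by a regular sequence $f_1,\dots,f_c$, then $i_*\OS_X$ has Koszul resolution $K^\bullet(f_1,\dots,f_c)$, and one computes $\R i^!\FS \simeq \R\HOM_{\OS_{{\mathbb A}^N_L}}(i_*\OS_X,\FS)$ by dualizing this Koszul complex; the result is concentrated in cohomological degree $c$ and equal to $i^*\FS\otimes \det(\NC_{X/{\mathbb A}^N_L})$. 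This computation is a direct transcription of the algebraic one (see e.g.\ \cite{Ch90}) and presents no difficulty in the rigid-analytic setting.

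The third step is to identify the right hand side when $\FS=\Omega^N_{{\mathbb A}^N_L}$. The conormal exact sequence
\[
0\lra \CS_{X/{\mathbb A}^N_L}\lra i^*\Omega^1_{{\mathbb A}^N_L}\lra \Omega^1_X\lra 0
\]
is an exact sequence of locally free $\OS_X$-modules, and taking top exterior powers gives
\[
i^*\Omega^N_{{\mathbb A}^N_L}\;\simeq\;\det(\CS_{X/{\mathbb A}^N_L})\otimes_{\OS_X}\Omega^d_X.
\]
Combined with $\det(\NC_{X/{\mathbb A}^N_L})=\det(\CS_{X/{\mathbb A}^N_L})^{\vee}$, the product $i^*\Omega^N_{{\mathbb A}^N_L}\otimes \det(\NC_{X/{\mathbb A}^N_L})$ is canonically isomorphic to $\Omega^d_X$, and the fundamental local isomorphism specializes to
\[
\R i^{!}\Omega^N_{{\mathbb A}^N_L}\;\simeq\;\Omega^d_X[-c]\;=\;\Omega^d_X[d-N],
\]
as required. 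The main obstacle is not the algebraic content — which is entirely formal once one has the Koszul model of $\R i^!$ — but rather the bookkeeping of signs and the verification that the local quasi-isomorphisms glue to a canonical global one; this canonicity is ensured by the functoriality of the trace / residue construction underlying the fundamental local isomorphism.
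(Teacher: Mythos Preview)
Your argument is correct and is the standard one via the fundamental local isomorphism for a regular closed immersion. The paper itself gives no proof of this lemma; it simply cites Chiarellotto \cite[(4.15)]{Ch90}, where exactly this Koszul/conormal-sequence computation is carried out in the rigid-analytic setting.
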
 

\begin{corollary}
\label{i-coh2}
\begin{enumerate}
\item There is a canonical quasi-isomorphism $i_*\Omega_X^d[d-N] \simeq \R\Hom_{\OS_{{\mathbb A}_C^N}}(i_* \OS_X, \Omega_{{\mathbb A}^N_C}^N)$.
\item For all coherent sheaves $\FS$ on $X$, there is a canonical isomorphism $H_c^i(X, \FS) \xrightarrow{\sim} H_c^i({\mathbb A}_C^N, i_*\FS)$, for all $i \ge 0$. 
\item For all coherent sheaves $\FS$ on $X$, there is a canonical quasi-isomorphism $$\R\Hom_{\OS_X}(\FS, \Omega_X^d)[d-N] \xrightarrow{\sim} \R\Hom_{\OS_{{\mathbb A}^N_C}}(i_*\FS, \Omega_{{\mathbb A}_C^N}^N).$$
\end{enumerate} 
\end{corollary}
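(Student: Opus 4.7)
The plan is to deduce each of the three claims from Lemma \ref{i-coh1} together with standard properties of closed immersions and duality in rigid geometry.

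For part (1), I would apply $i_*$ to the quasi-isomorphism $\Omega_X^d[d-N] \simeq \R i^!\Omega_{\mathbb{A}^N_C}^N$ of Lemma \ref{i-coh1} and invoke the local duality formula for closed immersions: since $i\colon X\hookrightarrow \mathbb{A}^N_C$ is a closed immersion, the adjunction $i_*\dashv \R i^!$ identifies $i_*\R i^!(-)$ with $\R\Hom_{\OS_{\mathbb{A}^N_C}}(i_*\OS_X, -)$ after viewing quasi-coherent $\OS_X$-modules as $\OS_{\mathbb{A}^N_C}$-modules supported on $X$. Combining these yields
\[
i_*\Omega_X^d[d-N] \simeq i_*\R i^!\Omega_{\mathbb{A}^N_C}^N \simeq \R\Hom_{\OS_{\mathbb{A}^N_C}}(i_*\OS_X, \Omega_{\mathbb{A}^N_C}^N).
\]

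For part (2), I would exploit the cover of $\mathbb{A}^N_C$ by the closed polydiscs $D_j=\{|x_i|\leq p^j\}$: their preimages $X\cap D_j$ form an exhausting family of quasi-compact admissible opens of the Stein variety $X$, cofinal among all quasi-compact opens since $i$ is closed. Since $i_*$ is exact on coherent sheaves and a closed immersion preserves cohomology, for every open $U'\subseteq \mathbb{A}^N_C$ with $U=U'\cap X$ we have $\rg(U,\FS)=\rg(U', i_*\FS)$. Writing the compactly supported cohomology as the mapping fiber $[\rg(X,\FS)\to \colim_j \rg(X\setminus X\cap D_j,\FS)]$ and doing the same on $\mathbb{A}^N_C$ using the $D_j$, the resulting two fibers are term-by-term identified, giving $H^i_c(X,\FS)\xrightarrow{\sim} H^i_c(\mathbb{A}^N_C, i_*\FS)$.

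For part (3), I would use the enriched version of the $(i_*, \R i^!)$ adjunction, namely the formula
\[
\R\Hom_{\OS_{\mathbb{A}^N_C}}(i_*\FS, G) \simeq i_*\R\Hom_{\OS_X}(\FS, \R i^! G),
\]
valid for a closed immersion and any coherent $\FS$ on $X$. Setting $G=\Omega_{\mathbb{A}^N_C}^N$ and substituting $\R i^!\Omega_{\mathbb{A}^N_C}^N\simeq \Omega_X^d[d-N]$ from Lemma \ref{i-coh1} yields the claimed quasi-isomorphism.

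The main technical point is ensuring that Grothendieck's local duality for a closed immersion of smooth spaces holds in the rigid analytic category with the necessary functoriality; for a regular closed immersion of codimension $N-d$ the dualizing complex is a shift of the top exterior power of the conormal bundle, and the required adjunction is essentially a local computation in a tubular neighbourhood of $X$, so no serious new input is needed beyond what Chiarellotto already uses. The second claim requires the minor verification that the $X\cap D_j$ are cofinal among quasi-compact admissible opens of $X$, which follows from Stein-ness and the fact that $X$ is a closed analytic subvariety of $\mathbb{A}^N_C$.
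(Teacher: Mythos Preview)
Your proposal is correct and follows essentially the same approach as the paper. For (1) and (3) you use the derived-category formulation of the $(i_*, \R i^!)$ adjunction where the paper writes out an injective resolution explicitly, and for (2) you argue cofinality via the concrete polydisc exhaustion $D_j$ where the paper instead cites the more general statement from Chiarellotto that every affinoid open of $X$ is cut out by finitely many affinoid opens of $\mathbb{A}^N_C$; these are equivalent routes to the same conclusion.
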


\begin{proof}
The first claim follows from Lemma~\ref{i-coh1} using the fact  that $i^{!}$ is right adjoint to $i_*$.

To prove the claim (2), note that the result is true for the usual cohomology (without compact support). We deduce the isomorphism for the compactly supported cohomology using that for each open affinoid $V$ of $X$, we can find a finite set of open affinoids $Z_1, \cdots, Z_n$ in the affine space such that $V$ is the intersection of the $Z_i$'s with $X$ (see \cite[(4.17)]{Ch90}).

For the third claim, consider an injective resolution $\IS_{\bullet}$ of $\Omega_{{\mathbb A}_C^N}^N$. Then 
\[ \Hom_{\OS_X}(i_*\FS, \IS^{\bullet}) \simeq \Hom_{\OS_{{\mathbb A}^N_C}}(\FS, i^{!}\IS^{\bullet}). \]
We conclude using that the functor $i^!$ sends injectives  to injectives and applying Lemma~\ref{i-coh1}.
\end{proof}

  We define the trace map for a smooth Stein variety $X$ as the composition: 
\begin{align*}
H^d_c(X, \Omega^d_X) \xrightarrow{\sim} H^d_c({\mathbb A}^N_C,& i_*\Omega_X^d) 
 \simeq {\rm Ext}^N_{\OS_X,c}(i_*\OS_X, \Omega_{{\mathbb A}^N_C}^N) \xrightarrow{\sim} \Hom_L(H^0({\mathbb A}_C^N, i_*\OS_X),L) \to L,   
\end{align*}
where the first and second isomorphisms follow from the previous lemma and the third one from the duality for the affine space.

\begin{lemma} {\rm({\em  \cite[3.5, 3.7]{VdP92}})}
\label{tr-uniq}
Let $X$ be a smooth Stein variety over $L$, of dimension $d$. Then,
\begin{enumerate}
\item  The trace map does not depend of the chosen closed immersion $X \hookrightarrow {\mathbb A}^N_L$.
\item If $X \to Y$ is an open immersion of smooth Stein varieties over $L$ of dimension $d$, the composition 
\[ H^d_c(X, \Omega^d) \to H^d_c(Y, \Omega^d) \xrightarrow{{\rm tr}_{{\rm coh},Y}} L \] 
is equal to ${\rm tr}_{{\rm coh},X}: H^d_c(X, \Omega^d) \to L$.  
\end{enumerate}
\end{lemma}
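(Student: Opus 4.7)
The trace map $\mathrm{tr}_{\mathrm{coh},X}$ constructed above can be rewritten, via the isomorphism of Corollary~\ref{i-coh2}(3) with $\mathcal{F}=\mathcal{O}_X$, as the composition $\omega\mapsto i_*\omega\mapsto \mathrm{tr}_{\mathrm{coh},\mathbb{A}^N_L}(i_*\omega)$, where $i_*\colon H^d_c(X,\Omega^d_X)\to H^N_c(\mathbb{A}^N_L,\Omega^N_{\mathbb{A}^N_L})$ is the Gysin pushforward coming from Lemma~\ref{i-coh1}. My plan is to show that this composition is intrinsic, deriving (1) and (2) as consequences.

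For (1), given closed immersions $i_1,i_2$ into $\mathbb{A}^{N_1}_L$ and $\mathbb{A}^{N_2}_L$, I would form the product immersion $i=(i_1,i_2)\colon X\hookrightarrow\mathbb{A}^{N_1+N_2}_L$ and reduce, by symmetry, to checking $\mathrm{tr}_{\mathrm{coh},X,i}=\mathrm{tr}_{\mathrm{coh},X,i_1}$. Since the linear projection $\pi_1\colon \mathbb{A}^{N_1+N_2}_L\to\mathbb{A}^{N_1}_L$ satisfies $\pi_1\circ i=i_1$, the desired identity would follow from the compatibility of traces on affine spaces under a trivial bundle projection, namely $\mathrm{tr}_{\mathrm{coh},\mathbb{A}^{N_1+N_2}_L}=\mathrm{tr}_{\mathrm{coh},\mathbb{A}^{N_1}_L}\circ\pi_{1,*}$ on classes of the form $i_*\omega$ (which are supported on $i(X)\subseteq\pi_1^{-1}(i_1(X))$). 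This compatibility can be checked by a direct computation from the explicit residue formula \eqref{trace-aff}.

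For (2), once (1) is available, I would use the naturality of the Serre pairing: the pushforward $j_*\colon H^d_c(X,\Omega^d_X)\to H^d_c(Y,\Omega^d_Y)$ and the restriction $j^*\colon H^0(Y,\mathcal{O}_Y)\to H^0(X,\mathcal{O}_X)$ are to be shown adjoint under the pairings of Theorem~\ref{serre-rigid}(2), yielding
\[ \mathrm{tr}_{\mathrm{coh},Y}(j_*\omega)=\langle 1_Y,j_*\omega\rangle_Y=\langle j^*1_Y,\omega\rangle_X=\langle 1_X,\omega\rangle_X=\mathrm{tr}_{\mathrm{coh},X}(\omega). \]
The adjunction itself can be verified concretely after choosing a single closed immersion $i\colon Y\hookrightarrow\mathbb{A}^N_L$, expressing both pairings as cup products landing in $H^N_c(\mathbb{A}^N_L,\Omega^N_{\mathbb{A}^N_L})$ and invoking the canonical morphism of Ext complexes comparing $(i\circ j)_!\mathcal{O}_X$ and $i_*\mathcal{O}_Y$, combined with the affine trace.

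The main obstacle will be the normalization and sign bookkeeping in step (1), propagated through the identification of Lemma~\ref{i-coh1}; as a safety net, should the direct residue verification become cumbersome, the trace can instead be characterized intrinsically as the unique $L$-linear functional on $H^d_c(X,\Omega^d_X)$ whose pairing with $1\in H^0(X,\mathcal{O}_X)$ implements the (manifestly embedding-independent) topological duality between the Fr\'echet space $H^0(X,\mathcal{O}_X)$ and the compact-type space $H^d_c(X,\Omega^d_X)$, which would settle both claims simultaneously.
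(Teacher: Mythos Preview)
The paper does not supply its own proof of this lemma: it is stated with a citation to van der Put \cite[3.5, 3.7]{VdP92} and used as a black box. Your main strategy for (1) --- comparing two embeddings via the diagonal/product embedding $i=(i_1,i_2)$ and reducing to a compatibility of affine traces along a coordinate projection --- is the standard device and is essentially how van der Put proceeds; likewise the adjunction argument for (2) is fine once (1) is in hand. So at the level of outline your approach is correct and aligned with the cited reference.

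One genuine caution: your ``safety net'' is circular as stated. You propose to characterize $\mathrm{tr}_{\mathrm{coh},X}$ as the functional whose pairing with $1$ realizes the ``manifestly embedding-independent'' topological duality between $H^0(X,\mathcal{O}_X)$ and $H^d_c(X,\Omega^d_X)$. But in the paper's setup that duality isomorphism $H^d_c(X,\Omega^d_X)\xrightarrow{\sim}\Hom_L(H^0(X,\mathcal{O}_X),L)$ is itself \emph{defined} via a chosen embedding (Corollary~\ref{i-coh2} and Serre duality on $\mathbb{A}^N_L$); proving that this map of topological vector spaces is independent of the embedding is exactly the content of claim (1), so it cannot be invoked to settle (1). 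If you want an embedding-free anchor, you would need a separate intrinsic construction of the pairing (e.g.\ via residues at points, or Grothendieck--Verdier style duality), which is additional work and not what the paper does. In short: keep your primary argument and drop the safety net, or replace it with an honest residue-based characterization.
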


\begin{proposition}
\label{dual-stein}
The theorem~\ref{serre-rigid} is true for smooth Stein varieties. 
\end{proposition}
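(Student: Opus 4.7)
The plan is to deduce Serre duality for a smooth Stein variety $X$ of dimension $d$ from the already proved case of the affine space (Proposition \ref{dual-aff}) by embedding $X$ as a closed analytic subspace $i\colon X\hookrightarrow\mathbb{A}^N_L$ and transporting the duality back and forth through $i$. The trace map on $X$ was defined as the composition passing through the trace map on $\mathbb{A}^N_L$; by Lemma \ref{tr-uniq} it is independent of the chosen embedding, so it is a genuine invariant of $X$.

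First I would use Corollary \ref{i-coh2}(2) to identify, for any coherent sheaf $\mathcal{F}$ on $X$,
\[ H^i_c(X,\mathcal{F}) \xrightarrow{\sim} H^i_c(\mathbb{A}^N_L, i_*\mathcal{F}), \qquad H^i(X,\mathcal{F}) \xrightarrow{\sim} H^i(\mathbb{A}^N_L, i_*\mathcal{F}), \]
where the second isomorphism holds because $i$ is a closed immersion and $i_*$ is exact on coherent sheaves. Then, applying Corollary \ref{i-coh2}(3) and taking cohomology of the shifted $\R\Hom$ complex, we get
\[ \Ext^{d-i}_{\OS_X}(\mathcal{F}, \Omega_X^d) \xrightarrow{\sim} \Ext^{N-i}_{\OS_{\mathbb{A}^N_L}}(i_*\mathcal{F}, \Omega_{\mathbb{A}^N_L}^N), \]
and an analogous identification with $\Ext_c$ on each side. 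Under these identifications, the Yoneda/cup product pairing on $X$ is intertwined with the corresponding pairing on $\mathbb{A}^N_L$: this compatibility is built into the adjunction $(i_*, \R i^!)$ and the quasi-isomorphism $i_*\Omega_X^d[d-N]\simeq \R\Hom_{\OS_{\mathbb{A}^N_L}}(i_*\OS_X,\Omega_{\mathbb{A}^N_L}^N)$ of Lemma \ref{i-coh1}, together with the fact that for $\mathcal{F} = \Omega_X^d$ the target $H^d_c(X,\Omega_X^d)$ is identified with $H^N_c(\mathbb{A}^N_L,\Omega^N_{\mathbb{A}^N_L})$ in a manner compatible with the trace (by definition of ${\rm tr}_{{\rm coh},X}$).

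Combining the above, the two pairings in Theorem \ref{serre-rigid}(2) for $X$ become, under these natural isomorphisms, the analogous pairings for $i_*\mathcal{F}$ on $\mathbb{A}^N_L$ composed with ${\rm tr}_{{\rm coh},\mathbb{A}^N_L}$. By Proposition \ref{dual-aff} the latter are perfect dualities, hence so are the former, which yields the theorem.

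The main obstacle I expect is the verification that the isomorphisms coming from Corollary \ref{i-coh2} really intertwine the cup-product/Yoneda pairings — i.e.\ that Lemma \ref{i-coh1} is an isomorphism \emph{of pairings}, not merely of the underlying groups. This amounts to a naturality check for the trace in the adjunction $(i_*,\R i^!)$ applied to the dualizing complex, and it is here that one uses that $i$ is a closed immersion of smooth rigid spaces (so that $\R i^!$ takes the explicit form of a shifted inverse image of $\Omega^N_{\mathbb{A}^N_L}$). Everything else is formal given Proposition \ref{dual-aff} and Lemma \ref{tr-uniq}.
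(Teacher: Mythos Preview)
Your proof is correct and follows essentially the same approach as the paper: embed $X$ into $\mathbb{A}^N_L$, transfer both sides of the pairing via the isomorphisms of Corollary~\ref{i-coh2}, and invoke Proposition~\ref{dual-aff}. You are in fact more explicit than the paper, which simply records the four identifications and declares the result to follow, whereas you correctly flag the compatibility of the pairings under these isomorphisms as the point requiring (routine) verification.
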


\begin{proof} Let $X$ be a smooth Stein variety. Our theorem
 follows from the duality for the affine space (Proposition~\ref{dual-aff}) using the isomorphisms from Corollary~\ref{i-coh2}: 
\begin{align*}
H_c^i(X, \FS) \xrightarrow{\sim} H_c^i({\mathbb A}_C^N, i_*\FS) \quad \text{ and }& {\rm Ext}_{\OS_X}^{d-i}(\FS, \Omega^d_X) \xrightarrow{\sim} {\rm Ext}_{\OS_{{\mathbb A}_C^N}}^{N-i}(i_*\FS, \Omega^N_{{\mathbb A}^N_C}) \\
H^i(X, \FS) \xrightarrow{\sim} H^i({\mathbb A}_C^N, i_*\FS) \quad \text{ and }& {\rm Ext}_{\OS_X,c}^{d-i}(\FS, \Omega^d_X) \xrightarrow{\sim} {\rm Ext}_{\OS_{{\mathbb A}_C^N},c}^{N-i}(i_*\FS, \Omega^N_{{\mathbb A}^N_C}).
\end{align*} 
\end{proof}

In particular, we obtain:  
\begin{proposition}
\label{comp-vanish}
Let $X$ be a smooth Stein variety of dimension $d$ over $L$ and let $\FS$ be a locally free sheaf  on $X$. Then $H^i_c(X, \FS)=0$ if $i \neq d$. 
\end{proposition}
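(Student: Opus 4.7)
The plan is to invoke the Serre duality just established in Proposition~\ref{dual-stein} (that is, Theorem~\ref{serre-rigid}(2)) in order to reduce the vanishing of $H^i_c(X, \FS)$ to the vanishing of an Ext group that, for locally free $\FS$, can be computed via ordinary (non-compactly-supported) coherent cohomology to which Kiehl's Theorem B applies.

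First I would apply the perfect pairing from Theorem~\ref{serre-rigid}(2) to obtain the natural isomorphism
\[ H_c^i(X, \FS) \xrightarrow{\sim} \Hom_L\bigl({\rm Ext}^{d-i}_{\OS_X}(\FS, \Omega_X^d),\, L\bigr), \]
so it suffices to show that the Ext group on the right vanishes for every $i \neq d$. When $i > d$ the index $d-i$ is negative and the vanishing is automatic. When $i < d$, I would use that $\FS$ is locally free: the sheaves $\HS xt^{j}_{\OS_X}(\FS, \Omega_X^d)$ vanish for $j > 0$ (being locally zero), so the local-to-global spectral sequence degenerates and gives
\[ {\rm Ext}^{d-i}_{\OS_X}(\FS, \Omega_X^d) \;\cong\; H^{d-i}\bigl(X,\, \HS om_{\OS_X}(\FS, \Omega_X^d)\bigr) \;\cong\; H^{d-i}\bigl(X,\, \FS^{\vee} \otimes_{\OS_X} \Omega_X^d\bigr). \]
Since $\FS^{\vee} \otimes \Omega_X^d$ is coherent and $X$ is Stein, Kiehl's Theorem B forces this higher cohomology to vanish as soon as $d-i > 0$, completing the argument.

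There is no substantial obstacle here; the proposition is essentially a formal consequence of the Serre duality (which is the actual content that has already been proved) combined with Kiehl's acyclicity theorem for coherent sheaves on Stein spaces. The only point requiring any care is the identification of the Ext group with ordinary coherent cohomology, which is standard for locally free $\FS$ via the collapse of the local-to-global spectral sequence.
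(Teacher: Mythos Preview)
Your proof is correct and matches the paper's approach: the proposition is stated with ``In particular, we obtain'' immediately after Proposition~\ref{dual-stein}, so the intended argument is exactly the one you wrote---Serre duality identifies $H^i_c(X,\FS)$ with the dual of ${\rm Ext}^{d-i}_{\OS_X}(\FS,\Omega_X^d)$, and the latter vanishes for $i\neq d$ by Kiehl's Theorem~B once one uses local freeness to rewrite it as $H^{d-i}(X,\FS^\vee\otimes\Omega_X^d)$.
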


\subsubsection{Overconvergent setting} In the dagger case the coherent duality was studied by Grosse-Kl\"onne. We will review it briefly. 
\begin{theorem}\cite[Th. 4.4]{GK1}
\label{serre-dagger}
Let $X$ be smooth dagger affinoid of dimension $d$ over $L$. Then
\begin{enumerate}
\item There exists a canonical trace map ${\rm tr}_{\rm coh}: \, H_c^{d}(X, \Omega_X^d)\to L$.
\item For all coherent sheaves $\FS$ on $X$, the pairings
\begin{align*}
& H_c^i(X, \FS) \times {\rm Ext}^{d-i}_{\OS_X}(\FS,\Omega_X^{d}) \to H_c^{d}(X, \Omega_X^d) \xrightarrow{{\rm tr}_{\rm coh}} L \\
& H^i(X, \FS) \times {\rm Ext}^{d-i}_{\OS_X,c}(\FS,\Omega_X^{d}) \to H_c^{d}(X, \Omega_X^d) \xrightarrow{{\rm tr}_{\rm coh}} L 
\end{align*}
induce perfect dualities, i.e., we have the induced isomorphisms of $L$-vector spaces:
\begin{align*}
& {\rm Ext}^{d-i}_{\OS_X}(\FS,\Omega_X^{d}) \stackrel{\sim}{\to} \Hom_L(H_c^i(X, \FS),L) & \text{ and } &
& H_c^i(X, \FS) \stackrel{\sim}{\to} \Hom_L( {\rm Ext}^{d-i}_{\OS_X}(\FS,\Omega_X^{d}),L), \\
 & {\rm Ext}^{d-i}_{\OS_X,c}(\FS,\Omega_X^{d}) \stackrel{\sim}{\to} \Hom_L(H^i(X, \FS),L) & \text{ and } &
& H^i(X, \FS) \stackrel{\sim}{\to} \Hom_L( {\rm Ext}^{d-i}_{\OS_X,c}(\FS,\Omega_X^{d}),L). 
\end{align*}
\end{enumerate}
\end{theorem}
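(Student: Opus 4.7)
The plan is to imitate the rigid-analytic proof of Proposition~\ref{dual-stein}, replacing ``smooth Stein variety'' by ``smooth dagger affinoid'' and ``closed immersion into $\mathbb{A}^N_L$'' by a closed dagger immersion $i\colon X \hookrightarrow D^{N,\dagger}_L$ into a dagger polydisc, which exists because the algebra of overconvergent functions on $X$ is a quotient of the Washnitzer algebra $W_L(T_1,\dots,T_N)$.

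First I would handle the dagger polydisc $D^{N,\dagger}_L$ by hand. Choosing a presentation $\{D^N_h\}$ of $D^{N,\dagger}_L$ by strictly larger rigid polydiscs, an argument parallel to Lemma~\ref{aff-vanish} shows that $H^i_c(D^{N,\dagger}_L,\Omega^N)=0$ for $i\neq N$ and that
\[
H^N_c(D^{N,\dagger}_L,\Omega^N) \simeq \Bigl\{\sum_{\nu>0}\tfrac{a_\nu}{x^\nu}\,dx_1\wedge\cdots\wedge dx_N\ \Big|\ \text{overconvergent decay}\Bigr\}.
\]
Define the trace as the coefficient functional $\sum a_\nu x^{-\nu}\,dx \mapsto a_{(1,\dots,1)}$, and verify duality against the Washnitzer algebra $\OS(D^{N,\dagger}_L)$ by the same coefficient-by-coefficient computation as in Proposition~\ref{dual-aff}; the overconvergence conditions on one side are precisely dual to those on the other. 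The argument extends from $\OS$ to free modules $\OS^m$ and, by a finite presentation, to arbitrary coherent $\FS$.

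For a general smooth dagger affinoid $X$ of dimension $d$ I would establish the dagger analogues of Lemma~\ref{i-coh1} and Corollary~\ref{i-coh2}. The key identity is
\[
\Omega^d_X[d-N] \simeq Ri^!\,\Omega^N_{D^{N,\dagger}_L},
\]
from which the adjunction $(i_*, Ri^!)$ together with smoothness gives
\[
R\Hom_{\OS_X}(\FS,\Omega^d_X)[d-N] \simeq R\Hom_{\OS_{D^{N,\dagger}_L}}(i_*\FS,\Omega^N_{D^{N,\dagger}_L}).
\]
The compactly supported base change $H^i_c(X,\FS)\simeq H^i_c(D^{N,\dagger}_L,i_*\FS)$ is obtained by choosing a compatible pair of presentations $\{X_h\}\hookrightarrow\{D^N_h\}$ and combining excision at each level with the fact that compactly supported coherent cohomology on dagger affinoids is computed via colimits of rigid cohomology with support in $\widehat{X}$ (as in \eqref{trick1}). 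Composing these with the trace on $D^{N,\dagger}_L$ defines $\mathrm{tr}_{\mathrm{coh}}\colon H^d_c(X,\Omega^d_X)\to L$ and yields the pairings of the theorem. Independence from the embedding and functoriality follow from the graph argument of Lemma~\ref{tr-uniq}, applied inside a common larger dagger polydisc.

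The main obstacle is the dagger analogue of Lemma~\ref{i-coh1}, i.e.\ the computation of $Ri^!$ for a closed immersion of smooth dagger affinoids: one must verify that the ideal of $X$ in $D^{N,\dagger}_L$ is Zariski-locally generated by a regular sequence of overconvergent functions, so that a Koszul-type resolution identifies $Ri^!\Omega^N_{D^{N,\dagger}_L}$ with $\Omega^d_X[d-N]$. Once this is in place everything else is formal; the only additional care needed is to keep track of the condensed/solid structures so that the resulting pairings live in $\sd(L_\Box)$ and not merely in the category of abstract $L$-vector spaces.
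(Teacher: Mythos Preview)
Your proposal is correct and follows essentially the same approach as the paper (which in turn follows Grosse--Kl\"onne \cite{GK1}): first establish Serre duality for the dagger polydisc ${\rm Sp}(L[X_1,\dots,X_N]^\dagger)$ by the explicit residue computation (the paper's Lemma~\ref{tate-vanish}, formula~\eqref{trace-dagger}, Proposition~\ref{dual-dagger1}), then transport duality to a general smooth dagger affinoid $X$ via a closed immersion $i\colon X\hookrightarrow {\rm Sp}(L[X_1,\dots,X_N]^\dagger)$ using the dagger analogues of Lemma~\ref{i-coh1} and Corollary~\ref{i-coh2}. The paper treats the $Ri^!$ computation as routine (``all the results concerning the closed immersion $i$ from Section~\ref{brompt0} are still true''), whereas you correctly flag it as the one point requiring care; otherwise the two arguments coincide.
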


The proof is similar to the one in the rigid case. We will first prove the result for $X:={\rm Sp}(L[X_1, \cdots, X_d]^{\dagger})$ and then consider a closed immersion $i: X \hookrightarrow {\rm Sp}(L[X_1, \cdots, X_d]^{\dagger})$ to deduce the duality for a general dagger affinoid $X$. 
  
\vskip.2cm
(i) {\em Duality for $X:={\rm Sp}(L[X_1, \cdots, X_d]^{\dagger})$.} As in the affine case, we can easily compute the compactly supported cohomology for the sheaf of differentials $\Omega^d$. This will allow us to define the trace map. 

\begin{lemma}
\label{tate-vanish}
We have $H^i_c(X, \Omega^d)=0$ if $i \neq d$ and for $i=d$ we have
\[ H^d_c(X, \Omega^d) \simeq \{ \sum_{ \nu >0} \frac{ a_{\nu}}{x^{\nu}} dx_1 \wedge \cdots \wedge dx_d \; | \; \lim_{|\nu| \to \infty}  \delta^{| \nu |}|a_{\nu}|=0 \text{ for all } \delta >1 \}\]
where $\nu=(\nu_1, \cdots, \nu_d) \in \Z^d$ and $x^{\nu}=x_1^{\nu_1}x_2^{\nu_2} \cdots x_d^{\nu_d}$. 
\end{lemma}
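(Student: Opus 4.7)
The plan is to adapt the proof of Lemma~\ref{aff-vanish} to the dagger setting by working through a presentation of $X$. Fix a presentation $\{X_h\}$ of $X$ with $X_h = \{|x_i| \le \rho_h\}$, where $\rho_h \searrow 1$, so that $\widehat{X}$ is the closed unit polydisc; by the dagger-affinoid definition of compactly supported coherent cohomology (analogous to \eqref{trick1} for the de Rham complex), we have $\rg_c(X,\Omega^d) \simeq \colim_h \rg_{\widehat{X}}(X_h,\Omega^d)$, where $\rg_{\widehat{X}}(X_h,\Omega^d)$ is the fiber of the restriction map $\rg(X_h,\Omega^d) \to \rg(X_h \setminus \widehat{X},\Omega^d)$. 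Since each $X_h$ is affinoid, its higher coherent cohomology vanishes, and the long exact sequence reduces the problem to computing $H^{\bullet}(X_h \setminus \widehat{X},\Omega^d)$ together with the restriction of global sections.

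Next I would cover $X_h \setminus \widehat{X}$ by the admissible opens $V_i^h := \{|x_i|>1\} \cap X_h$, $i=1,\dots,d$, and compute via \v{C}ech cohomology. Each nonempty intersection $\bigcap_{i\in J} V_i^h$ is a product of annuli (in the coordinates indexed by $J$) with closed discs (in the remaining coordinates), hence quasi-Stein, so \v{C}ech computes the sheaf cohomology. The key observation is that a Laurent monomial $x^\nu$ lies in $\mathcal{O}(\bigcap_{i\in J}V_i^h)$ iff $J \supseteq J_0(\nu):=\{i : \nu_i < 0\}$; the \v{C}ech complex therefore splits as a direct sum over sign patterns $J_0$, and the summand for a given $J_0$ is, up to a shift placing $J=J_0$ in degree $|J_0|-1$, the augmented simplicial chain complex of the full simplex on $\{1,\dots,d\}\setminus J_0$. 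This complex is acyclic when $0 < |J_0| < d$, it collapses to a single copy in \v{C}ech degree $0$ when $J_0 = \emptyset$, and to a single copy in degree $d-1$ when $J_0=\{1,\dots,d\}$. Consequently $H^i(X_h\setminus\widehat{X},\Omega^d)=0$ for $i \ne 0,d-1$, while $H^0 \simeq \mathcal{O}(X_h)\,dx_1\wedge\cdots\wedge dx_d$ and $H^{d-1} \simeq \{\sum_{\nu>0}(a_\nu/x^\nu)\,dx_1\wedge\cdots\wedge dx_d\}$, subject to the convergence condition on the region $\{1 < |x_i| \le \rho_h\}$.

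Substituting this into the long exact sequence, and using that the restriction $\mathcal{O}(X_h) \to H^0(X_h\setminus\widehat{X},\mathcal{O})$ is an isomorphism for $d\ge 2$ (a Hartogs-type statement read off the $J_0=\emptyset$ piece) and reduces to the usual inclusion of power series into Laurent series for $d=1$, one obtains $H^i_{\widehat{X}}(X_h,\Omega^d)=0$ for $i \ne d$ and $H^d_{\widehat{X}}(X_h,\Omega^d) \simeq H^{d-1}(X_h\setminus\widehat{X},\Omega^d)$. The condition on the Laurent coefficients is controlled by uniform convergence on the closed subpieces $\{\sigma \le |x_i| \le \rho_h\}$ as $\sigma \searrow 1$, so it does not depend on $\rho_h$; hence the colimit over $h$ is immediate, yielding the asserted description of $H^d_c(X,\Omega^d)$ and the vanishing of all other $H^i_c(X,\Omega^d)$. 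I expect the main obstacle to be the middle-degree vanishing of the \v{C}ech cohomology, i.e.\ the acyclicity of the sign-pattern subcomplexes for $0 < |J_0| < d$; this reduces to the contractibility of a simplex and is the same combinatorial input implicit in the proof of Lemma~\ref{aff-vanish}.
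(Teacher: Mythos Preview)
Your proposal is correct and follows essentially the same strategy as the paper: the paper's proof simply says ``similar to the one of Lemma~\ref{aff-vanish}'', writes down the long exact sequence coming from the presentation $X_\delta = \operatorname{Sp}(L\langle\delta^{-1}X_1,\dots,\delta^{-1}X_d\rangle)$, and invokes vanishing of higher coherent cohomology on the affinoids $X_\delta$. Your write-up spells out more of the \v{C}ech computation on $X_\delta\setminus\widehat{X}$ (the sign-pattern decomposition and the simplex acyclicity) than the paper does, but this is exactly the argument implicit in the reference to Lemma~\ref{aff-vanish}; your observation that the convergence condition on the purely negative Laurent part is governed by the inner boundary $|x_i|\to 1^+$ and hence independent of $\rho_h$ is the right way to see that the colimit stabilizes.
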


\begin{proof}
The proof is similar to the one of Lemma~\ref{aff-vanish}: if we write $X_{\delta}:={\rm Sp}(L[\delta^{-1}X_1, \cdots, \delta^{-1}X_d])$, for $\delta >1$, then we can use the long exact sequence: 
\[ \cdots \to \colim_{\delta} H^{i-1}(X_{\delta} \setminus \wh{X}, \Omega^d) \to H^i_c(X, \Omega^d) \to \colim_{\delta} H^i(X_{\delta}, \Omega^d) \to \colim_{\delta} H^{i-1}(X_{\delta}\setminus \wh{X}, \Omega^d) \to \cdots \]
to compute $H^i_c(X, \Omega^d)$ (using that $H^i(X_{\delta}, \Omega^d)=0$ for $i\neq0$). 
\end{proof}

We define the trace map for $X={\rm Sp}(L[X_1, \cdots, X_d]^{\dagger})$ as 
\begin{align}
\label{trace-dagger}
{\rm tr}_{\rm coh}: \qquad  H^d_c(X, \Omega^d) \quad& \to  L,\\
 \sum_{ \nu >0} \frac{ a_{\nu}}{x^{\nu}}dx  & \mapsto  a_{(1, \dots, 1)}, \notag
\end{align}
where $\nu=(\nu_1, \cdots, \nu_d) \in \Z^d$, $x^{\nu}=x_1^{\nu_1}x_2^{\nu_2} \cdots x_d^{\nu_d}$ and $dx:=dx_1 \wedge \cdots \wedge dx_d$.

\begin{proposition}
\label{dual-dagger1}
Theorem~\ref{serre-dagger} is true for $X={\rm Sp}(L[X_1, \cdots, X_d]^{\dagger})$. 
\end{proposition}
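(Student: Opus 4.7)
The plan is to mirror the proof of Proposition \ref{dual-aff} in the rigid analytic case, with the main new work being a direct verification that the pairing between the overconvergent Tate algebra and the ``negative series with unrestricted overconvergence'' is perfect in the solid/topological sense.

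\textbf{Step 1 (Trace and the free case).} Set $R := L\langle x_1,\ldots,x_d\rangle^{\dagger}$ and write $X_\delta := \operatorname{Sp}(L\langle \delta^{-1}x_1,\ldots,\delta^{-1}x_d\rangle)$ for $\delta > 1$, so that $H^0(X,\OS_X) = R = \colim_{\delta \downarrow 1} \OS(X_\delta)$. Take the trace map ${\rm tr}_{\rm coh}$ to be the one defined in \eqref{trace-dagger}. For $\FS = \OS_X$, by Lemma \ref{tate-vanish} and the vanishing $H^i(X,\OS_X)=0$ for $i>0$, the only non-trivial pairings are
\[
H^0(X,\OS_X)\times H^d_c(X,\Omega^d_X)\to L,\qquad
\bigl(\sum_{\nu\geq 0}b_\nu x^\nu,\ \sum_{\mu>0}\tfrac{a_\mu}{x^\mu}\,dx\bigr)\longmapsto \sum_{\nu\geq 0}a_{\nu+\mathbf 1}b_\nu,
\]
and the compactly-supported analogue. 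Convergence of the sum follows from the two growth conditions: $|b_\nu|\gamma^{|\nu|}\to 0$ for some $\gamma<1$, while $|a_\mu|\delta^{-|\mu|}\to 0$ for every $\delta>1$.

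\textbf{Step 2 (Perfectness of the basic pairing).} The key computation is that ${\rm tr}_{\rm coh}$ identifies $H^d_c(X,\Omega^d_X)$ with the (continuous/solid) dual of $R$. A continuous linear form $\ell\colon R\to L$ restricts to each Banach space $\OS(X_\delta)$ to a bounded functional, giving $|\ell(x^\nu)|\leq C(\delta)\delta^{|\nu|}$ for every $\delta>1$. Interpolating between two values $\delta'<\delta$ yields $|\ell(x^\nu)|\delta^{-|\nu|}\to 0$ for every $\delta>1$. Thus $a_\nu := \ell(x^{\nu-\mathbf 1})$ defines a unique element of $H^d_c(X,\Omega^d_X)$ which reproduces $\ell$ via the pairing, and conversely every such element gives a continuous functional. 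The symmetric direction, showing that the compactly-supported cohomology is topologically reflexive, uses the same argument: the rings $\OS(X_\delta)$ are Banach and their duals are known, so passing to the colimit/limit identifies both sides.

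\textbf{Step 3 (Reduction to coherent $\FS$).} For $\FS = \OS_X^m$ both sides are finite direct sums and the result follows from Step 2 by additivity. For a general coherent sheaf $\FS$, the ring $R$ is noetherian and regular of dimension $d$, so on the dagger affinoid $X$ the sheaf $\FS$ admits a finite resolution
\[
0\to \OS_X^{n_d}\to \OS_X^{n_{d-1}}\to \cdots\to \OS_X^{n_0}\to \FS\to 0.
\]
Applying $\R\Gamma(X,-)$, $\R\Gamma_c(X,-)$, $\R\Hom_{\OS_X}(-,\Omega^d_X)$, and $\R\Hom_{\OS_X,c}(-,\Omega^d_X)$ to this resolution gives the four hypercohomology spectral sequences needed, and the result of Step 2 applied termwise shows that the pairings between the two pairs of spectral sequences are perfect in each bidegree. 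Since the spectral sequences degenerate in a finite range, one concludes that the induced pairings on $H^i_c(X,\FS)$ resp.\ $H^i(X,\FS)$ are perfect.

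\textbf{Main obstacle.} The only genuinely new input compared with Proposition \ref{dual-aff} is the perfectness statement of Step 2: in the rigid analytic case one works with a single Banach space on each side, whereas here one has an LF-space on one side and a Fr\'echet-type space of ``uniformly overconvergent negative series'' on the other, and one must argue carefully with the solid topology to ensure that the naive algebraic pairing computes the topological dual. Once this is settled, Step 3 is a routine homological diagram chase.
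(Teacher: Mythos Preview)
Your proposal is correct and follows essentially the same approach as the paper's (very terse) proof, which also reduces to $\FS=\OS_X$, writes the pairing explicitly, and then deduces the general coherent case. One small slip: in Step~1 the growth condition on the $b_\nu$ should read $|b_\nu|\gamma^{|\nu|}\to 0$ for some $\gamma>1$ (overconvergent means convergent on a strictly larger polydisc); with $\gamma<1$ the condition is essentially vacuous and the convergence of the pairing sum would not follow.
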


\begin{proof}
The trace map is given by the formula \eqref{trace-dagger}. The proof of the duality is the same as the one of Proposition~\ref{dual-aff}: it is enough to prove the result for $\FS=\OS_X$ and in that case the pairing can be written explicitly. 
\end{proof} 

\vskip.2cm
(ii) {\em Duality for dagger affinoids.} Let $X$ be a smooth dagger affinoid of dimension $d$. By definition, we can find a closed immersion $i$ of $X$ into the Tate algebra $Y:={\rm Sp}(L[X_1, \cdots, X_d]^{\dagger})$. All the results concerning the closed immersion $i$ from Section~\ref{brompt0} are still true in this case (but simpler, since the immersion that we have now preserves the dimension). We define the trace map for $X$ as the composition: 
\begin{align*}
H^d_c(X, \Omega^d_X) \xrightarrow{\sim} H^d_c(Y,& i_*\Omega_X^d)  \simeq {\rm Ext}^d_{\OS_X,c}(i_*\OS_X, \Omega_{Y}^d) \xrightarrow{\sim} \Hom_L(H^0(Y, i_*\OS_Y),L) \to L.   
\end{align*}
Then, Theorem~\ref{serre-dagger} follows from Proposition~\ref{dual-dagger1} the same way as Proposition~\ref{dual-stein} follows from Proposition~\ref{dual-aff}. 

In particular, we obtain: 
\begin{proposition}
\label{dagger-vanish}
Let $X$ be a smooth dagger affinoid of dimension $d$ over $L$, and let $\FS$ be a locally free  sheaf on $X$. Then $H^i_c(X, \FS)=0$ if $i \neq d$. 
\end{proposition}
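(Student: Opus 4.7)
The plan is to deduce the vanishing directly from the perfect duality of Theorem~\ref{serre-dagger}, reducing it to the (well-known) vanishing of higher coherent cohomology of locally free sheaves on dagger affinoids. Concretely, Theorem~\ref{serre-dagger}(2) furnishes, for any coherent sheaf $\FS$ on $X$ and any $i$, a canonical $L$-linear isomorphism
\[
H^i_c(X,\FS)\stackrel{\sim}{\longrightarrow}\Hom_L\!\bigl(\Ext^{d-i}_{\OS_X}(\FS,\Omega_X^d),L\bigr).
\]
So the vanishing statement is equivalent to showing that $\Ext^{d-i}_{\OS_X}(\FS,\Omega_X^d)=0$ whenever $i\neq d$, under the extra assumption that $\FS$ is locally free.

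First I would dispose of the range $i>d$: here $d-i<0$, so the $\Ext$ group vanishes for purely formal reasons, and duality immediately gives $H^i_c(X,\FS)=0$. For the range $0\le i<d$ I would use local freeness of $\FS$: the sheaf $\sHom_{\OS_X}(\FS,\Omega_X^d)$ is then again a locally free (in particular coherent) $\OS_X$-module, and the local-to-global spectral sequence degenerates to give
\[
\Ext^{d-i}_{\OS_X}(\FS,\Omega_X^d)\;\simeq\;H^{d-i}\!\bigl(X,\sHom_{\OS_X}(\FS,\Omega_X^d)\bigr).
\]
Since $X$ is a dagger affinoid and $\sHom_{\OS_X}(\FS,\Omega_X^d)$ is coherent, the dagger analog of Kiehl's acyclicity theorem (which is one of the basic results in Grosse-Kl\"onne's foundational work \cite{GK1}) implies that this cohomology vanishes for $d-i>0$, i.e.\ for $i<d$. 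Combining the two ranges with the duality displayed above yields $H^i_c(X,\FS)=0$ for every $i\neq d$, as claimed.

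The only nontrivial input is thus the vanishing of higher coherent cohomology of a locally free sheaf on a dagger affinoid; this is the step where one genuinely uses the dagger (rather than merely rigid) structure, but it is already available in the references cited in the excerpt, and no further analytic estimates are required. The rest of the argument is formal manipulation of Ext groups and duality.
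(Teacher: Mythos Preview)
Your argument is correct and is exactly the one implicitly intended by the paper: the proposition is stated there as an immediate corollary of Theorem~\ref{serre-dagger} (introduced with ``In particular, we obtain:'') with no proof given, and your reduction via duality to the vanishing of $\Ext^{d-i}_{\OS_X}(\FS,\Omega_X^d)$, together with the identification $\Ext^{d-i}_{\OS_X}(\FS,\Omega_X^d)\simeq H^{d-i}(X,\Hhom_{\OS_X}(\FS,\Omega_X^d))$ for locally free $\FS$ and Kiehl-type acyclicity on dagger affinoids, is precisely how one unpacks that corollary. One small quibble: you say the vanishing of $H^i_c$ is \emph{equivalent} to the vanishing of the Ext group, but strictly speaking you only need (and only argue) the implication in one direction, which suffices.
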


Moreover, as in the rigid case, we have that the trace map is compatible with open immersions of dagger affinoids: 

\begin{lemma}
\label{tr-uniq00}
Let $X$ be a smooth dagger affinoid over $L$, of dimension $d$. Then,
\begin{enumerate}
\item The trace map does not depend of the chosen closed immersion $X \hookrightarrow {\rm Sp}(L[X_1, \cdots, X_d]^{\dagger})$.
\item If $X \to Y$ is an open immersion of dagger smooth affinoids over $L$ of dimension $d$, the composition 
\[ H^d_c(X, \Omega^d) \to H^d_c(Y, \Omega^d) \xrightarrow{{\rm tr}_{{\rm coh},Y}} L \] 
is equal to ${\rm tr}_{{\rm coh},X}: H^d_c(X, \Omega^d) \to L$.  
\end{enumerate}
\end{lemma}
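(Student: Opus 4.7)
The plan is to mimic van der Put's proof of the rigid analytic analog (Lemma \ref{tr-uniq}), using Grosse-Klönne's coherent duality together with the explicit Laurent-series description of the trace on the dagger Tate algebra.

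For part (1), given two closed immersions $i_1, i_2 : X \hookrightarrow Y_d := {\rm Sp}(L[X_1,\ldots,X_d]^{\dagger})$ (or, more generally, into $Y_N = {\rm Sp}(L[X_1,\ldots,X_N]^\dagger)$ for varying $N$, which is the setting in which Grosse-Klönne's proof of Theorem \ref{serre-dagger} naturally lives), the first step is to form the product closed immersion $i_1 \times i_2 : X \hookrightarrow Y_d \times Y_d = Y_{2d}$ and show that the traces associated with $i_1$, $i_2$, and $i_1 \times i_2$ all agree. This reduces to the following compatibility: for the projection $\pi : Y_{2d} \to Y_d$ onto the first factor, the composition
\[ H^{2d}_c(Y_{2d},\Omega^{2d}) \xrightarrow{\pi_*} H^d_c(Y_d,\Omega^d) \xrightarrow{{\rm tr}_{{\rm coh},Y_d}} L \]
equals ${\rm tr}_{{\rm coh},Y_{2d}}$. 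By the Laurent-series description of Lemma \ref{tate-vanish} and the formula \eqref{trace-dagger}, extracting the coefficient of $1/(x_1\cdots x_d\, y_1 \cdots y_d)$ on $Y_{2d}$ is the same as first extracting the coefficient of $1/(y_1\cdots y_d)$ (giving a Laurent series in the $x_i$'s) and then applying the residue in the $x_i$'s. Combined with the $i^!$-adjunction of Corollary \ref{i-coh2}(1), this projection compatibility immediately gives the independence claim.

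For part (2), let $j : X \hookrightarrow Y$ be an open immersion of smooth dagger affinoids of dimension $d$. Choose a closed immersion $i_Y : Y \hookrightarrow Y_N$ and another closed immersion $i'_X : X \hookrightarrow Y_M$. The composite $i_Y\circ j : X \to Y_N$ is only locally closed, but the combined map $(i'_X, i_Y\circ j) : X \hookrightarrow Y_{N+M}$ is a bona fide closed immersion, and by part (1) it computes ${\rm tr}_{{\rm coh},X}$. On the other side, the extension-by-zero map $j_!$ in compactly supported coherent cohomology fits into a commutative diagram with $i_Y$ and $i_Y\circ j$, so pushing forward $j_!\alpha$ to $H^N_c(Y_N, i_{Y,*}\Omega^d_Y)$ agrees with pushing $\alpha$ through $i_Y\circ j$. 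Coupling this with the projection compatibility on $Y_{N+M} = Y_N \times Y_M$ (the same compatibility used in part (1), now with respect to the second factor), we obtain ${\rm tr}_{{\rm coh},Y}\circ j_! = {\rm tr}_{{\rm coh},X}$, which is the claim.

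The main obstacle is the careful bookkeeping needed to establish the projection compatibility of the trace map on the affine dagger spaces in a way that plays well with the shifts and duality identifications of Corollary \ref{i-coh2}. The Laurent-series picture makes the compatibility transparent, but a rigorous proof has to track the $i^!$-adjunction isomorphisms across the composition $Y_N \hookrightarrow Y_{N+M}$ and verify a Künneth-type compatibility for $\R\Hom_{\OS_{Y_{N+M}}}(\OS_{Y_N}\boxtimes\OS_{X}, \Omega^{N+M})$. Once this compatibility is established, both claims of the lemma follow formally, in complete parallel with van der Put's rigid analytic treatment.
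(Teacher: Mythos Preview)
Your approach is correct but takes a different route from the paper. The paper's proof is a one-line reduction: it observes that the dagger statement can be reduced to the already-proven rigid analytic Stein case (Lemma \ref{tr-uniq}), presumably by passing through presentations $\{X_h\}$ and identifying the dagger compactly supported cohomology $H^d_c(X,\Omega^d)\simeq\operatorname{colim}_h H^d_{\wh{X}}(X_h,\Omega^d)$ with a (co)limit of compactly supported cohomologies of the Stein interiors $X_h^\circ$, so that the dagger trace is inherited from the Stein traces and all the compatibilities follow from Lemma \ref{tr-uniq} already established by van der Put.

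What you do instead is transport van der Put's product-immersion and projection-compatibility argument wholesale into the dagger setting, using the explicit Laurent description of Lemma \ref{tate-vanish} and formula \eqref{trace-dagger}. This is self-contained and correct (and you rightly flag that the closed immersion should be into $Y_N$ with $N\ge d$ rather than $Y_d$, which appears to be a typo in the paper's statement). The cost is the bookkeeping you acknowledge: tracking the $i^!$-adjunction through $Y_N\hookrightarrow Y_{N+M}$ and the K\"unneth compatibility. The paper's reduction avoids redoing this work by leveraging the rigid Stein result as a black box; your approach would be preferable if one wanted a treatment independent of the rigid case.
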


\begin{proof}The proof can be reduced to the Stein case, i.e., to Lemma \ref{tr-uniq}.
\end{proof}
\begin{remark}({\em Topological issues.})
All the terms in Theorem \ref{serre-rigid} and Theorem \ref{serre-dagger} are equipped with a natural topology and the dualities are topological. See \cite[Th. 4.21]{Ch90} for statements and discussions concerning these facts. 
\end{remark}

\subsection{Coherent duality: condensed approach}\label{serre-condensed0} The classical coherent duality results surveyed above can be paraphrased in the condensed mathematics language.

\begin{corollary}\label{serre-condensed}Let $X$ be a smooth partially proper rigid analytic variety over $L$, of dimension $d$. Then:
\begin{enumerate}
\item There are trace maps in $L_{\Box}$ and $\sd(L_{\Box})$, respectively:
$$\tr_{\rm coh}: H^d_c(X,\Omega^d_X)\to L,\quad {\rm tr}_{\rm coh}: \R\Gamma_c(X,\Omega^d)[d] \to L.
$$
They are compatible with open immersions. 
\item If $X$ is Stein or proper, the pairing
$$
H^i(X,\Omega^j_X)\otimes^{\LL_{\Box}}_LH^{d-i}_c(X,\Omega^{d-j}_X)\to H^{d}_c(X,\Omega^{d}_X)\lomapr{\tr_{\rm coh}} L 
$$
is perfect, i.e., it yields the quasi-isomorphisms in $\sd(L_{\Box})$
\begin{align}\label{52-cafe}
& \Omega^j(X)\stackrel{\sim}{\to}\R\Hom_{L_{\Box}}(H^{d}_c(X,\Omega^{d-j}_X),L),\quad \Omega^j(X)\stackrel{\sim}{\to}\Hom_{L_{\Box}}(H^{d}_c(X,\Omega^{d-j}_X),L),\\
& H^d_c(X,\Omega^j_X)\stackrel{\sim}{\to}\Hom_{L_{\Box}}(\Omega^{d-j}(X),L).\notag
\end{align}
\item  The pairing
$$
\R\Gamma(X,\Omega^j_X)\otimes^{\LL_{\Box}}_L\R\Gamma_c(X,\Omega^{d-j}_X)\to \R\Gamma_c(X,\Omega^{d}_X)\lomapr{\tr_{\rm coh}} L [-d]
$$
is perfect, i.e., it yields the quasi-isomorphism in $\sd(L_{\Box})$
\begin{align*}
\R\Gamma(X,\Omega^j_X)\simeq \R\Hom_{L_{\Box}}(\R\Gamma_c(X,\Omega^{d-j}_X)[d],L)
\end{align*}

\end{enumerate}
\end{corollary}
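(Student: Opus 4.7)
The plan is to reduce everything to the classical Serre dualities recalled in Theorem~\ref{serre-rigid} and Proposition~\ref{dual-stein} (together with the standard Serre duality for smooth proper rigid spaces), and then to promote these abstract $L$-module statements to the solid category. For part~(1), I would cover a partially proper $X$ by smooth Stein opens $V_\alpha$, equip each with the classical trace of Section~\ref{brompt0}, and glue: Lemma~\ref{tr-uniq} makes the local traces compatible with inclusions between Stein opens, and the cosheaf property of $\R\Gamma_c(-,\Omega^d)$ then produces the global map $\R\Gamma_c(X,\Omega^d)[d]\to L$ and its cohomology-level version. Compatibility with open embeddings of partially proper varieties will follow from the same local compatibility applied to a common Stein refinement.

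For part~(2) I would split into the Stein and proper cases. In the Stein case I would invoke Proposition~\ref{comp-vanish} together with the vanishing $H^{>0}(X,\Omega^j_X)=0$ to collapse all the pairings into a single pairing in complementary degrees, which is perfect abstractly by Proposition~\ref{dual-stein}. To upgrade to a solid duality, I would fix a Stein exhaustion $U_n\Subset U_{n+1}$ and present $\Omega^j(X)\simeq\lim_n\Omega^j(U_n)$ as a nuclear Fr\'echet $L$-module, while $H^d_c(X,\Omega^{d-j}_X)\simeq\colim_n H^d_{U_n}(X,\Omega^{d-j}_X)$ identifies with its strong topological dual, a nuclear LB $L$-module; for such pairs the evaluation is a perfect duality of solid $L$-modules and $\Hom$ agrees with $\R\Hom$ and with the topological dual, yielding~\eqref{52-cafe}. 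In the proper case $\R\Gamma_c=\R\Gamma$ and the cohomology groups are finite-dimensional $L$-vector spaces with discrete condensed structure, so the assertion reduces directly to classical Serre duality. Part~(3) should then follow formally: in the Stein case the complexes $\R\Gamma(X,\Omega^j_X)$ and $\R\Gamma_c(X,\Omega^{d-j}_X)$ are concentrated in degrees $0$ and $d$ respectively so the derived statement collapses to~(2), while in the proper case finite-dimensionality makes $\R\Hom$ agree with $\Hom$.

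The hard part will be the condensed upgrade in~(2). One must identify $\Omega^j(X)$ and $H^d_c(X,\Omega^{d-j}_X)$ with honest nuclear Fr\'echet and nuclear LB solid $L$-modules, verify that the Serre pairing realizes the topological duality between them, and then invoke the general condensed/solid principle that, for such a pair, evaluation induces a perfect duality and the internal $\Hom$ computes both the strong topological dual and the derived $\R\Hom$. The remaining ingredients---the passage from partially proper to Stein opens via Lemma~\ref{tr-uniq}, and the reduction of the derived statement~(3) to the non-derived~(2)---are essentially bookkeeping.
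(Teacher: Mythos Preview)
Your approach to~(1) and~(2) is essentially the paper's: build the trace on a general partially proper $X$ by gluing the Stein traces via Lemma~\ref{tr-uniq}, and in the Stein case upgrade the classical duality to the solid world by noting that $H^d_c(X,\Omega^{d-j})$ is of compact type (hence $\R\Hom_{L_\Box}(-,L)\simeq\Hom_{L_\Box}(-,L)$) while $\Omega^j(X)$ is nuclear Fr\'echet. Your Fr\'echet/LB presentation is a more explicit version of what the paper does in one sentence.

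There is, however, a genuine gap in your treatment of~(3). The claim~(3) is stated for an \emph{arbitrary} smooth partially proper $X$, with no Stein or proper hypothesis, and your argument (``the derived statement collapses to~(2)'') only covers those two special cases. For a general partially proper $X$ the complexes $\R\Gamma(X,\Omega^j)$ and $\R\Gamma_c(X,\Omega^{d-j})$ need not be concentrated in a single degree, so there is nothing to collapse. The paper handles this by taking a Stein covering $\{X_i\}$ of $X$, forming the associated \v{C}ech hypercovering $Y_\bullet$, and computing
\[
\R\Gamma(X,\Omega^j)\simeq \R\lim_i \R\Gamma(Y_i,\Omega^j)\simeq \R\lim_i \R\Hom_{L_\Box}\bigl(\R\Gamma_c(Y_i,\Omega^{d-j})[d],L\bigr)\simeq \R\Hom_{L_\Box}\bigl(\colim_i \R\Gamma_c(Y_i,\Omega^{d-j})[d],L\bigr),
\]
using that each $Y_i$ is Stein (so~(2) applies termwise) and that $\R\Hom$ sends colimits in the first variable to limits. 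This descent step is what you are missing; without it,~(3) remains unproved outside the Stein and proper cases.
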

\begin{proof} For claim (1), assume first that $X$ is Stein. Recall that the locally convex spaces $H^i_c(X,\Omega^j)$ and $H^i(X,\Omega^j)$ are of compact type and nuclear Fr\'echet (both in the classical sense), respectively. We can see them inside the category of solid $L$-modules. Then the claim (1) follows from Proposition \ref{dual-stein}.   For a general  smooth and partially proper $X$  take an admissible covering $\{U_i\}$, $i\in I$,  of $X$ such that the $U_i$'s are Stein. We have the exact sequence:
\begin{align*}
 \bigoplus_{i,j \in I} H^d_c(U_i \cap U_j, \Omega^d) \to \bigoplus_{i\in I} H_c^d(U_i, \Omega^d) \to H^d_c(X, \Omega^d)  \to 0. 
\end{align*}
We define the trace map ${\rm tr}_{\rm coh}: H^d_c(X, \Omega_X^d) \to L$
using the above exact sequence  and the fact that the construction of the coherent trace for Stein varieties is compatible with the open immersions $U_i \cap U_j \to U_i$ (Lemma~\ref{tr-uniq}).  The derived trace ${\rm tr}_{\rm coh}: \R\Gamma_c(X,\Omega^d) \to L$ is defined as the composition
$$
{\rm tr}_{\rm coh}: \R\Gamma_c(X,\Omega^d)\stackrel{\sim}{\leftarrow}\tau_{\leq d}\R\Gamma_c(X,\Omega^d)\to  H^d_c(X, \Omega_X^d)[-d] \lomapr{{\rm tr}_{\rm coh}} L[-d].
$$
For the last statement of claim (1) we have the following:
\begin{lemma}\label{coherent-pp}
Let $j:X\to Y$ be an open immersion of smooth partially proper rigid analytic varieties of dimension $d$ over $L$. The coherent trace map  is compatible with $j_*$, i.e., 
the following diagram is commutative: 
$$
 \xymatrix{ H^d_c(X, \Omega_X^d) \ar[r]^{j_*} \ar[d]_{{\rm tr}_{{\rm coh},X}} & H^d_c(Y, \Omega_Y^d) \ar[dl]^{{\rm tr}_{{\rm coh},Y}} \\
  L. }
$$
\end{lemma}
\begin{proof}
If $X,Y$ are Stein this follows from Lemma \ref{tr-uniq}. 

 For a general partially proper $Y$, take an  admissible covering $\{U_i\}$ of $Y$ by Stein varieties. Recall that   the trace map $\tr_{Y}: H^d_c(Y, \Omega_Y^d) \to L$ comes from the maps $\tr_{U_i}: H^d_c(U_i, \Omega_{U_i}^d) \to L$. Consider the diagram: 
\[ \xymatrix{ 
\bigoplus_i H^d_c(U_i \times_Y X, \Omega^d) \ar[rrr]  \ar[rdd]_{\oplus_i {\rm tr}_{U_i\times X}} \ar@{->>}[rd] & & & \bigoplus_i H^d_c(U_i, \Omega^d) \ar@{->>}[ld] \ar[ldd]^{\oplus_i {\rm tr}_{U_i}} \\
& H^d_c(X, \Omega^d) \ar[r] \ar[d]_{{\rm tr}_X}& H^d_c(Y, \Omega^d) \ar[d]^{{\rm tr}_Y}& \\
& C \ar@{=}[r] & C &}. \]
We want to show that the central square is commutative. Since the maps from the direct sums to $H^d_c(X, \Omega^d)$ and $H^d_c(Y, \Omega^d)$ are surjective, it suffices to show that the upper and outer trapezoids as well as the right and left triangles are commutative. 

The right triangle is commutative by definition of the trace map for $Y$; for the upper trapezoid, it is by definition of the pushforward map; for the left triangle and the outer trapezoid this follows from the previous point (since  $U_i\times_YX$ is Stein). 
\end{proof}

   For the claim (2), we again use the cited proposition and the fact that 
we have $$\R\Hom_{L_{\Box}}(H^{2d-i}_c(X,\Omega^{d-j}_X),L)\stackrel{\sim}{\leftarrow} \Hom_{L_{\Box}}(H^{2d-i}_c(X,\Omega^{d-j}_X),L)$$ because $H^i_c(X,\Omega^j)$ is of compact type.

 For claim (3),   we cover $X$  with Stein varieties $\{X_i\}, i\in I$, and consider the associated \v{C}ech hypercovering $Y_{\jcdot}$ of $X$.  Then we compute in 
 $\sd(L_{\Box})$
   \begin{align*}
{\R\Gamma}(X,\Omega^j)& \simeq\R\lim_{i} {\R\Gamma}(Y_{i},\Omega^j)\simeq\R\lim_{i}\R\Hom_{L_{\Box}}({\R\Gamma}_{c}(Y_{i},\Omega^{d-j})[d],L)\\
& \simeq\R\Hom_{L_{\Box}}(\colim_{i}{\R\Gamma}_{c}(Y_{i},\Omega^{d-j})[d],L)\simeq\R\Hom_{L_{\Box}}({\R\Gamma}_{c}(X,\Omega^{d-j})[d],L),
   \end{align*}
as wanted. Here, the second quasi-isomorphism follows from the duality for Stein varieties stated in claim (2).

\end{proof}

  We have an analogous duality result  in the overconvergent setting:
\begin{corollary}\label{serre-condensed-over}Let $X$ be a smooth dagger variety over $L$, of dimension $d$. Then:
\begin{enumerate}
\item There is a trace map of solid $L$-modules
$$\tr_{\rm coh}: H^d_c(X,\Omega^d_X)\to L. $$
\item If $X$ is an affinoid then the pairing
$$
H^i(X,\Omega^j)\otimes^{\LL_{\Box}}_LH^{2d-i}_c(X,\Omega^{d-j}_X)\to H^{2d}_c(X,\Omega^{d}_X)\lomapr{\tr_{\rm coh}} L 
$$
is perfect, i.e., it yields the quasi-isomorphisms in $\sd(L_{\Box})$
\begin{align}\label{52-cafe-over}
& H^i_c(X,\Omega^j)\stackrel{\sim}{\to}\R\Hom_{L_{\Box}}(H^{2d-i}(X,\Omega^{d-j}_X),L),\quad H^i_c(X,\Omega^j)\stackrel{\sim}{\to}\Hom_{L_{\Box}}(H^{2d-i}(X,\Omega^{d-j}_X),L),\\
& H^i(X,\Omega^j)\stackrel{\sim}{\to}\Hom_{L_{\Box}}(H^{2d-i}_c(X,\Omega^{d-j}_X),L).\notag
\notag
\end{align}
\item  If $X$ is quasi-compact, the pairing
$$
\R\Gamma(X,\Omega^j_X)\otimes^{\LL_{\Box}}_L\R\Gamma_c(X,\Omega^{d-j}_X)\to \R\Gamma_c(X,\Omega^{d}_X)\lomapr{\tr_{\rm coh}} L [-d]
$$
is perfect, i.e., it yields the quasi-isomorphism in $\sd(L_{\Box})$
\begin{align*}
\R\Gamma_c(X,\Omega^j_X)\simeq \R\Hom_{L_{\Box}}(\R\Gamma(X,\Omega^{d-j}_X)[d],L)
\end{align*}

\end{enumerate}
\end{corollary}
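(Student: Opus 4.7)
The argument proceeds in direct parallel to that of Corollary \ref{serre-condensed}, with Grosse-Kl\"onne's dagger coherent duality (Theorem \ref{serre-dagger}) playing the role of the rigid analytic Theorem \ref{serre-rigid}.

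For claim (1), when $X$ is a dagger affinoid, the classical trace from Theorem \ref{serre-dagger} is continuous between spaces of compact type and so upgrades automatically to a morphism $\tr_{\rm coh}\colon H^d_c(X,\Omega^d_X)\to L$ in $L_\Box$. For a general smooth dagger variety, I choose an admissible dagger affinoid covering $\{U_i\}_{i\in I}$ and glue the local traces via the cokernel presentation
\[
\bigoplus_{i,j} H^d_c(U_i\cap U_j,\Omega^d)\to \bigoplus_i H^d_c(U_i,\Omega^d)\to H^d_c(X,\Omega^d)\to 0,
\]
the well-definedness being ensured by the compatibility of affinoid traces with open immersions established in Lemma \ref{tr-uniq00}.

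For claim (2), with $X$ a dagger affinoid, the asserted duality is a condensed reformulation of Theorem \ref{serre-dagger}. Since the coherent cohomology groups involved are of compact type or nuclear Fr\'echet type as classical locally convex $L$-vector spaces, the solid $\R\Hom_{L_\Box}(-,L)$ collapses to the underived $\Hom_{L_\Box}(-,L)$ and coincides with the classical continuous dual, so that the topological perfect pairing of Theorem \ref{serre-dagger} automatically becomes a quasi-isomorphism in $\sd(L_\Box)$.

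For claim (3), with $X$ quasi-compact, I fix a finite admissible dagger affinoid cover $\{X_i\}_{i\in I}$ and work with the associated \v{C}ech hypercovering $Y_\bullet$. Combining the cosheaf property of $\rg_{\dr,c}$ with the affinoid duality of (2) applied at each $Y_i$ yields
\begin{align*}
\rg_c(X,\Omega^j) &\simeq \colim_i \rg_c(Y_i,\Omega^j) \simeq \colim_i \R\Hom_{L_\Box}(\rg(Y_i,\Omega^{d-j})[d],L) \\
&\simeq \R\Hom_{L_\Box}(\R\lim_i \rg(Y_i,\Omega^{d-j})[d],L) \simeq \R\Hom_{L_\Box}(\rg(X,\Omega^{d-j})[d],L),
\end{align*}
where the penultimate identification uses that in a stable $\infty$-category $\R\Hom(-,L)$ interchanges finite limits with finite colimits. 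The main technical point is this interchange, for which the finiteness of the cover (a consequence of quasi-compactness) is crucial; one must also verify naturality of the affinoid duality along open immersions of affinoids so that the local dualities assemble into a morphism of \v{C}ech systems.
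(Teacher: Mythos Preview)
Your argument is correct and essentially identical to the paper's: both invoke Theorem \ref{serre-dagger} and Lemma \ref{tr-uniq00} for (1) and (2), and for (3) both pass to a finite \v{C}ech hypercover and use that $\R\Hom_{L_\Box}(-,L)$ swaps finite colimits with finite limits. One small imprecision worth correcting: in the dagger affinoid setting the roles are reversed from the Stein case --- $H^i(X,\Omega^j)$ is of compact type while $H^i_c(X,\Omega^j)$ is nuclear Fr\'echet --- which is the ``mirror image'' the paper alludes to; this does not affect your argument, since for (2) the $\R\Hom$ you need to collapse is taken against $H^{2d-i}(X,\Omega^{d-j})$, which is indeed of compact type.
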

\begin{proof}The proof of (1) and (2) is the mirror image of the proof of Corollary \ref{serre-condensed}, using Lemma \ref{tr-uniq00} and  the fact that if $X$ is an affinoid then   the locally convex spaces $H^i(X,\Omega^j)$ and $H^i_c(X,\Omega^j)$ are of compact type and nuclear Fr\'echet (both in the classical sense), respectively.

  For claim (3),   we cover $X$  with affinoid dagger  varieties $\{X_i\}, i\in I$, and consider the associated \v{C}ech hypercovering $Y_{\jcdot}$ of $X$.  Then we compute in 
 $\sd(L_{\Box})$
   \begin{align*}
{\R\Gamma}_c(X,\Omega^j)& \simeq\colim_{i} {\R\Gamma_c}(Y_{i},\Omega^j)\simeq\colim_{i}\R\Hom_{L_{\Box}}({\R\Gamma}(Y_{i},\Omega^{d-j})[d],L)\\
& \simeq\R\Hom_{L_{\Box}}(\R\lim_{i}{\R\Gamma}(Y_{i},\Omega^{d-j})[d],L)\simeq\R\Hom_{L_{\Box}}({\R\Gamma}(X,\Omega^{d-j})[d],L),
   \end{align*}
as wanted. Here, the second quasi-isomorphism follows from the duality for affinoid dagger  varieties stated in claim (2) and the third quasi-isomorphism  from the fact that the limits and colimits are finite. 
 \end{proof}

 \subsection{de Rham dualities}
 We state the dualities satisfied by  de Rham and Hyodo-Kato cohomologies of  smooth partially proper rigid analytic varieties and smooth dagger affinoids.  We deduce the de Rham duality from coherent duality and the Hyodo-Kato duality is  obtained from the de Rham duality  using the Hyodo-Kato isomorphism.

 \subsubsection{Rigid analytic setting}
We can now prove the duality result satisfied by the de Rham cohomology:
\begin{theorem}{\rm ({\em de Rham duality})}
 \label{dualdR-rigid} 
Let $L=K,C$ and $X$ be a smooth partially proper  variety of dimension $d$ over $L$. Then: 
\begin{enumerate}
\item
There are natural trace maps in $\sd(L_{\Box})$ and $L_{\Box}$, respectively:
$${\rm tr}_{\dr}: \, \R\Gamma_{\dr,c}(X)[2d]\to L,\quad {\rm tr}_{\dr}: \, H^{2d}_{\dr,c}(X)\to L. 
$$
\item
The  pairing
$$
\R\Gamma_{\dr}(X)\otimes^{\LL_{\Box}}_L \R\Gamma_{\dr,c}(X)[2d]\to \R\Gamma_{\dr,c}(X)[2d]\xrightarrow{{\rm tr}_{\dr}} L
$$
is a perfect duality, i.e., we have induced quasi-isomorphism in $\sd(L_{\Box})$
\begin{align*}
& \R\Gamma_{\dr}(X)\stackrel{\sim}{\to} \R\Hom_{L_{\Box}} (\R\Gamma_{\dr,c}(X)[2d],L).
%& \R\Gamma_{\dr,c}(X)\stackrel{\sim}{\to} \R\Hom_{L} (\R\Gamma_{\dr}(X)[2d],L),
\end{align*}
\item The  pairing
$$
H^i_{\dr}(X)\otimes^{\LL_{\Box}}_L H^{2d-i}_{\dr,c}(X)\to H^{2d}_{\dr,c}(X)\xrightarrow{{\rm tr}_{\dr}} L
$$
is a perfect duality of solid $L$-vector spaces, i.e., we have induced isomorphisms in $L_{\Box}$
\begin{align*}
& H^i_{\dr}(X)\stackrel{\sim}{\to} \Hom_{L_{\Box}} (H^{2d-i}_{\dr,c}(X),L),\\
& H^i_{\dr,c}(X)\stackrel{\sim}{\to} \Hom_{L_{\Box}} (H^{2d-i}_{\dr}(X),L).
\end{align*}
\end{enumerate}
\end{theorem}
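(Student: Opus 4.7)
The plan is to deduce the de Rham duality from the coherent duality of Corollary \ref{serre-condensed} by exploiting the Hodge (b\^ete) filtration on $\Omega^{\bullet}_X$, whose graded pieces are coherent sheaves. Since $\dim X = d$ gives $\Omega^{\geq d}_X = \Omega^d_X[-d]$ and $\Omega^{\geq d+1}_X = 0$, this filtration has finite length and the comparison reduces to finitely many invocations of coherent duality, glued together by five-lemma arguments on the associated distinguished triangles.

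For the trace, I would first argue the vanishing $H^i_c(X,\Omega^j) = 0$ for $i>d$, which is Proposition \ref{comp-vanish} on Stein opens and extends to partially proper $X$ via Mayer--Vietoris on a Stein cover (the intersections of Stein opens are again Stein-like, so the vanishing is preserved). Combined with $\Omega^j = 0$ for $j>d$, this gives $H^i_{\dr,c}(X) = 0$ for $i>2d$, as well as $H^{2d}_{\dr,c}(X) = \mathrm{coker}\bigl(d\colon H^d_c(X,\Omega^{d-1}) \to H^d_c(X,\Omega^d)\bigr)$ by inspection of the filtration triangle $\R\Gamma_c(X,\Omega^d)[-d] \to \R\Gamma_{\dr,c}(X) \to \R\Gamma_c(X,\Omega^{\leq d-1})$. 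Since $\mathrm{tr}_{\mathrm{coh}}$ vanishes on exact forms (a ``Stokes'' compatibility, which I would prove by reducing via Lemma \ref{coherent-pp} and excision to the affine case, where \eqref{trace-aff} visibly kills all exact $d$-forms), it descends to $\mathrm{tr}_{\dr}\colon H^{2d}_{\dr,c}(X) \to L$. The derived trace is then
\[
  \R\Gamma_{\dr,c}(X)[2d] \xleftarrow{\sim} \tau_{\leq 2d}\R\Gamma_{\dr,c}(X)[2d] \to H^{2d}_{\dr,c}(X) \xrightarrow{\mathrm{tr}_{\dr}} L.
\]
The pairing is the cup product induced by the DGA structure on $\Omega^{\bullet}_X$: the wedge $\Omega^{\bullet}_X \otimes \Omega^{\bullet}_X \to \Omega^{\bullet}_X$ yields $\R\Gamma_{\dr}(X) \otimes^{\LL_{\Box}}_L \R\Gamma_{\dr,c}(X) \to \R\Gamma_{\dr,c}(X)$ by the standard pairing between sections and sections with support, and one composes with $\mathrm{tr}_{\dr}$ after the appropriate shift.

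For perfection, the Hodge filtration is multiplicative ($F^{\geq a} \wedge F^{\geq b} \subset F^{\geq a+b}$), so on the associated graded the pairing becomes wedge $\Omega^a[-a] \otimes \Omega^{d-a}[-(d-a)] \to \Omega^d[-d]$; after applying $\R\Gamma$ and $\R\Gamma_c$ and composing with $\mathrm{tr}_{\mathrm{coh}}$, this is exactly the perfect coherent duality pairing of Corollary \ref{serre-condensed}(3). The plan is then an induction on the length of the filtration: at each stage one has a morphism of distinguished triangles comparing $F^{\geq r}\R\Gamma_{\dr}(X)$ to $\R\Hom_{L_{\Box}}(\R\Gamma_{\dr,c}(X)/F^{\geq d-r+1}[2d], L)$, and the five-lemma propagates the quasi-isomorphism from the graded pieces to the whole filtered complex, yielding claim (2). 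Claim (3) then follows by taking $H^i$, using that the individual $H^j_{\dr,c}(X)$ and $H^j_{\dr}(X)$ are solid $L$-vector spaces of sufficiently tame type (compact-type LB and nuclear Fr\'echet on Stein/proper pieces, patched by Mayer--Vietoris) so that $\R\Hom_{L_{\Box}}(-,L)$ agrees with $\Hom_{L_{\Box}}(-,L)$ in degree zero, reducing the derived Hom on the right to the plain internal Hom of cohomology groups; reflexivity of these vector spaces then gives both isomorphisms in (3).

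The principal obstacle I anticipate is the compatibility step in the induction: one needs to check that the coherent-duality isomorphisms on the graded pieces are intertwined by the \emph{de Rham} differential on one side and the dual of the de Rham differential on the other, including sign conventions for the wedge product and for the coherent trace. Concretely, one must verify that the connecting morphisms in the exact triangles of the b\^ete filtration on $\R\Gamma_{\dr}(X)$ are sent, under coherent duality, to the connecting morphisms in the dual filtration on $\R\Hom_{L_{\Box}}(\R\Gamma_{\dr,c}(X)[2d], L)$. The natural strategy is to reduce this to a purely local statement on the DGA $\Omega^{\bullet}_X$, where it becomes a formal graded-commutativity check; globalizing this local compatibility through the \v{C}ech machinery used to define $\mathrm{tr}_{\mathrm{coh}}$ in the proof of Corollary \ref{serre-condensed} is the most delicate point.
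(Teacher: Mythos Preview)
Your proposal is correct and follows essentially the same strategy as the paper---reduce de Rham duality to coherent duality via the Hodge filtration---but the execution differs in one notable way for claim~(2).

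The paper does not run a b\^ete-filtration induction on general partially proper $X$. Instead it first reduces to the Stein case, where it exploits the fact that both complexes have explicit single-degree representatives: $\R\Gamma_{\dr}(X)$ is the honest complex $\mathcal{O}(X)\to\Omega^1(X)\to\cdots\to\Omega^d(X)$, while $\R\Gamma_{\dr,c}(X)$ is represented by the complex $H^d_c(X,\mathcal{O}_X)\to H^d_c(X,\Omega^1_X)\to\cdots\to H^d_c(X,\Omega^d_X)$ placed in degrees $[d,2d]$ (using the vanishing $H^i_c(X,\Omega^j)=0$ for $i\neq d$). With both sides written as honest complexes of solid vector spaces, the quasi-isomorphism is obtained by applying the termwise coherent duality $\Omega^j(X)\simeq \R\Hom_{L_{\Box}}(H^d_c(X,\Omega^{d-j}),L)$ from Corollary~\ref{serre-condensed} and observing that the de Rham differentials match. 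The passage to partially proper $X$ is then done by taking a Stein \v{C}ech hypercover $Y_{\bullet}$ and computing $\R\Gamma_{\dr}(X)\simeq\R\lim_i\R\Gamma_{\dr}(Y_i)\simeq\R\lim_i\R\Hom(\R\Gamma_{\dr,c}(Y_i)[2d],L)\simeq\R\Hom(\mathrm{colim}_i\R\Gamma_{\dr,c}(Y_i)[2d],L)$.

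The advantage of the paper's route is exactly that it dissolves your ``principal obstacle'': because both sides are literal complexes of vector spaces (not objects in the derived category) and the coherent duality isomorphism is explicit, the compatibility of connecting maps reduces to the one-line identity $\mathrm{tr}_{\rm coh}(d\alpha\wedge\beta)=\pm\mathrm{tr}_{\rm coh}(\alpha\wedge d\beta)$, which is $\mathrm{tr}_{\rm coh}(\partial)=0$ (cited from Beyer). Your filtration-induction approach would also work, and has the virtue of applying directly to partially proper $X$ without a separate globalization step, but then the compatibility check you flag does need to be written out. For claim~(3) the paper is more specific than your ``sufficiently tame type'': it uses that $H^m_{\dr,c}(X)$ is of compact type (in fact a countable direct sum of finite rank vector spaces), which gives the required $\mathrm{Ext}$-vanishing, and then reflexivity for the second isomorphism, exactly as you outline.
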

\begin{proof} In the case $X$ is Stein,  
the trace map ${\rm tr}_{\dr}: H^{2d}_{\dr,c}(X)\to L$ can be  induced by the coherent trace map ${\rm tr}_{\rm coh}: H^d_c(X, \Omega_X^d) \to L$ from 
Corollary \ref{serre-condensed}, noting that ${\rm tr}_{\rm coh}(\partial)=0$ where $\partial: H^{d}_c(X, \Omega_X^{d-1}) \to H^d_c(X, \Omega_X^d)$ (see \cite[Lemma 2.1.4]{Bey97}). Its derived version ${\rm tr}_{\dr}: \R\Gamma_{\dr,c}(X)[2d]\to L$ is defined as the composition
$$
{\rm tr}_{\dr}:\quad \R\Gamma_{\dr,c}(X)[2d]\stackrel{\sim}{\leftarrow}\tau_{\leq 2d}\R\Gamma_{\dr,c}(X)[2d]\to H^{2d}_{\dr,c}(X)\lomapr{{\rm tr}_{\rm dR}}L.
$$
In the case of general partially proper $X$ we reduce to the Stein case as in the proof of Corollary~\ref{serre-condensed}.

 In the case $X$ is Stein,  claim (2) follows immediately from the sequence representing $\R\Gamma_{\dr}(X)$:
 \begin{equation}\label{buis1}
 \so(X)\to \Omega^1(X)\to\cdots\to\Omega^d(X),
 \end{equation}
   the sequence \eqref{buis2} representing $\R\Gamma_{\dr,c}(X)$, and derived Serre duality \eqref{52-cafe}. For a general partially proper $X$,  we cover it with Stein varieties $\{X_i\}, i\in I$, and consider the associated \v{C}ech hypercovering $Y_{\jcdot}$ of $X$.  Then we compute in $\sd(L_{\Box})$
   \begin{align*}
{\R\Gamma}_{\dr}(X)& \simeq\R\lim_{i} {\R\Gamma}_{\dr}(Y_{i})\simeq\R\lim_{i}\R\Hom_{L_{\Box}}({\R\Gamma}_{\dr,c}(Y_{i})[2d],L)\\
& \simeq\R\Hom_{L_{\Box}}(\colim_{i}{\R\Gamma}_{\dr,c}(Y_{i})[2d],L)\simeq\R\Hom_{L_{\Box}}({\R\Gamma}_{\dr,c}(X)[2d],L),
   \end{align*}
as wanted. Here, the second quasi-isomorphism follows from the duality for Stein varieties.
   
   The first isomorphism in claim (3) follows from claim (2) by taking cohomology. We use here the fact that $\Ext^j_{L_{\Box}}(H^{m}_{\dr,c}(X),L)=0$, for $j\geq 1$, because $H^{m}_{\dr,c}(X)$ is of compact type (in fact a countable direct sum of finite rank vector spaces).  The second isomorphism follows from the computation: 
\begin{align*}
 \Hom_{L_{\Box}} (H^{2d-i}_{\dr}(X),L)\stackrel{\sim}{\leftarrow}  \Hom_{L_{\Box}} (\Hom_{L_{\Box}} (H^{i}_{\dr,c}(X),L),L)\stackrel{\sim}{\to} H^{i}_{\dr,c}(X),
\end{align*}
where the first isomorphism was just proven and the second one follows from reflexivity of $H^{i}_{\dr,c}(X)$.
\end{proof}
\begin{lemma}\label{dR-pp}
Let $j:X\to Y$ be an open immersion of smooth partially proper rigid analytic varieties of dimension $d$ over $L$. The de Rham trace map  is compatible with $j_*$, i.e., 
the following diagram is commutative: 
$$
 \xymatrix{ H^{2d}_{\dr,c}(X) \ar[r]^{j_*} \ar[d]_{{\rm tr}_{\dr,X}} & H^{2d}_{\dr,c}(Y) \ar[dl]^{{\rm tr}_{\dr,Y}} \\
  L. }
$$
\end{lemma}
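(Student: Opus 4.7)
The plan is to reduce the assertion to the analogous compatibility for the coherent trace, i.e., Lemma~\ref{coherent-pp}. Recall from the proof of Theorem~\ref{dualdR-rigid} that when $Z$ is a smooth partially proper variety of dimension $d$ over $L$, the de Rham trace $\operatorname{tr}_{\dr,Z}$ is built from the coherent trace $\operatorname{tr}_{\operatorname{coh},Z}\colon H^d_c(Z,\Omega^d_Z)\to L$ via the natural edge map $\pi_Z\colon H^d_c(Z,\Omega^d_Z)\twoheadrightarrow H^{2d}_{\dr,c}(Z)$ coming from the complex \eqref{buis2} (using that $\operatorname{tr}_{\operatorname{coh},Z}$ annihilates the image of $\partial\colon H^d_c(Z,\Omega^{d-1}_Z)\to H^d_c(Z,\Omega^d_Z)$, cf.\ \cite[Lemma 2.1.4]{Bey97}). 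Explicitly, one has $\operatorname{tr}_{\dr,Z}\circ\pi_Z=\operatorname{tr}_{\operatorname{coh},Z}$; for Stein $Z$ this is the construction itself, and for general partially proper $Z$ it follows from the fact that both traces are assembled from the same admissible Stein cover $\{U_i\}$ of $Z$ through the coequalizer presentation used in the proof of Corollary~\ref{serre-condensed}.

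Given this, consider the diagram
\[
\xymatrix{
H^d_c(X,\Omega^d_X) \ar[r]^{j_*} \ar@{->>}[d]_{\pi_X} & H^d_c(Y,\Omega^d_Y) \ar@{->>}[d]^{\pi_Y} \\
H^{2d}_{\dr,c}(X) \ar[r]^{j_*} \ar[d]_{\operatorname{tr}_{\dr,X}} & H^{2d}_{\dr,c}(Y) \ar[d]^{\operatorname{tr}_{\dr,Y}} \\
L \ar@{=}[r] & L.
}
\]
The upper square commutes by the naturality of the edge map $\pi_{(-)}$ with respect to $j_*$ (both are induced termwise by pushforward on $\R\Gamma_c(-,\Omega^{\bullet})$). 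The outer rectangle is precisely the diagram of Lemma~\ref{coherent-pp} applied to $j\colon X\to Y$, hence it commutes. Since $\pi_X$ is surjective, a diagram chase forces the lower square to commute, which is the desired identity.

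The only nontrivial point in the above argument is the identification $\operatorname{tr}_{\dr,Z}\circ\pi_Z=\operatorname{tr}_{\operatorname{coh},Z}$ for a general partially proper $Z$ (and the naturality of $\pi_Z$). Both facts, however, follow formally from the Stein case once one unwinds the cover-based definition: choose an admissible Stein cover $\{U_i\}$ of $Z$ and use the exact sequences
\[
\bigoplus_{i,j} H^d_c(U_i\cap U_j,\Omega^d)\to \bigoplus_i H^d_c(U_i,\Omega^d)\to H^d_c(Z,\Omega^d)\to 0,
\]
together with the analogous sequences at level of $H^{2d}_{\dr,c}$; since $\operatorname{tr}_{\operatorname{coh}}$ and $\operatorname{tr}_{\dr}$ on the Stein pieces are compatible by construction, and $\pi$ is natural on the affinoid Stein pieces, the assertion extends to $Z$. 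This completes the proof.
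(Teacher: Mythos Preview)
Your proof is correct and follows essentially the same approach as the paper's: both reduce the de Rham compatibility to the coherent one (Lemma~\ref{coherent-pp}) via the factorization $\operatorname{tr}_{\dr,Z}\circ\pi_Z=\operatorname{tr}_{\operatorname{coh},Z}$, with your version spelling out the diagram chase and the cover-based justification that the paper leaves implicit in its one-line proof.
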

\begin{proof} This follows immediately from the definition of the de Rham trace map and an  analogous claim for coherent traces (see Lemma \ref{coherent-pp}).
\end{proof}
 
 \begin{corollary}{\rm ({\em Filtered de Rham duality})}
Let $L=K,C$ and $X$ be a smooth partially proper variety of dimension $d$ over $L$.
Let $r,r^{\prime}\in\N, r+r^{\prime}=d$. The  pairing in $\sd(L_{\Box})$
$$
(\R\Gamma_{\dr}(X)/F^{r^{\prime}})\otimes^{\LL_{\Box}}_L F^{r+1}\R\Gamma_{\dr,c}(X)[2d]\to \R\Gamma_{\dr,c}(X)[2d]\xrightarrow{{\rm tr}_{\dr}} L
$$
is a perfect duality, i.e., we have induced quasi-isomorphisms in $\sd(L_{\Box})$
\begin{align*}
& \R\Gamma_{\dr}(X)/F^{r^{\prime}}\stackrel{\sim}{\to} \R\Hom_{L_{\Box}} (F^{r+1}\R\Gamma_{\dr,c}(X)[2d],L),\\
& F^{r^{\prime}}\R\Gamma_{\dr}(X)\stackrel{\sim}{\to} \R\Hom_{L_{\Box}} (\R\Gamma_{\dr,c}(X)/F^{r+1}[2d],L).
%& \R\Gamma_{\dr,c}(X)\stackrel{\sim}{\to} \R\Hom_{L} (\R\Gamma_{\dr}(X)[2d],L),
\end{align*}
\end{corollary}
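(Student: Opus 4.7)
The plan is to bootstrap from the unfiltered de Rham duality of Theorem \ref{dualdR-rigid} by devissage along the Hodge filtration, with the graded pieces handled by coherent Serre duality (Corollary \ref{serre-condensed}(3)).

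First I would verify that the pairing of Theorem \ref{dualdR-rigid} vanishes on the subobject $F^{r'}\R\Gamma_{\dr}(X)\otimes^{\LL_{\Box}}_L F^{r+1}\R\Gamma_{\dr,c}(X)[2d]$. Since the wedge product is compatible with the Hodge filtration via $F^i\cdot F^j\subset F^{i+j}$, and since $F^{d+1}\R\Gamma_{\dr,c}(X)=0$ for dimension reasons, this follows from $r'+(r+1)=d+1$. This yields the two maps claimed in the corollary and assembles them into the commutative diagram of distinguished triangles
\[ \xymatrix @C=0.1cm{
F^{r'}\R\Gamma_{\dr}(X) \ar[r]\ar[d] & \R\Gamma_{\dr}(X) \ar[r]\ar[d]_{\rotatebox{90}{$\sim$}} & \R\Gamma_{\dr}(X)/F^{r'} \ar[d] \\
\R\Hom_{L_{\Box}}((\R\Gamma_{\dr,c}/F^{r+1})[2d], L) \ar[r] & \R\Hom_{L_{\Box}}(\R\Gamma_{\dr,c}[2d], L) \ar[r] & \R\Hom_{L_{\Box}}(F^{r+1}\R\Gamma_{\dr,c}[2d], L)
} \]
whose bottom row is obtained by applying $\R\Hom_{L_{\Box}}(-[2d], L)$ to the triangle $F^{r+1}\R\Gamma_{\dr,c}(X)\to\R\Gamma_{\dr,c}(X)\to\R\Gamma_{\dr,c}(X)/F^{r+1}$, and whose middle vertical is a quasi-isomorphism by Theorem \ref{dualdR-rigid}. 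By the five-lemma it then suffices to establish either of the outer verticals as a quasi-isomorphism.

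For the right column I would induct on $r'$, using the short exact sequences $F^{i+1}\to F^i\to\R\Gamma(X,\Omega^i_X)[-i]$ and their compactly supported analogues to reduce to the associated graded pieces. On gradeds the induced pairing is non-zero only when the indices sum to $d$, where it becomes the wedge pairing $\R\Gamma(X,\Omega^i_X)\otimes^{\LL_{\Box}}_L\R\Gamma_c(X,\Omega^{d-i}_X)\to\R\Gamma_c(X,\Omega^d_X)$ composed with ${\rm tr}_{\dr}$. Since ${\rm tr}_{\dr}$ is by construction the coherent trace ${\rm tr}_{\rm coh}$ precomposed with the projection onto $H^d_c(X,\Omega^d_X)$ (compare the proof of Theorem \ref{dualdR-rigid} and Lemma \ref{dR-pp}), this is precisely the perfect Serre pairing of Corollary \ref{serre-condensed}(3); a routine bookkeeping of shifts produces the graded quasi-isomorphism $\R\Gamma(X,\Omega^i_X)[-i]\xrightarrow{\sim}\R\Hom_{L_{\Box}}(\R\Gamma_c(X,\Omega^{d-i}_X)[d+i],L)$. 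These quasi-isomorphisms propagate to the full filtered objects via the five-lemma applied to the successive filtration triangles. The main obstacle, modest in substance but delicate in execution, is the bookkeeping of signs and shifts in the devissage and the explicit identification of the induced pairing on each graded piece with the Serre pairing; all the conceptual content is already available.
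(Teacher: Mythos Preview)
Your proposal is correct but follows a different path from the paper's proof. The paper argues first in the Stein case by an explicit termwise computation: using the representing complex \eqref{buis2} for $F^{r'}\R\Gamma_{\dr,c}(X)$, it applies $\R\Hom_{L_{\Box}}(-,L)$ degree by degree, invokes the nonderived Serre duality $H^d_c(X,\Omega^i)^*\simeq\Omega^{d-i}(X)$ of Corollary~\ref{serre-condensed} at each term, and directly reads off $\R\Gamma_{\dr}(X)/F^{r+1}$; the general partially proper case is then obtained by the same limit/colimit globalization used for Theorem~\ref{dualdR-rigid}. Your approach instead bootstraps from the already-proved unfiltered duality via devissage along the Hodge filtration, with the graded steps handled by the derived Serre duality of Corollary~\ref{serre-condensed}(3). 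Since that corollary is already stated for arbitrary partially proper $X$, your argument applies uniformly without a separate Stein-to-partially-proper step, and it avoids choosing explicit representatives; on the other hand, the paper's direct computation makes the identification with the Serre pairing on each term completely transparent and sidesteps the sign/shift bookkeeping you flagged.
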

\begin{proof} Assume first that $X$ is Stein. 
We compute: 
\begin{align*}
\R\Hom_{L_{\Box}}(F^{r^{\prime}}\R\Gamma_{\dr,c}(X),L)  & \simeq
 (H^d_c(X,\Omega^{r^{\prime}})^*\leftarrow H^d_c(X,\Omega^{r^{\prime}+1})^*\leftarrow \cdots \leftarrow H^d_c(X,\Omega^d)^*)[-d-r^{\prime}]\\
& \simeq \so(X)\to\Omega^1(X)\to\cdots\to \Omega^{r}(X)\\
&\simeq \R\Gamma_{\dr}(X)/F^{r+1}.
\end{align*}
We wrote here $(-)^*$ for $\Hom_{L_{\Box}}(-,L)$ and used the fact that $$\Hom_{L_{\Box}}(H^d_c(X,\Omega^{i}),L)\stackrel{\sim}{\to}\R\Hom_{L_{\Box}}(H^d_c(X,\Omega^{i}),L)$$ because the cohomology groups with compact support are of compact type.
The second quasi-isomorphism follows from Corollary \ref{serre-condensed}. 

 The case of $X$ partially proper follows from the Stein case by a limit/colimit argument as in the proof of Theorem \ref{dualdR-rigid}.
\end{proof}

   The duality for $\B^+_{\dr}$-cohomology has a different form. For  $r\geq d$, a natural trace map in $\sd(\B^+_{\dr,\Box})$ can be defined by the composition
 $$
 {\rm tr}_{\B^+_{\dr}}:\quad F^{r}\R\Gamma_{\dr}(X_C/\B^+_{\dr})[2d]\to \R\Gamma_c(X,\Omega^d)\otimes_K^{\LL_{\Box}}F^{r-d}\B^+_{\dr}\verylomapr{{\rm tr}_{\rm coh}\otimes{\rm Id}} F^{r-d}\B^+_{\dr}.
 $$
\begin{corollary}{\rm ({\em Filtered $\B^+_{\dr}$-duality})}\label{bdrduality} Let  $X$ be a smooth partially proper  variety of dimension $d$ over $K$, $r,r^{\prime}\geq d$ and $s=r+r^{\prime}-d$. Then we have a natural quasi-isomorphism in $\sd(\B^+_{\dr,\Box})$: 
$$
F^r\R\Gamma_{\dr}(X_C/\B^+_{\dr})\simeq \R\Hom_{\B^+_{\dr,\Box}}(F^{r^{\prime}}\R\Gamma_{\dr,c}(X_C/\B^+_{\dr}),F^s\B^+_{\dr}).
$$
\end{corollary}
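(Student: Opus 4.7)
The plan is to construct the duality pairing from the cup product and the trace ${\rm tr}_{\B^+_{\dr}}$ defined just before the statement, then verify it is perfect by reducing to the filtered de Rham duality (Corollary of Theorem~\ref{dualdR-rigid}) via the product formulas of Lemma~\ref{Boa1} and~\eqref{Boa2}. Explicitly, the pairing
\[ F^r \R\Gamma_{\dr}(X_C/\B^+_{\dr}) \otimes^{\LL_\Box}_{\B^+_{\dr}} F^{r'} \R\Gamma_{\dr,c}(X_C/\B^+_{\dr}) \to F^s \B^+_{\dr}[-2d] \]
is the composition of the $\B^+_{\dr}$-linear cup product (which sends $F^r \otimes F^{r'}$ into $F^{r+r'}$ for the convolution filtration) with ${\rm tr}_{\B^+_{\dr}}$; via $\iota_{\rm BK}$ and $\iota_{{\rm BK},c}$ it is the convolution of the de Rham cup product pairing over $C$ with the multiplication on $\B^+_{\dr}$.

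First I would reduce to the Stein case by covering $X$ by Stein opens, taking the \v{C}ech hypercovering, and using that $F^r\R\Gamma_{\dr}(\cdot/\B^+_{\dr})$ is a homotopy limit while $F^{r'}\R\Gamma_{\dr,c}(\cdot/\B^+_{\dr})$ is a homotopy colimit (the Hodge-completed crystalline structure sheaf is a sheaf, its compactly supported avatar a cosheaf). Since $\R\Hom(-, F^s\B^+_{\dr})$ converts colimits to limits, this reduces to $X$ Stein. In that case, $\iota_{{\rm BK},c}$ identifies $F^{r'}\R\Gamma_{\dr,c}(X_C/\B^+_{\dr})$ with $\colim_{i+j \geq r'} F^i\R\Gamma_{\dr,c}(X_C) \dsolid_C F^j\B^+_{\dr}$, and tensor--hom adjunction gives
\[ \R\Hom_{\B^+_{\dr,\Box}}\bigl(F^{r'}\R\Gamma_{\dr,c}(X_C/\B^+_{\dr}), F^s\B^+_{\dr}\bigr) \simeq \R\lim_{i+j \geq r'} \R\Hom_{C_\Box}\bigl(F^i\R\Gamma_{\dr,c}(X_C), \R\Hom_{\B^+_{\dr,\Box}}(F^j\B^+_{\dr}, F^s\B^+_{\dr})\bigr). \]
Since $\B^+_{\dr}$ is a complete discrete valuation ring with uniformizer $t$ and $F^j\B^+_{\dr} = t^j\B^+_{\dr}$, the inner $\R\Hom$ equals $F^{s-j}\B^+_{\dr}$. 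Applying the filtered de Rham duality over $C$ to each $F^i\R\Gamma_{\dr,c}(X_C)$ (after the $[2d]$-shift coming from the trace) and pulling $F^{s-j}\B^+_{\dr}$ out via the compact-type structure of the cohomology groups yields terms $F^{s-j}\B^+_{\dr} \dsolid_C (\R\Gamma_{\dr}(X_C)/F^{d-i+1})$. The change of variables $a = d-i+1$, $b = s-j$ turns the constraint $i+j \geq r'$ (with $0 \leq i \leq d$, $j \geq 0$) into $a + b \leq r + 1$, using $s = r+r'-d$; the resulting limit matches the convolution filtration description of $F^r(\R\Gamma_{\dr}(X_C) \dsolid_C \B^+_{\dr}) \simeq F^r\R\Gamma_{\dr}(X_C/\B^+_{\dr})$ coming from $\iota_{\rm BK}$.

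The main obstacle will be the functional analytic compatibility of the solid derived $\R\Hom$ with base change by $\B^+_{\dr}$ and with the convolution filtration. Concretely, one needs that $\R\Hom_{C_\Box}(A, F^{s-j}\B^+_{\dr}) \simeq F^{s-j}\B^+_{\dr} \dsolid_C \R\Hom_{C_\Box}(A, C)$ for $A$ of compact type (the Hodge pieces of $\R\Gamma_{\dr,c}$ in the Stein case), and that the $\R\lim$/$\dsolid$ interchanges used to assemble the convolution filtration are valid in $\sd(\B^+_{\dr,\Box})$. Once these analytic points are settled, the remainder is index bookkeeping around the identity $s = r + r' - d$.
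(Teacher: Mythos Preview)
Your strategy is the same as the paper's at the structural level: reduce to the Stein case by a limit/colimit argument, then use the product formulas $\iota_{\rm BK}$ and $\iota_{{\rm BK},c}$ to transfer the question to a de Rham/coherent duality over the base. The difference lies in how the Stein case is handled. Rather than working abstractly with the convolution-filtration colimit, the paper uses the explicit complex: for $X$ Stein, $F^{r'}\R\Gamma_{\dr,c}(X_C/\B^+_{\dr})$ is represented by
\[
\bigl(H^d_c(X,\so)\solid_K F^{r'}\B^+_{\dr}\to H^d_c(X,\Omega^1)\solid_K F^{r'-1}\B^+_{\dr}\to\cdots\to H^d_c(X,\Omega^d)\solid_K F^{r'-d}\B^+_{\dr}\bigr)[-d],
\]
and one applies $\R\Hom_{\B^+_{\dr,\Box}}(-,F^s\B^+_{\dr})$ term by term: adjunction gives $\R\Hom_{K_\Box}(H^d_c(X,\Omega^i),F^{s-r'+i}\B^+_{\dr})$, the compact-type/Fr\'echet identity from \cite[Th.~3.40]{RR} pulls out $F^{s-r'+i}\B^+_{\dr}=F^{r-d+i}\B^+_{\dr}$, and then \emph{coherent} Serre duality $H^d_c(X,\Omega^i)^*\simeq\Omega^{d-i}(X)$ (Corollary~\ref{serre-condensed}) yields exactly the complex representing $F^r\R\Gamma_{\dr}(X_C/\B^+_{\dr})$.

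Your abstract route is viable, but the last step is not free and you have glossed over it. After your change of variables you have an $\R\lim$ over $\{a+b\leq r+1\}$ of terms $F^b\B^+_{\dr}\dsolid(\R\Gamma_{\dr}(X_C)/F^a)$, whereas the convolution filtration defining $F^r(\R\Gamma_{\dr}(X_C)\dsolid\B^+_{\dr})$ is a \emph{colimit} over $\{a+b\geq r\}$ of terms $F^a\R\Gamma_{\dr}(X_C)\dsolid F^b\B^+_{\dr}$. That these agree is true in the Stein case (both compute the same explicit complex because the de Rham filtration has finite length), but it needs to be said; it is precisely the kind of interchange you yourself flag as the ``main obstacle.'' The paper's term-by-term argument simply sidesteps this, at the cost of invoking coherent duality directly rather than the filtered de Rham corollary. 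A minor point: the product formula~\eqref{Boa2} is over $K$, not $C$; this is harmless after base change, but your display should be adjusted.
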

\begin{proof} Assume first that $X$ is Stein. 
This follows from the  computation:
\begin{align}\label{kolobrzeg2}
\R\Hom_{\B^+_{\dr,\Box}}(F^{r^{\prime}}& \R\Gamma_{\dr,c}(X_C/\B^+_{\dr}),F^s\B^+_{\dr})  \simeq \Hom_{\B^+_{\dr,\Box}}(F^{r^{\prime}}\R\Gamma_{\dr,c}(X_C/\B^+_{\dr}),F^s\B^+_{\dr})\\
& \simeq (H^d_c(X,\Omega^d)^*\otimes^{\Box}_K t^{r}\B^+_{\dr}\to H^d_c(X,\Omega^{d-1})^*\otimes^{\Box}_K t^{r-1}\B^+_{\dr}\to \cdots \to H^d_c(X,\so)^*\otimes^{\Box}_K t^{r-d}\B^+_{\dr})[-d],\notag\\
& \simeq \so(X)\otimes^{\Box}_K t^r\B^+_{\dr}\to\Omega^1(X)\otimes^{\Box}_K t^{r-1}\B^+_{\dr}\to\cdots\to \Omega^d(X)\otimes^{\Box}_K t^{r-d}\B^+_{\dr}\notag\\
&\simeq F^r\R\Gamma_{\dr}(X_C/\B^+_{\dr}).\notag
\end{align}
We used here the quasi-isomorphisms
\begin{align*}
\R\Hom_{\B^+_{\dr,\Box}}(H^d_c(X,\Omega^i)\otimes^{\LL_{\Box}}_K t^{r^{\prime}-i}\B^+_{\dr},F^s\B^+_{\dr}) \simeq H^d_c(X,\Omega^i)^*\otimes^{\Box}_K t^{r-i}\B^+_{\dr},
\end{align*}
which hold because we have:
\begin{align*}
\R\Hom_{\B^+_{\dr,\Box}}(H^d_c(X,\Omega^i)\otimes^{\LL_{\Box}}_K t^{r^{\prime}-i}\B^+_{\dr},t^s\B^+_{\dr}) &  \simeq \R\Hom_{K_{\Box}}(H^d_c(X,\Omega^i),t^{s-r^{\prime}+i}\B^+_{\dr})\\
  & \simeq \R\Hom_{K_{\Box}}(H^d_c(X,\Omega^i),K)\otimes_K^{\LL_{\Box}}F^{s-r^{\prime}+i}\B^+_{\dr}\\
  & \simeq H^d_c(X,\Omega^i)^*\otimes^{\Box}_K t^{r-d+i}\B^+_{\dr}.
\end{align*}
Here, the first quasi-isomorphism is by adjunction;  the second quasi-isomorphism follows from the fact that $H^d_c(X,\Omega^i)$ is of compact type, $\B^+_{\dr}$ is Fr\'echet, and we have \cite[Th. 3.40]{RR}.

  The case of $X$ partially proper follows from the Stein case by a limit/colimit argument as in the proof of Theorem \ref{dualdR-rigid}.
\end{proof}

\subsubsection{Overconvergent setting}

By the same arguments as in the proof of Theorem \ref{dualdR-rigid} we deduce the Poincar\'e duality for de Rham cohomology: 
\begin{theorem}{\rm(de Rham duality)}
 \label{dualdR-ov} 
Let $L=K,C$ and $X$ be a smooth  dagger variety of dimension $d$ over $L$. Then:  
\begin{enumerate}
\item
There are natural trace maps in $\sd(L_{\Box})$ and $L_{\Box}$, respectively:
$${\rm tr}_{\dr}: \, \R\Gamma_{\dr,c}(X)[2d]\to L,\quad {\rm tr}_{\dr}: \, H^{2d}_{\dr,c}(X)\to L. 
$$
\item If $X$ is quasi-compact, the  pairing
$$
\R\Gamma_{\dr}(X)\otimes^{\LL_{\Box}}_L \R\Gamma_{\dr,c}(X)[2d]\to \R\Gamma_{\dr,c}(X)[2d]\xrightarrow{{\rm tr}_{\dr}} L
$$
is a perfect duality, i.e., we have an  induced quasi-isomorphism in $\sd(L_{\Box})$
\begin{align*}
& \R\Gamma_{\dr}(X)\stackrel{\sim}{\to} \R\Hom_{L_{\Box}} (\R\Gamma_{\dr,c}(X)[2d],L).
%& \R\Gamma_{\dr,c}(X)\stackrel{\sim}{\to} \R\Hom_{L} (\R\Gamma_{\dr}(X)[2d],L),
\end{align*}
\item If $X$ is quasi-compact, 
the  pairing
$$
H^i_{\dr}(X)\otimes^{\LL_{\Box}}_L  H^{2d-i}_{\dr,c}(X)\to H^{2d}_{\dr,c}(X)\xrightarrow{{\rm tr}_{\dr}} L
$$
is a perfect duality of solid $L$-vector spaces, i.e., we have an induced quasi-isomorphism in $\sd(L_{\Box})$
\begin{align*}
& H^i_{\dr}(X)\stackrel{\sim}{\to} \Hom_{L_{\Box}} (H^{2d-i}_{\dr,c}(X),L),\\
& H^i_{\dr,c}(X)\stackrel{\sim}{\to} \Hom_{L_{\Box}} (H^{2d-i}_{\dr}(X),L).
\end{align*}
\end{enumerate}
\end{theorem}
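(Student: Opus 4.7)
The plan is to mirror the proof of Theorem \ref{dualdR-rigid}, replacing the input Corollary \ref{serre-condensed} with its overconvergent counterpart Corollary \ref{serre-condensed-over}. The roles of ``Stein'' and ``quasi-compact'' are swapped because coherent duality exchanges these two types of varieties in the dagger setting.

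\textbf{Part (1): the trace maps.} First I construct $\tr_{\dr}$ in the affinoid case. Let $X$ be a smooth dagger affinoid of dimension $d$. I would define the map of solid $L$-modules
\[ \tr_{\dr} : H^{2d}_{\dr,c}(X) \to L \]
as being induced by the coherent trace $\tr_{\coh} : H^d_c(X,\Omega^d_X) \to L$ of Corollary~\ref{serre-condensed-over}. For this to make sense I must verify the vanishing $\tr_{\coh}\circ d = 0$ on the image of $d : H^d_c(X,\Omega^{d-1}_X)\to H^d_c(X,\Omega^d_X)$. This can be checked by hand for $X=\Sp(L\langle X_1,\dots,X_d\rangle^\dagger)$ using the explicit formula \eqref{trace-dagger} (the trace picks off the coefficient of $x_1^{-1}\cdots x_d^{-1}\, dx$, which is never hit by an exact form), and then propagated to any smooth dagger affinoid via a closed immersion, just as in the construction of $\tr_{\coh}$ itself. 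The derived trace is then obtained by the composition
\[ \tr_{\dr} : \R\Gamma_{\dr,c}(X)[2d] \xleftarrow{\sim} \tau_{\le 2d}\R\Gamma_{\dr,c}(X)[2d] \to H^{2d}_{\dr,c}(X) \xrightarrow{\tr_{\dr}} L. \]
For general $X$, the compatibility of the coherent trace with open immersions of dagger affinoids (Lemma~\ref{tr-uniq00}) globalizes $\tr_{\dr}$ by the same cosheaf argument as in the proof of Corollary~\ref{serre-condensed}.

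\textbf{Part (2) and (3): duality in the affinoid case.} Assume $X$ is a smooth dagger affinoid. Then $\R\Gamma_{\dr}(X)$ is represented by the complex $\mathcal{O}(X)\to \Omega^1(X)\to\cdots\to\Omega^d(X)$ (concentrated in one ``direction''), while by Proposition~\ref{dagger-vanish} we have $H^i_c(X,\Omega^j)=0$ for $i\ne d$, so $\R\Gamma_{\dr,c}(X)$ is represented by the complex
\[ (H^d_c(X,\mathcal{O}) \to H^d_c(X,\Omega^1) \to \cdots \to H^d_c(X,\Omega^d))[-d]. \]
The perfect pairing on the level of complexes then follows degreewise from the affinoid coherent duality \eqref{52-cafe-over}: pairing the $j$-th term of $\R\Gamma_{\dr}(X)$ with the $(d-j)$-th term of $\R\Gamma_{\dr,c}(X)[d]$ reproduces exactly the Serre-type isomorphism $\Omega^j(X)\simeq \Hom_{L_\Box}(H^d_c(X,\Omega^{d-j}),L)$. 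Since the compactly supported coherent cohomology groups are of compact type, $\R\Hom_{L_\Box}(-,L)$ reduces to $\Hom_{L_\Box}(-,L)$, and the duality on complexes descends to an isomorphism on cohomology. This gives claims (2) and (3) for affinoids.

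\textbf{Part (2) and (3): quasi-compact case.} For a general quasi-compact smooth dagger $X$, I would choose a finite admissible covering by smooth dagger affinoids $\{X_i\}_{i\in I}$ and form the associated \v{C}ech hypercovering $Y_{\scriptscriptstyle\bullet}$. Mayer-Vietoris (Proposition~\ref{mv-ov} and its de Rham analogue) gives
\[ \R\Gamma_{\dr}(X)\simeq \R\lim_i \R\Gamma_{\dr}(Y_i), \qquad \R\Gamma_{\dr,c}(X)\simeq \colim_i \R\Gamma_{\dr,c}(Y_i). \]
Since the cover is finite, this $\R\lim$ and $\colim$ are finite (co)limits, so they commute with the internal $\R\Hom_{L_\Box}(-,L)$. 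Then the affinoid duality established above yields
\[ \R\Gamma_{\dr}(X)\simeq \R\lim_i \R\Hom_{L_\Box}(\R\Gamma_{\dr,c}(Y_i)[2d],L) \simeq \R\Hom_{L_\Box}(\colim_i \R\Gamma_{\dr,c}(Y_i)[2d],L) \simeq \R\Hom_{L_\Box}(\R\Gamma_{\dr,c}(X)[2d],L). \]
Compatibility with the trace follows from its cosheaf definition and the affinoid compatibility. The claim (3) at the level of cohomology groups is then extracted as in Theorem~\ref{dualdR-rigid}(3), using reflexivity and that the relevant cohomology groups are of compact type or Fr\'echet in the classical sense (so $\Ext^j_{L_\Box}$ vanishes for $j\ge 1$ against $L$).

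\textbf{Main obstacle.} The technical heart is the affinoid duality, where one has to check that the pairing of complexes induced termwise by \eqref{52-cafe-over} is genuinely compatible with the differentials---in particular that it sends $\tr_{\coh}\circ d$ to zero---and that the passage between $\R\Hom_{L_\Box}(-,L)$ and $\Hom_{L_\Box}(-,L)$ is harmless because of the compact-type/Fr\'echet dichotomy of the cohomology groups. Globalizing via a finite \v{C}ech cover is then a formal matter precisely because one is in the quasi-compact (rather than arbitrary partially proper) regime, so no infinite $\R\lim/\R\Hom$ exchange issues arise.
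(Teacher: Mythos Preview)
Your overall strategy matches the paper's proof almost exactly: build the trace from the coherent trace in the affinoid case, prove the derived duality degreewise from Serre duality, then globalize over a finite \v{C}ech cover using that finite (co)limits commute with $\R\Hom$.

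There is one slip worth flagging. In the dagger affinoid setting the roles of ``compact type'' and ``nuclear Fr\'echet'' are swapped relative to the Stein case: it is $H^i(X,\Omega^j)$ that is of compact type, while $H^i_c(X,\Omega^j)$ is nuclear Fr\'echet (see the proof of Corollary~\ref{serre-condensed-over}). So your sentence ``since the compactly supported coherent cohomology groups are of compact type'' is false here, and you cannot directly conclude that $\R\Hom_{L_\Box}(H^d_c(X,\Omega^{d-j}),L)$ collapses to $\Hom$. The paper sidesteps this by proving the duality in the other direction: it shows
\[
\R\Gamma_{\dr,c}(X)\simeq \R\Hom_{L_\Box}(\R\Gamma_{\dr}(X)[2d],L),
\]
using that $\Omega^j(X)$ is of compact type (so $\R\Hom_{L_\Box}(\Omega^j(X),L)\simeq\Hom_{L_\Box}(\Omega^j(X),L)$), and only afterwards deduces the direction stated in the theorem. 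Similarly, for part (3) the paper does not invoke compact-type arguments but rather the stronger fact that $H^m_{\dr}(X)$ is of \emph{finite rank} over $L$ for quasi-compact dagger $X$, which immediately kills the higher $\Ext$ groups and gives reflexivity for free. Once you make these two adjustments your argument is the paper's argument.
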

\begin{proof}
In the case $X$ is affinoid,  
the trace map ${\rm tr}_{\dr}: H^{2d}_{\dr,c}(X)\to L$ can be  induced by the coherent trace map ${\rm tr}_{\rm coh}: H^d_c(X, \Omega_X^d) \to L$ from 
Corollary \ref{serre-condensed-over}, noting that ${\rm tr}_{\rm coh}(\partial)=0$ where $\partial: H^{d}_c(X, \Omega_X^{d-1}) \to H^d_c(X, \Omega_X^d)$ (reduce to the Stein case and use \cite[Lemma 2.1.4]{Bey97}). Its derived version ${\rm tr}_{\dr}: \R\Gamma_{\dr,c}(X)[2d]\to L$ is defined as the composition
$$
{\rm tr}_{\dr}:\quad \R\Gamma_{\dr,c}(X)[2d]\stackrel{\sim}{\leftarrow}\tau_{\leq 2d}\R\Gamma_{\dr,c}(X)[2d]\to H^{2d}_{\dr,c}(X)\lomapr{{\rm tr}_{\rm dR}}L.
$$
In the case of general smooth  $X$ we reduce to the affinoid case as in the proof of Corollary~\ref{serre-condensed-over}.

 In the case $X$ is affinoid,  claim (2) follows immediately from the sequence representing $\R\Gamma_{\dr}(X)$:
 $$
 \so(X)\to \Omega^1(X)\to\cdots\to\Omega^d(X),
 $$
   the analog of the sequence \eqref{buis2} representing $\R\Gamma_{\dr,c}(X)$, and derived Serre duality \eqref{52-cafe}. For a general quasi-compact  $X$,  we cover it with 
   dagger affinoid  varieties $\{X_i\}, i\in I$, and consider the associated \v{C}ech hypercovering $Y_{\jcdot}$ of $X$.  Then we compute in $\sd(L_{\Box})$
   \begin{align*}
{\R\Gamma}_{\dr,c}(X)& \simeq\colim_{i} {\R\Gamma}_{\dr,c}(Y_{i})\simeq\colim_{i}\R\Hom_{L_{\Box}}({\R\Gamma}_{\dr}(Y_{i})[2d],L)\\
& \simeq\R\Hom_{L_{\Box}}(\R\lim_{i}{\R\Gamma}_{\dr}(Y_{i})[2d],L)\simeq\R\Hom_{L_{\Box}}({\R\Gamma}_{\dr}(X)[2d],L),
   \end{align*}
as wanted. Here, the second quasi-isomorphism follows from the duality for affinoid varieties and the third quasi-isomorphism from the fact that limits/colimits are finite.
   
    The first isomorphism in claim (3) follows from claim (2) by taking cohomology. We use here the fact that $\Ext^j_{L_{\Box}}(H^{m}_{\dr}(X),L)=0$, for $j\geq 1$, because $H^{m}_{\dr}(X)$ is of finite rank.  The second isomorphism follows from the computation: 
\begin{align*}
 \Hom_{L_{\Box}} (H^{2d-i}_{\dr,c}(X),L)\stackrel{\sim}{\leftarrow}  \Hom_{L_{\Box}} (\Hom_{L_{\Box}} (H^{i}_{\dr}(X),L),L)\stackrel{\sim}{\to} H^{i}_{\dr}(X),
\end{align*}
where the first isomorphism was just proven and the second one follows from (trivial) reflexivity of $H^{i}_{\dr}(X)$.
\end{proof}

  \subsection{Hyodo-Kato duality} \label{poinc-hk-rig}
Using the Hyodo-Kato isomorphisms  we can  deduce Poincar\'e duality for the Hyodo-Kato cohomology of a smooth partially proper rigid analytic variety or a smooth dagger affinoid over $L=K,C$ from the one for de Rham cohomology. 
\subsubsection{Rigid analytic setting} 
\begin{theorem}
\label{dualHK-rigid} 
Let $L=K,C$ and $X$ be a smooth partially proper variety of dimension $d$ over $L$. Set $F_L=F, F^{\rm nr}, \breve{F}$ and $*=-,\breve{F}$. Then: 
\begin{enumerate}
\item
There is a natural trace map in the category of solid  $(\phi,N)$-modules over $F_L$
$${\rm tr}_{\hk}: \, H^{2d}_{\hk,*,c}(X)\to F_L\{-d\},  
$$
which is compatible with the de Rham trace map via the Hyodo-Kato morphism and Galois equivariant in the case $L=C$ and $X$ is defined over $K$.
\item
The  pairing of solid $(\phi,N)$-modules over $F_L$
$$
H^i_{\hk,*}(X)\otimes^{\Box}_{F_L} H^{2d-i}_{\hk,*,c}(X)\to H^{2d}_{\hk,*,c}(X)\xrightarrow{{\rm tr}_{\hk}} F_L\{-d\}
$$
is a perfect duality, i.e., we have  induced isomorphisms of solid $(\phi,N)$-modules over $F_L$
\begin{align*}
& H^i_{\hk,*}(X)\stackrel{\sim}{\to} \Hom_{F_{L,\Box}} (H^{2d-i}_{\hk,*,c}(X),F_L\{-d\}),\\
& H^i_{\hk,*,c}(X)\stackrel{\sim}{\to} \Hom_{F_{L,\Box}} (H^{2d-i}_{\hk,*}(X),F_L\{-d\}).
\end{align*}
These isomorphisms are  Galois equivariant if $L=C$ and $X$ is defined over $K$.
\end{enumerate}
\end{theorem}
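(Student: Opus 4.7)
The overall strategy is to build the trace and the duality pairing on the Hyodo--Kato side, transfer the known de Rham Poincar\'e duality (Theorem~\ref{dualdR-rigid}) across the Hyodo--Kato isomorphism $\iota_{\hk,c}$ of Proposition~\ref{sains1}, and promote everything to the category of solid $(\phi,N)$-modules (with $\sg_K$-action when $L=C$ and $X$ descends to $K$).

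First, I would construct the trace $\tr_{\hk}: H^{2d}_{\hk,*,c}(X)\to F_L\{-d\}$. The starting point is that, by $\iota_{\hk,c}\otimes_{F_L}C$, one has
\[
H^{2d}_{\hk,*,c}(X)\otimes^{\Box}_{F_L}C\stackrel{\sim}{\to} H^{2d}_{\dr,c}(X),
\]
and composing with $\tr_{\dr}$ yields a morphism into $C$. The task is to show that it factors through a $(\phi,N)$-equivariant map into the rank-one object $F_L\{-d\}$ with the right Frobenius weight $p^d$ and trivial monodromy. For this I would reduce to the proper case by excision: for a quasi-compact open $U\subset X$ and $Z=X\setminus U$, one has a distinguished triangle relating $\rg_{\hk,c}(X)$ to $\rg_{\hk,c}(U)$ and the Hyodo--Kato cohomology of an ambient compactification, where the proper-case trace is the classical (Tsuji/Hyodo--Kato) crystalline trace, automatically $(\phi,N,\sg_K)$-equivariant. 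Patching these local traces using Lemma~\ref{dR-pp} as template gives the global trace, with $(\phi,N)$-structure preserved by construction. Compatibility with $\tr_{\dr}$ under $\iota_{\hk,c}$ is then built in.

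Second, I would define the pairing as the Hyodo--Kato cup product followed by $\tr_{\hk}$; this is tautologically a morphism of solid $(\phi,N)$-modules (and $\sg_K$-equivariant in the relevant case). Its perfectness on underlying $F_L$-modules is obtained by base-changing to $C$: under $\iota_{\hk,c}\otimes_{F_L}C$, the pairing becomes the de Rham pairing, perfect by Theorem~\ref{dualdR-rigid}. Using that $H^i_{\hk,*}(X)$ and $H^{2d-i}_{\hk,*,c}(X)$ are of nuclear Fr\'echet and compact type respectively (inherited from the de Rham side via $\iota_{\hk}$ and $\iota_{\hk,c}$), and that $F_L\to C$ is faithful on such solid modules, one concludes that
\[
H^i_{\hk,*}(X)\stackrel{\sim}{\to}\Hom_{F_{L,\Box}}(H^{2d-i}_{\hk,*,c}(X),F_L\{-d\})
\]
is an isomorphism in $F_{L,\Box}$. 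Reversing the roles of $i$ and $2d-i$ gives the second isomorphism. Equivariance under $\phi$, $N$, and $\sg_K$ of these maps follows automatically because the pairing and trace were constructed within the $(\phi,N,\sg_K)$-equivariant framework.

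The main obstacle is the construction of $\tr_{\hk}$ with its $(\phi,N)$-structure, i.e.\ showing that the composite $H^{2d}_{\hk,*,c}(X)\to H^{2d}_{\dr,c}(X)\xrightarrow{\tr_{\dr}} C$ lands in $F_L\cdot t^{-d}\subset C$ after identifying $F_L\{-d\}\hookrightarrow C$ via $\iota_{\hk}$, compatibly with Frobenius of weight $p^d$ and trivial monodromy. The reduction via excision to the proper case is the delicate step: one must verify that the locally-constructed traces glue coherently, and that the identifications on overlaps are $(\phi,N,\sg_K)$-equivariant. Granting this, the rest of the argument is formal transfer of the de Rham duality along a faithful base change.
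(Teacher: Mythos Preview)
Your overall strategy—transport the de Rham duality of Theorem~\ref{dualdR-rigid} across the Hyodo--Kato isomorphism, once a compatible trace $\tr_{\hk}$ is in hand—matches the paper exactly. The reduction of perfectness to the de Rham case via $\iota_{\hk,c}$ is correct and is precisely what the paper does (see diagram~\eqref{diagRond}).

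The gap is in your construction of the trace. Your excision step is not well-formed: for a quasi-compact open $U\subset X$ with $Z=X\setminus U$, the distinguished triangle involves cohomology supported on or around $Z$, not ``the Hyodo--Kato cohomology of an ambient compactification''; moreover $U$ quasi-compact is not partially proper, so you do not get a pushforward $j_*$ on compactly supported Hyodo--Kato cohomology. And smooth partially proper rigid varieties need not admit smooth compactifications, so ``reduce to the proper case by compactifying $X$'' is not available in general. Patching local traces as in Lemma~\ref{dR-pp} would produce an $F_L$-linear map, but it does not by itself establish the $(\phi,N)$-structure on the target $F_L\{-d\}$: you still need to know that Frobenius acts by $p^d$ and monodromy by zero on $H^{2d}_{\hk,*,c}(X)$, and your proposal does not supply this.

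The paper's device is different and much more direct. Assume $X$ connected and pick an open ball $U\subset X$ of dimension $d$. Since $H^{2d}_{\hk,*,c}$ and $H^{2d}_{\dr,c}$ are one-dimensional for connected smooth varieties, both pushforwards
\[
H^{2d}_{\hk,*,c}(U)\xrightarrow{\ \sim\ }H^{2d}_{\hk,*,c}(X),\qquad
H^{2d}_{\hk,*,c}(U)\xrightarrow{\ \sim\ }H^{2d}_{\hk,*,c}({\mathbb P}^d_L)
\]
are isomorphisms (diagram~\eqref{china1}). This bridges $X$ to ${\mathbb P}^d$ without compactifying $X$. On ${\mathbb P}^d$ one descends further to ${\mathbb P}^d$ over $\breve{F}$ (resp.\ $\Q_p$ for the $F^{\nr}$-version), where $\iota_{\hk}$ is already an isomorphism onto $H^{2d}_{\dr,c}$ and the de Rham trace visibly lands in $\breve{F}\{-d\}$ (resp.\ $F^{\nr}\{-d\}$) with the correct Frobenius weight; triviality of $N$ on $H^{2d}_{\hk,*,c}(X)$ is read off from the same chain. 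This simultaneously constructs the factorization $\tr_{\hk}$ and proves its $(\phi,N,\sg_K)$-equivariance.
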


\begin{proof}
Assume first that we can define a trace map ${\rm tr}_{\hk} : H^{2d}_{\HK,*,c}(X) \to F_L\{-d\}$
such that the following diagram commutes:
\begin{equation}
\label{diagRond}
 \xymatrix{ 
H^i_{\HK,*}(X)  \ar[d]^{\iota_{\HK}} \otimes^{\Box}_{F_L}   H^{2d-i}_{\HK,*,c} (X) \ar[d]^{\iota_{\HK}}    \ar[r]  & H^{2d}_{\HK,*,c}(X) \ar[d]^{\iota_{\HK}} \ar[r]^-{{\rm tr}_{\hk}} & F_L\{-d\} \ar[d]  \\
H^i_{\dr}(X)  \otimes^{\Box}_L  H^{2d-i}_{\dr,c}(X)   \ar[r]  &H^{2d}_{\dr,c}(X) \ar[r]^-{{\rm tr}_{\dr}} & L}.
\end{equation}
Then, since the map from the bottom row is a perfect pairing, via the Hyodo-Kato isomorphisms,  the top row is a perfect pairing as well and we obtain the duality for the Hyodo-Kato cohomology.

  Hence it suffices to prove claim (1). We will show that the de Rham trace map factorises through $F_L$ when restricted to $H^{2d}_{\HK,c}(X)$ yielding the Hyodo-Kato trace map with the required properties. We may assume that $X$ is connected. We start with the case $L=C$ and $*=\breve{F}$. 
Consider the following commutative diagram
\begin{equation}\label{china1}
\xymatrix{
H^{2d}_{\HK,\breve{F},c}(X) \ar[r]^{\iota_{\HK}}& H^{2d}_{\dr,c}(X)\ar[r]^-{{\rm tr}_{\dr}}_-{\sim} & L\\
H^{2d}_{\HK,\breve{F},c}(U)  \ar[d]^{\wr}\ar[u]^{\wr} \ar[r]^{\iota_{\HK}}& H^{2d}_{\dr,c}(U)  \ar[d]^{\wr}\ar[u]^{\wr}\ar[r]^-{{\rm tr}_{\dr}} _-{\sim}& L\ar@{=}[u]\\
H^{2d}_{\HK,\breve{F},c}({\mathbb P}^d_L)  \ar[r]^{\iota_{\HK}}& H^{2d}_{\dr,c}({\mathbb P}^d_L) \ar[r]^-{{\rm tr}_{\dr}} _-{\sim}& L\ar@{=}[u]\\
H^{2d}_{\HK,\breve{F},c}({\mathbb P}^d_{\breve{F}}) \ar[u]^{\wr} \ar[r]^{\iota_{\HK}}_{\sim}& H^{2d}_{\dr,c}({\mathbb P}^d_{\breve{F}}) \ar[u]\ar[r]^-{{\rm tr}_{\dr}}_-{\sim} & \breve{F}\{-d\}\ar[u]
}
\end{equation}
Here $U\subset X$ is an open ball of dimension $d$ inside $X$. The bottom de Rham trace map is compatible with Frobenius as can be easily seen by passing to the affine space and using  formula \eqref{trace-aff}. 
The above diagram shows that the top horizontal composition factors through $\breve{F}$,  defining the trace map ${\rm tr}_{\hk}: H^{2d}_{\HK,\breve{F},c}(X)\to \breve{F}\{-d\}$, which is clearly compatible with the de Rham trace map and Frobenius. Moreover,  the monodromy on 
$H^{2d}_{\HK,\breve{F},c}(X)$ is trivial (since it is so on $H^{2d}_{\HK,\breve{F},c}({\mathbb P}^d_{\breve{F}})$) hence the trace map ${\rm tr}_{\hk}$ is compatible with monodromy. 
Same for the residual Galois action.

 The same argument works for $L=K$ (and $F_L=F$). For $L=C$ and $F_L=F^{\nr}$ (and $*=-$) we need to modify it slightly.  We replace the left column in diagram \ref{china1} with $F^{\nr}$-cohomology and then add the following commutative diagram to the bottom
 $$
\xymatrix{
H^{2d}_{\HK,{F}^{\nr},c}({\mathbb P}^d_L)  \ar[r]^{\iota_{\HK}}& H^{2d}_{\dr,c}({\mathbb P}^d_L) \ar[r]^-{{\rm tr}_{\dr}} _-{\sim}& L\\
H^{2d}_{\HK,{F}^{\nr},c}({\mathbb P}^d_{\Q_p}) \ar[u] \ar[r]^{\iota_{\HK}}_{\sim}& H^{2d}_{\dr,c}({\mathbb P}^d_{\Q_p}) \ar[u]\ar[r]^-{{\rm tr}_{\dr}}_-{\sim} & \Q_p\{-d\}\ar[u]}
$$
Now we   $F^{\nr}$-linearize  the bottom row. This induces an isomorphism in the left column and our claim follows. 
 \end{proof}
\begin{lemma}\label{HK-pp}
Let $j:X\to Y$ be an open immersion of smooth partially proper rigid analytic varieties of dimension $d$ over $L$. The Hyodo-Kato  trace map  is compatible with $j_*$, i.e., 
the following diagram is commutative: 
$$
 \xymatrix{ H^{2d}_{\hk,*,c}(X) \ar[r]^{j_*} \ar[d]_{{\rm tr}_{\hk,X}} & H^{2d}_{\hk,*,c}(Y) \ar[dl]^{{\rm tr}_{\hk,Y}} \\
  F_L\{-d\}. }
$$
\end{lemma}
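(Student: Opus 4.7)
The plan is to deduce this compatibility from its de Rham counterpart (Lemma~\ref{dR-pp}) by unpacking the definition of the Hyodo-Kato trace used in the proof of Theorem~\ref{dualHK-rigid}. Recall that $\tr_{\hk,X}$ was constructed there by showing that the composite $\tr_{\dr,X}\circ\iota_{\hk}\colon H^{2d}_{\hk,*,c}(X)\to L$ factors uniquely through a canonical injection $F_L\{-d\}\hookrightarrow L$ (induced, as in diagram~\eqref{china1}, by the Hyodo-Kato quasi-isomorphism at a point together with a fixed trivialization of the Tate twist). The same construction applies to $Y$, a priori using the same identification of $F_L\{-d\}$ as a submodule of $L$.

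The first step is to observe that the Hyodo-Kato morphism is natural in open immersions, so that $\iota_{\hk}\circ j_* = j_*\circ\iota_{\hk}$ on $H^{2d}_{*,c}$. Combining with Lemma~\ref{dR-pp}, which gives $\tr_{\dr,Y}\circ j_* = \tr_{\dr,X}$, this yields
\[
(F_L\{-d\}\hookrightarrow L)\circ\tr_{\hk,Y}\circ j_* = \tr_{\dr,Y}\circ j_*\circ\iota_{\hk} = \tr_{\dr,X}\circ\iota_{\hk} = (F_L\{-d\}\hookrightarrow L)\circ\tr_{\hk,X}.
\]
The second and final step is to invoke injectivity of $F_L\{-d\}\hookrightarrow L$ (clear in each of the three cases $F_L = F, F^{\nr}, \breve{F}$, since $F_L$ embeds in $L$ and the Tate twist does not change the underlying $F_L$-module) to cancel this map and conclude.

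The only subtlety --- and what I expect to be the main (and essentially the only) obstacle --- is to verify that the trivializations implicit in the constructions of $\tr_{\hk,X}$ and $\tr_{\hk,Y}$ are compatible. This should follow by choosing a common open ball $U\subseteq X$, whose image under $j$ is an open ball of dimension $d$ in $Y$: both traces are characterized via diagram~\eqref{china1} applied to such a ball, so they use identical identifications. Once this bookkeeping is in place, the argument is purely formal, and a direct translation to the overconvergent setting handles the dagger version.
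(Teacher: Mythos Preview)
Your proposal is correct and follows essentially the same approach as the paper: both arguments reduce to Lemma~\ref{dR-pp} via the factorization $\tr_{\dr}\circ\iota_{\hk} = (F_L\{-d\}\hookrightarrow L)\circ\tr_{\hk}$, naturality of $\iota_{\hk}$ with respect to $j_*$, and injectivity of $F_L\{-d\}\hookrightarrow L$. The paper packages these ingredients into a single diagram (with inner HK square, outer de Rham square, and connecting trapezoids) and checks commutativity piece by piece, whereas you write out the same identities as a linear chain of equalities; the content is identical. Your worry about the compatibility of trivializations is unnecessary: the inclusion $F_L\{-d\}\hookrightarrow L$ is canonical and does not depend on the variety, so no ``common open ball'' argument is needed.
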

\begin{proof} We will argue for $F^{\nr}$-cohomology; the argument for other $F_L$-cohomologies   is analogous.
  Consider the following diagram: 
\[ 
\xymatrix{ 
H^{2d}_{{\rm dR},c}(X) \ar[ddd]_{{\rm tr}_{X}} \ar[rrr]& & & H^{2d}_{{\rm dR},c}(Y) \ar[ddd]^{{\rm tr}_{Y}} \\
& H^{2d}_{\hk,c}(X) \ar[r] \ar[d]_{{\rm tr}_{X}} \ar[ul] &  H^{2d}_{\hk,c}(Y)  \ar[d]^{{\rm tr}_{Y}} \ar[ru]& \\
& F^{\rm nr} \ar@{=}[r] \ar@{^(->}[dl] & F^{\rm nr} \ar@{^(->}[dr] & \\
C \ar@{=}[rrr] & & & C} \]
We want  show that  the central square commutes. For that, since the arrows from $F^{\rm nr}$ to $C$ are injective, it suffices to check the commutativity of the four trapezoids and the outer square. Commutativity  is clear for the lower trapezoid. For the left and right trapezoids, it comes from the construction of the trace maps. For the upper one, it follows from the compatibility of the Hyodo-Kato morphism with the pushforward. It remains to show that the trace map for the de Rham cohomology is compatible with pushforward. But this is Lemma \ref{dR-pp}. 
\end{proof}
\subsubsection{Overconvergent setting} 
\begin{theorem}
\label{dualHK-ov} 
Let $L=K,C$ and $X$ be a smooth dagger variety of dimension $d$ over $L$. Set $F_L=F, F^{\rm nr}, \breve{F}$ and $*=-,\breve{F}$. Then: 
\begin{enumerate}
\item
There is a natural trace map in the category of solid  $(\phi,N)$-modules over $F_L$
$${\rm tr}_{\hk}: \, H^{2d}_{\hk,*,c}(X)\to F_L\{-d\},
$$
which is compatible with the de Rham trace map via the Hyodo-Kato morphism and Galois equivariant in the case $L=C$ and $X$ is defined over $K$.
\item If $X$ is quasi-compact, 
the  pairing of solid $(\phi,N)$-modules over $F_L$
$$
H^i_{\hk,*}(X)\otimes_{F_L}^{\Box} H^{2d-i}_{\hk,*,c}(X)\to H^{2d}_{\hk,*,c}(X)\xrightarrow{{\rm tr}_{\hk}} F_L\{-d\}
$$
is a perfect duality of solid $F_L$-vector spaces, i.e., we have induced isomorphisms of solid $(\phi,N)$-modules over $F_L$
\begin{align*}
& H^i_{\hk,*}(X)\stackrel{\sim}{\to} \Hom_{F_L,\Box} (H^{2d-i}_{\hk,*,c}(X),L\{-d\}),\\
& H^i_{\hk,*,c}(X)\stackrel{\sim}{\to} \Hom_{F_L,\Box} (H^{2d-i}_{\hk,*}(X),L\{-d\}).
\end{align*}
These isomorphisms are  Galois equivariant if $L=C$ and $X$ is defined over $K$.
\end{enumerate}
\end{theorem}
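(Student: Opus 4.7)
The plan is to mirror the proof of Theorem~\ref{dualHK-rigid} from the rigid analytic case, using as new inputs the overconvergent Hyodo-Kato quasi-isomorphism (Proposition~\ref{HK-morph-ov}), the overconvergent de Rham duality (Theorem~\ref{dualdR-ov}), and the fact that, for the proper variety ${\mathbb P}^d_L$, the overconvergent cohomology coincides canonically with the rigid analytic one, so that the trace map furnished by Theorem~\ref{dualHK-rigid} is already available on ${\mathbb P}^d_L$.

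For claim~(1) we may assume $X$ is connected. We pick a dagger open ball $U \subset X$ of dimension $d$, which we simultaneously view as an open of ${\mathbb P}^d_L$ via a coordinate chart, and set
$$
{\rm tr}_{\hk}^{X}\colon H^{2d}_{\hk,*,c}(X) \xleftarrow[\sim]{j^{X}_{*}} H^{2d}_{\hk,*,c}(U) \xrightarrow[\sim]{j^{{\mathbb P}}_{*}} H^{2d}_{\hk,*,c}({\mathbb P}^d_L) \xrightarrow{{\rm tr}_{\hk}^{{\mathbb P}}} F_L\{-d\}.
$$
The crucial point, and the main technical obstacle, is verifying that both pushforward maps are isomorphisms. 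Via Proposition~\ref{HK-morph-ov} (base change to $C$) this reduces to the corresponding statement for $H^{2d}_{\dr,c}$, which follows from the one-dimensionality of the top compactly supported de Rham group of a smooth connected dagger variety of dimension $d$: for quasi-compact targets this is immediate from Theorem~\ref{dualdR-ov}, and in general one passes to a quasi-compact open of $X$ containing $U$ and uses that $H^{2d}_{\hk,*,c}$ is computed as a filtered colimit over quasi-compact opens with transition maps compatible with the trace. Compatibility with ${\rm tr}_{\dr}$, with $\phi$, $N$, and (when $L=C$ and $X$ is defined over $K$) with the Galois action is then inherited from the corresponding compatibility on ${\mathbb P}^d_L$, which was already established in the proof of Theorem~\ref{dualHK-rigid}.

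For claim~(2), assuming $X$ quasi-compact, the approach is to compare the Hyodo-Kato pairing with the de Rham pairing via the commutative diagram
$$
\xymatrix@R=4mm{
H^i_{\hk,*}(X) \otimes^{\Box}_{F_L} H^{2d-i}_{\hk,*,c}(X) \ar[d]^{\iota_{\hk}\otimes\iota_{\hk}} \ar[r] & H^{2d}_{\hk,*,c}(X) \ar[d]^{\iota_{\hk}} \ar[r]^-{{\rm tr}_{\hk}} & F_L\{-d\} \ar[d] \\
H^i_{\dr}(X) \otimes^{\Box}_L H^{2d-i}_{\dr,c}(X) \ar[r] & H^{2d}_{\dr,c}(X) \ar[r]^-{{\rm tr}_{\dr}} & L,
}
$$
whose bottom row is a perfect duality by Theorem~\ref{dualdR-ov}. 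By Proposition~\ref{HK-morph-ov}, base changing the top pairing from $F_L$ to $C$ along $F_L \hookrightarrow C$ recovers the bottom one. Writing $C \simeq F_L \oplus W$ as solid $F_L$-modules and applying the splitting argument from the end of the proof of Corollary~\ref{local-global-kwak}, one deduces perfectness of the top pairing itself, yielding the two claimed isomorphisms; Galois equivariance in the case $L=C$ is automatic from the construction.
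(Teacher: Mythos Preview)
Your argument for claim~(2) is essentially identical to the paper's: compare the Hyodo-Kato pairing to the de Rham one via the commutative square, invoke Theorem~\ref{dualdR-ov} for the bottom row, and use the Hyodo-Kato isomorphism to transfer perfectness. Your explicit splitting $C\simeq F_L\oplus W$ just unpacks what the paper means by ``via the Hyodo-Kato isomorphisms''.

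For claim~(1), however, you take a genuinely different route. The paper constructs the dagger trace locally by passing through dagger presentations to the rigid analytic Stein case: for a dagger affinoid with presentation $\{X_h\}$, one uses the rigid trace maps already built in Theorem~\ref{dualHK-rigid} for the partially proper interiors sitting between $\hat{X}$ and $X_h$, then globalizes by the cosheaf formalism. You instead replay the rigid-analytic argument (diagram~\eqref{china1}) directly in the dagger category, routing through an open ball and ${\mathbb P}^d_L$ to borrow the trace from the proper case. Both approaches are correct. Yours has the virtue of making the compatibility with $(\phi,N)$ and the de Rham trace immediate, since everything is pulled back from ${\mathbb P}^d_L$ where Theorem~\ref{dualHK-rigid} already supplies them; but you pay for this by having to verify that the pushforwards $j^X_*$ and $j^{{\mathbb P}}_*$ are isomorphisms, which for non-quasi-compact $X$ requires a small extra argument, and you should also note that independence of the choice of $U$ follows from the injectivity of $\iota_{\hk}$ on $H^{2d}_{\hk,*,c}$ together with the de Rham compatibility. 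The paper's approach fits more naturally with how the dagger cohomologies are \emph{defined} (via presentations), and automatically delivers compatibility with the rigid analytic trace under the comparison of Proposition~\ref{rig-ov}, which is something you would otherwise need to check separately.
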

\begin{proof}  For the trace map in the affinoid case, we use dagger presentations and the trace map for Stein varieties. 
We have an analog of Lemma \ref{HK-pp} in this setting, i.e., for an open immersion of smooth dagger affinoids of dimension $d$ over $K$. This can be easily seen from the definition of the Hyodo-Kato trace  via a reduction to Lemma \ref{HK-pp}. This yields, as in the proof of Corollary \ref{serre-condensed-over}, the trace map for general smooth dagger varieties.

  For claim (2), note
first 
that the following diagram commutes:
$$
 \xymatrix{ 
H^i_{\HK,*}(X)  \ar[d]^{\iota_{\HK}} \otimes^{\Box}_{F_L}   H^{2d-i}_{\HK,*,c} (X) \ar[d]^{\iota_{\HK}}    \ar[r]  & H^{2d}_{\HK,*,c}(X) \ar[d]^{\iota_{\HK}} \ar[r]^-{{\rm tr}_{\hk}} & F_L\{-d\} \ar[d]  \\
H^i_{\dr}(X)  \otimes^{\Box}_L  H^{2d-i}_{\dr,c}(X)   \ar[r]  &H^{2d}_{\dr,c}(X) \ar[r]^-{{\rm tr}_{\dr}} & L}.
$$
That is, that the Hyodo-Kato and de Rham trace maps are compatible: which follows from the fact the de Rham trace map can also be defined using dagger presentations and Stein trace maps.
Then, since the map from the bottom row is a perfect pairing by Theorem \ref{dualdR-ov}, via the Hyodo-Kato isomorphisms,  the top row is a perfect pairing as well and we obtain the duality for the Hyodo-Kato cohomology. 
 \end{proof}

\subsection{Examples}

We will compute the de Rham and Hyodo-Kato cohomologies of the affine space and the torus and check they satisfy the previous Poincar\'e duality results.

\subsubsection{Affine space}\label{affine}
Let  $X={\mathbb A}_L^d$ be the affine dagger space over $L=K,C$. Recall that we have  
$$
 H^i_{\dr}(X)\simeq  \begin{cases} L \text{ for } i = 0,\\ 0 \text{ for } i\neq 0, \end{cases} \text{ and  }  H^i_{\hk}(X)\simeq \begin{cases} F_L, \text{ pour } i = 0,\\ 0, \text{ for  } i\neq 0,\end{cases}
$$
where $F_C=F^{\nr}$ and $F_K=F$. 

It follows that, by Poincar\'e duality, we have: 
\begin{equation}
\label{mdm2}
 H^i_{\dr,c}(X)\simeq  \begin{cases} 0 \text{ for } i \neq 2d, \\ L \text{ for } i=2d, \end{cases} \text{ and }  H^i_{\hk,c}(X)\simeq  \begin{cases} 0 \text{ for } i \neq 2d, \\ F_L\{ -d \} \text{ for } i=2d. \end{cases} 
\end{equation}
The monodromy on $ H^i_{\hk,c}(X)$ is trivial and so is the action of the Galois group $\Gal(\overline{K}/F^{\rm nr})$ (in the case $L=C$).

  We can however obtain this result more directly. Recall that \cite[proof of Th. 4.17]{CDN3}, there exists  a semistable weak formal scheme $\sa^d$ over $\so_K$ such that $\sa^d_K\simeq {\mathbb A}^d_K$. Moreover
 there exist closed (resp. open) subschemes $Y_s, s\in\N,$ (resp. $U_s, s\in \N$) of the special fiber $Y:=\sa^d_0$ such that 
\begin{enumerate}
\item each $Y_s$ is a finite union of irreducible components,
\item $Y_s\subset U_s\subset Y_{s+1}$ and their union is $Y$,
\item the tubes  $\{\overline{U}_s:=]U_{s}[_{X}\},s\in\N,$ form  a Stein covering of $\sa^d_K$ by closed discs.
\end{enumerate}
We will call the covering $\{U_s\},s\in\N,$ a Stein covering of $Y$.
The schemes $U_s, Y_s$ inherit their log-structure from $Y$ (which is canonically a log-scheme log-smooth over $k^{0}$). The log-schemes $Y_s$ are not log-smooth (over $k^0$) but they are ideally log-smooth, i.e., they have a canonical idealized log-scheme structure and are ideally log-smooth for this structure.

\vskip.2cm
$\bullet$ {\em Computation of the de Rham cohomology with compact support.}
We have the distinguished triangle 
  \begin{align}
  \label{pijany1}
 &  \R\Gamma_{\dr,c}(X)\to  \R\Gamma_{\dr}(X)\to  \R\Gamma_{\dr}(\partial X), \mbox{where }\\
  & \R\Gamma_{\dr}(\partial X)\simeq \colim_s\R\Gamma_{\dr}(X\setminus \overline{U}_s).\notag
  \end{align}
 Since $X\setminus \overline{U}_s$ is a punctured open disc, we have 
 $$
H^i_{\dr}(\partial X)\simeq \begin{cases} 
K & \mbox{ for } i=0, \\
K & \mbox{ for } i =2d-1, \\
0 & \mbox{ for } i\neq0, 2d-1.\\
\end{cases}
 $$
 The computation in (\ref{mdm2}) now easily follows from that fact and  the distinguished triangle (\ref{pijany1}).

 \vskip.2cm
$\bullet$ {\em Computation of the Hyodo-Kato cohomology with compact support.}
Similarly, we can use the distinguished triangle (everything is over $F$ now)
 \begin{align}
  \label{pijany2}
 &  \R\Gamma_{\hk,c}(X)\to  \R\Gamma_{\hk}(X)\to  \R\Gamma_{\hk}(\partial X), \mbox{where }\\
  & \R\Gamma_{\hk}(\partial X)\simeq \colim_s\R\Gamma_{\hk}(X\setminus U^0_s),\notag
\end{align}
where $U^0_s$ is the naive  interior of  $\overline{U}_s$, an open disc. Since $X\setminus \overline{U}^0_s$ has a semistable model over $F$, we have: 
\begin{enumerate}
\item The Hyodo-Kato morphism induces an isomorphism $\iota_{\hk}: H^i_{\hk}(\partial X) \wh{\otimes}_F K \stackrel{\sim}{\to}H^i_{\dr}(\partial X)$, for all $i \ge 0$, and so we obtain 
 $$
H^i_{\hk}(\partial X)\simeq \begin{cases} 
F & \mbox{ for } i=0, \\
F & \mbox{ for } i =2d-1, \\
0 & \mbox{ for } i\neq0, 2d-1.\\
\end{cases}
 $$
\item Frobenius:  we have the isomorphism $H^{2d}_{\hk,c}(X)\stackrel{\sim}{\to} H^{2d}_{\hk,c}({\mathbb P}_K^d)$ and the Frobenius on the target group has slope $d$.
\item The monodromy is trivial on $H^i_{\hk}(X\setminus U^0_s)$ (since the integral model has no loops) hence on $H^i_{\hk}(\partial X)$ as well.
\end{enumerate}
We then recover the results from (\ref{mdm2}) using (\ref{pijany2}).

\subsubsection{Torus} \label{torus}

Let  $X:=\G_{m,K}^d$ be a rigid analytic torus of dimension $d$. Recall  that  $H^i_{\dr}(\G_{m,K}^d)$ is a  $K$-vector space of dimension  $\binom{d}{i}$, generated by symbols  $c_1^{\dr}(z_{j_1}) \dots c_1^{\dr}(z_{j_i})$ for $j_1 < j_2 < \dots < j_i$, where  $c_1^{\dr}(z)= \frac{dz}{z}$ is the de Rham Chern class. Similarly, $H^i_{\HK}(\G_{m,K}^d)$ is a  $F$-vector space of dimension  $\binom{d}{i}$, generated by symbols $c_1^{\HK}(z_{j_1}) \dots c_1^{\HK}(z_{j_i})$. The Chern classes $c_1^{\dr}(z)$ and  $c_1^{\HK}(z)$ are compatible with the  Hyodo-Kato isomorphism. Moreover $\varphi(c_1^{\HK}(z))=p c_1^{\HK}(z)$ and  $N(c_1^{\HK}(z))=0$.
 
 Using Poincar\'e duality, we obtain that $H^i_{\dr,c}(\G_{m,K}^d)$ is a  $K$-vector space of dimension  $\binom{d}{2d-i}$, generated by classes  $(c_1^{\dr}(z_{j_1}) \dots c_1^{\dr}(z_{j_{2d-i}}))^*$ for $j_1 < j_2 < \dots < j_{2d-i}$.
  Similarly, $H^i_{\HK,c}(\G_{m,K}^d)$ is a  $F$-vector space of dimension  $\binom{d}{2d-i}$, generated by classes $(c_1^{\HK}(z_{j_1}) \dots c_1^{\HK}(z_{j_{2d-i}}))^*$.  Monodromy acts trivially on these classes and Frobenius by multiplication by $p^{i-d}$.

\section{Geometric syntomic cohomology with compact support}

In this section, we define geometric compactly supported syntomic cohomology for both smooth rigid analytic varieties and smooth dagger varieties. In each case, the previous recipes apply and similar properties are obtained. We then show that compactly supported syntomic cohomology can be computed in terms of compactly supported (twisted) Hyodo-Kato cohomology and (quotients of) compactly supported $\B_{\dr}^+$-cohomology. In particular, this will allow us to compare the rigid definition with the dagger one for partially proper varieties. 

\subsection{Rigid analytic setting}

Recall (see \cite[Section 4.1]{CN3}) that geometric syntomic cohomology is defined by $\eta$-\'etale descent from the crystalline syntomic cohomology of Fontaine-Messing $\rg_{\synt}(\XS, \Q_p(r)):=[F^r\rg_{\crr}(\XS) \xrightarrow{p^r- \varphi} \rg_{\crr}(\XS)]$. By regarding the crystalline sheaves as condensed abelian groups, we can define the sheaf $\AS_{\synt}(r)$ on ${\rm Sm}_{C,\eet}$ as a sheaf with values in ${\cal D}({\Q_{p,\Box}})$ and set $\rg_{\synt}(X,\Q_p(r)):= \rg_{\eet}(X, \AS_{\synt}(r))$ for $X \in {\rm Sm}_{\eet, C}$. Following the usual process, we define the compactly supported cohomology $\rg_{\synt,c}(X,\Q_p(r)) \in {\cal D}(\Q_{p,\Box})$ of a smooth variety $X$ over $C$. It satisfies all the properties from Section~\ref{compact1}.  

Recall that we have:
\begin{theorem}\cite[Corollary 5.5]{CN4}
\label{syn-HK-dR}
Let $X \in {\rm Sm}_{C}$ and $r \ge 0$. There is a natural quasi-isomorphism in ${\cal D}({\Q_{p,\Box}})$: 
\[ \rg_{\synt}(X,\Q_p(r)) \xrightarrow{\sim} [ [\rg_{\hk}(X) \dsolid_{F^{\rm nr}} \Bstp]^{N=0,\varphi=p^r} \xrightarrow{\iota_{\hk} \otimes \iota} (\rg_{\dr}(X/ \B_{\dr}^+)/F^r], \]
where 
\[ [\rg_{\hk}(X) \dsolid_{F^{\rm nr}} \Bstp]^{N=0,\varphi=p^r}:=  \left[\vcenter{\xymatrix @C=2cm{ 
\rg_{\hk}(X) \dsolid_{F^{\rm nr}}\Bstp \ar[r]^-{(1- \varphi/p^r)} \ar[d]^{N} & \rg_{\hk}(X) \dsolid_{F^{\rm nr}}\Bstp \ar[d]^N  \\
\rg_{\hk}(X) \dsolid_{F^{\rm nr}} \Bstp \ar[r]^{(1- \varphi/p^{r-1})} & \rg_{\hk}(X) \dsolid_{F^{\rm nr}} \Bstp }} \right]. \]  
\end{theorem}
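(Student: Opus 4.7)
\textbf{Plan for Theorem~\ref{syn-HK-dR}.} The result is cited from \cite[Corollary 5.5]{CN4}, so the plan is to recall how the argument runs in the classical setting and indicate what has to be checked in order for it to make sense in the solid $\infty$-categorical framework. By $\eta$-étale descent and the fact that all functors in play preserve limits and colimits (and the relevant topologies agree after sheafification), it suffices to produce the quasi-isomorphism locally on a sufficiently nice étale hypercover, so one may replace $X$ by the generic fiber of a semistable formal model $\XS$ over $\OS_C$. For such $\XS$ the definition reads
\[
  \rg_{\synt}(\XS,\Q_p(r)) \simeq \bigl[F^r\rg_{\crr}(\XS)_{\Q_p} \xrightarrow{p^r-\varphi} \rg_{\crr}(\XS)_{\Q_p}\bigr],
\]
and the task is to rewrite the right-hand side in Hyodo--Kato / $\B^+_{\dr}$ terms.

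First, I would invoke the comparison $\epsilon^{\hk}_{\st}\colon \rg_{\hk}(\XS_1^0)_{\Q_p}\dsolid_{\breve F}\B^+_{\st}\stackrel{\sim}{\to}\rg_{\crr}(\XS_1)_{\Q_p}\dsolid_{\B^+_{\crr}}\B^+_{\st}$ recalled in the proof of Proposition~\ref{kit-2}, together with the usual identification of $\rg_{\crr}(\XS)_{\Q_p}$ with the $N=0$-part of the $\B^+_{\st}$-theory; this lets me replace the two occurrences of $\rg_{\crr}(\XS)_{\Q_p}$ inside the mapping fiber by $[\rg_{\hk}(\XS_0)\dsolid_{F^{\nr}}\B^+_{\st}]^{N=0}$ (now interpreted derivedly as the fiber of $N\colon \rg_{\hk}\dsolid \B^+_{\st}\to \rg_{\hk}\dsolid\B^+_{\st}$). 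Simultaneously, the Hodge completion step identifies $F^r\rg_{\crr}(\XS)_{\Q_p}$, up to a shift by the filtration on $\B^+_{\dr}$, with $F^r\rg_{\dr}(X/\B^+_{\dr})$ via $\hat\iota_{\rm BK}$ from Lemma~\ref{Boa1}, so the entire Frobenius--filtration square can be rewritten in the mixed $(\rg_{\hk},\B^+_{\st},\B^+_{\dr})$ language.

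The second main ingredient is Fontaine's fundamental short exact sequence
\[
  0\to \Q_p(r)\to [\B^+_{\st}]^{N=0,\varphi=p^r}\to \B^+_{\dr}/F^r\to 0,
\]
which, when tensored with $\rg_{\hk}(\XS_0)$ over $F^{\nr}$ and combined with the computations above, expresses the fiber
$[F^r\rg_{\crr}\xrightarrow{p^r-\varphi}\rg_{\crr}]$ as the total fiber of the $2\times 2$ diagram built from $N$ and $1-\varphi/p^r$ acting on $\rg_{\hk}(\XS_0)\dsolid_{F^{\nr}}\B^+_{\st}$, landing in $\rg_{\dr}(X/\B^+_{\dr})/F^r$ through $\iota_{\hk}\otimes\iota$. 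Unwinding the definition of $[\,\cdot\,]^{N=0,\varphi=p^r}$ in the statement, this is exactly the asserted quasi-isomorphism. Functoriality and compatibility with étale localisation then propagate the local quasi-isomorphism to all of $X$.

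\textbf{Main obstacle.} The classical argument in \cite{CN4} manipulates Banach/Fréchet spaces and derived $p$-adic completions explicitly. Here everything must be recast in $\sd(\Q_{p,\Box})$ and for filtered objects in $\sff\sd(\Q_{p,\Box})$; the delicate point is that $[\,\cdot\,]^{N=0,\varphi=p^r}$ is a derived (iterated) fiber rather than a naive eigenspace, and one must check that (i) solid tensor products commute with all the fibers and colimits involved (via \cite[Lemma A.9]{Bos22} and the fact that $\rg_{\hk}(\XS_0)$ is built from Banach spaces, as used in Lemma~\ref{kit-1}), and (ii) the Hodge filtration on $\rg_{\crr}$ matches the one on $\rg_{\dr}(X/\B^+_{\dr})$ after solidification, which is the content of Lemma~\ref{Boa1} and the compatibility of the fundamental exact sequence with the filtration by $F^{\bullet}\B^+_{\dr}$. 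Once these compatibilities are in place, the proof is essentially the one in \cite{CN4} transcribed into the condensed setting.
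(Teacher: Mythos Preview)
Your plan is essentially the paper's own argument: work locally on a semistable model, use $\epsilon^{\hk}_{\st}$ (from Proposition~\ref{kit-2}) to identify $[\rg_{\hk}\dsolid_{F^{\nr}}\B^+_{\st}]^{N=0}$ with $\rg_{\crr}$, use $\iota_{{\rm BK},r}$ (from Lemma~\ref{Boa1}, equation~\eqref{Boa}) to identify $\rg_{\crr}/F^r$ with $\rg_{\dr}(X/\B^+_{\dr})/F^r$, and check that the solid tensor products behave. One small point: your ``second main ingredient'', Fontaine's fundamental exact sequence, is not actually needed here and the paper does not invoke it; the passage from $[F^r\rg_{\crr}\xrightarrow{p^r-\varphi}\rg_{\crr}]$ to $[[\rg_{\crr}]^{\varphi=p^r}\to\rg_{\crr}/F^r]$ is a purely formal total-fiber manipulation of a commutative square, valid for any filtered $\varphi$-module, and the fundamental sequence is rather the special case $C=\B^+_{\crr}$ of this rewriting, not the tool that produces it.
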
 

\begin{proof}
The proof is given in \cite[Section 5.1]{CN4}. Let us recall how it goes to see that  it still works in the condensed setting: We work locally. The quasi-isomorphism for the local statement is induced by the quasi-isomorphisms $\iota_{\rm BK}^1$ and $\iota_{\rm BK}^2$ given by the following commutative diagram: 
  \begin{equation*}
\xymatrix{
[\R\Gamma_{\hk}({X})\dsolid_{F^{\nr}}{\B}^+_{\st}]^{N=0}\ar[d]^{\epsilon^{\hk}_{\st}}_{\wr} \ar@/_80pt/[dd]^{\wr}_{\iota_{\rm BK}^1}
\ar[r]^-{\iota_{\hk}\otimes \iota}   &   
   \rg_{\dr}({X}/\B^+_{\dr})/F^r
 \ar@/^80pt/[dd]^{\iota_{\rm BK}^2}_{\wr}\\
     [\R\Gamma_{\crr}({X})\dsolid_{\B^+_{\crr}}{\B}^+_{\st}]^{N=0}\\
         \R\Gamma_{\crr}({X})\ar[u]^{\wr}\ar[r]^{\can}  &        \R\Gamma_{\crr}({X})/F^r\ar[uu]_{\rotatebox{90}{$\sim$}}^{\iota_{{\rm BK},r}}
}
\end{equation*} 
where the quasi-isomorphism $\epsilon^{\hk}_{\st}$ is the quasi-isomorphism from the proof of Proposition~\ref{kit-2} and $\iota_{{\rm BK},r}$ is the one from \eqref{Boa}.  
\end{proof}

We will prove below in Proposition~\ref{ciud-3} that we have an analogous result for the compactly supported cohomology. Let us first define and prove some properties satisfied by the twisted Hyodo-Kato cohomology and the quotients of $\B_{\dr}^+$-cohomology.   We take $X\in {\rm Sm}_C$. 

\vskip.2cm
(i) {\em Twisted Hyodo-Kato cohomology.}
We set  
\begin{align*} 
\HK_*(X,r) := [\R\Gamma_{\HK,*}(X) \dsolid_{F^{\rm nr}} \Bstp]^{N=0,\varphi=p^r}.
\end{align*}

\begin{lemma}
\label{ciud-1}
There is a natural quasi-isomorphism in $\sd(\Q_{p,\Box})$:
\[ \HK_c(X,r) \stackrel{\sim}{\to} [ \HK(X,r) \to \HK(\partial X, r)] \]
where $\HK(\partial X,r):= \colim_{Z\in\Phi} \HK(X \setminus Z,r)$. 
\end{lemma}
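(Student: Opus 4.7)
The plan is to unwind the definitions and observe that each operation in play commutes with the relevant (co)limits. By construction,
$$\HK_c(X,r) = [\R\Gamma_{\HK,c}(X) \dsolid_{F^{\rm nr}} \Bstp]^{N=0, \varphi=p^r},$$
where $\R\Gamma_{\HK,c}(X)$ is the mapping fiber of $\R\Gamma_{\HK}(X) \to \R\Gamma_{\HK}(\partial X)$ and $\R\Gamma_{\HK}(\partial X) = \colim_{Z\in\Phi} \R\Gamma_{\HK}(X\setminus Z)$. The task is thus to push the outer operation $[-]^{N=0,\varphi=p^r}$ past both a mapping fiber and a filtered colimit.

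First I would use that the derived solid tensor product $(-) \dsolid_{F^{\rm nr}} \Bstp$ preserves colimits (it is a left adjoint in each variable) and, being biexact on the stable $\infty$-category $\sd(\Q_{p,\Box})$, also preserves finite limits, in particular mapping fibers. Combining these two facts yields
$$\R\Gamma_{\HK,c}(X) \dsolid_{F^{\rm nr}} \Bstp \simeq \bigl[\R\Gamma_{\HK}(X) \dsolid_{F^{\rm nr}} \Bstp \to \colim_Z\bigl(\R\Gamma_{\HK}(X\setminus Z) \dsolid_{F^{\rm nr}} \Bstp\bigr)\bigr].$$
Next, the eigenspace functor $[-]^{N=0,\varphi=p^r}$ is the $2\times 2$ square finite homotopy limit written out in Theorem~\ref{syn-HK-dR}, so it commutes with other finite limits, and applying it to the preceding identification gives
$$\HK_c(X,r) \simeq \bigl[\HK(X,r) \to \bigl[\colim_Z \R\Gamma_{\HK}(X\setminus Z) \dsolid_{F^{\rm nr}} \Bstp\bigr]^{N=0,\varphi=p^r}\bigr].$$

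It remains, and this is the main obstacle, to interchange the finite-limit operation $[-]^{N=0,\varphi=p^r}$ with the filtered colimit over $\Phi$, so that the right-hand term becomes $\colim_Z \HK(X\setminus Z, r) = \HK(\partial X, r)$. Such an interchange fails in a general $\infty$-category, but it holds in any compactly generated (and hence presentable) stable $\infty$-category, since in that setting filtered colimits are exact and so commute with finite limits. As $\sd(\Q_{p,\Box})$ is of this form, the required identification follows, completing the argument. All the quasi-isomorphisms produced along the way are natural in $X$, so the resulting equivalence is natural as required.
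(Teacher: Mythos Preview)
Your argument is correct and follows the same approach as the paper's proof: reduce to commuting the colimit over $\Phi$ past both $(-)\dsolid_{F^{\rm nr}}\Bstp$ and the derived eigenspace $[-]^{N=0,\varphi=p^r}$. The paper's proof is terser, citing only the commutation of colimits with $\dsolid_{F^{\rm nr}}$ and leaving the commutation of the finite-limit eigenspace with the filtered colimit implicit; you spell out this second step and justify it via exactness of filtered colimits in the stable category $\sd(\Q_{p,\Box})$, which is a welcome clarification.
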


\begin{proof}
We need to show that we have a quasi-isomorphism
\[ [(\colim_{Z\in\Phi} \rg_{\hk}(X\setminus Z)) \dsolid_{F^{\rm nr}} \Bstp]^{N=0, \varphi=p^r} \stackrel{\sim}{\to}  \colim_{Z\in\Phi} ([\rg_{\hk}(X\setminus Z) \dsolid_{F^{\rm nr}} \Bstp]^{N=0, \varphi=p^r}).\] 
But this follows from the fact that taking the colimit commutes with $\dsolid_{F^{\rm nr}}$.
\end{proof}

\vskip.2cm
(ii) {\em $\B^+_{\dr}$-cohomology and its quotients.}
Recall that we set 
\begin{align*} 
\DR_*(X,r):= \R\Gamma_{\dr,*}(X/\B^+_{\dr})/F^r .
\end{align*}
Arguing as in the proof of Lemma \ref{ciud-1} we get:
\begin{lemma}
\label{ciud-2}
There is a natural quasi-isomorphism in $\sd(\B^+_{\dr,\Box})$:
\[ \DR_c(X,r) \stackrel{\sim}{\to} [ \DR(X,r) \to \DR(\partial X, r)] \]
where $\DR(\partial X,r):= \colim_{Z\in\Phi} \DR(X \setminus Z,r)$. 
\end{lemma}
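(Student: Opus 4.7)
The plan is to transcribe the proof of Lemma~\ref{ciud-1}, with the exact functor $[\,\cdot\,]^{N=0,\varphi=p^r}$ applied to $(\cdot)\dsolid_{F^{\rm nr}}\Bstp$ replaced by the cofiber operation $(\cdot)/F^r$. By construction,
\[ \DR_c(X,r) = \rg_{\dr,c}(X/\B^+_{\dr})/F^r \simeq \mathrm{cofib}\bigl(F^r\rg_{\dr,c}(X/\B^+_{\dr}) \to \rg_{\dr,c}(X/\B^+_{\dr})\bigr), \]
and both $\rg_{\dr,c}(X/\B^+_{\dr})$ and its filtered piece $F^r\rg_{\dr,c}(X/\B^+_{\dr})$ are defined as the mapping fibers of the natural restriction maps $\rg_{\dr}(X/\B^+_{\dr}) \to \rg_{\dr}(\partial X/\B^+_{\dr})$ and of its filtered analog, respectively.

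The first step is to commute the cofiber $(\cdot)/F^r$ past the defining mapping fiber of $\rg_{\dr,c}(-/\B^+_{\dr})$. This is legal because fibers and cofibers commute in the stable $\infty$-category $\sd(\B^+_{\dr,\Box})$ (they differ only by a shift), yielding
\[ \DR_c(X,r) \simeq \bigl[\, \DR(X,r) \longrightarrow \rg_{\dr}(\partial X/\B^+_{\dr})/F^r \,\bigr]. \]

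The second step is to interchange $(\cdot)/F^r$ with the filtered colimit $\colim_{Z\in\Phi}$ defining $\rg_{\dr}(\partial X/\B^+_{\dr})$. This is the exact analog of the observation used in Lemma~\ref{ciud-1} that colimits commute with $\dsolid_{F^{\rm nr}}\Bstp$: since $(\cdot)/F^r$ is itself a finite colimit (a cofiber), it commutes with any other colimit, so
\[ \rg_{\dr}(\partial X/\B^+_{\dr})/F^r = \Bigl(\colim_{Z\in\Phi}\rg_{\dr}((X\setminus Z)/\B^+_{\dr})\Bigr)/F^r \simeq \colim_{Z\in\Phi}\DR(X\setminus Z,r) =: \DR(\partial X,r). \]
Composing the two quasi-isomorphisms gives the claim.

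No essential obstacle is anticipated; the argument is purely formal once one is working in the stable $\infty$-category $\sd(\B^+_{\dr,\Box})$. The only minor point to verify is the compatibility of the Hodge filtration on $\rg_{\dr}(\partial X/\B^+_{\dr})$ with the filtrations on the individual $\rg_{\dr}((X\setminus Z)/\B^+_{\dr})$, i.e.\ that $F^r\rg_{\dr}(\partial X/\B^+_{\dr}) = \colim_{Z\in\Phi}F^r\rg_{\dr}((X\setminus Z)/\B^+_{\dr})$, which is immediate from the definition of $F^r$ on $\rg_{\dr}(\partial X/\B^+_{\dr})$ as the colimit over $\Phi$.
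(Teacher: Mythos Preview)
The proposal is correct and takes essentially the same approach as the paper, which simply says ``Arguing as in the proof of Lemma~\ref{ciud-1} we get'' this lemma. Your write-up is a faithful and slightly more explicit unpacking of that argument: commuting the cofiber $(\cdot)/F^r$ first with the defining mapping fiber and then with the filtered colimit over $\Phi$, both of which are formal in a stable $\infty$-category.
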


\vskip.2cm
(iii) {\em Comparison with  syntomic cohomology.} As in the classical case, we can compute compactly supported syntomic cohomology in terms of compactly supported Hyodo-Kato and $\B_{\dr}^+$- cohomologies.  

\begin{proposition}
\label{ciud-3}
There is a natural quasi-isomorphism in $\sd(\Q_{p,\Box})$: 
\[ \rg_{\synt,c}(X,\Q_p(r)) \xrightarrow{\sim} [ \HK_c(X,r) \to \DR_c(X,r) ] \]
such that for all quasi-compact open $Z \subset X$, the following diagram commutes in $\sd(\Q_{p,\Box})$:
\[ \xymatrix{ 
\rg_{\synt,c}(X,\Q_p(r)) \ar[r] &  \HK_c(X,r) \ar[r] & \DR_c(X,r) \\
\rg_{\synt,Z}(X,\Q_p(r)) \ar[r]\ar[u] & \HK_Z(X,r) \ar[r] \ar[u] & \DR_Z(X,r). \ar[u]
}\] 
\end{proposition}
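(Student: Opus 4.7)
The plan is to deduce Proposition \ref{ciud-3} directly from its usual (non-compactly supported) analogue, Theorem \ref{syn-HK-dR}, by applying the latter to $X$ and to $X\setminus Z$ for each $Z\in\Phi_X$ and then passing to the mapping fiber. All the constructions involved are functorial in the variable $X\in{\rm Sm}_{C,\eet}$, so this is essentially a formal manipulation once the relevant colimits are seen to be compatible with the operations at play.

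More precisely, Theorem \ref{syn-HK-dR} produces, naturally in $X$, a commutative diagram
\[
\xymatrix{
\rg_{\synt}(X,\Q_p(r)) \ar[r]^-{\sim} & [\HK(X,r)\to \DR(X,r)]
}
\]
in $\sd(\Q_{p,\Box})$, where the right-hand side uses the Hyodo--Kato morphism $\iota_{\hk}\otimes\iota$. Applying this to $X\setminus Z$ for every $Z\in\Phi_X$ and taking the filtered colimit over $Z$, we obtain a quasi-isomorphism
\[
\colim_{Z\in\Phi_X}\rg_{\synt}(X\setminus Z,\Q_p(r))\xrightarrow{\sim}\colim_{Z\in\Phi_X}[\HK(X\setminus Z,r)\to\DR(X\setminus Z,r)].
\]
The key point is that filtered colimits commute with mapping fibers in $\sd(\Q_{p,\Box})$ and, by Lemmas \ref{ciud-1} and \ref{ciud-2}, with the formation of $\HK$ and $\DR$ (which ultimately reduces to the fact that $-\dsolid_{F^{\nr}}\Bstp$, the derived eigenspaces $[\,-\,]^{N=0,\varphi=p^r}$ expressed as finite limits, and $-\dsolid_{\B^+_{\dr}}(\B^+_{\dr}/F^r)$ all commute with filtered colimits). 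Consequently the right-hand colimit is identified with $[\HK(\partial X,r)\to\DR(\partial X,r)]$.

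Next, I form the mapping fiber of the restriction map from $X$ to $\partial X$. By definition, the source is
\[
\rg_{\synt,c}(X,\Q_p(r)) = \bigl[\rg_{\synt}(X,\Q_p(r))\to \colim_{Z\in\Phi_X}\rg_{\synt}(X\setminus Z,\Q_p(r))\bigr],
\]
while the target, using Lemmas \ref{ciud-1} and \ref{ciud-2}, becomes
\[
\Bigl[[\HK(X,r)\to\HK(\partial X,r)]\to [\DR(X,r)\to\DR(\partial X,r)]\Bigr]\simeq [\HK_c(X,r)\to\DR_c(X,r)].
\]
Here I use the elementary fact that the iterated mapping fiber can be computed in either order, combined with the definitions of $\HK_c$ and $\DR_c$. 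This yields the desired quasi-isomorphism.

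Finally, the compatibility with the local cohomology variant is automatic: applying the naturality of Theorem \ref{syn-HK-dR} to the pair $(X,X\setminus Z)$ produces the commutative square
\[
\xymatrix{
\rg_{\synt,Z}(X,\Q_p(r))\ar[r]^-{\sim}\ar[d] & [\HK_Z(X,r)\to\DR_Z(X,r)]\ar[d]\\
\rg_{\synt,c}(X,\Q_p(r))\ar[r]^-{\sim} & [\HK_c(X,r)\to\DR_c(X,r)],
}
\]
whose right vertical arrow is induced by the canonical maps $\HK_Z(X,r)\to \colim_{Z'\in\Phi_X}\HK_{Z'}(X,r)\simeq \HK_c(X,r)$ and similarly for $\DR$. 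Since there is essentially nothing new to verify beyond Theorem \ref{syn-HK-dR} and the commutation of $\HK$, $\DR$ with filtered colimits, I do not expect any serious obstacle; the only mild subtlety is keeping track of the solid-category structures so that the colimit commutations are valid, which is already dealt with in Lemmas \ref{ciud-1} and \ref{ciud-2}.
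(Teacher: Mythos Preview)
Your proof is correct and follows essentially the same approach as the paper: apply Theorem~\ref{syn-HK-dR} to $X$ and to each $X\setminus Z$, pass to the colimit over $Z\in\Phi_X$ (using Lemmas~\ref{ciud-1} and~\ref{ciud-2}), and then take the mapping fiber. The paper packages this as a $3\times 3$ diagram whose rows are distinguished triangles and two of whose columns are distinguished, whence the third column is too; your ``swap the order of iterated fibers'' argument is the same mechanism.
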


\begin{proof} 
We deduce from Lemma~\ref{ciud-1} and Lemma~\ref{ciud-2} that we have a commutative diagram in which rows are distinguished triangles:  
\[ \xymatrix{
\rg_{\synt,c}(X,\Q_p(r)) \ar[r] \ar[d] & \rg_{\synt}(X,\Q_p(r)) \ar[r] \ar[d] & \rg_{\synt}(\partial X,\Q_p(r)) \ar[d]  \\
\HK_c(X,r) \ar[r] \ar[d] & \HK(X,r) \ar[r] \ar[d] & \HK(\partial X,r) \ar[d] \\
\DR_c(X,r) \ar[r] & \DR(X,r) \ar[r] &\DR(\partial X,r) } \]
The second column is a distinguished triangle by Theorem~\ref{syn-HK-dR}, so it suffices to check that the third column is a distinguished triangle as well. But for $Z \subset X$ a quasi-compact open, we have: 
\[\rg_{\synt}(X \setminus Z, \Q_p(r)) \stackrel{\sim}{\to} [ \HK(X \setminus Z,r) \to \DR(X\setminus Z,r) ] \]
and we deduce the result by taking the colimit over all $Z\in\Phi$.
\end{proof}

\subsection{Overconvergent setting} The definition of the compactly supported syntomic cohomology of a dagger variety is the same as the one of the pro-\'etale cohomology: we first give a local definition using presentation of dagger affinoids, and then (co)sheafify it.  

\subsubsection{Definition}

 If $X$ is a dagger affinoid over $C$ with a presentation $\{X_h\}_{h \in \N}$ we define in $\sd(\Q_{p,\Box})$
  \begin{align*}
 \rg^{\natural}_{\synt}(\partial X, \Q_p(r)) & := \rg_{\synt}(\partial \{X\}_h,\Q_p(r)):=\colim_h\rg_{\synt}(X_h\setminus \wh{X},\Q_p(r)),\\
\rg^{\natural}_{\synt,c}(X,\Q_p(r)) & := \rg_{\synt,c}(\{X\}_h,\Q_p(r)):=[\rg^{\natural}_{\synt}(X,r)\to  \rg^{\natural}_{\synt}(\partial X,\Q_p(r)) ], \quad \text{ for $r \in \N$}.
  \end{align*}
For a general smooth dagger variety $X$ over $L$, using the natural equivalence
  \begin{equation*}
  {\rm CSh}({\rm SmAff}^{\dagger}_{L,{\rm an}})\stackrel{\sim}{\to} {\rm CSh}({\rm Sm}^{\dagger}_{L,{\rm an}})\
  \end{equation*}
  we define the cosheaf $\sa_{\synt,c}(r)$ on $X_{\an}$ as the cosheaf associated to the precosheaf defined by $U\mapsto \rg^{\natural}_{\synt,c}(U,\Q_p(r)), U\in  {\rm SmAff}^{\dagger}_{L}$, $U\to X$  an admissible open. We set  $$
  \rg_{\synt,c}(X,\Q_p(r)):=\rg(X,\sa_{\synt,c}(r)) \in {\cal D}({\Q_{p,\Box}}).
  $$
 \begin{lemma}\label{herbata2-synt} Let $X$ be in ${\rm Sm}_C^{\dagger}$. 
  \begin{enumerate} 
  \item  {\rm (Local-global compatibility)} If $X$ is  a  dagger affinoid then the natural map in $\sd(\Q_{p,\Box})$
  $$ \rg_{\synt,c}(X,\Q_p(r))\to \rg^{\natural}_{\synt,c}(X,\Q_p(r))$$
   is a quasi-isomorphism. 
  \item {\rm (Passing to completion)} If $X$ is partially proper then there is  natural quasi-isomorphism  in $\sd(\Q_{p,\Box})$
  $$
  \rg_{\synt,c}({X},\Q_p(r))\stackrel{\sim}{\to} \rg_{\synt,c}(\wh{X},\Q_p(r)).
  $$
  \end{enumerate}
  \end{lemma}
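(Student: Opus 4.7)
The plan is to reduce both claims to the corresponding statements for compactly supported Hyodo-Kato and $\B^+_{\dr}$-cohomologies, which have already been established in Corollary \ref{local-global-kwak} and Proposition \ref{rig-ov}. To do this, I first need a dagger analog of Proposition \ref{ciud-3} saying that
\[ \rg^{\natural}_{\synt,c}(X,\Q_p(r)) \simeq \bigl[\HK^{\natural}_c(X,r) \xrightarrow{\iota_{\hk}} \DR^{\natural}_c(X,r)\bigr], \]
where $\HK^{\natural}_c(X,r) := [\rg^{\natural}_{\hk,c}(X) \dsolid_{F^{\nr}} \B^+_{\st}]^{N=0,\phi=p^r}$ and $\DR^{\natural}_c(X,r) := \rg^{\natural}_{\dr,c}(X/\B^+_{\dr})/F^r$. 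For a dagger affinoid $X$ with presentation $\{X_h\}$, the excision \eqref{excision1} allows us to replace $\rg_{\synt, \wh{X}}(X_h,\Q_p(r))$ by the mapping fiber of $\rg_{\synt}(X_h,\Q_p(r)) \to \rg_{\synt}(X_h \setminus \wh{X}, \Q_p(r))$, and then Proposition \ref{ciud-3} applied to both $X_h$ and $X_h \setminus \wh{X}$ gives the claim after taking $\colim_h$ (recall that colim commutes with $\dsolid_{F^{\nr}}\B^+_{\st}$, with $[-]^{N=0,\phi=p^r}$ because these are mapping fibers of $F^{\nr}$-linear maps, and with quotients by $F^r$).

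Granted this, claim (1) follows by combining the local-global compatibilities
\[ \rg_{\hk,c}(X) \xrightarrow{\sim} \rg^{\natural}_{\hk,c}(X), \qquad \rg_{\dr,c}(X/\B^+_{\dr}) \xrightarrow{\sim} \rg^{\natural}_{\dr,c}(X/\B^+_{\dr}) \]
from Corollary \ref{local-global-kwak} (the first tensored over $F^{\nr}$ with $\B^+_{\st}$ and then taking $[-]^{N=0,\phi=p^r}$, the second after quotienting by $F^r$). One needs to verify that these operations preserve the quasi-isomorphism; this is immediate for $-/F^r$ and follows for $[-\dsolid_{F^{\nr}}\B^+_{\st}]^{N=0,\phi=p^r}$ from the fact that $\dsolid$ preserves quasi-isomorphisms and $[-]^{N=0,\phi=p^r}$ is a finite homotopy limit.

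For claim (2), reduce by analytic codescent to the case where $X$ is Stein, and choose an exhaustion $\{U_n\}$ by dagger affinoids with $U_n \Subset U_{n+1}$. Comparing the distinguished triangles
\[ \rg_{\synt,c}(X,\Q_p(r)) \to \rg_{\synt}(X,\Q_p(r)) \to \colim_n \rg_{\synt}(X \setminus U_n, \Q_p(r)) \]
and the analogous one for $\wh{X}$ (both obtained via Proposition \ref{ciud-3} and its dagger analog), I will identify the second and third vertical maps using Proposition \ref{rig-ov}: the Hyodo-Kato and $\B^+_{\dr}$-cohomologies of $X$ and $\wh{X}$, as well as those of $X \setminus U_n$ and $\wh{X} \setminus \wh{U}_n$, agree under the passage-to-completion map, and this identification is compatible with all the structures ($N$, $\phi$, Hodge filtration, Hyodo-Kato morphism) used in the syntomic decomposition. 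The mapping fiber then gives the required quasi-isomorphism on compactly supported syntomic cohomology.

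The main obstacle is the dagger analog of Proposition \ref{ciud-3}: one must check carefully that the filtered colimit over the presentation index $h$ commutes with the eigenspace functor $[-\dsolid_{F^{\nr}}\B^+_{\st}]^{N=0,\phi=p^r}$ in the solid setting. This is the only nontrivial point; once it is in place the rest of the argument is formal and parallels the proofs of Lemmas \ref{herbata2}, \ref{pois1}, and \ref{pois1-Bdr}.
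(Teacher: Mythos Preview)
Your overall strategy is correct and close in spirit to the paper's, but there are two points worth flagging.

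\textbf{Part (1).} The paper argues differently: it verifies the Mayer--Vietoris triangle for $\rg^{\natural}_{\synt,c}$ directly, by excision reducing to the triangle
\[
\rg_{\synt,\wh{X}}(X_h,\Q_p(r))\leftarrow \rg_{\synt,\wh{X}_1}(X_h,\Q_p(r))\oplus \rg_{\synt,\wh{X}_2}(X_h,\Q_p(r))\leftarrow \rg_{\synt,\wh{X_1\cap X_2}}(X_h,\Q_p(r)),
\]
which is distinguished by passing to twisted Hyodo--Kato and $\B^+_{\dr}$-cohomologies in the rigid setting. Your route---decompose $\rg^{\natural}_{\synt,c}$ as $[\HK^{\natural}_c \to \DR^{\natural}_c]$ and invoke Corollary~\ref{local-global-kwak}---works too, but your write-up is slightly circular as stated: you phrase it as identifying $\rg_{\synt,c}(X)$ with $[\HK_c(X,r)\to \DR_c(X,r)]$, which is Proposition~\ref{ciud-4} and comes \emph{after} this lemma. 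The fix is to say instead that $U\mapsto \HK^{\natural}_c(U,r)$ and $U\mapsto \DR^{\natural}_c(U,r)$ are cosheaves (by Corollary~\ref{local-global-kwak}, since $\dsolid_{F^{\nr}}\B^+_{\st}$, $[-]^{N=0,\phi=p^r}$, and $-/F^r$ are exact in the stable sense), hence so is their mapping fiber $\rg^{\natural}_{\synt,c}$; this gives (1) without touching~\ref{ciud-4}. (Your worry about $\colim_h$ commuting with $[-]^{N=0,\phi=p^r}$ is not serious: it is a finite limit in a stable $\infty$-category.)

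\textbf{Part (2).} Your outline matches the paper's (distinguished-triangle comparison in the Stein case, then analytic codescent), but your citation is off: Proposition~\ref{rig-ov} concerns \emph{compactly supported} HK and $\B^+_{\dr}$-cohomologies, whereas the second and third vertical maps in your diagram involve the \emph{usual} syntomic cohomology of $X$, $X\setminus U_n$ and their completions. The paper instead cites the overconvergent-to-rigid comparison for usual syntomic cohomology from \cite{CN3}. If you want to use Proposition~\ref{rig-ov} genuinely, you can skip the distinguished triangle entirely: combine (1) with the decomposition $\rg_{\synt,c}\simeq[\HK_c\to\DR_c]$ on both sides and apply~\ref{rig-ov} directly to the compactly supported HK and $\B^+_{\dr}$ pieces (with compatibility of Hyodo--Kato morphisms from~\ref{rig-ov}(3)). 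That gives a cleaner alternative to the paper's argument.
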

  
  \begin{proof}
   For the first claim, 
  it suffices to show that for any open covering $X=X_1\cup X_2$ by dagger affinoids, we have the Mayer-Vietoris distinguished triangle
  $$
  \rg^{\natural}_{\synt,c}(X,\Q_p(r))\leftarrow \rg^{\natural}_{\synt,c}(X_1,\Q_p(r))\oplus \rg^{\natural}_{\synt,c}(X,\Q_p(r))\leftarrow \rg^{\natural}_{\synt,c}(X_1\cap X_2,\Q_p(r)).
  $$
  Choosing a presentation $\{ X_h \}$ of $X$, an $h\in \N$, and using excision we can write this triangle as
  $$
   \rg_{\synt,\wh{X}}(X_h,\Q_p(r))\leftarrow \rg_{\synt,\wh{X}_1}(X_h,\Q_p(r))\oplus \rg_{\synt,\wh{X}_2}(X_h,\Q_p(r))\leftarrow \rg_{\synt\wh{X_1\cap X_2}}(X_h,\Q_p(r)).
  $$
  That this triangle is distinguished can be seen by passing to twisted Hyodo-Kato and $\B^+_{\dr}$-cohomologies.

  Let us now prove claim (2). 
 If $X$ is Stein, take $\{ U_n \}$ an increasing covering of $X$ by admissible dagger affinoids. We have a commutative diagram:
\[ \xymatrix{ \rg_{\synt,c}(\wh{X},\Q_p(r)) \ar[r]  & \rg_{\synt}(\wh{X},\Q_p(r)) \ar[r]  & \colim_{n}  \rg_{\synt}(\wh{X}\setminus \wh{U}_n,\Q_p(r))  \\
\rg_{\synt,c}(X,\Q_p(r)) \ar[r] \ar@{-->}[u] & \rg_{\synt}(X,\Q_p(r)) \ar[r]  \ar[u]_{\rotatebox{90}{$\sim$}}& \colim_{n}  \rg_{\synt}(X\setminus {U}_n,\Q_p(r)) \ar[u]_{\rotatebox{90}{$\sim$}} } \] 
where the second row is the distinguished triangle     \eqref{tri-overcv-synt}. (We note that there is no circular reasoning here: Remark \ref{finish1} uses only the first claim of this lemma.) The second and third vertical arrows are quasi-isomorphisms by the usual comparison results between overconvergent cohomologies and their rigid analytic analogues (see \cite{CN3}).  We obtain that the first vertical map (defined by the diagram) is a quasi-isomorphism as well. 

   Since a general partially proper variety $X$ is locally Stein we can conclude by analytic codescent.
  \end{proof}
\begin{remark}\label{finish1}As for pro-\'etale cohomology, if $X$ is smooth and Stein and $\{U_n\}$ an increasing quasi-compact covering of $X$ with $U_n \Subset U_{n+1}$, we define the cohomology of the boundary of $X$ as 
\[ \rg_{\synt}(\partial X,\Q_p(r)):= \colim_{U_n} \rg_{\synt}(X \setminus U_n,\Q_p(r)). \]
The same proof as for \eqref{tri-overcv} shows that we have a distinguished triangle in ${\cal D}({\Q_{p,\Box}})$:
\begin{equation}
\label{tri-overcv-synt} 
\rg_{\synt,c}(X,\Q_p(r)) \to \rg_{\synt}(X,\Q_p(r)) \to \rg_{\synt}(\partial X,\Q_p(r)). 
\end{equation}  
\end{remark}
 \subsubsection{Passage to the Bloch-Kato version}
As in the rigid analytic, we can express compactly supported syntomic cohomology in terms of compactly supported de Rham and Hyodo-Kato cohomologies. We define the twisted Hyodo-Kato cohomology and the quotients of the $\B_{{\rm dR}}^+$-cohomology as in the rigid case:
 \begin{align*} 
& \HK_c(X,r) := [\R\Gamma_{\HK,c}(X) \dsolid_{F^{\rm nr}} \Bstp]^{N=0,\varphi=p^r},\\
& \DR_c(X,r):= \R\Gamma_{\dr,c}(X/\B^+_{\dr})/F^r.
\end{align*} 
The first one lives in $\sd(\Q_{p,\Box})$, the second one in $\sd(\B^+_{\dr,\Box})$.

\begin{proposition}
\label{coho-hk11}
For all $i\ge 0$ and $r\ge 0$, we have in ${\Q_{p,\Box}}$
\begin{align*}
H^i \HK_*(X,r) \simeq (H^i_{\hk,*}(X) \solid_{F^{\rm nr}} \Bstp)^{N=0, \varphi=p^r}.
\end{align*}
\end{proposition}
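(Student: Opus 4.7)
The plan is to unpack the bracket as an iterated homotopy fiber, commute $H^i$ with the solid tensor product against $\Bstp$, and then show that the resulting derived eigenspaces for $\varphi$ and $N$ collapse to non-derived ones. Writing $C := \rg_{\hk,*}(X) \dsolid_{F^{\rm nr}} \Bstp$, the commutation relation $N\varphi = p\varphi N$ turns the defining $2 \times 2$ square into a morphism of rows and yields a distinguished triangle
\begin{equation*}
\HK_*(X,r) \to [C]^{\varphi=p^r} \xrightarrow{N} [C]^{\varphi=p^{r-1}},
\end{equation*}
where $[C]^{\varphi=p^s} := \mathrm{fib}(1-\varphi/p^s : C \to C)$.

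Next, I would show that $H^i(C) \simeq M^i \solid_{F^{\rm nr}} \Bstp$ where $M^i := H^i_{\hk,*}(X)$. This amounts to an exactness statement for the solid tensor product against $\Bstp$, which one can reduce via the decomposition $\Bstp \simeq \B_{\crr}^+[u]$ to the corresponding fact for $\B_{\crr}^+$, using that the Hyodo--Kato cohomology is built from Fr\'echet / compact-type solid $F^{\rm nr}$-modules. Given this, the long exact sequence of the triangle above expresses $H^i\HK_*(X,r)$ in terms of the derived Frobenius and monodromy eigenspaces of $M^i \solid_{F^{\rm nr}} \Bstp$. The proposition then reduces to two surjectivity assertions: (a) the operator $1 - \varphi/p^s$ acts surjectively on $M^i \solid_{F^{\rm nr}} \Bstp$ for every relevant $s$; (b) the monodromy $N$ maps $(M^i \solid_{F^{\rm nr}} \Bstp)^{\varphi=p^r}$ onto $(M^i \solid_{F^{\rm nr}} \Bstp)^{\varphi=p^{r-1}}$. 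Granting (a) and (b), the long exact sequence collapses and gives the desired identification with the non-derived eigenspace $(M^i \solid_{F^{\rm nr}} \Bstp)^{N=0, \varphi=p^r}$.

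The main obstacle will be establishing the surjectivities (a) and (b). Using the $F^{\rm nr}$-linear decomposition $\Bstp = \bigoplus_{k \ge 0} \B_{\crr}^+ \cdot u^k$ on which $\varphi$ multiplies $u^k$ by $p^k$ and $N = -d/du$, statement (a) reduces to the surjectivity of $1 - p^{k-s}\varphi$ on $M^i \solid_{F^{\rm nr}} \B_{\crr}^+$ for each $k$; this follows from the analogous surjectivity on $\B_{\crr}^+$ itself --- a classical consequence of the fundamental exact sequences in $p$-adic Hodge theory --- after tensoring with $M^i$. Statement (b) reduces to the obvious surjectivity of $-d/du$ on $\B_{\crr}^+[u]$, once one tracks the Frobenius slopes across the decomposition. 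In both cases the delicate point is to preserve surjectivity under the solid tensor product against $M^i$, which will rest on the topological structure of $M^i$ (compact type or Fr\'echet, inherited from $X$) and on the solid flatness of $\B_{\crr}^+$ over $F^{\rm nr}$.
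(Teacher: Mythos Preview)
Your overall outline---commute $H^i$ with the solid tensor product, then show that the derived $(N,\varphi)$-eigenspaces collapse to non-derived ones---matches the paper's. The gap is in how you justify the two surjectivities.

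For (a), you write that surjectivity of $1-\varphi/p^s$ on $M^i \solid_{F^{\rm nr}} \B_{\crr}^+$ ``follows from the analogous surjectivity on $\B_{\crr}^+$ itself \ldots\ after tensoring with $M^i$.'' But the Frobenius on $M^i \solid_{F^{\rm nr}} \B_{\crr}^+$ is $\varphi_{M^i}\otimes\varphi_{\B_{\crr}^+}$, not $1\otimes\varphi_{\B_{\crr}^+}$, so the operator $1-\varphi/p^s$ is \emph{not} obtained by base-changing an operator on $\B_{\crr}^+$ along $M^i$; solid flatness of $\B_{\crr}^+$ and the topology of $M^i$ do not help here. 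Similarly for (b): the monodromy on $M^i \solid_{F^{\rm nr}} \Bstp$ is $N_{M^i}\otimes 1 + 1\otimes N_{\Bstp}$, so it does not reduce to $-d/du$ on $\B_{\crr}^+[u]$ unless you know something about $N_{M^i}$ (e.g.\ nilpotence).

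The paper handles this by inserting a local-to-global step you are missing: it first takes $X$ to be the generic fiber of a \emph{basic} semistable formal model, so that each $M^i=H^i_{\hk}(X)$ is a \emph{finite} $(\varphi,N)$-module over $F^{\rm nr}$. For finite $(\varphi,N)$-modules the two exact sequences
\[
0 \to M \solid \B_{\crr}^+ \to M \solid \Bstp \xrightarrow{N} M \solid \Bstp \to 0,
\qquad
0 \to (M\solid\B_{\crr}^+)^{\varphi=p^r} \to M\solid\B_{\crr}^+ \xrightarrow{p^r-\varphi} M\solid\B_{\crr}^+ \to 0
\]
are classical (the first by induction on the nilpotence order of $N_M$, the second as in \cite[Remark~2.30]{CN1}). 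The general case is then obtained by choosing an \'etale hypercovering by such basic models and comparing the two resulting spectral sequences. Your argument would go through if you restricted to this finite case first; what is missing is precisely this reduction.
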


\begin{proof}
The result for the compactly supported cohomology follows from the result in the classical case working with dagger affinoids and using the long exact sequence defining the cohomology with compact support. So it suffices to prove the result for $\HK(X,r)$. An analogous result in the classical setting was proved in \cite[Example 3.16]{CDN3} (and extended in \cite[Prop. 4.24]{CN4}). The proof in the condensed setting is similar,  the main difference is that $\B_{\st}^+$ is already flat for the solid tensor product $-\solid_{F^{\rm nr}} -$.

  We first prove that we have 
\begin{equation}
\label{TeMaw0}
H^i (\rg_{\hk}(X) \dsolid_{F^{\rm nr}} \Bstp) \simeq H^i_{\hk}(X) \solid_{F^{\rm nr}} \Bstp, \quad \text{for all } i \ge 0. 
\end{equation}
Since $\Bstp$ is flat over $F$, we have a natural quasi-isomorphism in $\sd(\Q_{p,\Box})$
\[ H^0_{\hk}(X) \dsolid_{F^{\rm nr}} \Bstp \simeq H^0_{\hk}(X) \solid_{F^{\rm nr}} \Bstp. \]
This yields the distinguished triangle: 
\[ (H^0_{\hk}(X) \solid_{F^{\rm nr}} \Bstp)[0] \to \rg_{\hk}(X) \dsolid_{F^{\rm nr}} \Bstp \to (\tau_{\le 1} \rg_{\hk}(X)) \dsolid_{F^{\rm nr}} \Bstp\]
and, by taking the cohomology, we get
\begin{align*}
& H^0_{\hk}(X) \solid_{F^{\rm nr}} \Bstp \simeq H^0(\rg_{\hk}(X) \dsolid_{F^{\rm nr}} \Bstp), \\
& H^i(\rg_{\hk}(X) \dsolid_{F^{\rm nr}} \Bstp) \simeq H^i((\tau_{\le 1} \rg_{\hk}(X)) \dsolid_{F^{\rm nr}} \Bstp), \quad \text{for all } i \ge 1.
\end{align*} 
We repeat the same computation to obtain the isomorphism~\eqref{TeMaw0} for all $i$. 

We now assume that $X:= \XS_C$ for $\XS$ in the category of basic semistable formal models $\MS_C^{\dagger, \rm ss,b}$. The proposition in that case follows from the isomorphism~\eqref{TeMaw0} and from the two exact sequences: 
\begin{align*}
& 0 \to M \solid_{F^{\rm nr}} \B_{\crr}^+ \to M \solid_{F^{\rm nr}} \B_{\st}^+ \xrightarrow{N} M \solid_{F^{\rm nr}} \B_{\st}^+ \to 0, \\
& 0 \to (M \solid_{F^{\rm nr}} \B_{\crr}^+)^{\varphi=p^r} \to M \solid_{F^{\rm nr}} \B_{\crr}^+ \xrightarrow{p^r- \varphi} M \solid_{F^{\rm nr}} \B_{\crr}^+ \to 0
\end{align*} 
for $M$ a finite $(\varphi,N)$-module (the first one is proved by recurrence on the integer $m$ such that $N^m=0$ and the second one is proved in the same way as in \cite[Remark 2.30]{CN1}). 

In the general case, take $\US_{\bullet}$ an \'etale hypercovering of $X$ from $\MS_C^{\dagger, \rm ss,b}$. By the previous paragraph, the proposition is true for each $\US_{i,C}$. We then use the two following spectral sequences to conclude: 
\begin{align*}
& E_2^{i,j}= H^i(\rg_{\hk}(\US_{j,C}) \dsolid_{F^{\rm nr}} \Bstp) \Rightarrow  H^{i+j}(\rg_{\hk}(X) \dsolid_{F^{\rm nr}} \Bstp) \\
& E_2^{i,j}= H^i_{\hk}(\US_{j,C}) \solid_{F^{\rm nr}} \Bstp \Rightarrow  H^{i+j}_{\hk}(X) \solid_{F^{\rm nr}} \Bstp
\end{align*}
(to get the second one, we use the flatness of $\Bstp$ and the fact that the differentials are $\Bstp$-linear). 
\end{proof}

As in the rigid case, we can write syntomic cohomology in terms of Hyodo-Kato and de Rham cohomologies:
\begin{proposition}
\label{ciud-4}
Let $X$ be a smooth dagger variety over $C$. There is a natural  quasi-isomorphism in $\sd(\Q_{p,\Box})$: 
\[ \rg_{\synt,c}(X,\Q_p(r)) \xrightarrow{\sim} [ \HK_c(X,r) \to \DR_c(X,r) ]. \]
\end{proposition}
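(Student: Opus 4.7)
The proof would follow the same overall logic as Proposition~\ref{ciud-3}, but must bridge the cosheaf-theoretic definition of the overconvergent compactly supported syntomic cohomology and the rigid-analytic facts used in Proposition~\ref{ciud-3}. I would proceed in two steps: first establish the statement locally for dagger affinoids via a dagger presentation, then globalize by cosheafification.

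\emph{Step 1: Dagger affinoids.} Let $X$ be a smooth dagger affinoid with presentation $\{X_h\}_{h\in \N}$ and rigid completion $\wh{X}$. For each index $h$, Theorem~\ref{syn-HK-dR} applied to both $X_h$ and its open $X_h\setminus \wh{X}$ gives compatible natural quasi-isomorphisms
\[
  \rg_{\synt}(X_h,\Q_p(r)) \xrightarrow{\sim} [\HK(X_h,r)\to \DR(X_h,r)],\qquad
  \rg_{\synt}(X_h\setminus \wh{X},\Q_p(r)) \xrightarrow{\sim} [\HK(X_h\setminus \wh{X},r)\to \DR(X_h\setminus \wh{X},r)].
\]
Taking mapping fibers, one obtains $\rg_{\synt,\wh{X}}(X_h,\Q_p(r)) \simeq [\HK_{\wh{X}}(X_h,r)\to \DR_{\wh{X}}(X_h,r)]$. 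I would then take the filtered colimit over $h$: by the commutation of $\colim_h$ with mapping fibers, with the solid tensor product $\dsolid_{F^{\rm nr}}\Bstp$, and with the derived fixed-point functor $[-]^{N=0,\varphi=p^r}$ (exactly as in the proof of Lemma~\ref{ciud-1}), this yields
\[
  \rg^{\natural}_{\synt,c}(X,\Q_p(r)) \xrightarrow{\sim} [\HK^{\natural}_c(X,r)\to \DR^{\natural}_c(X,r)].
\]
Invoking local-global compatibility for each of the three cohomologies (Lemma~\ref{herbata2-synt}, Corollary~\ref{local-global-kwak}, and Lemma~\ref{pois1-Bdr}) produces the desired quasi-isomorphism on a dagger affinoid.

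\emph{Step 2: Globalization.} The construction in Step 1 is manifestly functorial for open immersions of smooth dagger affinoids, because each constituent cohomology is. Since $\rg_{\synt,c}$, $\rg_{\hk,c}$, and $\rg_{\dr,c}(-/\B^+_{\dr})$ on smooth dagger varieties are defined as global cosections of the cosheafifications of their affinoid-local versions, and since the formation of mapping fibers, of $\dsolid_{F^{\rm nr}}\Bstp$, and of derived fixed points all commute with cosheafification of presheaves of complexes of solid modules, the local quasi-isomorphism sheafifies to a quasi-isomorphism of cosheaves. Taking $\rg(X,-)$ gives the claim for arbitrary smooth dagger~$X$.

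\emph{Main obstacle.} The key technical point is the commutation of the derived fixed-point functor $[-]^{N=0,\varphi=p^r}$ with the filtered colimit in $h$. This is a question of a finite homotopy limit commuting with a filtered colimit in a stable $\infty$-category; it holds here because the square defining the derived fixed points is finite and both $N$ and $\varphi-p^r$ act levelwise on the colimit system, so the argument used in Lemma~\ref{ciud-1} applies verbatim. A secondary point is that the Mayer-Vietoris property of the right-hand side needed in Step~2 must match that of $\sa_{\synt,c}(r)$; this follows from the cosheaf construction of the overconvergent $\rg_{\hk,c}$ and $\rg_{\dr,c}(-/\B^+_{\dr})$ established earlier in the paper, so no genuinely new input is required.
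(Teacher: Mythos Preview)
Your proposal is correct and follows essentially the same approach as the paper: both reduce to the $\natural$-version on dagger affinoids by applying the rigid-analytic Theorem~\ref{syn-HK-dR} to the members of a presentation and passing to the colimit, then globalize by cosheafification. The only cosmetic difference is that the paper organizes the local step as a $3\times 3$ diagram (rows: compact support/usual/boundary; columns: syntomic/HK/DR) and uses a two-out-of-three argument on the columns, whereas you take the mapping fiber at each level $h$ before passing to the colimit---these are equivalent bookkeepings of the same commutations.
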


\begin{proof}
It suffices to show the result for the $\natural$-cohomologies. Let $X$ be an affinoid dagger space and $\{ X_h \}$ be a presentation of $X$. Consider the following commutative diagram:
\[ \xymatrix{
\rg^{\natural}_{\synt,c}(X,\Q_p(r)) \ar[r] \ar[d] & \rg^{\natural}_{\synt}(X,\Q_p(r)) \ar[r] \ar[d] & \rg^{\natural}_{\synt}(\partial X,\Q_p(r)) \ar[d]  \\
\HK^{\natural}_c(X,r) \ar[r] \ar[d] & \HK^{\natural}(X,r) \ar[r] \ar[d] & \HK^{\natural}(\partial X,r) \ar[d] \\
\DR^{\natural}_c(X,r) \ar[r] & \DR^{\natural}(X,r) \ar[r] &\DR^{\natural}(\partial X,r), } \]
where we set
 \begin{align*} 
& \HK^{\natural}_c(X,r) := [\R\Gamma^{\natural}_{\HK,c}(X) \dsolid_{F^{\rm nr}} \Bstp]^{N=0,\varphi=p^r},\\
& \DR^{\natural}_c(X,r):= \R\Gamma^{\natural}_{\dr,c}(X/\B^+_{\dr})/F^r.
\end{align*} 
The first row in the above diagram  is a distinguished triangle by definition, the second and third ones are  clearly distinguished triangles. The second and third column are distinguished triangles by Proposition~\ref{syn-HK-dR}. It follows that the first column is a distinguished triangle as well. 
\end{proof}

  We will need the following:
\begin{lemma}
\label{comm-synHK2}
Let $j:X \hookrightarrow Y$ be an open immersion of smooth dagger varieties over $C$, then the following diagram  commutes: 
\[ \xymatrix{
\rg_{\synt,c}(X,\Q_p(r)) \ar[r]^{j_*} \ar[d] & \rg_{\synt,c}(Y,\Q_p(r)) \ar[d] \\
\HK_c(X,r) \ar[r]^{j_*} & \HK_c(Y,r).
 }\]
 Similarly, for the Hyodo-Kato map $\iota_{\hk}: \R\Gamma_{\hk,c}(-)\to \R\Gamma_{\dr,c}(-)$.
\end{lemma}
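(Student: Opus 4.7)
The plan is to verify both commutativities first at the level of dagger affinoids with chosen presentations, where $j_{*}$ has a completely explicit description via the excision isomorphism \eqref{excision1}, and then propagate to arbitrary open immersions by cosheaf functoriality. First I would reduce to the case where $X$ and $Y$ are dagger affinoids by picking compatible presentations $\{X_{h}\}$ and $\{Y_{h}\}$ such that each $X_{h} \hookrightarrow Y_{h}$ is an open immersion and $\wh{X} \subseteq \wh{Y}$. By the local-global compatibilities (Lemma~\ref{herbata2-synt}(1) for syntomic, Corollary~\ref{local-global-kwak} for Hyodo-Kato and $\B_{\dr}^{+}$, and Lemma~\ref{pois1} for de Rham), it suffices to check commutativity for the $\natural$-cohomologies.

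Next I would identify $j_{*}$ on each $\natural$-cohomology, via \eqref{dec1} and its analogues, with the colimit over $h$ of the composition
$$
\rg_{\star,\wh{X}}(X_{h}) \xleftarrow[\;\sim\;]{\rm exc.} \rg_{\star,\wh{X}}(Y_{h}) \lra \rg_{\star,\wh{Y}}(Y_{h}),
$$
for $\star \in \{\synt(r),\ \HK(-,r),\ \hk,\ \dr\}$, where the first arrow is an excision quasi-isomorphism and the second is induced by enlarging the support from $\wh{X}$ to $\wh{Y}$. The core observation is then that the local period maps
$$
\rg_{\synt,\wh{X}}(X_{h},\Q_{p}(r)) \lra \HK_{\wh{X}}(X_{h},r),
\qquad
\rg_{\HK,\wh{X}}(X_{h}) \xrightarrow{\iota_{\hk}} \rg_{\dr,\wh{X}}(X_{h})
$$
are natural in the pair $(X_{h},\wh{X})$: they are built by taking supports inside the diagrams for $X_{h}$ and $X_{h}\setminus\wh{X}$ appearing in the proof of Theorem~\ref{syn-HK-dR} and in Proposition~\ref{kit-2}, whose building blocks $\iota_{\hk}$, $\iota$, $\iota_{\rm BK}$, $\epsilon^{\hk}_{\st}$ are all natural under restrictions. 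This naturality makes both local period maps commute with both the excision quasi-isomorphism and the change-of-support map, and hence with $j_{*}$ at the $\natural$-level.

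To globalize, I would appeal to the fact that the two period maps in question are the maps associated to morphisms of precosheaves on ${\rm SmAff}^{\dagger}_{C}$: the source-side $\sa_{\synt,c}(r)$ maps to the cosheafification of $U \mapsto \HK^{\natural}_{c}(U,r)$, and $\sa_{\hk,c}$ maps to $\sa_{\dr,c}$. The $j_{*}$'s on the globalized cohomologies are, by construction, obtained by cosheaf extension from the local $j_{*}$'s, so commutativity of the local squares established in the previous step immediately yields the two global squares. The main obstacle I expect is bookkeeping the naturality of the syntomic-to-Hyodo-Kato period map in the solid setting, since the proof of Theorem~\ref{syn-HK-dR} strings together several quasi-isomorphisms $\epsilon^{\hk}_{\st}$, $\iota^{1}_{\mathrm{BK}}$, $\iota^{2}_{\mathrm{BK}}$, $\iota_{\mathrm{BK},r}$; each is evidently natural, but one must verify carefully that after taking supports and passing to the colimit over $h$, the resulting diagram is functorial under both types of maps entering the definition of $j_{*}$.
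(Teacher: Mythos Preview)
Your proposal is correct and follows essentially the same route as the paper: reduce to dagger affinoids with compatible presentations, write $j_{*}$ as the colimit over $h$ of the composition ``excision isomorphism $\circ$ change of support'' on $\rg_{\star,\wh{X}}(Y_{h})$, and observe that the syntomic-to-$\HK$ map and $\iota_{\hk}$ are natural in the pair $(Y_{h},\wh{X})$, so the square commutes. The paper's proof stops there (``the commutativity is now clear''); your additional globalization paragraph via morphisms of precosheaves is a harmless elaboration of what the paper leaves implicit.
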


\begin{proof}
We can work locally.  Let $X \hookrightarrow Y$ be an open immersion of dagger affinoids and take $\{ X_h \}$ and $\{ Y_h \}$ presentations of $X$ and $Y$ respectively, such that we have open immersions $X_h \hookrightarrow Y_h$.
Recall that the maps $j_*: \rg^{\natural}_{\synt,c}(X,r) \to \rg^{\natural}_{\synt,c}(Y,r)$ and $\HK_c(X,r) \to \HK_c(Y,r)$ are induced by the compositions:
\begin{align*}
\rg^{\natural}_{\synt,c}(X,r) & = \colim_{h} \rg^{\natural}_{\synt,\wh{X}}(X_h,r) \xleftarrow{\sim} \colim_{h} \rg^{\natural}_{\synt,\wh{X}}(Y_h,r) \to  \colim_{h} \rg^{\natural}_{\synt,\wh{Y}}(Y_h,r)=  \rg^{\natural}_{\synt,c}(Y,r), \\
\HK^{\natural}_{c}(X,r) &\stackrel{\sim}{\to} \colim_{h} \HK^{\natural}_{\wh{X}}(X_h,r) \xleftarrow{\sim} \colim_{h} \HK^{\natural}_{\wh{X}}(Y_h,r) \to  \colim_h \HK^{\natural}_{\wh{Y}}(Y_h,r) \xleftarrow{\sim} \HK^{\natural}_{c}(Y,r).
\end{align*}
The commutativity of the diagram in the lemma is now clear. The argument for the Hyodo-Kato map is analogous.
\end{proof}

\subsection{Period morphism} We show that we have a period isomorphism comparing syntomic  cohomologies with  pro-\'etale  cohomologies with compact support from the previous section. This will allow us to compute explicitly $p$-adic pro-\'etale cohomologies in the next chapter of the paper.
\subsubsection{Rigid analytic period morphism} 
Recall that, for all $X \in {\rm Sm}_C$, we have a period morphism in $\sd(\Q_{p,\Box})$
$$
\alpha_{r}: \R\Gamma_{\synt}(X,\Q_p(r))\to \R\Gamma_{\proeet}(X,\Q_p(r))
$$
that is a quasi-isomorphism after truncation $\tau_{\leq r}$ (see   \cite[Corollary 7.3]{CN3}). By functoriality, it induces a period morphism  in $\sd(\Q_{p,\Box})$
\begin{equation}
\label{alpha-c}
\alpha_{r,c}: \R\Gamma_{\synt,c}(X,\Q_p(r))\to \R\Gamma_{\proeet,c}(X,\Q_p(r)).
\end{equation}

\begin{theorem}
\label{per-synet}
For  $X$ in  $\Sm_C$ and  $r\ge0$, the map $\alpha_{r,c}$ is a quasi-isomorphism in ${\cal D}({\Q_{p,\Box}})$:
\[ \alpha_{r,c}: \tau_{\le r} \R\Gamma_{\synt,c}(X,\Q_p(r)) \xrightarrow{\sim} \tau_{\le r} \R\Gamma_{\proeet,c}(X, \Q_p(r)). \]
\end{theorem}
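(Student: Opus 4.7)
The plan is to reduce the compactly supported statement to the known non-compactly supported quasi-isomorphism $\alpha_r : \tau_{\leq r}\R\Gamma_{\synt}(Y,\Q_p(r)) \xrightarrow{\sim} \tau_{\leq r}\R\Gamma_{\proeet}(Y,\Q_p(r))$ (cited from \cite[Corollary 7.3]{CN3}), applied simultaneously to $Y = X$ and to $Y = X\setminus Z$ for all $Z\in\Phi$. By construction, the period morphism $\alpha_{r,c}$ fits into a morphism of distinguished triangles in $\sd(\Q_{p,\Box})$:
\[
  \xymatrix{
    \R\Gamma_{\synt,c}(X,\Q_p(r)) \ar[r] \ar[d]^{\alpha_{r,c}} & \R\Gamma_{\synt}(X,\Q_p(r)) \ar[r] \ar[d]^{\alpha_r} & \R\Gamma_{\synt}(\partial X,\Q_p(r)) \ar[d]^{\alpha_r^{\partial}} \\
    \R\Gamma_{\proeet,c}(X,\Q_p(r)) \ar[r] & \R\Gamma_{\proeet}(X,\Q_p(r)) \ar[r] & \R\Gamma_{\proeet}(\partial X,\Q_p(r)).
  }
\]

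The middle vertical arrow is a quasi-isomorphism on $\tau_{\leq r}$ by the non-compact result. For the right vertical arrow, observe that both boundary cohomologies are defined as filtered colimits over $\Phi$ of the cohomologies of $X\setminus Z$; since filtered colimits are exact and commute with the truncation $\tau_{\leq r}$, we have
\[
  \tau_{\leq r}\R\Gamma_{*}(\partial X,\Q_p(r)) \simeq \colim_{Z\in\Phi} \tau_{\leq r}\R\Gamma_{*}(X\setminus Z,\Q_p(r)), \qquad *=\synt,\proeet,
\]
and applying the non-compact quasi-isomorphism termwise to each $X\setminus Z$ (which is itself in $\Sm_C$) shows that $\alpha_r^{\partial}$ is also a quasi-isomorphism after $\tau_{\leq r}$.

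To finish, I take the long exact sequence of solid $\Q_p$-modules attached to the above triangles. For $i\leq r-1$ the five-lemma at the level of $H^i$ is immediate, since all four neighbouring cohomology groups (in degrees $i-1,\dots,i+1$ on the two right-hand columns) lie in the range where $\alpha_r$ and $\alpha_r^{\partial}$ are already known to be isomorphisms. The only non-formal case is $i=r$, where the relevant portion of the diagram reads
\[
  \xymatrix@C=1.2em{
    H^{r-1}(\rg_{\synt}(X)) \ar[r] \ar[d]^{\wr} & H^{r-1}(\rg_{\synt}(\partial X)) \ar[r] \ar[d]^{\wr} & H^r(\rg_{\synt,c}(X)) \ar[r] \ar[d] & H^r(\rg_{\synt}(X)) \ar[r] \ar[d]^{\wr} & H^r(\rg_{\synt}(\partial X)) \ar[d]^{\wr} \\
    H^{r-1}(\rg_{\proeet}(X)) \ar[r] & H^{r-1}(\rg_{\proeet}(\partial X)) \ar[r] & H^r(\rg_{\proeet,c}(X)) \ar[r] & H^r(\rg_{\proeet}(X)) \ar[r] & H^r(\rg_{\proeet}(\partial X)),
  }
\]
with all coefficient twists $\Q_p(r)$ suppressed. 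The four flanking vertical arrows are isomorphisms by the non-compact case (applied to $X$ and to $\partial X$ via the colimit discussion above), so the five-lemma forces the middle arrow to be an isomorphism as well, which yields the desired quasi-isomorphism $\tau_{\leq r}\alpha_{r,c}$.

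I expect no serious obstacle here: the content is essentially bookkeeping with truncations, fibre sequences and filtered colimits, together with the commutation of $\tau_{\leq r}$ with the filtered colimit defining $\partial X$. The only point needing mild care is that the period morphism is natural for the open immersions $X\setminus Z \hookrightarrow X$ (so that the right-hand square of the first diagram commutes on the nose in $\sd(\Q_{p,\Box})$, not just up to something), but this is built into the construction of $\alpha_r$ at the level of the sheaves $\AS_{\synt}(r)$ on the étale site of $\Sm_C$, which is functorial for any morphism of smooth rigid analytic varieties.
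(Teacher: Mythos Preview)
Your proof is correct and follows exactly the same approach as the paper: both set up the map of distinguished triangles defining the compactly supported cohomologies, use the non-compact period isomorphism for $X$ and for each $X\setminus Z$, and note that truncation commutes with the filtered colimit defining $\partial X$. The only difference is cosmetic: the paper concludes in one sentence from the triangle, whereas you spell out the five-lemma argument on the long exact sequences explicitly (which is perhaps the more honest way to handle the edge case $i=r$, since $\tau_{\le r}$ does not literally preserve distinguished triangles).
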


\begin{proof} This follows from the map of distinguished triangles: 
\[ \xymatrix{ 
\R\Gamma_{\synt,c}(X, r) \ar[d] \ar[r] & R\Gamma_{\synt}(X, r) \ar[d]^{\wr}_{\tau_{\leq r}} \ar[r] & \R\Gamma_{\synt}(\partial X, r) \ar[d]^{\wr}_{\tau_{\leq r}} \\
\R\Gamma_{\proeet,c}(X, \Q_p(r)) \ar[r] & R\Gamma_{\proeet}(X, \Q_p(r))   \ar[r] & \R\Gamma_{\proeet}(\partial X, \Q_p(r)).  \\ 
} \] 
The second and the third vertical maps are quasi-isomorphisms (after truncation  $\tau_{\le r}$). Note here that truncation commutes with colimits. Hence  the first vertical map is a quasi-isomorphism after truncation $\tau_{\leq r}$ as well.
\end{proof}

 \subsubsection{Overconvergent period morphism}
 
Let $X$ be a dagger affinoid over $C$ and let $\{X_h\}$ be a presentation of $X$. We have compatible period maps in $\sd(\Q_{p,\Box})$
\[ \rg_{\synt}(X_h,\Q_p(r)) \to \rg_{\proeet}(X_h, \Q_p(r)), \quad \text{ and } \quad \rg_{\synt}(X_h\setminus \wh{X},\Q_p(r)) \to \rg_{\proeet}(X_h\setminus \wh{X}, \Q_p(r)) \]
that are quasi-ismorphisms after truncation $\tau_{\le r}$. They induce a local period morphism for the compactly supported cohomology:
 \[ \alpha_{r,c}^{\natural}: \rg^{\natural}_{\synt,c}(X,\Q_p(r)) \to \rg^{\natural}_{\proeet,c}(X, \Q_p(r)), \] 
and this map is a quasi-isomorphism in $\sd(\Q_{p.\Box})$ after taking the truncation.

 We can know globalize the above construction and we obtain:
\begin{theorem}
\label{alpha-c-dagger}
For  $X$ in  $\Sm^{\dagger}_C$ and  $r\ge0$, there is a quasi-isomorphism in ${\cal D}(\Q_{p,\Box})$:
\[ \alpha^{\dagger}_{r,c}: \tau_{\le r} \R\Gamma_{\synt,c}(X,\Q_p(r)) \xrightarrow{\sim} \tau_{\le r} \R\Gamma_{\proeet,c}(X, \Q_p(r)). \]
\end{theorem}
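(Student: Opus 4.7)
The plan is to follow the strategy of Theorem~\ref{per-synet}: reduce first to a local statement on dagger affinoids through the presentation $\{X_h\}$, and then globalize via the cosheaf structure.

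For the local step, let $X$ be a smooth dagger affinoid with presentation $\{X_h\}$ and completion $\wh{X}$, and consider the morphism of defining distinguished triangles
\[
\xymatrix@C=3mm{
\rg^{\natural}_{\synt,c}(X,\Q_p(r)) \ar[r] \ar[d]_{\alpha^{\natural}_{r,c}} & \rg^{\natural}_{\synt}(X,\Q_p(r)) \ar[d] \ar[r] & \rg^{\natural}_{\synt}(\partial X, \Q_p(r)) \ar[d]  \\
\rg^{\natural}_{\proeet,c}(X, \Q_p(r)) \ar[r] & \rg^{\natural}_{\proeet}(X, \Q_p(r)) \ar[r] & \rg^{\natural}_{\proeet}(\partial X, \Q_p(r)).
}
\]
The second and third vertical arrows are filtered colimits over $h$ of the rigid period maps on $X_h$ and $X_h\setminus\wh{X}$, respectively. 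By Theorem~\ref{per-synet} and commutation of $\tau_{\le r}$ with filtered colimits in $\sd(\Q_{p,\Box})$, these two columns become quasi-isomorphisms after applying $\tau_{\le r}$. A five-lemma argument on the long exact sequences of cohomology, at degrees $i\le r$, then forces $\alpha^{\natural}_{r,c}$ to be a $\tau_{\le r}$-quasi-isomorphism (no degree is lost because the compactly supported term appears on only one side of the defining triangle).

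For the globalization, the local maps $\alpha^{\natural}_{r,c}$ are functorial for open immersions of dagger affinoids via excision~\eqref{excision1}, hence they assemble into a morphism of precosheaves on ${\rm SmAff}^{\dagger}_{C}$ and, after cosheafification, into a morphism of cosheaves $\sa_{\synt,c}(r)\to\sa_{\proeet,c}(r)$ whose sections on $X$ yield the global period map $\alpha^{\dagger}_{r,c}$. When $X$ is itself a dagger affinoid, local--global compatibility (Lemmas~\ref{herbata2}(1) and~\ref{herbata2-synt}(1)) identifies $\alpha^{\dagger}_{r,c}$ with $\alpha^{\natural}_{r,c}$ and the local step concludes the proof. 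For a general $X$, I would reduce the statement for $\alpha^{\dagger}_{r,c}$ to that for the non-compactly supported period map $\alpha^{\dagger}_{r}$ evaluated on $X$ and on $\partial X$, using the defining distinguished triangle; the non-compactly supported statement in turn extends from affinoids to arbitrary $X$ via the sheaf-theoretic Mayer-Vietoris distinguished triangles and a filtered colimit argument, which is exactly the route taken for $\rg_{\proeet}$ versus $\rg_{\synt}$ in the classical (non-compactly supported) dagger setting.

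The main obstacle is this final descent step: since $\tau_{\le r}$ does not commute with arbitrary colimits in the solid derived category, one must route the Mayer-Vietoris induction through the non-compactly supported cohomology (where the five-lemma works without losing a degree) rather than directly through the compactly supported one, and then invoke the defining distinguished triangle to recover the conclusion with compact support. The only genuine inputs one needs beyond the rigid case are the cosheaf property of the compactly supported complexes, the local-global compatibility lemmas, and the exactness of filtered colimits in $\sd(\Q_{p,\Box})$.
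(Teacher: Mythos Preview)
Your local argument is correct and is exactly what the paper does: the rigid period maps on $X_h$ and on $X_h\setminus\wh X$ are $\tau_{\le r}$-quasi-isomorphisms, and since $\rg^{\natural}_{*,c}$ sits as the \emph{fiber} of the defining triangle, the five-lemma transfers this to $\alpha^{\natural}_{r,c}$ without losing a degree. The paper's own proof then writes only ``we can now globalize the above construction'' and states the theorem, so from here on you are filling in what the paper leaves implicit.

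There is a genuine gap in your globalization for general $X$. You propose to invoke a triangle $\rg_{*,c}(X)\to\rg_*(X)\to\rg_*(\partial X)$ and reduce to the non-compactly supported period map on $X$ and on $\partial X$. But such a triangle is only available when $\partial X$ has actually been set up: for dagger affinoids (via the presentation, by definition) and for Stein varieties (Proposition~\ref{stein-proet} and Remark~\ref{finish1}). For an arbitrary $X\in\Sm^{\dagger}_C$---already for a quasi-compact non-affinoid dagger variety covered by two affinoids---no boundary object $\partial X$ is defined in the paper, and the assignment $U\mapsto\rg^{\natural}_*(\partial U)$ on affinoids is the cofiber of a cosheaf-to-sheaf map, hence neither a sheaf nor a cosheaf, so it does not globalize in the way your argument requires. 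You have correctly diagnosed the obstacle (truncation does not commute with the cosheaf Mayer--Vietoris colimits, so a naive induction on an affinoid cover loses a degree at each step), but your proposed route around it via $\partial X$ does not exist in the stated generality.
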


\subsubsection{Geometrization of the period morphism} 
We define ${\rm TVS}$ (for Topological Vector Spaces) as the category of presheaves of solid $\Q_p$-modules (or, more generally, derived solid $\Q_p$-modules)  on the category ${\rm Perf}_C$ of perfectoid spaces over $C$. The usual Fontaine's period rings $$\B_{\dr}^+,\B_{\crr}^+, \B_{\st}^+, \B_{\dr},\B_{\crr}, \B_{\st}$$ have canonical  structures of ${\rm TVS}$'s that we will denote by  ${\bb B}_{\dr}^+,{\bb B}_{\crr}^+, {\bb B}_{\st}^+, {\bb B}_{\dr},{\bb B}_{\crr}, {\bb B}_{\st}$, respectively. As in~\cite[\S 7]{CN4}, we will see that we can endow the cohomologies defined in the previous sections with a ${\rm TVS}$-structure and that the period morphism can be lifted to  this category.

\vskip.2cm 
(i) {\em Rigid analytic setting.} If $X$ is a smooth rigid analytic variety over $C$, we write ${\bb H}^i_{\proeet}(X, \Q_p), i\in\N$, (respectively ${\bb R}_{\proeet}(X, \Q_p)$) for the functor on ${\rm Perf}_C$ defined as
\[ \Lambda \mapsto H^i_{\proeet}(X_{\Lambda}, \Q_p) \quad \text{(respectively, } \Lambda \mapsto \rg_{\proeet}(X_{\Lambda} \Q_p) ), \]
where $X_{\Lambda}$ is the base change $X\times_C \Lambda$. These define objects in ${\rm TVS}$.

On the other hand, the syntomic cohomology can be geometrized by tensoring $\rg_{\hk}(X) \otimes^{\LL_{\Box}}_{\breve{F}}\B_{\st}^+$ with $- \otimes_{\B_{\st}^+}^{\LL_{\Box}} {\bb B}_{\st}^+$ and $F^r\rg_{\dr}(X/ \B_{\dr}^+)$ with $- \otimes_{\B_{\dr}^+}^{\LL_{\Box}} {\bb B}_{\dr}^+$ to obtain ${\bb R}_{\rm syn}(X, \Q_p(r)), r \ge 0$. 

Recall that the period morphism can be lifted to ${\rm TVS}$: 

\begin{proposition}{\rm ({\em Colmez-Nizio{\l},  \cite[Th.~7.3]{CN4}})}
\label{geometrization}
Let $X\in {\rm Sm}_C$ and $r \ge 0$. The functorial period map in $\DS(\Q_{p, \Box})$
\[ \alpha_r : \rg_{\rm syn}(X, \Q_p(r)) \to  \rg_{\proeet}(X, \Q_p(r)) \]
lifts to a functorial map of TVS’s (with values in $\DS(\Q_{p, \Box} )$) :
\[      {\bbalpha}_r : {\bb R}_{\rm syn}(X, \Q_p(r)) \to {\bb R}_{\proeet}(X, \Q_p(r)), \]
which is a quasi-isomorphism after truncation $\tau_{\le r}$.
\end{proposition}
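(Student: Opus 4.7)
The plan is to upgrade the three ingredients of $\rg_{\rm syn}(X,\Q_p(r))$ and $\rg_{\proeet}(X,\Q_p(r))$ to presheaves on ${\rm Perf}_C$ and check that the period map constructed in the previous sections is already functorial in the base. Concretely, for $\Lambda\in{\rm Perf}_C$, set
\[
  {\bb R}_{\proeet}(X,\Q_p(r))(\Lambda):=\rg_{\proeet}(X_\Lambda,\Q_p(r)),
\]
and define ${\bb H\bb K}(X,r)$, ${\bb D\bb R}(X,r)$ by tensoring
$\rg_{\hk}(X)\otimes^{\LL_\Box}_{F^{\rm nr}}\B^+_{\st}$ and $\rg_{\dr}(X/\B^+_{\dr})/F^r$ against ${\bb B}^+_{\st}$, ${\bb B}^+_{\dr}$ over $\B^+_{\st}$, $\B^+_{\dr}$ respectively, then putting
\[
  {\bb R}_{\rm syn}(X,\Q_p(r)):=\bigl[{\bb H\bb K}(X,r)^{N=0,\varphi=p^r}\xrightarrow{\iota_{\hk}\otimes\iota}{\bb D\bb R}(X,r)\bigr].
\]
The candidate $\bbalpha_r$ should be the value-by-value period map, built from the Fontaine--Messing construction applied to $X_\Lambda$ and glued via \'etale descent from a semistable formal model.

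The first step is to check that $\bbalpha_r$ is indeed a well-defined map of presheaves on ${\rm Perf}_C$. Since $\rg_{\hk}(X)$, $\rg_{\dr}(X/\B^+_{\dr})$ and $\rg_{\proeet}(X,\Q_p(r))$ are all \'etale-locally computed from a semistable model $\XS$ of $X$, I would work on a basic affinoid piece and show that base change $\XS\leadsto \XS_\Lambda$ is compatible with the Hyodo--Kato and $\B^+_{\dr}$-constructions by inspection of the crystalline definitions (this is where the condensed formalism is useful: the natural quasi-isomorphisms $\iota_{\rm BK}$ and $\epsilon^{\hk}_{\st}$ used in the proof of Theorem~\ref{syn-HK-dR} are manifestly natural in $\Lambda$ because they are defined from maps of period sheaves).

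The second step is to check that $\bbalpha_r$ is a quasi-isomorphism in degrees $\le r$. This is a statement about sheaves on ${\rm Perf}_C$, so it suffices to verify it after evaluating at each perfectoid $\Lambda$. After base change, $X_\Lambda$ is a smooth rigid analytic variety over $\Lambda$, and the non-geometrized Colmez--Nizio{\l} comparison (the source for Theorem~\ref{syn-HK-dR} and the pointwise quasi-isomorphism of the period map on $\tau_{\le r}$) gives exactly what we need. The only point to verify is that the Fontaine--Messing construction of the period map commutes with base change from $C$ to $\Lambda$; this follows from the fact that the crystalline-to-syntomic-to-\'etale construction is defined sheaf-theoretically on the pro-\'etale site via the Poincar\'e lemma for $\B^+_{\dr}$ and $\B^+_{\crr}$ established by Scholze, and these period sheaves behave well under perfectoid base change.

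The main obstacle is making sure that the ``same'' period map is used on both sides of the Fontaine--Messing reconstruction, because subtle choices in the gluing of $\rg_{\rm syn}$ from local pieces can obstruct functoriality in $\Lambda$; this is dealt with in \cite{CN4} by working with a single universal construction on the site of semistable formal models and only then base-changing. Granting this, the rest is bookkeeping in the condensed/derived framework, and one concludes that $\bbalpha_r$ is a morphism in ${\rm TVS}$ whose cone lies in degrees $>r$, as claimed.
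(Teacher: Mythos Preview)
The paper does not prove this proposition at all: it is stated with attribution to \cite[Th.~7.3]{CN4} and simply recalled as an input. So there is no ``paper's own proof'' to compare against; the reference is the proof.

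That said, your sketch has a real gap. In your second step you write ``After base change, $X_\Lambda$ is a smooth rigid analytic variety over $\Lambda$, and the non-geometrized Colmez--Nizio{\l} comparison \ldots\ gives exactly what we need.'' But $\Lambda$ is an arbitrary perfectoid space over $C$, not a complete nonarchimedean field, so $X_\Lambda$ is not a smooth rigid analytic variety in the sense to which Theorem~\ref{syn-HK-dR} or the period isomorphism $\alpha_r$ applies. You cannot simply invoke the over-$C$ comparison pointwise at each $\Lambda$. Relatedly, you define ${\bb R}_{\rm syn}$ by \emph{tensoring} the over-$C$ Hyodo--Kato and $\B^+_{\dr}$-complexes with ${\bb B}^+_{\st}$, ${\bb B}^+_{\dr}$ (as in the paper), but then describe $\bbalpha_r$ as the ``value-by-value period map, built from the Fontaine--Messing construction applied to $X_\Lambda$.'' These are a priori two different objects: the tensored syntomic complex is computed entirely from $X$ over $C$ and then base-changed formally, while any intrinsic syntomic/Fontaine--Messing construction over $\Lambda$ would require a relative theory. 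Identifying the two, and then comparing with $\rg_{\proeet}(X_\Lambda,\Q_p(r))$, is precisely the content of \cite[Th.~7.3]{CN4}; your sketch assumes it rather than proving it. The appeal to ``period sheaves behave well under perfectoid base change'' hides the actual work: one needs a base-change theorem for the relevant period sheaves and for the comparison maps, not just for the rings.
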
 

Similarly, for $X$ a smooth rigid analytic variety over $C$, we endow $\rg_{\proeet,c}(X, \Q_p(r))$ and $\rg_{{\rm syn},c}(X, \Q_p(r))$ with a ${\rm TVS}$-structure by setting
\begin{align*}
&{\bb R}_{\proeet,c}(X, \Q_p(r)) :  \Lambda \mapsto \rg_{\proeet}(X_{\Lambda}, \Q_p(r)), \\
&{\bb R}_{{\rm syn},c}(X, \Q_p(r)): \Lambda \mapsto [ [\rg_{\hk,c}(X)\otimes^{\LL_{\Box}}_{\breve{F}} {\bb B}_{\st}^+(\Lambda)]^{\varphi=p^r,N=0} \to (\rg_{\dr,c}(X/ \B_{\dr}^+)\otimes_{\B_{\dr}^+}^{\LL_{\Box}} {\bb B}_{\dr}^+(\Lambda))/F^r ].
\end{align*} 
There are canonical quasi-isomorphisms: 
\begin{align*}
{\bb R}_{\proeet,c}(X, \Q_p(r)) &\simeq [ {\bb R}_{\proeet}(X, \Q_p(r)) \to {\bb R}_{\proeet}(\partial X, \Q_p(r)], \\ 
{\bb R}_{{\rm syn},c}(X, \Q_p(r)) &\simeq [ {\bb R}_{\rm syn}(X, \Q_p(r)) \to {\bb R}_{\rm syn}(\partial X, \Q_p(r)], 
\end{align*}
where ${\bb R}_{*}(\partial X, \Q_p(r)):= \colim_{Z \in \Phi} {\bb R}_*(X \setminus Z, \Q_p(r))$, for $* \in \{ \proeet, {\rm syn}\}$. It follows easily  from Proposition~\ref{geometrization} that the period morphism for compactly supported cohomology can be lifted to the category ${\rm TVS}$:
\begin{proposition} There exists a natural map of ${\rm TVS}$ (with values in $\DS(\Q_{p, \Box} )$) :
\begin{equation}\label{geometrization2} { \bbalpha}_{r,c} : {\bb R}_{{\rm syn},c}(X, \Q_p(r)) \to {\bb R}_{\proeet,c}(X, \Q_p(r)), \end{equation} 
lifting the morphism~\eqref{alpha-c}. Moreover, it is a quasi-isomorphism after truncation $\tau_{\le r}$.
\end{proposition}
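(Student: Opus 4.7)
The plan is to reduce the statement to the analogous one for usual (non-compactly supported) cohomology, namely Proposition~\ref{geometrization}, via the mapping-fiber definition of compactly supported cohomology and the formal properties of $\bb{R}_{*}(\partial X, \Q_p(r))$ already established in the excerpt.

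First, I would apply Proposition~\ref{geometrization} to each quasi-compact open complement $X\setminus Z$ for $Z\in\Phi_X$. Functoriality in $X$ (for open immersions) yields a map of diagrams in $\TVS$
\[
\bbalpha_r(X\setminus Z)\colon \bb{R}_{\synt}(X\setminus Z,\Q_p(r))\to \bb{R}_{\proeet}(X\setminus Z,\Q_p(r)),
\]
compatible as $Z$ varies over $\Phi_X$. Passing to the filtered colimit over $\Phi_X$ (which is a colimit in $\sd(\Q_{p,\Box})$, hence commutes with the pointwise evaluation on $\Lambda\in\mathrm{Perf}_C$ defining a $\TVS$) gives a map
\[
\bbalpha_r(\partial X)\colon \bb{R}_{\synt}(\partial X,\Q_p(r))\to \bb{R}_{\proeet}(\partial X,\Q_p(r)).
\]
Taking the mapping fiber of $\bbalpha_r(X)\to \bbalpha_r(\partial X)$ and using the two quasi-isomorphisms displayed just before the statement
\[
\bb{R}_{\proeet,c}(X,\Q_p(r))\simeq [\bb{R}_{\proeet}(X,\Q_p(r))\to\bb{R}_{\proeet}(\partial X,\Q_p(r))]
\]
and the analogous one for $\synt$, produces the desired morphism $\bbalpha_{r,c}$ in $\TVS$, and by construction it lifts $\alpha_{r,c}$ from \eqref{alpha-c}.

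For the truncation claim, I would invoke that truncation $\tau_{\leq r}$ commutes with filtered colimits in $\sd(\Q_{p,\Box})$ and with the formation of mapping fibers up to a shift controlled by $r$. By Proposition~\ref{geometrization}, the map $\bbalpha_r$ is a quasi-isomorphism after $\tau_{\leq r}$ on both $X$ and every $X\setminus Z$; hence so is $\bbalpha_r(\partial X)$ after $\tau_{\leq r}$. The five-lemma applied to the long exact sequences of the two fiber sequences then gives that $\bbalpha_{r,c}$ is a quasi-isomorphism after $\tau_{\leq r}$. Evaluated on each $\Lambda\in \mathrm{Perf}_C$ this is exactly the argument already used in the proof of Theorem~\ref{per-synet}, and it goes through verbatim in the $\TVS$-setting since the relevant quasi-isomorphism is tested pointwise.

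The only mild subtlety is to make sure that the geometrized $\HK$ and $\B_{\dr}^+$-components defining $\bb{R}_{\synt,c}(X,\Q_p(r))$ are indeed compatible with the excision/colimit identifications used to define $\bb{R}_{\synt}(\partial X,\Q_p(r))$; this is a direct consequence of the fact that the solid tensor products $-\otimes^{\LL_{\Box}}_{\breve{F}}\bb{B}^+_{\st}(\Lambda)$ and $-\otimes^{\LL_{\Box}}_{\B^+_{\dr}}\bb{B}^+_{\dr}(\Lambda)$, as well as the formation of $[-]^{N=0,\varphi=p^r}$ and the quotient by $F^r$, all commute with the filtered colimit over $Z\in\Phi_X$. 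With this bookkeeping in place, no further ingredient is needed.
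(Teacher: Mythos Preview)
Your proposal is correct and follows exactly the approach the paper intends: the paper gives no explicit proof beyond the sentence ``It follows easily from Proposition~\ref{geometrization}'', having just recorded the mapping-fiber identifications $\bb{R}_{*,c}(X,\Q_p(r))\simeq [\bb{R}_{*}(X,\Q_p(r))\to\bb{R}_{*}(\partial X,\Q_p(r))]$; your write-up simply spells out the details (colimit over $Z\in\Phi_X$, mapping fiber, and the truncation argument already used verbatim in the proof of Theorem~\ref{per-synet}).
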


\vskip.2cm
(ii){\em The overconvergent setting.}  We have a similar picture for dagger varieties. Let $X$ be a dagger affinoid over $C$, let $\{X_h\}_{h\in\N}$ be a presentation, and $\widehat{X}$ the associated rigid analytic variety. We define the ${\rm TVS}$:
\[ {\bb R}_{*,c}^{\dagger}(X, \Q_p(r)) :=  \colim_h [{\bb R}_{*}(X_h, \Q_p(r)) \to {\bb R}_{*}(X_h\setminus \widehat{X}, \Q_p(r))]  , \quad r\ge 0, \quad \text{ for } * \in \{ \proeet, {\rm syn}\}. \]
We then extend this definition to any smooth dagger variety $X$ over $C$, by \'etale cosheafification and get ${\rm TVS}$'s:  ${\bb R}_{*,c}(X, \Q_p(r))$. 

 Now, for smooth dagger affinoids $X$ over $C$, using the rigid analytic period morphism $\bbalpha_{r,c}$ from \eqref{geometrization2}, we get a period morphism of ${\rm TVS}$'s
\[  { \bbalpha}^{\dagger}_{r,c} : {\bb R}_{{\rm syn},c}(X, \Q_p(r)) \to {\bb R}_{\proeet,c}(X, \Q_p(r)), \quad r \ge 0, \]
which globalizes to a period morphism 
that lifts the period morphism $\alpha^{\dagger}_{r,c}$ from Theorem~\ref{alpha-c-dagger}. It is a quasi-isomorphism after truncation $\tau_{\le r}$. To sum up: 
\begin{proposition} There exists a natural map of ${\rm TVS}$ (with values in $\DS(\Q_{p, \Box} )$) :
\begin{equation}\label{geometrization3} { \bbalpha}^{\dagger}_{r,c} : {\bb R}_{{\rm syn},c}(X, \Q_p(r)) \to {\bb R}_{\proeet,c}(X, \Q_p(r)), \end{equation} 
lifting the morphism~\eqref{alpha-c-dagger}. It is a quasi-isomorphism after truncation $\tau_{\le r}$.
\end{proposition}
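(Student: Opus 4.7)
The plan is to lift the period morphism $\alpha^{\dagger}_{r,c}$ of Theorem~\ref{alpha-c-dagger} to $\mathrm{TVS}$ by reducing everything to the rigid analytic statement in Proposition~\ref{geometrization}, then gluing the resulting local maps by the same cosheafification procedure already used to construct $\rg_{\synt,c}^{\natural}$ and $\rg_{\proeet,c}^{\natural}$.

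First, assume $X$ is a smooth dagger affinoid over $C$, choose a presentation $\{X_h\}_{h\in \N}$, and write $\widehat{X}$ for the rigid analytic completion. For each $h\in\N$, Proposition~\ref{geometrization} supplies functorial maps of TVS's
\[
\bbalpha_r : {\bb R}_{\rm syn}(X_h,\Q_p(r)) \to {\bb R}_{\proeet}(X_h,\Q_p(r)), \qquad
\bbalpha_r : {\bb R}_{\rm syn}(X_h\setminus\widehat{X},\Q_p(r)) \to {\bb R}_{\proeet}(X_h\setminus\widehat{X},\Q_p(r))
\]
that are compatible with the restriction $X_h\setminus\widehat{X}\hookrightarrow X_h$ by naturality. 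Taking the pointwise (in $\Lambda\in{\rm Perf}_C$) mapping fiber and then the filtered colimit over $h$, I obtain the local period morphism
\[
\bbalpha^{\natural}_{r,c} : {\bb R}^{\dagger}_{{\rm syn},c}(X,\Q_p(r)) \longrightarrow {\bb R}^{\dagger}_{\proeet,c}(X,\Q_p(r)),
\]
which by construction lifts $\alpha^{\natural}_{r,c}$. Functoriality of $\bbalpha_r$ in $X_h$ gives the covariance of $\bbalpha^{\natural}_{r,c}$ for open immersions of smooth dagger affinoids, through the excision recipe used to define pushforwards in \eqref{trick1} and its pro-\'etale analog.

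Next, I would globalize. Since $\bbalpha^{\natural}_{r,c}$ is functorial for open immersions, $U\mapsto \bbalpha^{\natural}_{r,c}(U)$ is a morphism of precosheaves on ${\rm SmAff}^{\dagger}_{C}$ with values in $\mathrm{TVS}$. The target and source precosheaves already satisfy the cosheaf property by the Mayer--Vietoris distinguished triangles established in the proofs of Lemma~\ref{herbata2} and Lemma~\ref{herbata2-synt} (these distinguished triangles are proved in $\sd(\Q_{p,\Box})$, but the same proofs work in $\mathrm{TVS}$ because evaluation at any $\Lambda\in{\rm Perf}_C$ reduces them to the already-known rigid analytic Mayer--Vietoris). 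Using the equivalence \eqref{equiv1}, I extend $\bbalpha^{\natural}_{r,c}$ to all of ${\rm Sm}_C^{\dagger}$ by cosheafification and obtain the desired natural morphism of TVS's $\bbalpha^{\dagger}_{r,c}$. By construction its underlying map in $\sd(\Q_{p,\Box})$ coincides with $\alpha^{\dagger}_{r,c}$.

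Finally, to prove that $\tau_{\leq r}\bbalpha^{\dagger}_{r,c}$ is a quasi-isomorphism it suffices to check this pointwise for each $\Lambda\in{\rm Perf}_C$, where it becomes a statement about maps in $\sd(\Q_{p,\Box})$. Because truncation $\tau_{\leq r}$ commutes with filtered colimits and with mapping fibers of morphisms between complexes concentrated in bounded-below degrees, one is reduced to showing that, for each pair $(X_h, X_h\setminus \widehat{X})$ base-changed to $\Lambda$, the rigid period map $\bbalpha_r$ is an isomorphism after $\tau_{\leq r}$, which is exactly Proposition~\ref{geometrization}. The main potential subtlety, and the only step I would verify with care, is the commutation of $\tau_{\leq r}$ with the colimit over $h$ in the TVS setting; this reduces to the same commutation in $\sd(\Q_{p,\Box})$ on evaluation at $\Lambda$, which holds because filtered colimits are $t$-exact. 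This completes the plan.
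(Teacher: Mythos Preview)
Your proposal is correct and follows essentially the same route as the paper: build $\bbalpha^{\dagger}_{r,c}$ locally on dagger affinoids from the rigid analytic geometrized period map of Proposition~\ref{geometrization} via presentations, then globalize by cosheafification, and deduce the truncation statement pointwise in $\Lambda$ from the rigid case. One phrasing to tighten: truncation $\tau_{\leq r}$ does \emph{not} literally commute with mapping fibers, even for bounded-below complexes; what you actually use (and what the paper uses in the proof of Theorem~\ref{per-synet}) is that in a morphism of fiber sequences, if two of the three vertical maps become quasi-isomorphisms after $\tau_{\leq r}$ then so does the third, which follows from the induced fiber sequence on cones.
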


\section{Computations} 
We present here  computations of  geometric compactly supported $p$-adic pro-\'etale cohomology of some Stein varieties using the period morphisms from Section~\ref{proetale-c} and the duality results from Section~\ref{dualities}. In particular, our computations show the existence of trace maps that will be used to prove  Poincar\'e duality for geometric $p$-adic pro-\'etale cohomology in \cite{CGN2}. 

\subsection{Affine space}
Let us first consider the case of the affine rigid space of dimension $d$ over $K$. Using the distinguished triangle from Proposition~\ref{ciud-3}, we can compute its compactly supported syntomic cohomology.   

\begin{proposition} \label{SYNcAff}Let $X:={\mathbb A}_K^d$, $d\geq 1$, be the rigid analytic affine space of dimension $d$ over $K$. 
\begin{enumerate}
\item Let $r\geq 0$. 
 We have the following isomorphisms in $\Q_{p,\Box}$
\begin{align*}
&  H^{i}_{\synt,c}(X_C,\Q_p(r)) =0, \quad i<d, i>2d;\\
& H^{i}_{\synt,c}(X_C,\Q_p(r)) \simeq 
 (H_c^d(X_C, \Omega^{i-d-1})/\kker d)\otimes_K^{\Box}{\rm gr}_F^{r-i+d}\B^+_{\dr}, \quad    d \le i \le 2d-1. 
\end{align*}
\item If $0\leq r<d$ then $H_{\synt,c}^{2d}(X_C, \Q_p(r)) =0$. If $r\geq d$ then we have an exact sequence  in $\Q_{p,\Box}$
$$ 0 \to (H_c^d(X_C, \Omega^{d-1})/ \kker d) (r-d) \to H_{\synt,c}^{2d}(X_C, \Q_p(r)) \to \Q_p(r-d) \to 0. 
$$
\end{enumerate}
\end{proposition}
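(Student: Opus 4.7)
The plan is to apply Proposition~\ref{ciud-3} to obtain the distinguished triangle
\[ \rg_{\synt,c}(X_C,\Q_p(r)) \to \HK_c(X_C, r) \to \DR_c(X_C, r) \]
and then read off $H^*_{\synt,c}(X_C, \Q_p(r))$ from the associated long exact sequence. The cohomology of $\DR_c(X_C, r)$ is already computed in Lemma~\ref{DRcAff}, so the bulk of the work is identifying $H^i(\HK_c(X_C, r))$.

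For this, I combine the computation of Section~\ref{affine} -- namely $\rg_{\hk,c}(X_C) \simeq F^{\nr}\{-d\}[-2d]$ with trivial monodromy and Frobenius of slope $d$ -- with the rigid analog of Proposition~\ref{coho-hk11}. The derived solid tensor product $\rg_{\hk,c}(X_C) \dsolid_{F^{\nr}} \Bstp$ is concentrated in degree $2d$ and isomorphic to $\Bstp\{-d\}$; taking the $N$-fiber (which is just $\Bcrp\{-d\}$ with no derived contribution, by surjectivity of $N$ on $\Bstp$ with kernel $\Bcrp$) reduces the derived $(N=0, \phi=p^r)$-eigenspace to the fiber of $1-\phi/p^{r-d}\colon \Bcrp \to \Bcrp$, placed in degrees $[2d,2d+1]$. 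Fontaine's fundamental exact sequence provides the surjectivity of $\phi-p^{r-d}$ on $\Bcrp$, so $H^i(\HK_c(X_C, r))$ vanishes outside degree $2d$ and $H^{2d}(\HK_c(X_C, r)) \simeq (\Bcrp)^{\phi=p^{r-d}}$.

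With these inputs the long exact sequence essentially writes down the answer. For $i<d$ and $i>2d$ all contributing terms vanish. For $d \leq i \leq 2d-1$, vanishing of $H^{i-1}(\HK_c)$ and $H^i(\HK_c)$ gives $H^i_{\synt,c}(X_C,\Q_p(r)) \simeq H^{i-1}(\DR_c)$, and substituting $i-1$ into Lemma~\ref{DRcAff} yields the formula in~(1). For $i=2d$ and $r \geq d$, the Bloch--Kato fundamental exact sequence
\[ 0 \to \Q_p(r-d) \to (\Bcrp)^{\phi=p^{r-d}} \to \B^+_{\dr}/F^{r-d} \to 0 \]
identifies $\kker(H^{2d}(\HK_c) \to H^{2d}(\DR_c))$ with $\Q_p(r-d)$ and forces the connecting map to be surjective, producing the short exact sequence of~(2) after noting ${\rm gr}^{r-d}\B^+_{\dr} \simeq C(r-d)$; for $r<d$ both relevant terms vanish, so $H^{2d}_{\synt,c}=0$. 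The main technical point throughout is the reduction of the derived $(N=0,\phi=p^r)$-eigenspace to a Fontaine-style computation on $\Bcrp$ -- tracking the twist $\{-d\}$ coming from the Frobenius slope on $H^{2d}_{\hk,c}$, and invoking Fontaine's surjectivity statement to rule out any degree $2d+1$ contribution.
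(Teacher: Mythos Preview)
Your proof is correct and follows essentially the same route as the paper's: both arguments feed the distinguished triangle of Proposition~\ref{ciud-3} with the computation of $H^i\HK_c(X_C,r)$ (via Section~\ref{affine} and the rigid analog of Proposition~\ref{coho-hk11}) and of $H^i\DR_c(X_C,r)$ (via Lemma~\ref{DRcAff}), then read off the long exact sequence, invoking the fundamental exact sequence in degree $2d$. One small expository slip: your sentence ``for $i>2d$ all contributing terms vanish'' is not literally true at $i=2d+1$, since $H^{2d}\DR_c(X_C,r)\simeq \B^+_{\dr}/F^{r-d}$ is nonzero for $r>d$; the vanishing of $H^{2d+1}_{\synt,c}$ really comes from the surjectivity of $H^{2d}\HK_c\to H^{2d}\DR_c$ that you establish a few lines later. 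Also, the surjectivity of $1-\phi/p^{r-d}$ on $\Bcrp$ (needed for all $r$, including $r<d$) is not quite the fundamental exact sequence itself but a companion fact (cf.\ \cite[Remark~2.30]{CN1}, invoked in the proof of Proposition~\ref{coho-hk11}); this is a matter of attribution, not of substance.
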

\begin{proof} Clearly   $H^{i}_{\synt,c}(X_C,\Q_p(r)) =0$, for $i<d$. 

 Let   $0\leq i\leq d-1$. Since 
$ H^{i+d}_{\hk,c}(X_C,r)= H^{i+d-1}_{\hk,c}(X_C,r)=0, $ 
we obtain from  Lemma \ref{DRcAff} that
\[ H^{i+d}_{\synt,c}(X_C,\Q_p(r)) \simeq H^{i+d-1}\DR_c(X_C,r) \simeq
 (H_c^d(X_C, \Omega^{i-1})/\kker d)\otimes^{\Box}_K{\rm gr}_F^{r-i}\B^+_{\dr}.
 \]
For the same reasons $H^{i+d}_{\synt,c}(X_C,\Q_p(r)) =0$ if $i> d+1$. 

     For  $i=d$, by Section \ref{affine}, we have
\[
H^{2d}\hk_c(X_C,r)  \simeq (\breve{F}\{-d \} \solid_{\breve{F}} \Bstp)^{N=0,\varphi=p^r} = (\Bstp)^{N=0,\varphi=p^{r-d}}=(\Bcrp)^{\varphi=p^{r-d}}. 
\] 
And, by Lemma \ref{DRcAff}, we have  
\[ H^{2d} \DR_c(X_C,r) \simeq \B^+_{\dr}/ F^{r-d}.
\]
Hence we get the exact sequence: 
\[ 0 \to  H^{2d-1} \DR_c(X_C,r)\to H_{\synt,c}^{2d}(X_C, \Q_p(r)) \to ( \Bcrp)^{\varphi=p^{r-d}} \to \B^+_{\dr}/F^{r-d} \to H_{\synt,c}^{2d+1}(X_C, \Q_p(r))\to 0 . \]
If $r<d$,  this yields
$$
H_{\synt,c}^{2d}(X_C, \Q_p(r))=H_{\synt,c}^{2d+1}(X_C, \Q_p(r))=0.
$$
If $r\geq d$, we have a fundamental exact sequence of $p$-adic Hodge Theory:
$$
0\to \Q_p(r-d) \to ( \Bcrp)^{\varphi=p^{r-d}} \to \B^+_{\dr}/F^{r-d} \to 0.
$$
Hence $H_{\synt,c}^{2d+1}(X_C, \Q_p(r))=0$ and we get the exact sequence
$$ 0 \to (H_c^d(X_C, \Omega^{d-1})/ \kker d) (r-d) \to H_{\synt,c}^{2d}(X_C, \Q_p(r)) \to \Q_p(r-d) \to 0, $$
 as wanted.
\end{proof}

 The following corollary follows immediately from Proposition \ref{SYNcAff} and the period isomorphism from Theorem~\ref{per-synet}.
\begin{corollary}
Let $X:={\mathbb A}_K^d$, $d\geq 1$, be the rigid analytic affine space of dimension $d$ over $K$. 
\begin{enumerate}
\item 
 We have the following isomorphisms  in $\Q_{p,\Box}$
\begin{align*}
&  H^{i}_{\proeet,c}(X_C,\Q_p) =0, \quad i<d, i>2d;\\
& H^{i}_{\proeet,c}(X_C,\Q_p(i-d)) \simeq 
 H_c^d(X_C, \Omega^{i-d-1})/\kker d \mbox{ if }  d\le i \le 2d-1.
\end{align*}
\item  We have an exact sequence  in $\Q_{p,\Box}$
$$ 0 \to H_c^d(X_C, \Omega^{d-1})/ \kker d\to H_{\proeet,c}^{2d}(X_C, \Q_p(d)) \to \Q_p \to 0. 
$$
\end{enumerate}
\end{corollary}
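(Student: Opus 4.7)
The plan is to combine Proposition~\ref{SYNcAff} with the period isomorphism of Theorem~\ref{per-synet}, which gives $H^i_{\synt,c}(X_C,\Q_p(r))\simeq H^i_{\proeet,c}(X_C,\Q_p(r))$ whenever $i\leq r$. Since we work over $C$, the pro-\'etale sheaf $\Q_p(r)$ is isomorphic to $\Q_p$, so the twist affects only the identification of the Tate-twist structure, which appears on the syntomic side through the factor ${\rm gr}_F^{\bullet}\B^+_{\dr}\simeq C\cdot t^{\bullet}$.

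For the vanishing statements in (1), when $i<d$ or $i>2d$, I choose any $r\geq i$; Proposition~\ref{SYNcAff} gives $H^i_{\synt,c}(X_C,\Q_p(r))=0$, and the period isomorphism transfers this vanishing to $H^i_{\proeet,c}(X_C,\Q_p(r))$, hence to $H^i_{\proeet,c}(X_C,\Q_p)$.

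For the range $d\leq i\leq 2d-1$, I take $r=i$ in Theorem~\ref{per-synet} and invoke Proposition~\ref{SYNcAff}(1):
\[
H^i_{\proeet,c}(X_C,\Q_p(i))\simeq(H_c^d(X_C,\Omega^{i-d-1})/\kker d)\otimes_K^{\Box}{\rm gr}_F^d\B^+_{\dr}.
\]
Identifying ${\rm gr}_F^d\B^+_{\dr}\simeq C(d)$ via $t^d$ and absorbing the $C$-factor into the already $C$-linear left factor, the right-hand side becomes $(H_c^d(X_C,\Omega^{i-d-1})/\kker d)(d)$. A twist by $-d$ then gives the claimed formula for $H^i_{\proeet,c}(X_C,\Q_p(i-d))$.

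For (2), I apply the same strategy with $r=2d\geq d$: Proposition~\ref{SYNcAff}(2) provides a short exact sequence on the syntomic side which, via $\alpha_{2d,c}$, becomes
\[
0\to (H_c^d(X_C,\Omega^{d-1})/\kker d)(d)\to H^{2d}_{\proeet,c}(X_C,\Q_p(2d))\to \Q_p(d)\to 0,
\]
and a twist by $-d$ yields the announced sequence. The only delicate point in the whole argument is to verify that the untwisting by $-d$ is compatible with the period map, i.e.\ that the factors ${\rm gr}_F^{\bullet}\B^+_{\dr}$ on the syntomic side correspond under $\alpha$ to the expected Tate twists on the pro-\'etale side; this is built into the construction of the period map and requires no further argument.
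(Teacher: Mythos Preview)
Your proof is correct and follows exactly the approach the paper indicates: the corollary is stated there as an immediate consequence of Proposition~\ref{SYNcAff} combined with the period isomorphism of Theorem~\ref{per-synet}. Your explicit choice of $r=i$ (resp.\ $r=2d$) to stay in the range $i\leq r$ required by $\tau_{\leq r}$, followed by the identification ${\rm gr}_F^d\B^+_{\dr}\simeq C(d)$ and the untwist by $(-d)$, is precisely what the paper leaves implicit.
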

\begin{example}[Dimension 1] \label{dim1-aff}
For $d=1$ and $r\geq 0$, Proposition \ref{SYNcAff} yields:
\begin{enumerate}[label=\roman*)]
\item $H^0_{\synt,c}({\mathbb A}_C^1, \Q_p(r))=0$, $H^i_{\synt,c}({\mathbb A}_C^1, \Q_p(r))=0$, for $i\geq 3$.
\item  $H^1_{\synt,c}({\mathbb A}_C^1, \Q_p(r))=0.$ 
\item $H^2_{\synt,c}({\mathbb A}_C^1, \Q_p)=0$.
\item For $r\geq 1$, an exact sequence of solid $\Q_p$-modules : \[ 0 \to H^1_c({\mathbb A}_C^1,\so) (r-1) \to H^2_{\synt,c}({\mathbb A}_C^1, \Q_p(r)) \to \Q_p(r-1) \to 0. \] 
\end{enumerate}
In the last point we have used the fact that the map $H^1_c({\mathbb A}_C^1,\so)\to H^1_c({\mathbb A}_C^1,\Omega^1)$ is injective (since $H^1_{\dr,c}({\mathbb A}_C^1)=0$).

  This yields:
  \begin{enumerate}
  \item  $H^i_{\proeet,c}({\mathbb A}_C^1, \Q_p)=0$, for $i\neq 2$.
  \item an exact sequence of solid $\Q_p$-modules
   \[ 0 \to H^1_c({\mathbb A}_C^1,\so) \to H^2_{\proeet,c}({\mathbb A}_C^1, \Q_p(1)) \to \Q_p\to 0. \] 
 \end{enumerate}
\end{example}

\subsection{Stein curves} We now compute the compactly supported $p$-adic cohomology of Stein curves. Let $X$ be a smooth Stein variety over $K$ of dimension $d= 1$.
  For $i\geq 0$, we have an exact sequence
$$\xymatrix{
H^{i-1}{\rm HK}_c(X_C,i)\ar[r] &  H^{i-1}{\rm DR}_c(X_C,i)\ar[r]& H^i_{\proeet,c}(X_C,\Q_p(i))\ar[r] & 
  H^{i}{\rm HK}_c(X_C,i)\ar[r]&
  H^{i}{\rm DR}_c(X_C,i)
}
$$
 We have in $\sd(\Q_{p,\Box})$: 
\begin{align*}
\rg_{\dr,c}(X)\dsolid_{K}\B^+_{\dr} & \simeq (H^1_c(X,\so_X)\solid_{K}\B^+_{\dr}\to H^1_c(X,\Omega^1_X)\solid_{K}\B^+_{\dr})[-1],\\
F^i(\rg_{\dr,c}(X)\dsolid_{K}\B^+_{\dr}) & \simeq  (H^1_c(X,\so_X)\solid_{K}F^i\B^+_{\dr}\to H^1_c(X,\Omega^1_X)\solid_{K}F^{i-1}\B^+_{\dr})[-1],\\
{\rm DR}_c(X_C,i)& \simeq (H^1_c(X,\so_X)\solid_{K}(\B^+_{\dr}/F^i)\to H^1_c(X,\Omega^1_X)\solid_{K}(\B^+_{\dr}/F^{i-1}))[-1].
\end{align*}
In particular $H^j{\rm DR}_c(X_C,i)=0$, for $j\neq 1,2.$

We also have (see Proposition~\ref{coho-hk11}): 
\begin{equation}
\label{hk-camb}
 H^{j}{\rm HK}_c(X_C,i)\simeq (H^j_{{\rm HK},c}(X_C)\solid_{F^{\rm nr}}\B^+_{\st})^{N=0,\phi=p^i}.
\end{equation}

By the Poincar\'e duality
$$
H^{j}_{{\rm HK},c}(X_C)\simeq H^{2-j}_{{\rm HK}}(X_C)^*\{-1\},\quad H^{j}_{{\rm dR},c}(X_C)\simeq H^{2-j}_{{\rm dR}}(X_C)^*\{-1\},
$$
where the first dual is the $F^{\rm nr}$-dual and the second one -- the  $C$-dual. It follows that 
\begin{align}
\label{camb1}
 & H^2_{{\rm HK},c}(X_C)\simeq F^{\rm nr}\{-1\}; \quad H^i_{{\rm HK},c}(X_C)=0, \mbox{ for } i\neq 1,2;\\
 & H^2_{{\rm dR},c}(X_C)\simeq C\{-1\}; \quad H^i_{{\rm dR},c}(X_C)=0, \mbox{ for } i\neq 1,2.\notag
\end{align}
 This implies  that  $H^i_{\proeet,c}(X_C,\Q_p)=0$ for $i\neq 1,2,3$.

 \vskip.2cm
    $\bullet$ {\em The group $H^1_{\proeet,c}(X_C,\Q_p)$.}
   We have the exact sequence of solid $\Q_p$-modules
    $$
  \xymatrix{
H^{0}{\rm HK}_c(X_C,2) \ar[r] &  H^{0}{\rm DR}_c(X_C,1)\ar[r] & H^1_{\proeet,c}(X_C,\Q_p(1))\ar[r] & H^{1}{\rm HK}_c(X_C,1)\ar[r]&
  H^{1}{\rm DR}_c(X_C,1)
}$$
By \eqref{camb1}, this yields the exact sequence
\begin{equation}
\label{vend000}
  \xymatrix{
0 \ar[r]  & H^1_{\proeet,c}(X_C,\Q_p(1)) \ar[r] & H^{1}{\rm HK}_c(X_C,1)\ar[r]&
  H^{1}{\rm DR}_c(X_C,1).
}
\end{equation}
From Lemma~\ref{DRc}, we obtain an isomorphism 
$ H^1\DR_c(X_C,1) \simeq H^1_c(X_C, \so)   $
and from Proposition~\ref{coho-hk11}, $H^1\HK_c(X_C,1) \simeq (H^1_{\hk,c}(X_C)\solid_{F^{\rm nr}} \Bstp)^{N=0, \varphi=p}$. But the map 
$$ (H^1_{\hk,c}(X_C)\solid_{F^{\rm nr}} \Bstp)^{N=0, \varphi=p} \to H^1_c(X_C, \so)  $$
factorizes as 
\begin{equation}
\label{vend001}
 (H^1_{\hk,c}(X_C)\solid_{F^{\rm nr}} \Bstp)^{N=0, \varphi=p} \to H^1_{\dr, c}(X_C)  \to H^1_c(X_C, \so) 
 \end{equation}
where the second morphism is injective. So the kernel of the second arrow in \eqref{vend000} is isomorphic to the kernel of the first arrow in \eqref{vend001}. This is equal to $ (H^1_{\hk,c}(X_C)\solid_{F^{\rm nr}} \Bstp)^{N=0, \varphi=1}$ (see \cite[Lemma 3.39]{CDN3} or Lemma~\ref{HKc} below) and so we get 
\[ H^1_{\proeet,c}(X_C,\Q_p(1)) \xrightarrow{\sim} (H^1_{\hk,c}(X_C)\solid_{F^{\rm nr}} \Bstp)^{N=0, \varphi=1}. \]

 \vskip.2cm
 $\bullet$ {\em The group $H^2_{\proeet,c}(X_C,\Q_p)$.} The long exact sequence above gives the exact sequence: 
  $$
  \xymatrix{
H^{1}{\rm HK}_c(X_C,2)\ar[r] &  H^{1}{\rm DR}_c(X_C,2)\ar[r] & H^2_{\proeet,c}(X_C,\Q_p(2))\ar[r] &  H^{2}{\rm HK}_c(X_C,2)\ar[r]&
  H^{2}{\rm DR}_c(X_C,2)
}$$
Using \eqref{hk-camb} and \eqref{camb1} we get the exact sequence
\begin{equation}
\label{camb2}
\xymatrix{
H^{1}{\rm HK}_c(X_C,2) \ar[r] &  H^{1}{\rm DR}_c(X_C,2)\ar[r]& H^2_{\proeet,c}(X_C,\Q_p(2))\ar[r] &  \B_{\crr}^{+,\phi=p}\ar[r]&
\B^+_{\dr}/F^1 
}
\end{equation}
Recall now that we have a fundamental exact sequence
$$
0\to\Q_p(1)\to  \B_{\crr}^{+,\phi=p}\to 
\B^+_{\dr}/F^1 \to 0.
$$
Using it we get
 the exact sequence
\begin{equation}
\label{vend0}
  \xymatrix{
H^{1}{\rm HK}_c(X_C,2) \ar[r] &  H^{1}{\rm DR}_c(X_C,2)\ar[r] & H^2_{\proeet,c}(X_C,\Q_p(2))\ar[r] & \Q_p(1)\ar[r] & 0
}
\end{equation}
where the first term is isomorphic to $(H^1_{\hk,c}(X_C) \solid_{F^{\rm nr}} \Bstp)^{\varphi=p^2, N=0}$.

\begin{remark}
By duality, the slopes of the Frobenius on $H^1_{\hk,c}(X_C)$ are between $0$ and $1$. If we assume they are $0$, then, we can show that we have an exact sequence   
\begin{equation}
\label{vend00}
0 \to (H^1_c(X_C, \so)/ \kker d)(1) \to H^2_{\proeet,c}(X_C,\Q_p(2)) \to \Q_p(1) \to 0.
\end{equation}
To see that first note that, from Lemma \ref{DRc}, we get an exact sequence
\begin{equation}
\label{vend1}
0\to H^1_c(X_C,\so)(1)\to H^{1}{\rm DR}_c(X_C,2)\to H^1_{\dr,c}(X_C)\to 0
\end{equation}
and since the slopes of the Frobenius on $H^1_{\hk,c}(X_C)$ are $\le 1$, we have the following exact sequence (see \cite[Lemma 3.39]{CDN3} or Lemma~\ref{HKc} below)
\begin{equation}
\label{vend2}
0 \to (H^1_{\hk,c}(X_C) \solid_{F^{\rm nr}} \Bstp)^{\varphi=p, N=0} \xrightarrow{t} (H^1_{\hk,c}(X_C) \solid_{F^{\rm nr}} \Bstp)^{\varphi=p^2, N=0} \to H^1_{\dr,c}(X_C) \to 0.   
\end{equation}
Combining \eqref{vend1} and \eqref{vend2}, we see that the cokernel of the first map in \eqref{vend0} is isomorphic to the cokernel of the map 
$$  (H^1_{\hk,c}(X_C) \solid_{F^{\rm nr}} t\Bstp)^{\varphi=p^2, N=0} \to  H^1_c(X_C,\so)(1)$$
 which in turn factorizes as
$$ (H^1_{\hk,c}(X_C) \solid_{F^{\rm nr}} t\Bstp)^{\varphi=p^2, N=0} \to H^1_{\dr,c}(X_C) (1) \to H^1_c(X_C,\so)(1).$$
But, if the slopes of the Frobenius are zero then the first map is surjective (see Lemma~\ref{HKc} below) so the cokernel we want is isomorphic to the cokernel of the map on the right, which gives the exact sequence \eqref{vend00}. 
\end{remark} 

  \vskip.2cm
  $\bullet$ {\em The group  $H^3_{\proeet,c}(X_C,\Q_p)$.}
  We claim that  $H^3_{\proeet,c}(X_C,\Q_p)=0$. Indeed,   we have the exact sequence
    $$
  \xymatrix{
 H^{2}{\rm DR}_c(X_C,3)\ar[r] & H^3_{\proeet,c}(X_C,\Q_p(3))\ar[r] &  H^{3}{\rm HK}_c(X_C,3)
}$$
But the two terms, on the left and  on the right, are zero (use Lemma \ref{DRc} for the left term).

\subsection{Trace maps} \label{trace-maps}
The purpose of this section is to define trace maps for the $p$-adic pro-\'etale cohomology of smooth partially proper varieties and dagger affinoids of dimension $d$ over $K$. To do that, we use the period isomorphism to express $H^{2d}_{\proeet,c}(X, \Q_p(r))$ in terms of Hyodo-Kato and de Rham cohomologies and then we use the trace morphisms from Section~\ref{dualities}. We then show that these trace maps satisfy various compatibilities.  

\subsubsection{Partially proper varieties and dagger affinoids}

Let $X$ be a smooth partially proper  variety or a smooth dagger affinoid over $K$. Assume that it is of dimension $d\geq 1$ and geometrically irreducible.
 We have  an exact sequence of solid $\Q_p$-modules
 \begin{equation}
 \label{Stein-tr}
    \xymatrix{
\ar[r] &  H^{2d}_{\proeet,c}(X_C,\Q_p(2d))\ar[r] &  (H^{2d}_{\rm HK,c}(X_C)\solid_{F^{\rm nr}}\B^+_{\st})^{N=0,\phi=p^{2d}} \ar[r]&
  H^{2d}{\rm DR}_c(X_C,2d)  \ar[r] & 
}\end{equation}
Since $N=0$ and $\phi=p^d$ on $H^{2d}_{\rm HK,c}(X_C)$, we get an exact sequence
$$
  \xymatrix{
\ar[r] &  H^{2d}_{\proeet,c}(X_C,\Q_p(2d))\ar[r] &  H^{2d}_{\rm HK,c}(X_C)\solid_{F^{\rm nr}}\B_{\crr}^{+,\phi=p^{d}} \ar[r]&
  H^{2d}_{\rm dR,c}(X)\solid_K\B^+_{\dr}/F^d & 
}$$
Via the compatible trace maps 
$$
{\rm tr}_{\hk,X_C}:  H^{2d}_{\rm HK,c}(X_C)\stackrel{\sim}{\to} F^{\nr},\quad  {\rm tr}_{\dr,X}:  H^{2d}_{\rm dR,c}(X)\stackrel{\sim}{\to} K
$$
this yields an exact sequence
$$
  \xymatrix{
\ar[r] &  H^{2d}_{\proeet,c}(X_C,\Q_p(2d))\ar[r] &  \B_{\crr}^{+,\phi=p^{d}} \ar[r]^{\iota}&
 \B^+_{\dr}/F^d\ar[r]  & 
}$$
Hence a natural trace map of solid $\Q_p$-modules
$$
{\rm tr}_{\proeet,X_C}: H^{2d}_{\proeet,c}(X_C,\Q_p(2d))\to \Q_p(d).
$$
This map is surjective. We will denote in the same way the derived trace map  in $\sd(\Q_{p,\Box})$
$$
{\rm tr}_{\proeet,X_C}: \R\Gamma_{\proeet,c}(X_C,\Q_p(2d))[2d]\to \Q_p(d)
$$
obtained as the composition
\begin{align*}
{\rm tr}_{\proeet,X_C}: \R\Gamma_{\proeet,c}(X_C,\Q_p(2d))[2d] & \stackrel{\sim}{\leftarrow} \tau_{\leq 2d}\R\Gamma_{\proeet,c}(X_C,\Q_p(2d))[2d]\\
 & \to H^{2d}_{\proeet,c}(X_C,\Q_p(2d))\lomapr{{\rm tr}_{\proeet,X_C}} \Q_p(d).
\end{align*}

\begin{remark}{\rm (Proper varieties)}  Assume that $X$ is proper in the above. 
Then we see that  the first map in the exact sequence \eqref{Stein-tr} is injective and it follows that the trace map is an isomorphism
$$
{\rm tr}_{\proeet, X_C}: H^{2d}_{\proeet,c}(X_C,\Q_p(2d))\stackrel{\sim}{\to} \Q_p(d).
$$
\end{remark}
\subsubsection{Properties of trace maps} The trace morphisms defined above satisfy the following compatibilities. 

\begin{lemma}\label{properties}The trace map ${\rm tr}_{\proeet,X_C}$  is compatible with 
\begin{enumerate}
\item the corresponding Hyodo-Kato and de Rham trace maps, i.e., the following diagrams commute in $\Q_{p,\Box}$
\begin{align*}
\xymatrix{
H^{2d}_{\proeet,c}(X_C,\Q_p(2d))\ar[d]^{{\rm tr}_{\proeet,X_C}} \ar[r] & (H^{2d}_{{\rm HK},c}(X_C)\solid_{F^{\rm nr}}\B_{\st}^+)^{N=0}\ar[d]^{{\rm tr}_{\hk,X_C}\otimes{\rm Id}} \\
\Q_p(d)\ar[r] & \B_{\crr}^{+}\{-d\},
}\quad 
\xymatrix{
H^{2d}_{\proeet,c}(X_C,\Q_p(2d))\ar[d]^{{\rm tr}_{\proeet,X_C}} \ar[r] & H^{2d}_{{\rm dR},c}(X)\solid_{K}\B_{\dr}^{+}\ar[d]^{{\rm tr}_{\dr,X}\otimes{\rm Id}} \\
\Q_p(d)\ar[r] & \B^{+}_{\dr}
}
\end{align*}
\item open immersions $j:X\hookrightarrow Y$, where $X, Y$ are or partially proper varieties or  dagger affinoids over $K$. 
\end{enumerate}
\end{lemma}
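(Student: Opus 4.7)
The plan for (1) is to observe that the pro-\'etale trace map ${\rm tr}_{\proeet,X_C}$ was \emph{defined} precisely so that the first diagram commutes. Starting from the exact sequence \eqref{Stein-tr} and applying the Hyodo-Kato trace (which is compatible with the de Rham trace via the Hyodo-Kato morphism, Theorem~\ref{dualHK-rigid}(1)), we obtain an exact sequence
\[
H^{2d}_{\proeet,c}(X_C,\Q_p(2d)) \to \B_{\crr}^{+,\phi=p^d} \xrightarrow{\iota} \B^+_{\dr}/F^d,
\]
and ${\rm tr}_{\proeet,X_C}$ is, by construction, the induced map into $\ker(\iota)=\Q_p(d)$; this yields the first diagram immediately. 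The second diagram then follows by post-composing the first with the inclusion $\B^+_{\crr}\{-d\}\hookrightarrow\B^+_{\dr}$ and invoking once more the compatibility of ${\rm tr}_{\hk,X_C}$ with ${\rm tr}_{\dr,X}$ from Theorem~\ref{dualHK-rigid}(1).

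For (2), my plan is to show that every layer in the construction of ${\rm tr}_{\proeet,X_C}$ is functorial with respect to the open immersion $j:X\hookrightarrow Y$ and then to stack these functorialities. Concretely: (i) the period exact sequence \eqref{Stein-tr} is functorial for $j_*$, by combining Proposition~\ref{ciud-3} (which identifies $\rg_{\synt,c}$ with the Hyodo-Kato/de~Rham fiber) with Lemma~\ref{comm-synHK2} (which gives compatibility of $\alpha_{r,c}$ with $j_*$ on both the Hyodo-Kato and de~Rham sides); (ii) the Hyodo-Kato trace satisfies ${\rm tr}_{\hk,Y_C}\circ j_* = {\rm tr}_{\hk,X_C}$ by Lemma~\ref{HK-pp}, together with its analogue for dagger affinoids obtained by reducing to the Stein case. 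Stacking these compatibilities on top of the square from (1) and restricting to the subobject $\Q_p(d)\subset \B_{\crr}^{+,\phi=p^d}$ gives the required identity ${\rm tr}_{\proeet,Y_C}\circ j_* = {\rm tr}_{\proeet,X_C}$.

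The main technical point will be the verification that the identification of ${\rm tr}_{\proeet,X_C}$ as the map induced on $\ker(\iota)$ is itself natural under $j_*$, i.e.\ that the connecting morphism in the long exact sequence attached to \eqref{Stein-tr} respects open immersions. This reduces to functoriality for $j_*$ of the syntomic period fiber sequence from Proposition~\ref{ciud-3}, which in turn follows from Lemma~\ref{comm-synHK2} applied componentwise. Once this bookkeeping is in place each diagram amounts to a routine chase, with no analytic subtleties arising, since everything takes place at the level of cohomology groups in $\Q_{p,\Box}$.
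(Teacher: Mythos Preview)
Your proposal is correct and follows essentially the same strategy as the paper. For (1) both you and the paper observe that the compatibility is immediate from the construction; for (2) both arguments reduce the pro-\'etale trace compatibility to the Hyodo-Kato one (Lemma~\ref{HK-pp}) by exploiting the injectivity of $\Q_p(d)\hookrightarrow\B_{\crr}^{+}\{-d\}$, which you phrase as ``restricting to the subobject $\Q_p(d)$'' and the paper phrases as a diagram chase through a larger square with $\B_{\crr}^{+}\{-d\}$ on the outside. Your invocation of Lemma~\ref{comm-synHK2} makes explicit the functoriality of the map $H^{2d}_{\proeet,c}\to (H^{2d}_{\hk,c}\otimes\B_{\st}^+)^{N=0}$ under $j_*$, which the paper simply asserts (``the upper trapezoid is commutative''); note however that Lemma~\ref{comm-synHK2} is stated for dagger varieties, so in the partially proper case you should appeal instead to the rigid-analytic construction of $j_*$ in Section~\ref{compact1} together with Proposition~\ref{ciud-3}.
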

\begin{proof}
The compatibility (1) is clear from the construction. For  compatibility (2), we  first assume that $X$ and $Y$ are partially proper and consider the following diagram of solid $\Q_p$-modules: 
\begin{equation*}
\xymatrix{
(H^{2d}_{{\rm HK},c}(X_C)\solid_{F^{\rm nr}}\B_{\st}^{+})^{N=0} \ar[rrr]^{j_*} \ar[rddd]_{{\rm tr}_{X_C}\otimes{\rm Id}} & & & (H^{2d}_{{\rm HK},c}(Y_C)\solid_{F^{\rm nr}}\B_{\st}^{+})^{N=0} \ar[lddd]^{{\rm tr}_{Y_C}\otimes{\rm Id}} \\
& H^{2d}_{\proeet,c}(X_C,\Q_p(2d)) \ar[r]^{j_*} \ar[lu] \ar[d]^{{\rm tr}_{X_C}}&  H^{2d}_{\proeet,c}(Y_C,\Q_p(2d)) \ar[ru] \ar[d]^{{\rm tr}_{Y_C}} & \\
& \Q_p(d) \ar@{=}[r]  \ar@{^(->}[d] & \Q_p(d) \ar@{^(->}[d] & \\ 
& \B_{\crr}^{+}\{-d\} \ar@{=}[r] & \B_{\crr}^{+}\{-d\} &
}
\end{equation*}
We want to show that the upper square is commutative.  We know that the upper trapezoid is commutative. The right and left triangles are commutative by claim (1). Since the arrows from $\Q_p(d)$ to $\B_{\crr}^{+, \phi=p^d}$ are injective, it suffices then to prove the commutativity of the outer trapezoid. or that the geometric Hyodo-Kato trace is compatible with the open immersion $X\hookrightarrow Y$. But this was shown in Lemma \ref{HK-pp}.

   The proof is similar when $X$ and $Y$ are both dagger affinoids, using the proof of Theorem \ref{dualHK-ov}.
   \end{proof}

\subsubsection{Comparison with the Huber trace map} This section will not be used in the rest of the paper. Let $X$ be a smooth partially proper rigid analytic variety of dimension $d$, over $K$.  Recall (see Lemma \ref{proet-et-entier}) that we have a canonical map in $\sd(\Q_{p,\Box})$
$$
\R\Gamma_{\proeet,c}(X_C,\Q_p)\to \R\Gamma_{\eet,c}(X_C,\Q_p),
$$
where $\R\Gamma_{\eet,c}(X_C,\Q_p)=(\lim_n\R\Gamma_{\eet,c}(X_C,\Z/p^n))\otimes^{\Box}_{\Z_p}\Q_p$. We also have (compatible with $n$) Huber trace maps (see  \cite[Th. 7.5.3]{Huber})
$$
{\rm tr}^H_{\eet,X_C,n}: \R\Gamma_{\eet,c}(X_C,\Z/p^n(d))[2d]\to \Z/p^n
$$
that induce a trace map in $\sd(\Q_{p,\Box})$
$$
{\rm tr}^H_{\eet,X_C}: \R\Gamma_{\eet,c}(X_C,\Q_p(d))[2d]\to \Q_p.
$$
We will call the composition 
$$
{\rm tr}^H_{\proeet, X_C}:\R\Gamma_{\proeet,c}(X_C,\Q_p(d))[2d]\to \R\Gamma_{\eet,c}(X_C,\Q_p(d))[2d]\verylomapr{{\rm tr}^H_{\eet,X_C}}\Q_p
$$
the {\em pro-\'etale Huber trace map}.
\begin{proposition}
The two trace maps in $\sd(\Q_{p,\Box})$ 
$$\tr_{\proeet,X_C}, \tr^{H}_{\proeet,X_C}: \R\Gamma_{\proeet,c}(X_C,\Q_p(2d))[2d]\to \Q_p(d)$$
are equal. 
\end{proposition}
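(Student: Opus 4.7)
The plan is to reduce via excision and functoriality for open embeddings to the case of a smooth projective algebraic $K$-variety, where both traces agree with the classical algebraic Poincar\'e trace.

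First, one checks that both trace maps are compatible with pushforwards $j_*$ along open embeddings $j: U \hookrightarrow X$ of smooth partially proper $d$-dimensional rigid varieties: for $\tr_{\proeet,X_C}$ this is Lemma \ref{properties}(2), and for Huber's $\tr^H_{\proeet,X_C}$ it follows from the functoriality of Huber's compactly supported trace \cite[Th.~7.5.3]{Huber}. Any smooth rigid affinoid over $K$ admits an open immersion into the analytification $\overline{Y}^{\an}$ of a smooth projective $K$-variety $\overline{Y}$ of dimension $d$ (e.g.\ embed a closed polydisc into $\A^d \subset \PP^d$). Choosing an admissible cover $\{U_\alpha\}$ of $X$ by such affinoids and applying Mayer--Vietoris for compactly supported pro-\'etale and Huber's cohomologies (which both theories satisfy, and with which both trace maps are compatible by the first step), the equality $\tr_{\proeet,X_C}=\tr^H_{\proeet,X_C}$ reduces by $\Q_p$-linearity to the case $X=\overline{Y}^{\an}$ for a smooth projective algebraic variety $\overline{Y}$ over $K$.

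For such an $\overline{Y}$ of dimension $d$, we have $H^{2d}_{\proeet,c}(\overline{Y}^{\an}_C,\Q_p(d)) = H^{2d}_{\eet}(\overline{Y}^{\an}_C,\Q_p(d)) \simeq \Q_p$, and Huber's trace coincides with the algebraic Poincar\'e trace by construction. The syntomic trace is obtained, via the exact sequence \eqref{Stein-tr}, from the Hyodo--Kato and de Rham traces together with the fundamental exact sequence $0 \to \Q_p(d) \to \B_{\crr}^{+,\phi=p^d} \to \B_{\dr}^+/F^d \to 0$. The main obstacle is therefore to verify, for a smooth projective algebraic variety $\overline{Y}$, the compatibility of the algebraic syntomic (Fontaine--Messing) period morphism with trace maps. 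This is a classical result on the crystalline-\'etale comparison, which may be further reduced via pushforward along $\overline{Y} \to \PP^d_K$ (a finite projection, proper, so that traces are preserved on the nose) to the case of projective space, where both sides are computed explicitly from the Chern class of the hyperplane and Kummer theory. Carefully tracking the identification of the analytic Hyodo--Kato and $\B_{\dr}^+$-traces on $\overline{Y}^{\an}_C$ -- as defined via the passage to $\PP^d$ in diagram \eqref{china1} in the proof of Theorem~\ref{dualHK-rigid} -- with their algebraic counterparts, together with the normalizations in the fundamental exact sequence, is the principal technical step.
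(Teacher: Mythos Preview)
Your overall strategy matches the paper's: reduce via an open cover and $j_*$-compatibility of both trace maps to a projective case, then invoke the classical algebraic comparison theorems (ultimately a Chern class computation on $\PP^d$). However, your reduction step has a genuine gap. You cover $X$ by rigid \emph{affinoids} $U_\alpha$, but closed affinoids are not partially proper. The syntomic trace $\tr_{\proeet}$ is only defined in this paper for partially proper varieties (or dagger affinoids), and Lemma~\ref{properties}(2) requires both source and target of $j$ to be partially proper (or both dagger affinoids). So neither the trace nor the $j_*$-compatibility you invoke is available for the inclusions $U_\alpha\hookrightarrow X$ or $U_\alpha\hookrightarrow \overline{Y}^{\an}$. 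In addition, the assertion that an \emph{arbitrary} smooth rigid affinoid admits an open immersion into the analytification of a smooth projective $K$-variety of the same dimension is not justified; your parenthetical example of a closed polydisc in $\PP^d$ does not settle the general case.

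The paper repairs exactly this point by covering $X$ with \emph{open} discs, which are partially proper, and using the surjection $\bigoplus_i H^{2d}_{\proeet,c}(U_{i,C},\Q_p(2d))\twoheadrightarrow H^{2d}_{\proeet,c}(X_C,\Q_p(2d))$ coming from the spectral sequence~\eqref{yunnan1}. An open disc then embeds as a partially proper open of $\PP^{d,\an}$, so Lemma~\ref{properties}(2) and the Huber compatibility (via \cite[Th.~5.3.3]{Zav}) both apply, reducing directly to $\PP^{d,\an}$. From there one drops compact support, passes to the algebraic category via GAGA, and the equality of the two algebraic traces on $\PP^d$ follows from compatibility of the classical $p$-adic comparison isomorphism with Chern classes, essentially as you outline in your final paragraph. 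Replacing ``affinoid'' by ``open polydisc'' throughout your first paragraph would make your argument go through and coincide with the paper's.
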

\begin{proof} By the spectral sequence \eqref{yunnan1},  we have a surjection in $\Q_{p,\Box}$
$$
\bigoplus_{i\in I}H^{2d}_{\proeet,c}(U_{i,C},\Q_p(2d)) \twoheadrightarrow H^{2d}_{\proeet,c}(X_C,\Q_p(2d)),
$$
where the index set $ \{U_i\}$ runs over a covering family of open discs. Hence we may assume that $X$ is an open disc. Then, since both trace maps are compatible with
open immersions of partially proper rigid analytic varieties (by Lemma \ref{properties} and \cite[Th. 5.3.3]{Zav}, respectively), we may assume that $X$ is the projective space: $X={\mathbb P}^{d,{\rm an}}_C$.

    Now we can drop the compact support everywhere and then  pass to the algebraic category. It suffices thus   to compare the trace maps
\begin{equation}\label{finish2}
\tilde{\tr}_{\eet, X_C}, \tr_{\eet, X_C}: H^{2d}_{\eet}({\mathbb P}^{d,{\rm alg}}_C,\Q_p(2d))\to \Q_p(d),
\end{equation}
where the first trace map is the algebraic analog of $\tr_{\proeet, X_C}$ (so defined via comparison theorems) and the second trace map is the classical algebraic \'etale trace map. That this passage works this way follows from GAGA-type theorems in \cite{Shao} and the compatibility of the algebraic and analytic \'etale trace maps. Now, the equality of the two trace maps in \eqref{finish2}   follows from the known fact that
 the classical comparison theorems are compatible with trace maps (this reduces to a comparison of Chern class maps).
\end{proof}

\section{Stein varieties}
We  construct  here a conditional  version of a fundamental diagram for geometric compactly supported $p$-adic pro-\'etale cohomology of  Stein varieties and  apply it to determine such  cohomology of tori and Drinfeld's spaces.  The proofs are similar to the ones of \cite{CDN3}, but more complicated due to the shift by $d$ that appears in the term  coming from   the compactly supported de Rham cohomology. 

\subsection{Fundamental diagram} \label{ex-stein} 
Let $X$ be a smooth dagger Stein variety of dimension $d$ over $K$. The purpose of this section is to study the compactly supported $p$-adic pro-\'etale cohomology of $X_C$. To do that, we need to extend some of the results of \cite{CDN3} and, in particular, their Lemma 3.39: 

\begin{lemma}
\label{HKc}
Let $j$, $k$ and $l$ be in $\N$ such that $k \le j$. There is an exact sequence in $\Q_{p,\Box}$:  
\[ 0 \to (H^l_{\hk,c}(X_C) \solid_{F^{\rm nr}} \Bstp)^{N=0, \varphi=p^{j-k}} \xrightarrow{t^{k+1}} (H^l_{\hk,c}(X_C) \solid_{F^{\rm nr}} \Bstp)^{N=0, \varphi=p^{j+1}} \xrightarrow{\pi^{l,j+1}_{k+1}} H^{l}_{\dr,c}(X) \solid_K (\B^+_{\dr}/F^{k+1}).  \]
Moreover, the right arrow is surjective if the slopes of the Frobenius on $H^l_{\hk,c}(X_C)$ are $\le j-k$. 
\end{lemma}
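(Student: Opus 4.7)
The argument closely parallels that of \cite[Lemma 3.39]{CDN3}, replacing (pro-\'etale) cohomology by its compactly supported counterpart throughout; the compactly supported Hyodo-Kato apparatus assembled in Sections \ref{HK-ch} and \ref{Bdr-coho} (and in particular the Hyodo-Kato quasi-isomorphism $\iota_{\hk,c}$ of Proposition \ref{sains1}) allows us to transport the classical strategy to our setting. I would construct $\pi^{l,j+1}_{k+1}$ as the composition of $\iota_{\hk,c}$ (pre-composed with $\iota\colon \Bstp\hookrightarrow \B^+_{\dr}$), the projection modulo $F^{k+1}$, and the product formula $\iota_{{\rm BK},c}\colon \rg_{\dr,c}(X)\dsolid_K\B^+_{\dr}\xrightarrow{\sim}\rg_{\dr,c}(X_C/\B^+_{\dr})$ from \eqref{Boa2}, taken in cohomological degree $l$. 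One then checks, using that $\iota\colon \Bstp\to \B^+_{\dr}$ kills the $\Bstp$-monodromy and is $\varphi$-equivariant, that this composition factors through the $(N=0,\varphi=p^{j+1})$-part as required.

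For exactness at the middle term, an element in the kernel of $\pi^{l,j+1}_{k+1}$ is sent by $\iota_{\hk,c}\circ\iota$ into $F^{k+1}\rg_{\dr,c}(X_C/\B^+_{\dr})$, hence -- invoking the Hyodo-Kato isomorphism backwards together with $F^{k+1}\B^+_{\dr}=t^{k+1}\B^+_{\dr}$ -- it is of the form $t^{k+1}y$ for some $y\in H^l_{\hk,c}(X_C)\solid_{F^{\rm nr}}\Bstp$. Because $\Bstp$ is $t$-torsion free, such a $y$ is unique; from $\varphi(t)=pt$ one gets $\varphi(y)=p^{j-k}y$, and from $N(t)=0$ together with the Leibniz rule for $N$, one deduces $N(y)=0$. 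This produces the desired preimage in $(H^l_{\hk,c}(X_C)\solid_{F^{\rm nr}}\Bstp)^{N=0,\varphi=p^{j-k}}$, and injectivity of multiplication by $t^{k+1}$ is automatic from the same $t$-torsion freeness.

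The main obstacle is the surjectivity statement under the slope hypothesis. I would deduce it from the fundamental exact sequence
\[ 0\to \Q_p(n)\to (\Bcrp)^{\varphi=p^n}\to \B^+_{\dr}/F^n\to 0,\quad n\ge 0, \]
applied slope-by-slope: for $M$ a $(\varphi,N)$-module over $F^{\rm nr}$ with $N=0$ on which $\varphi$ acts by $p^\lambda$, tensoring yields a surjection $(M\solid_{F^{\rm nr}}\Bcrp)^{\varphi=p^{j+1}}\twoheadrightarrow M\solid_{F^{\rm nr}}\B^+_{\dr}/F^{j+1-\lambda}$, whose further projection onto $M\solid_{F^{\rm nr}}\B^+_{\dr}/F^{k+1}$ is still surjective precisely because $\lambda\le j-k$, i.e., $k+1\le j+1-\lambda$. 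To implement this slope decomposition on the (potentially infinite-dimensional) solid $(\varphi,N)$-module $H^l_{\hk,c}(X_C)$, I would use Poincar\'e duality (Theorem \ref{dualHK-rigid}) to identify it, up to a Tate twist, with the topological $F^{\rm nr}$-dual of $H^{2d-l}_{\hk}(X_C)$, a nuclear Fr\'echet $(\varphi,N)$-module whose slope structure is accessible through the methods of \cite[\S 3]{CDN3}, and then dualise the slope-by-slope surjection back, exploiting that all maps are continuous/solid and that the relevant period rings are flat in the solid sense. Rigorously navigating this solid/slope/duality interplay -- in particular ensuring that the eigenspace and tensor functors commute with the duality passage -- is the chief technical delicacy of the argument.
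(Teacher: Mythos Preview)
Your exactness-in-the-middle argument has a genuine gap. Going ``backwards through the Hyodo-Kato isomorphism'' only shows that the image of $x$ in $H^l_{\hk,c}(X_C)\solid_{F^{\rm nr}}\B^+_{\dr}$ is $t^{k+1}y'$ for some $y'$ with $\B^+_{\dr}$-coefficients; it does not produce a $y$ in $H^l_{\hk,c}(X_C)\solid_{F^{\rm nr}}\Bstp$. The inclusion $\Bcrp\hookrightarrow\B^+_{\dr}$ is not $t$-saturated, so divisibility by $t^{k+1}$ on the $\B^+_{\dr}$-side does not automatically descend. The paper fills exactly this gap by proving the period-ring identity $(F^{k+1}\Bcrp)^{\varphi=p^{s}}=(t^{k+1}\Bcrp)^{\varphi=p^{s}}$ for every $s$, by induction on $k$ using the characterisation $t\Bcrp=\{x\in\Bcrp:\varphi^n(x)\in F^1\Bcrp\text{ for all }n\ge 0\}$. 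This is the step that lets one pull the factor $t^{k+1}$ back into crystalline coefficients, and it is not a formality.

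For surjectivity, the paper takes a different and cleaner route that avoids the slope-decomposition and duality gymnastics you flag as the ``chief technical delicacy.'' Rather than decomposing the infinite-dimensional $(\varphi,N)$-module $H^l_{\hk,c}(X_C)$ into isotypic pieces, the paper runs an induction on the filtration level: it proves that $\pi^{l,j+1}_m$ is surjective for all $1\le m\le k+1$ via a diagram whose top row is the short exact sequence $H^l_{\dr,c}\solid_K(F^{m-1}/F^m)\hookrightarrow H^l_{\dr,c}\solid_K(\B^+_{\dr}/F^m)\twoheadrightarrow H^l_{\dr,c}\solid_K(\B^+_{\dr}/F^{m-1})$. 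The inductive step then reduces to surjectivity of $(H^l_{\hk,c}\solid\Bstp)^{N=0,\varphi=p^{j+2-m}}\to H^l_{\dr,c}\solid_K C$, which is the $k=0$ case already available from \cite[Lemma 3.39]{CDN3}; the slope bound $\le j-k$ guarantees $\le j+1-m$ for every $m\le k+1$. No structural analysis of $H^l_{\hk,c}(X_C)$ beyond the slope bound, and no Poincar\'e duality, is needed.
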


\begin{proof}
The case $k=0$ was proved in \cite[Lemma~3.39]{CDN3}. For $k \neq 0$, it is clear that the multiplication by $t^{k+1}$ is injective, it remains to show the exactness in the middle. To do that, as in \cite[Lemma~3.39]{CDN3}, it suffices to show that for all $\delta$, $(F^{k+1}\Bcrp)^{\varphi=p^{j+1- \delta}}=(t^{k+1} \Bcrp)^{\varphi=p^{j+1- \delta}}$, where we set $F^k \Bcrp=\Bcrp \cap F^k \B_{\dr}^+$. Clearly, $(t^{k+1}\Bcrp)^{\varphi=p^{j+1- \delta}}\subset(F^{k+1} \Bcrp)^{\varphi=p^{j+1- \delta}}.$
We prove the other inclusion  by induction. For $k=0$, we have 
$$(F^{1}\Bcrp)^{\varphi=p^{j+1- \delta}} \subset \{ x \in \Bcrp \; | \; \varphi^{n}(x) \in F^1\Bcrp \text{ for all } n >0 \}=t \Bcrp.$$ 
Let $k \ge 0$. By induction, it suffices to prove that for all $\delta$, $(F^{k+1}\Bcrp)^{\varphi=p^{j+1-\delta}}\subset (t F^k\Bcrp)^{\varphi=p^{j+1-\delta}}$. Let $y$ be in $(F^{k+1}\Bcrp)^{\varphi=p^{j+1-\delta}}$, then $y=t^{k+1}y_0$ with $y_0$ in $\B_{\dr}^+$ and we have
$$y \in \{ x \in \Bcrp \; | \; \varphi^{n}(x) \in F^1\Bcrp \text{ for all } n >0 \}=t \Bcrp.$$ 
We obtain that $t^ky_0$ is in $\Bcrp \cap F^k \B_{\dr}^+=F^k \Bcrp$, which concludes the proof of the first statement of the lemma.

Let us prove that if the slopes of the Frobenius on $H^l_{\hk,c}(X_C)$ are $\le j-k$, then $\pi^{l,j+1}_{k+1}$ is surjective. We show by induction that for all $1   \le m \le k+1$, the map \[ \pi^{l,j+1}_{m} : (H^l_{\hk,c}(X_C) \solid_F \Bstp)^{N=0, \varphi=p^{j+1}} \to H^{l}_{\dr,c}(X) \solid_K \B^+_{\dr}/F^{m} \] is surjective. For $m=1$, the result follows from \cite[Lemma~3.39]{CDN3} (the slopes of the Frobenius are less or equal to $ j-k$ so they are less or equal to $j$). Assume that $\pi^{l,j+1}_{m-1}$ is surjective. The surjectivity of $\pi^{l,j+1}_{m}$ will follow from the commutative diagram: 
\[ \xymatrix@C=10pt{
  H^{l}_{\dr,c}(X) \solid_K (F^{m-1}\B^+_{\dr}/F^{m}) \ar@{^(->}[r] & H^{l}_{\dr,c}(X) \solid_K (\B^+_{\dr}/F^{m}) \ar@{->>}[r] & H^{l}_{\dr,c}(X) \solid_K (\B^+_{\dr}/F^{m-1})  \\
 \ar[u] (H^l_{\hk,c}(X_C) \solid_{F^{\rm nr}} t^{m-1}\Bstp)^{N=0, \varphi=p^{j+1}} \ar@{^(->}[r] & (H^l_{\hk,c}(X_C) \solid_{F^{\rm nr}} \Bstp)^{N=0, \varphi=p^{j+1}} \ar[u]^{\pi^{l,j+1}_{m}} \ar@{=}[r] & (H^l_{\hk,c}(X_C) \solid_{F^{\rm nr}} \Bstp)^{N=0, \varphi=p^{j+1}} \ar[u]^{\pi^{l,j+1}_{m-1}}  \\
}\]
The first row is exact, the vertical arrow $\pi^{l,j+1}_{m-1}$ is surjective by the inductive assumption,  and the left vertical arrow is surjective since the morphism
\[ (H^l_{\hk,c}(X_C) \solid_{F^{\rm nr}} \Bstp)^{N=0, \varphi=p^{j+1-(m-1)}} \to  H^{l}_{\dr,c}(X) \solid_K (\B^+_{\dr}/F^1) \] 
is surjective (the slopes of the Frobenius are less or equal to $j-k$ so as long as $m$ is less than $k+1$, the slopes are less or equal to $j-m+1$). We deduce that $\pi^{l,j+1}_{m}$ is surjective, as wanted. \end{proof}

\begin{remark}Since the slopes of Frobenius on $H^l_{\hk}(X_C)$ are between $0$ and $i$, by duality, the slopes of the Frobenius on $H^l_{\hk,c}(X_C)$ are between $d-(2d-l)=l-d$ and $d-0=d$. 
\end{remark}

\begin{proposition}
\label{bal}
Assume that the maps $\pi^{r-1,r}_{d}$ and $\pi^{r-1,r-d}_1$ are surjective. Then there is an exact sequence in $\Q_{p,\Box}$:
\[ 0 \to (H_c^d(X_C, \Omega^{r-d-1})/ \kker d)(d) \to H_{\synt,c}^{r}(X_C, \Q_p(r)) \to (H^{r}_{\hk,c}(X_C) \solid_{F^{\rm nr}} t^d \Bstp)^{\varphi=p^{r}, N=0} \to 0. \]
\end{proposition}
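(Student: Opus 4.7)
The plan is to read off the proposition from the long exact sequence associated to the distinguished triangle $\rg_{\synt,c}(X_C,\Q_p(r)) \to \HK_c(X_C,r) \to \DR_c(X_C,r)$ of Proposition~\ref{ciud-4}. The relevant five-term segment is
\[ H^{r-1}\HK_c(X_C,r) \xrightarrow{\iota^{r-1}_{\hk}} H^{r-1}\DR_c(X_C,r) \to H^r_{\synt,c}(X_C,\Q_p(r)) \to H^r\HK_c(X_C,r) \xrightarrow{\iota^{r}_{\hk}} H^r\DR_c(X_C,r), \]
so the proposition reduces to identifying $\coker \iota^{r-1}_{\hk}$ with $(H^d_c(X_C,\Omega^{r-d-1})/\kker d)(d)$ and $\kker \iota^{r}_{\hk}$ with $(H^r_{\hk,c}(X_C) \solid_{F^{\nr}} t^d\Bstp)^{N=0,\phi=p^r}$. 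Throughout, Proposition~\ref{coho-hk11} supplies $H^i\HK_c(X_C,r) = (H^i_{\hk,c}(X_C) \solid_{F^{\nr}} \Bstp)^{N=0,\phi=p^r}$, while Proposition~\ref{comp-vanish} (since $X$ is Stein of dimension $d$) lets Lemma~\ref{DRc} present $H^i\DR_c(X_C,r)$ as an extension of the quotient $H^i_{\dr,c}(X_C) \solid_K \B^+_{\dr}/F^{r-i+d-1}$ by the subobject $(H^d_c(X_C,\Omega^{i-d})/{\rm Im}\,d) \solid_K {\rm gr}^{r-i+d-1}\B^+_{\dr}$.

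For the cokernel of $\iota^{r-1}_{\hk}$, the composition with the projection of $H^{r-1}\DR_c$ onto the quotient $H^{r-1}_{\dr,c}(X_C) \solid_K \B^+_{\dr}/F^d$ coincides with the map $\pi^{r-1,r}_d$ of Lemma~\ref{HKc}, which is surjective by the first hypothesis. It then remains to show that the image of $\iota^{r-1}_{\hk}$ inside the subobject $(H^d_c(X_C,\Omega^{r-d-1})/{\rm Im}\,d)(d)$ equals precisely the subgroup $H^{r-1}_{\dr,c}(X_C)(d)$ coming from the natural inclusion $H^{r-1}_{\dr,c}(X_C) = \kker d/{\rm Im}\,d \hookrightarrow H^d_c(X_C,\Omega^{r-d-1})/{\rm Im}\,d$. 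By Lemma~\ref{HKc}, this image is carried by $\kker \pi^{r-1,r}_d = (H^{r-1}_{\hk,c}(X_C) \solid t^d\Bstp)^{N=0,\phi=p^r}$; factoring out $t^d$ and chasing the Hyodo-Kato isomorphism together with the identification ${\rm gr}^d\B^+_{\dr} \simeq C(d)$, the induced map on $(H^{r-1}_{\hk,c}(X_C) \solid \Bstp)^{N=0,\phi=p^{r-d}}$ factors as $\pi^{r-1,r-d}_1$ followed (after the $(d)$-twist) by the above inclusion. The second hypothesis, surjectivity of $\pi^{r-1,r-d}_1$, then pins down the image as exactly $H^{r-1}_{\dr,c}(X_C)(d)$, whence $\coker \iota^{r-1}_{\hk} \simeq (H^d_c(X_C,\Omega^{r-d-1})/\kker d)(d)$.

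For the kernel of $\iota^{r}_{\hk}$, an analogous but unconditional analysis applies. Composition with the projection of $H^r\DR_c$ onto $H^r_{\dr,c}(X_C) \solid \B^+_{\dr}/F^{d-1}$ is $\pi^{r,r}_{d-1}$, whose kernel by Lemma~\ref{HKc} is $t^{d-1}(H^r_{\hk,c}(X_C) \solid \Bstp)^{N=0,\phi=p^{r-d+1}}$. Inside this kernel, $\iota^r_{\hk}$ lands in the subobject $(H^d_c(X_C,\Omega^{r-d})/{\rm Im}\,d)(d-1)$ and, after the same kind of factorization, is given by $\pi^{r,r-d+1}_1$ composed with the injective natural inclusion $H^r_{\dr,c}(X_C) \hookrightarrow H^d_c(X_C,\Omega^{r-d})/{\rm Im}\,d$; its kernel therefore equals $\kker \pi^{r,r-d+1}_1 = t(H^r_{\hk,c}(X_C) \solid \Bstp)^{N=0,\phi=p^{r-d}}$, and multiplying back by $t^{d-1}$ yields $\kker \iota^{r}_{\hk} = (H^r_{\hk,c}(X_C) \solid t^d\Bstp)^{N=0,\phi=p^r}$. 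The main obstacle will be establishing the two factorization statements rigorously: this requires matching the filtration on $\DR_c(X_C,r)$ coming from the de Rham complex $H^d_c(X,\Omega^\bullet)$ with the $F^\bullet\B^+_{\dr}$-filtration compatibly with the Hyodo-Kato isomorphism, and once this bookkeeping is set up the rest of the argument is formal diagram-chasing.
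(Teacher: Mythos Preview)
Your proposal is correct and follows essentially the same approach as the paper. The paper organizes the argument as a single large commutative diagram, defining maps $\gamma_{r-1}$ and $\gamma_r$ (your $\iota^{r-1}_{\hk}$ and $\iota^r_{\hk}$ restricted to the kernels of $\pi^{r-1,r}_d$ and $\pi^{r,r}_{d-1}$ and landing in the subobjects from Lemma~\ref{DRc}) and then computes $\ker\gamma_r$ and $\coker\gamma_{r-1}$ by exactly the factorizations you describe; your two-step presentation (compose with the projection, then analyze the induced map on the kernel into the subobject) unpacks the same diagram chase.
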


\begin{proof}
Consider the following commutative diagram:
{\small \[ \xymatrix@C=10pt{
0 \ar[r] & (H^d_c(\Omega^{r-d})/{\rm Im}(d))(d-1) \ar[r] & H^r \DR_c(X_C,r) \ar[r] & H^r_{\dr,c}(X) \solid_K (\B_{\dr}^+/F^{d-1}) \ar[r] & 0 \\
0 \ar[r] & (\HK^r_c(X_C) \solid_{F^{\rm nr}} t^{d-1} \Bstp)^{N=0, \varphi=p^{r}} \ar[r] \ar[u]^{\gamma_r} & (\HK^r_c(X_C) \solid_{F^{\rm nr}} \Bstp)^{N=0, \varphi=p^{r}} \ar[r] \ar[u] & H^r_{\dr,c}(X) \solid_K (\B_{\dr}^+/F^{d-1}) \ar@{=}[u] & \\
& & H^r_{\synt,c}(X_C, \Q_p(r)) \ar[u] & & \\
0 \ar[r] & (H^d_c(\Omega^{r-d-1})/{\rm Im}(d))(d) \ar[r] & H^{r-1} \DR_c(X_C,r) \ar[u] \ar[r] & H^{r-1}_{\dr,c}(X) \solid_K (\B_{\dr}^+/F^{d}) \ar[r] & 0 \\
0 \ar[r] & (\HK^{r-1}_c(X_C) \solid_{F^{\rm nr}} t^{d} \Bstp)^{N=0, \varphi=p^{r}} \ar[r] \ar[u]^{\gamma_{r-1}} & (\HK^{r-1}_c(X_C) \solid_{F^{\rm nr}} \Bstp)^{N=0, \varphi=p^{r}} \ar[r]^-{\pi^{r-1,r}_d} \ar[u] & H^{r-1}_{\dr,c}(X) \solid_K (\B_{\dr}^+/F^{d}) \ar@{=}[u] \ar[r] & 0 
}\]}
where $\HK^*_c(X_C):=H^*_{\hk,c}(X_C)$, $H^d_c(\Omega^{*}):=H^d_c(X_C, \Omega^*)$ and the maps $\gamma_{r-1}$, $\gamma_r$ are obtained by factorization. The rows are exact sequences by Lemma~\ref{DRc} and Lemma~\ref{HKc} (and by assumption on $\pi_d^{r-1,r}$ for the bottow row). The column in the middle is an exact sequence as well.    
We deduce that we have an exact sequence: 
\[ 0 \to \mathrm{coker}(\gamma_{r-1}) \to H^r_{\synt,c}(X_C, \Q_p(r)) \to \kker(\gamma_r) \to 0 \] 
and it remains to compute $\kker(\gamma_r)$ and $\coker(\gamma_{r-1})$. 
\vskip.2cm
($\bullet$) {\em Kernel of $\gamma_r$.} The map $\gamma_r$ factorizes as 
\[ (H^{r}_{\HK,c}(X_C)\solid_{F^{\rm nr}} t^{d-1} \Bstp)^{N=0, \varphi=p^{r}} \xrightarrow{\gamma_{r}^{(1)}} H^{r}_{\dr,c}(X_C) (d-1) \xrightarrow{\gamma_{r}^{(2)}} (H_c^d(X_C,\Omega^{r-d})/ {\rm Im}(d)) (d-1) \]
with $\gamma_{r}^{(2)}$ injective. Then,
$$ \kker(\gamma_r) \simeq \kker(\gamma_r^{(1)}) \simeq (H^{r}_{\HK,c}(X_C)\solid_{F^{\rm nr}} t^{d} \Bstp)^{N=0, \varphi=p^{r}},$$
as wanted. 

\vskip.2cm
($\bullet$) {\em Cokernel of $\gamma_{r-1}$.} As above, the map $\gamma_{r-1}$ factorizes as    
\[ (H^{r-1}_{\HK,c}(X_C)\solid_{F^{\rm nr}} t^{d} \Bstp)^{N=0, \varphi=p^{r}} \xrightarrow{\gamma_{r-1}^{(1)}} H^{r-1}_{\dr,c}(X_C) (d) \xrightarrow{\gamma_{r-1}^{(2)}} (H_c^d(X_C,\Omega^{r-d-1})/ {\rm Im}(d))(d) \]
with $\gamma_{r-1}^{(2)}$ injective. The hypothesis that $\pi_1^{r-1,r-d}$ is surjective implies that $\gamma_{r-1}^{(1)}$ is surjective as well. So 
$$\coker(\gamma_{r-1}) \simeq \coker(\gamma_{r-1}^{(2)}) \simeq (H_c^d(X_C,\Omega^{r-d-1})/ \kker d) (d) $$
and we obtain the exact sequence from the proposition. 
\end{proof}

%\begin{remark}
%\label{amas}
%More generally, without the surjectivity assumption, the same computation shows that, for all $i\le r$, we have an exact sequence: 
%\[ (H^{i-1}_{\hk,c}(X_C) \solid_{F^{\rm nr}}  \Bstp)^{\varphi=p^{r}, N=0} \to H^{i-1} \DR_c(X_C,r) \to H_{\synt,c}^{i}(X_C, \Q_p(r)) \to (H^{i}_{\hk,c}(X_C) \solid_{F^{\rm nr}} t^{r-i+d} \Bstp)^{\varphi=p^{r}, N=0} \to 0.\]
%\end{remark}

Using the previous lemma, we obtain a bicartesian diagram analogous to the one of \cite[Example 3.35]{CDN3} to compute the compactly supported syntomic cohomology of Stein varieties: 
\begin{theorem}
\label{diagSYNc}
Assume that the maps $\pi^{r-1,r}_{d}$ and $\pi^{r-1,r-d}_1$ are surjective. Then, there is a map of exact sequences in $\Q_{p,\Box}$:  
{\small\[ \xymatrix{
0 \ar[r] & (H_c^d(X_C, \Omega^{r-d-1})/ \kker d)(d) \ar[r] \ar@{=}[d] & H_{\synt,c}^{r}(X_C, \Q_p(r)) \ar[d]^{\alpha} \ar[r] & (H^{r}_{\HK,c}(X_C) \solid_{F^{\rm nr}} t^d \Bstp)^{\varphi=p^{r}, N=0} \ar[r] \ar[d]^{\beta} & 0 \\
0 \ar[r] & (H_c^d(X_C, \Omega^{r-d-1})/ \kker d)(d) \ar[r] & H_c^d(X_C, \Omega^{r-d})^{d=0}(d)  \ar[r] & H^r_{\dr,c}(X_C)(d) \ar[r] & 0 
}\]}
\end{theorem}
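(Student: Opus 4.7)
The top exact sequence is precisely Proposition~\ref{bal}. What remains is to construct the bottom exact sequence, to build the vertical maps $\alpha$ and $\beta$, and to verify commutativity of both squares. I would attack these in order.

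\textbf{Step 1: the bottom exact sequence.} Since $X$ is Stein of dimension $d$, coherent Serre duality gives $H^i_c(X_C, \Omega^j)=0$ for $i\neq d$ (Proposition~\ref{comp-vanish}), so $\R\Gamma_{\dr,c}(X_C)$ is computed by the single-row complex $H^d_c(X_C,\Omega^{\bullet})[-d]$ of \eqref{buis2}. The tautological short exact sequence expressing $H^r_{\dr,c}(X_C)$ as the quotient of closed $(r-d)$-forms by the image of $d$ from degree $r-d-1$ yields
\[ 0 \to H^d_c(X_C,\Omega^{r-d-1})/\ker d \to H^d_c(X_C,\Omega^{r-d})^{d=0} \to H^r_{\dr,c}(X_C) \to 0, \]
and twisting by $(d)$ gives the bottom row.

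\textbf{Step 2: the map $\beta$.} Use the short exact sequence $0 \to t^{d+1}\Bstp \to t^d\Bstp \to \mathrm{gr}^d \Bdrp \to 0$, where $\theta$ identifies $\mathrm{gr}^d \Bdrp \cong C(d)$. Solid-tensoring with $H^r_{\hk,c}(X_C)$ over $F^{\rm nr}$ (using flatness of $\Bstp$ as in the proof of Proposition~\ref{coho-hk11}) and composing with the Hyodo--Kato isomorphism of Proposition~\ref{sains1} gives, after restricting to the $(\varphi=p^r,N=0)$-eigenspace, the desired map $\beta$.

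\textbf{Step 3: the map $\alpha$ and commutativity.} This is the main step. The strategy is to exploit the factorizations of $\gamma_r$ and $\gamma_{r-1}$ from the proof of Proposition~\ref{bal}. Recall that $\coker(\gamma_{r-1}) \simeq (H^d_c(X_C,\Omega^{r-d-1})/\ker d)(d)$ is identified with the kernel of the bottom row, and that $\ker(\gamma_r)$ coincides with the cokernel of the top row; on this cokernel, the map $\gamma_r^{(1)}$ agrees up to a Tate twist with $\beta$ followed by the projection $H^r_{\dr,c}(X_C)(d)\hookrightarrow (H^d_c(X_C,\Omega^{r-d})/\ker d)(d-1)$. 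Using this, I would define $\alpha$ by lifting a syntomic cocycle: represent a class in $H^r_{\synt,c}(X_C,\Q_p(r))$ by a pair $(a,b)$ with $a\in (H^r_{\hk,c}(X_C)\solid_{F^{\rm nr}} \Bstp)^{N=0,\varphi=p^r}$ and $b$ a primitive in $\DR_c(X_C,r)$ satisfying $\iota_{\hk}(a)=\partial b$; then $\beta$ applied to the $t^d$-part of $a$ represents a de Rham class, which lifts canonically (modulo the kernel $(H^d_c(X_C,\Omega^{r-d-1})/\ker d)(d)$) to a closed form $\omega\in H^d_c(X_C,\Omega^{r-d})^{d=0}(d)$. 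Setting $\alpha([(a,b)]):=[\omega]$ makes the right square commute by construction, and the left square commutes because on the kernel of the top row, both $\alpha$ and the bottom inclusion reduce to the identification of $(H^d_c(X_C,\Omega^{r-d-1})/\ker d)(d)$.

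\textbf{Main obstacle.} The hard part is Step~3: producing a well-defined $\alpha$ requires that the closed-form lift exist and be uniquely determined modulo the left kernel. The surjectivity hypotheses $\pi^{r-1,r}_d$ and $\pi^{r-1,r-d}_1$ in the theorem statement are exactly what Lemma~\ref{HKc} needs to make the relevant projection maps $(H^{\bullet}_{\hk,c}(X_C)\solid_{F^{\rm nr}}\Bstp)^{N=0,\varphi=p^j} \to H^{\bullet}_{\dr,c}(X_C)\solid_K(\Bdrp/F^k)$ surjective, and these surjectivities feed into Proposition~\ref{bal} to guarantee that the factorizations of $\gamma_r,\gamma_{r-1}$ have the expected kernel/cokernel. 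Without these slope assumptions on Frobenius, the extension class defining the top row would fail to be the pushforward along $\beta$ of the extension defining the bottom row, and $\alpha$ could not be constructed.
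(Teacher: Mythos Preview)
Your Steps 1 and 2 are correct and essentially match the paper. The issue is Step 3, where your construction of $\alpha$ is too vague to constitute a proof. You propose to represent a syntomic class by a pair $(a,b)$ and then say that ``$\beta$ applied to the $t^d$-part of $a$ represents a de Rham class, which lifts canonically (modulo the kernel \ldots) to a closed form $\omega$''. But no such canonical lift exists from a de Rham class alone: the short exact sequence in the bottom row is not canonically split, so you must say precisely how the primitive $b$ determines the lift, and then check that this is independent of the choice of representative $(a,b)$. As written, your definition of $\alpha$ is circular (you are essentially positing the lift whose existence is the content of the theorem) and the ``main obstacle'' you identify is not resolved by invoking the surjectivity hypotheses---those guarantee the top row is exact (via Proposition~\ref{bal}), not that your lifting procedure is well-defined.

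The paper avoids this difficulty entirely by defining $\alpha$ directly and functorially. Recall that syntomic cohomology is, by its original Fontaine--Messing description, the fiber $[F^r\rg_{\crr,c}\xrightarrow{p^r-\varphi}\rg_{\crr,c}]$, so there is a natural map $H^r_{\synt,c}(X_C,\Q_p(r))\to H^r\big(F^r(\rg_{\dr,c}(X)\solid_K\B^+_{\dr})\big)$. One then projects to the graded piece
\[
H^r\big(F^r(\rg_{\dr,c}(X)\solid_K\B^+_{\dr})\big)\longrightarrow H^d_c(X,\Omega^{r-d})^{d=0}\solid_K(F^d\B^+_{\dr}/F^{d+1}),
\]
and this composite is $\alpha$. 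With $\alpha$ and $\beta$ defined intrinsically, commutativity of both squares is a separate verification (the paper defers to the argument in \cite[Example~3.35]{CDN3}). This is both shorter and avoids any well-definedness check: the map exists before one ever looks at the top exact sequence, and the surjectivity hypotheses are used only to invoke Proposition~\ref{bal}.
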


\begin{proof}
The first row is exact by the previous proposition and the exactness of the second one is implied by the definition of the de Rham cohomology $H^{i+d}_{\dr,c}(X_C)= H^i(H^d_c(X_C, \Omega^{\bullet}))$.  
The map $\alpha$ is given by the composition:
\[ H^r_{\synt,c}(X_C, \Q_p(r)) \to H^r(F^r(R\Gamma_{\dr,c}(X) \solid_K \B^+_{\dr})) \to H_c^d(X, \Omega_X^{r-d})^{d=0} \solid_K (F^d \B^+_{\dr}/F^{d+1}) \]
and $\beta$ is the morphism:
\[ (H^{r}_{\HK,c}(X_C) \solid_{F^{\rm nr}} t^d \Bstp)^{\varphi=p^{r}, N=0} \xrightarrow{\iota_{\HK} \otimes \iota} H^r_{\dr,c}(X) \solid_K F^d \B^+_{\dr} \to (H^r_{\dr,c}(X) \solid_K C)(d). \]
We then show that the diagram is commutative using the same argument as in the classical case \cite[Example 3.35]{CDN3}.
\end{proof}

\begin{corollary}
\label{proet-c-stein}
Let $X$ be a smooth Stein variety of dimension $d$ over K and $r\ge0$. Assume that the maps $\pi^{r-1,r}_{d}$ and $\pi^{r-1,r-d}_1$ are surjective.Then there is a bicartesian diagram in $\Q_{p,\Box}$: 
\[ \xymatrix{
H_{\proeet,c}^{r}(X_C, \Q_p(r)) \ar[r] \ar[d] &  (H^{r}_{\HK,c}(X_C) \solid_{F^{\rm nr}} t^d \Bstp)^{\varphi=p^{r}, N=0} \ar[d] \\ 
H_c^d(X_C, \Omega^{r-d})^{d=0} (d) \ar[r] &  H^r_{\dr,c}(X_C) (d)}. \]
\end{corollary}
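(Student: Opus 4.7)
The plan is to deduce this corollary directly from the syntomic version already established in Theorem~\ref{diagSYNc}, by transporting the diagram through the period isomorphism. Nothing new needs to be proven about either side of the square; the only real work is identifying the maps.

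First, I would extract the bicartesian square from Theorem~\ref{diagSYNc}. That theorem provides a map of short exact sequences in $\Q_{p,\Box}$ whose left vertical arrow is the identity on $(H^d_c(X_C,\Omega^{r-d-1})/\kker d)(d)$. A map of short exact sequences whose left arrow is an isomorphism has a bicartesian right square (both a pullback and a pushout), by the standard five-lemma-type argument: the induced map on kernels and cokernels of the horizontal arrows is an isomorphism on both. This yields the analogous bicartesian diagram with $H_{\synt,c}^{r}(X_C,\Q_p(r))$ in the upper-left corner.

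Next, I would apply the period isomorphism of Theorem~\ref{per-synet}, which states that $\alpha_{r,c}\colon \tau_{\le r}\rg_{\synt,c}(X_C,\Q_p(r)) \xrightarrow{\sim} \tau_{\le r}\rg_{\proeet,c}(X_C,\Q_p(r))$ is a quasi-isomorphism. In particular it induces an isomorphism $H^r_{\synt,c}(X_C,\Q_p(r)) \xrightarrow{\sim} H^r_{\proeet,c}(X_C,\Q_p(r))$ in $\Q_{p,\Box}$. Replacing the upper-left corner via this identification produces the desired square; a bicartesian square remains bicartesian after substituting an isomorphic object in one corner, provided the two arrows out of that corner are correctly identified.

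This last verification is the main (indeed only) obstacle. One must check that, under the isomorphism $H^r_{\synt,c}\simeq H^r_{\proeet,c}$, the map $\alpha\colon H^r_{\synt,c}(X_C,\Q_p(r))\to H^d_c(X_C,\Omega^{r-d})^{d=0}(d)$ from Theorem~\ref{diagSYNc} coincides with the natural geometric map out of $H^r_{\proeet,c}$, and similarly that the map $\beta$ matches the natural projection to Hyodo-Kato cohomology. Both are constructed on the syntomic side using the distinguished triangle of Proposition~\ref{ciud-3} (which factors through $\HK_c(X_C,r)$ and $\DR_c(X_C,r)$), and the identification we need follows from the compatibility of the period morphism with the Hyodo-Kato and $\B^+_{\dr}$-components, lifted through the geometrized period morphism of \eqref{geometrization2}. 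Once this bookkeeping is carried out, the square of the corollary is bicartesian.
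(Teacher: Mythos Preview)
Your proposal is correct and is exactly what the paper intends: the corollary is stated without proof, as it follows immediately from Theorem~\ref{diagSYNc} by replacing $H^r_{\synt,c}$ with $H^r_{\proeet,c}$ via the period isomorphism of Theorem~\ref{per-synet}. Your observation that a map of short exact sequences with identity on the left yields a bicartesian right square is the precise mechanism, and your caution about identifying the outgoing arrows is reasonable but ultimately unnecessary here, since the maps out of $H^r_{\proeet,c}$ in the corollary are \emph{defined} to be those induced from the syntomic side through $\alpha_{r,c}$.
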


\begin{remark}
If the slopes of the Frobenius on $H^{r-1}_{\hk,c}(X_C)$ are in $\{r-d-1,r-d\}$ then the map $\pi^{r-1,r}_{d}$ is surjective. If they are equal to $r-d-1$ then $\pi^{r-1,r-d}_1$ is surjective.
 \end{remark}
 
 \begin{remark}
 \label{diag-00}
We will now show that, if we drop the surjectivity assumption for $\pi^{r-1,r}_{d}$ (but we are still assuming that $\pi^{r-1,r-d}_1$ is surjective), then there is a short exact sequence:
\begin{equation}
\label{diag-01}
0 \to \coker(\pi^{r-1,r}_d) \to  H_{\synt,c}^{r}(X_C, \Q_p(r)) \to P(X_C,r) \to 0
\end{equation}
where $P(X_C,r)$ is the bicartesian diagram from the previous theorem, i.e., 
\[ P(X_C,r):= \kker((H^{r}_{\HK,c}(X_C) \solid_{F^{\rm nr}} t^d \Bstp)^{\varphi=p^{r}, N=0} \oplus H_c^d(X_C, \Omega^{r-d})^{d=0} (d) \to H^r_{\dr,c}(X_C) (d)). \]
We keep the same notations as in the proof of Propositon~\ref{bal} and we moreover denote by
$\iota_{\hk}^{r-1,r}$ the map $(H^{r-1}_{\hk,c}(X_C) \solid_{F^{\rm nr}}  \Bstp)^{\varphi=p^{r}, N=0} \to H^{r-1} \DR_c(X_C,r)$. Then, similar computations as before show that we have a commutative diagram:
\[ {\scriptscriptstyle \xymatrix{   % \footnotesize -> \scriptscriptstyle [Piotr]
   & 0  & && \\
   & \coker(\pi^{r-1,r}_d) \ar[u] &&&  \\
0 \ar[r] & \coker(\iota_{\hk}^{r-1,r}) \ar[u] \ar[r] \ar@{-->}@/^1cm/[d]&  H_{\synt,c}^{r}(X_C, \Q_p(r)) \ar[r] \ar[d]^{\alpha} & (H^{r}_{\HK,c}(X_C) \solid_{F^{\rm nr}} t^d \Bstp)^{\varphi=p^{r}, N=0} \ar[d]^{\beta} \ar[r] & 0 \\
  0 \ar[r]  & ( H_c^d(X_C, \Omega^{r-d-1})/ \kker d) (d)  \ar[u] \ar[r]   &H_c^d(X_C, \Omega^{r-d})^{d=0} (d) \ar[r] & H^r_{\dr,c}(X_C) (d) \ar[r] & 0 \\
  & 0 \ar[u] & & & }}\]
  where the rows and the first column are exact (the vertical exact sequence follows from the snake lemma and the computation of $\gamma_{r-1}$ made in the proof of Proposition~\ref{bal}:
   $$\coker(\gamma_{r-1}) \simeq ( H_c^d(X_C, \Omega^{r-d-1})/ \kker d)(d) $$ using the assumption that $\pi^{r-1,r-d}_1$ is surjective). 
  The dotted arrow  is obtained by factorization of $\alpha$ and defines a section to the injection.
  
Now, the exact sequence \eqref{diag-01} follows from a diagram chase (note that the map on the left in the exact sequence is given by a section of the morphism from the cokernel of $\iota_{\hk}^{r-1,r}$ to the cokernel of $\pi^{r-1,r}_d$).   
 \end{remark}

\subsection{Examples} We will now use Theorem \ref{diagSYNc} to compute the compactly supported geometric $p$-adic pro-\'etale cohomology of tori and Drinfeld spaces. 
\subsubsection{Torus} 

We will use the previous formula to compute the compactly supported pro-\'etale cohomology of the torus $X:=\G_{m,K}^d$.   For $0\leq r\le d-1$, we have $H^r_{\proeet,c}(X_C,\Q_p)=0$ since $H^i_{\dr,c}(X_C)=0,$ for $i\leq d-1$. 
Assume thus that   $d \le r \le 2d$.  We start with the following

\begin{lemma}
For all $k \ge 0$, there is an isomorphism of solid $\Q_p$-modules
$$(H^{r-1}_{\HK,c}(X_C) \solid_{F^{\rm nr}} \Bstp)^{\varphi=p^{k}, N=0} \simeq  (\wedge^{2d-r+1} \Q_p^d )\solid_{\Q_p} (\Bcrp)^{\varphi=p^{k+d-r+1}}. $$  
\end{lemma}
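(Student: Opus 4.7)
The plan is to use the explicit description of the compactly supported Hyodo-Kato cohomology of the torus given in Section~\ref{torus} to reduce everything to a direct computation with Fontaine's period rings.

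First, I would recall that, by Section~\ref{torus}, $H^{r-1}_{\HK,c}(X_C)$ is a free $F^{\rm nr}$-module of rank $\binom{d}{2d-r+1}$, with a $\Q_p$-rational basis given by the classes $e_J := (c_1^{\HK}(z_{j_1})\cdots c_1^{\HK}(z_{j_{2d-r+1}}))^*$ for $J=(j_1<\cdots<j_{2d-r+1})\subset\{1,\dots,d\}$. On this basis the monodromy $N$ is trivial and the Frobenius acts by the scalar $p^{r-1-d}$. Setting $W := \wedge^{2d-r+1}\Q_p^d$, this identifies $H^{r-1}_{\HK,c}(X_C)$, as a $(\varphi,N)$-module over $F^{\rm nr}$, with $F^{\rm nr}\otimes_{\Q_p}W$, where $\varphi$ is $p^{r-1-d}$ times the absolute Frobenius on $F^{\rm nr}$ and $N=0$.

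Next I would handle the $N=0$ condition. Since $N$ acts trivially on $H^{r-1}_{\HK,c}(X_C)$, the induced monodromy on $H^{r-1}_{\HK,c}(X_C)\solid_{F^{\rm nr}}\Bstp$ is $1\otimes N_{\Bstp}$, whose kernel is $H^{r-1}_{\HK,c}(X_C)\solid_{F^{\rm nr}}\Bcrp$. Combining this with the identification above yields, after absorbing $F^{\rm nr}$ into $\Bcrp$, a $\varphi$-equivariant isomorphism of solid $\Q_p$-modules
\[ \bigl(H^{r-1}_{\HK,c}(X_C)\solid_{F^{\rm nr}}\Bstp\bigr)^{N=0} \;\simeq\; W\solid_{\Q_p}\Bcrp, \]
where on the right $\varphi$ acts by $p^{r-1-d}\otimes\varphi_{\Bcrp}$ (the Frobenius on $W$ is trivial but the overall scalar $p^{r-1-d}$ remains).

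Finally, taking $\varphi = p^k$-invariants on both sides gives exactly
\[ \bigl(H^{r-1}_{\HK,c}(X_C)\solid_{F^{\rm nr}}\Bstp\bigr)^{\varphi=p^k,\,N=0} \;\simeq\; W\solid_{\Q_p}(\Bcrp)^{\varphi=p^{k+d-r+1}}, \]
since the equation $p^{r-1-d}\varphi_{\Bcrp}(b) = p^k b$ is $\varphi_{\Bcrp}(b) = p^{k-r+1+d} b$. The only subtlety I anticipate is ensuring that the decomposition $H^{r-1}_{\HK,c}(X_C)\simeq F^{\rm nr}\otimes_{\Q_p}W$ is truly $\varphi$-equivariant with the stated scalar action; this is immediate once one notes that the Chern classes $c_1^{\HK}(z_j)$ themselves, and hence all duals of their products, can be chosen in the $\Q_p$-structure, so the only contribution to Frobenius is the scalar coming from the $2d-r+1$ cup-products (dualized and twisted by $\{-d\}$ via Poincar\'e duality). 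No hard estimates are required beyond this bookkeeping.
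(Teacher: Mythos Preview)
Your proof is correct and follows essentially the same approach as the paper: identify $H^{r-1}_{\HK,c}(X_C)$ with $\wedge^{2d-r+1}(F^{\rm nr})^d$ carrying trivial monodromy and Frobenius given by the scalar $p^{r-1-d}$, pass to $\Bcrp$ via the $N=0$ condition, and then read off the Frobenius eigenspace. The only cosmetic difference is that the paper phrases the last step as an isoclinic splitting (extracting the slope-$(r-1-d)$ part of the $(\varphi,N)$-module against the complementary Frobenius eigenspace of $\Bcrp$), whereas you absorb $F^{\rm nr}$ into $\Bcrp$ first and then solve $p^{r-1-d}\varphi_{\Bcrp}=p^k$ directly; both amount to the same bookkeeping.
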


\begin{proof}
By Section \ref{torus}, we have: 
\[(H^{r-1}_{\HK,c}(X_C) \solid_{F^{\rm nr}} \Bstp)^{\varphi=p^{k}, N=0} \simeq  ( \wedge^{2d-r+1} (F^{\rm nr})^d\{d-r+1\}  \solid_{F^{\rm nr}} \Bstp)^{\varphi=p^{k}, N=0}. \]  
The monodromy on $\wedge^{2d-r+1} (F^{\rm nr})^d$ is trivial. 
We compute
\begin{align*}
  (\wedge^{2d-r+1} (F^{\rm nr})^d\{d-r+1\}   \solid_{F^{\rm nr}} \Bstp)^{\varphi=p^{k}, N=0} 
  & \simeq (\wedge^{2d-r+1} (F^{\rm nr})^d\{d-r+1\}  )^{\varphi=p^{-(d-r+1)}} \solid_{\Q_p} (\Bcrp)^{\varphi=p^{k+d-r+1}} \\
  & \simeq (\wedge^{2d-r+1} \Q_p^d) \solid_{\Q_p} (\Bcrp)^{\varphi=p^{k+d-r+1}},
  \end{align*}
  which proves the lemma. 
  \end{proof}
Since we also have
\[ H^{r-1}_{\dr,c}(X)  \simeq (H_{\dr}^{2d-r+1}(X))^* \simeq \wedge^{2d-r+1} K^d, \]
we deduce that the maps
\begin{align*}
\pi_{1}^{r-1,r-d} : &  (\wedge^{2d-r+1} \Q_p^d) \solid_{\Q_p} (\Bcrp)^{\varphi=p} \to (\wedge^{2d-r+1} \Q_p^d )\solid_{\Q_p} C \\
\pi_d^{r-1,r} &( \wedge^{2d-r+1} \Q_p^d) \solid_{\Q_p} (\Bcrp)^{\varphi=p^{d+1}} \to (\wedge^{2d-r+1} \Q_p^d )\solid_{\Q_p} \B_{\dr}^+/F^d
\end{align*}
are surjective.
The same computation as in the proof of the lemma gives: 
\begin{align*}
(H^r_{\HK,c}(X_C) \solid_{F^{\rm nr}} t^d\Bstp)^{\varphi=p^{r}, N=0} \simeq  (\wedge^{2d-r} \Q^d_p)(d). \end{align*}
So, by Theorem \ref{diagSYNc} we obtain a  map of exact sequences in $\Q_{p,\Box}$: 
\begin{equation} 
\label{kol12}
\xymatrix{
0\ar[r] &H^d_c(X_C,\Omega^{r-d-1})/\kker d \ar@{=}[d]\ar[r] &H_{\proeet,c}^{r}(X_C, \Q_p(r-d)) \ar[r] \ar[d] &  \wedge^{2d-r} \Q_p^d  \ar[d] \ar[r] & 0\\ 
0\ar[r] &H^d_c(X_C,\Omega^{r-d-1})/\kker d \ar[r] &H_c^d(X_C, \Omega^{r-d})^{d=0}  \ar[r] &   \wedge^{2d-r} C^d \ar[r] & 0. 
}
\end{equation}
For comparison, we have $H_{\eet,c}^{r}(X_C, \Q_p(r-d))\simeq H_{\eet,c}^{r}(X^{\rm alg}_C, \Q_p(r-d))\simeq  \wedge^{2d-r} \Q_p^d$. 
\begin{remark} Recall that for the usual cohomology we have an exact sequence in $\Q_{p,\Box}$ (see~\cite[Section~4.3.2]{CDN3}),
\begin{equation*} \xymatrix{
0\ar[r] &\Omega^{r-1}(X_C)/\kker d \ar[r] &H_{\proeet}^{r}(X_C, \Q_p(r)) \ar[r] &  \wedge^{r} \Q_p^d   \ar[r] & 0
}
\end{equation*}
We claim that, for any smooth Stein variety $X$ over $C$,  we have an isomorphism in $C_{\Box}$
\begin{equation}\label{tea1}
(\Omega^{r-1}(X)/\kker d)^*\simeq H^d_c(X,\Omega^{d-r})/\kker d. 
\end{equation}
This observation together with diagram \eqref{kol12} was a starting point of the study  of  dualities  for $p$-adic pro-\'etale cohomology in \cite{CGN2}. To prove the isomorphism 
\eqref{tea1}, let $r^{\prime}=d-r$ and consider the exact sequence
$$
0\to \Omega^{r-1}(X)/\kker d\to \Omega^r(X)^{d=0}\to H^r_{\dr}(X)\to 0.
$$
Dualizing, we get the exact sequence (recall that $\Ext^j_{C_{\Box}}(H^r_{\dr,c}(X), C)$ is zero for $j >0$ since $H^r_{\dr,c}(X)$ is of compact type)
$$
0\to H^r_{\dr}(X)^*\to\Omega^r(X)^{d=0,*}\to (\Omega^{r-1}(X)/\kker d)^*\to 0.
$$
Using Poincar\'e duality from Theorem~\ref{dualdR-rigid}, we obtain the exact sequence (where $r^{\prime}=d-r$)
$$
0\to H^{d+r^{\prime}}_{\dr}(X)\to H^d_c(X,\Omega^{r^{\prime}})/{\rm Im}\,d\to (\Omega^{r-1}(X)/\kker d)^*\to 0.
$$
Hence an  isomorphism \eqref{tea1}, as wanted. 
\end{remark}
\subsubsection{Drinfeld's space} We now treat the case of the Drinfeld space $X:=\mathbb{H}_K^d$. Recall that we have  natural isomorphisms of solid $K$- and $F^{\rm nr}(\phi,N)$-modules, respectively (see \cite[Lemma 5.11]{CDN3}):
 \[ H^{i}_{\dr}(\mathbb{H}_K^d) \simeq \Sp_{i}(K)^*, \quad H^i_{\hk}(\mathbb{H}_C^d) \simeq \Sp_i(F^{\rm nr})^*\{-i\}, \quad 0\leq i\leq d,  \]
 where $\Sp_i(-)$ is the generalized locally constant Steinberg representation (see \cite[Sec. 5.2.1]{CDN3}). 
We note that the monodromy on $ H^i_{\hk}(\mathbb{H}_C^d)$ is trivial. For $i >d$, $H^{i}_{\dr}(\mathbb{H}_K^d)=0$. Hence we have natural isomorphisms of solid $K$- and $F^{\rm nr}(\phi,N)$-modules, respectively:
$$
 H^{i}_{\dr,c}(\mathbb{H}_K^d) \simeq \Sp_{2d-i}(K), \quad H^i_{\hk,c}(\mathbb{H}_C^d) \simeq \Sp_{2d-i}(F^{\rm nr})\{d-i\},\quad i\geq d.
$$
In particular, the monodromy on $ H^i_{\hk,c}(\mathbb{H}_C^d)$ is trivial. For $i<d$, we have $H^{i}_{\dr,c}(\mathbb{H}_K^d)=0$, hence $H^{i}_{\proeet,c}(\mathbb{H}_K^d, \Q_p)=0$.

  Assume thus that $r\geq d$. 
 \begin{lemma}
For all $k \ge 0$, we have a natural isomorphism of solid $\Q_p$-modules
$$(H^{r-1}_{\HK,c}(X_C) \solid_{F^{\rm nr}} \Bstp)^{\varphi=p^{k}, N=0} \simeq  \Sp_{2d-r+1}(\Q_p) \solid_{\Q_p} (\Bcrp)^{\varphi=p^{k+d-r+1}}. $$  
\end{lemma}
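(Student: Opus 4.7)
The plan is to imitate verbatim the proof of the torus lemma preceding this statement, substituting the Drinfeld-specific input
\[ H^{r-1}_{\hk,c}(\mathbb{H}^d_C) \simeq \Sp_{2d-r+1}(F^{\rm nr})\{d-r+1\} \]
in place of $\wedge^{2d-r+1}(F^{\rm nr})^d\{d-r+1\}$ appearing in the torus case. The hypotheses needed to run that argument verbatim are present: from the paragraph preceding the lemma the monodromy operator $N$ acts trivially on $H^i_{\hk,c}(\mathbb{H}^d_C)$, so imposing $N=0$ reduces $\Bstp$ to $\Bcrp$, yielding
\[ (H^{r-1}_{\hk,c}(\mathbb{H}^d_C) \solid_{F^{\rm nr}} \Bstp)^{\varphi=p^k,N=0} \simeq (\Sp_{2d-r+1}(F^{\rm nr})\{d-r+1\} \solid_{F^{\rm nr}} \Bcrp)^{\varphi=p^k}. \]

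Next I would use that $\Sp_{2d-r+1}(F^{\rm nr}) \simeq \Sp_{2d-r+1}(\Q_p) \solid_{\Q_p} F^{\rm nr}$ as solid $F^{\rm nr}$-modules (an immediate consequence of the definition of the generalized locally constant Steinberg representation recalled in \cite[\S5.2.1]{CDN3}, where the Frobenius on the $\Sp_{2d-r+1}(\Q_p)$-factor is trivial). Plugging this into the previous display and collapsing $F^{\rm nr}\solid_{F^{\rm nr}}\Bcrp \simeq \Bcrp$ gives
\[ \bigl(\Sp_{2d-r+1}(\Q_p) \solid_{\Q_p} \Bcrp\bigr)\{d-r+1\}^{\,\varphi=p^k}. \]
Since the twist $\{d-r+1\}$ multiplies Frobenius by $p^{-(d-r+1)}$, the eigenvalue condition on the tensor product translates into $\varphi_{\Bcrp}=p^{k+d-r+1}$, which produces the desired isomorphism.

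I do not expect a genuine obstacle: the entire computation is formally identical to that of the torus lemma, with the Steinberg representation replacing the exterior power. The only point deserving a line of verification is the $\Q_p$-rational factorization of $\Sp_{2d-r+1}(F^{\rm nr})$, which is immediate from its definition as the quotient of solid $\Q_p$-valued locally constant functions on the relevant flag variety, base-changed to $F^{\rm nr}$. Everything else (flatness of $\Bcrp$ for $\solid_{F^{\rm nr}}$, the eigenvalue bookkeeping) is exactly what was used in the preceding lemma.
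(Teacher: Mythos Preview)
Your proposal is correct and matches the paper's proof essentially line for line: the paper's argument consists precisely of the three-step computation you outline (pass from $\Bstp$ to $\Bcrp$ using triviality of monodromy, absorb the twist $\{d-r+1\}$ into the Frobenius eigenvalue, then factor out $\Sp_{2d-r+1}(\Q_p)$ from $\Sp_{2d-r+1}(F^{\rm nr})$).
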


\begin{proof}
From the above formulas  we get
\begin{align*}
(H^{r-1}_{\hk,c}(X_C) \solid_{F^{\rm nr}} \Bstp)^{\varphi=p^{k}, N=0} & \simeq  ( \Sp_{2d-r+1}(F^{\rm nr})\{d-r+1\}\solid_{F^{\rm nr}} \Bcrp)^{\varphi=p^{k}} \\
& \simeq  ( \Sp_{2d-r+1}(F^{\rm nr})\solid_{F^{\rm nr}} \Bcrp)^{\varphi=p^{k+d-r+1}} \\
 & \simeq \Sp_{2d-r+1}(\Q_p) \solid_{\Q_p} (\Bcrp)^{\varphi=p^{k+d-r+1}}.
 \end{align*}
 \end{proof}
 
And as in the case of torus, we obtain that the maps 
\begin{align*}
\pi_{1}^{r-1,r-d} : &  \Sp_{2d-r+1}(\Q_p) \solid_{\Q_p} (\Bcrp)^{\varphi=p} \to \Sp_{2d-r+1}(\Q_p) \solid_{\Q_p} C, \\
\pi_d^{r-1,r} & \Sp_{2d-r+1}(\Q_p) \solid_{\Q_p} (\Bcrp)^{\varphi=p^{d+1}} \to \Sp_{2d-r+1}(\Q_p) \solid_{\Q_p} (\B_{\dr}^+/F^d)
\end{align*}
are surjective. Since, computing as in the the proof of the above lemma, we also have 
$$
(H^r_{\HK,c}(X_C) \solid_{F^{\rm nr}} t^d\Bstp)^{\varphi=p^{r}, N=0} \simeq   \Sp_{2d-r}(\Q_p)(d),$$ from    Theorem \ref{diagSYNc}, we obtain the map of exact sequences 
 in $\Q_{p,\Box}$:  
\begin{equation} \xymatrix{
0\ar[r] &H^d_c(X_C,\Omega^{r-d-1})/\kker d \ar@{=}[d]\ar[r] &H_{\proeet,c}^{r}(X_C, \Q_p(r-d)) \ar[r] \ar[d] & \Sp_{2d-r}(\Q_p) \ar[d] \ar[r] & 0\\ 
0\ar[r] &H^d_c(X_C,\Omega^{r-d-1})/\kker d\ar[r] &H_c^d(X_C, \Omega^{r-d})^{d=0}  \ar[r] &  \Sp_{2d-r}(C) \ar[r] & 0.} 
\end{equation}
For comparison, we have an isomorphism (see \cite[Th. 1.2]{CDHN})
$$
H_{\eet,c}^{r}(X_C, \Q_p(r-d)) \simeq \Sp^{\rm cont}_{2d-r}(\Q_p), 
$$
where $\Sp^{\rm cont}_i(-)$ is the generalized continuous Steinberg representations (see \cite[Sec. 5.2.1]{CDN3}). Hence we should have a commutative diagram (we did not check compatibilities)
$$
\xymatrix{
H_{\proeet,c}^{r}(X_C, \Q_p(r-d)) \ar[r] \ar[d] & \Sp_{2d-r}(\Q_p) \ar@{^(->}[d] \\
H_{\eet,c}^{r}(X_C, \Q_p(r-d)) \ar[r]  & \Sp^{\rm cont}_{2d-r}(\Q_p).
}
$$

 \end{document}